\documentclass[10pt, a4paper, twoside]{amsart}

\usepackage{amsthm}
\usepackage{amsmath}
\usepackage{amssymb}
\usepackage{amscd}

\usepackage[all]{xy}
\usepackage{tikz}
\usepackage{tikz-cd}

\usepackage[backref=page]{hyperref}
\usepackage{aliascnt}
\usepackage{cleveref}
\usepackage{adjustbox}
\usepackage{mathtools}
\usepackage{enumitem}
\usepackage{xcolor}
\usepackage[normalem]{ulem}
\usepackage{comment}

\hypersetup{
    colorlinks=true,
    linkcolor=blue,
    anchorcolor=blue,
    citecolor=blue
}

\newtheorem{thm}{\bf Theorem}[section]

\newaliascnt{prop}{thm}
\newtheorem{prop}[prop]{\bf Proposition}
\aliascntresetthe{prop}

\newaliascnt{lem}{thm}
\newtheorem{lem}[lem]{\bf Lemma}
\aliascntresetthe{lem}

\newaliascnt{cor}{thm}
\newtheorem{cor}[cor]{\bf Corollary}
\aliascntresetthe{cor}

\newaliascnt{q}{thm}

\aliascntresetthe{q}

\newaliascnt{conj}{thm}
\newtheorem{conj}[conj]{\bf Conjecture}
\aliascntresetthe{conj}

\newtheorem*{thm*}{\bf Theorem}
\newtheorem*{cor*}{\bf Corollary}

\newaliascnt{p-thm}{thm}
\newtheorem{p-thm}[p-thm]{\bf pre-Theorem}
\aliascntresetthe{p-thm}

\newaliascnt{p-prop}{thm}
\newtheorem{p-prop}[p-prop]{\bf pre-Proposition}
\aliascntresetthe{p-prop}

\newaliascnt{p-lem}{thm}
\newtheorem{p-lem}[p-lem]{\bf pre-Lemma}
\aliascntresetthe{p-lem}

\newaliascnt{p-cor}{thm}
\newtheorem{p-cor}[p-cor]{\bf pre-Corollary}
\aliascntresetthe{p-cor}

\theoremstyle{definition}

\newaliascnt{df}{thm}

\aliascntresetthe{df}

\newaliascnt{rem}{thm}
\newtheorem{rem}[rem]{\it Remark}
\aliascntresetthe{rem}

\newaliascnt{ex}{thm}

\aliascntresetthe{ex}

\newaliascnt{ass}{thm}
\newtheorem{ass}[ass]{\bf Assumption}
\aliascntresetthe{ass}

\newtheorem*{df*}{\bf Definition}
\newtheorem*{dfs*}{\bf Definitions}
\newtheorem*{ack*}{\bf Acknowledgements}

\crefname{thm}{theorem}{theorems}
\Crefname{thm}{Theorem}{Theorems}

\crefname{prop}{proposition}{propositions}
\Crefname{prop}{Proposition}{Propositions}

\crefname{lem}{lemma}{lemmas}
\Crefname{lem}{Lemma}{Lemmas}

\crefname{cor}{corollary}{corollaries}
\Crefname{cor}{Corollary}{Corollaries}

\crefname{q}{question}{questions}
\Crefname{q}{Question}{Questions}

\crefname{conj}{conjecture}{conjectures}
\Crefname{conj}{Conjecture}{Conjectures}

\crefname{p-thm}{pre-theorem}{pre-theorems}
\Crefname{p-thm}{Pre-Theorem}{Pre-Theorems}

\crefname{p-prop}{pre-proposition}{pre-propositions}
\Crefname{p-prop}{Pre-Proposition}{Pre-Propositions}

\crefname{p-lem}{pre-lemma}{pre-lemmas}
\Crefname{p-lem}{Pre-Lemma}{Pre-Lemmas}

\crefname{p-cor}{pre-corollary}{pre-corollaries}
\Crefname{p-cor}{Pre-Corollary}{Pre-Corollaries}

\crefname{df}{definition}{definitions}
\Crefname{df}{Definition}{Definitions}

\crefname{rem}{remark}{remarks}
\Crefname{rem}{Remark}{Remarks}

\crefname{ex}{example}{examples}
\Crefname{ex}{Example}{Examples}

\crefname{ass}{assumption}{assumptions}
\Crefname{ass}{Assumption}{Assumptions}

\numberwithin{equation}{section}

\def\P{\mathbb{P}}
\def\C{\mathbb{C}}
\def\F{\mathbb{F}}
\def\Q{\mathbb{Q}}

\def\Z{\mathbb{Z}}

\DeclareMathOperator{\alg}{alg}

\DeclareMathOperator{\tors}{tors}
\DeclareMathOperator{\Ker}{Ker}
\DeclareMathOperator{\Image}{Im}

\DeclareMathOperator{\nr}{nr}
\DeclareMathOperator{\cl}{cl}

\DeclareMathOperator{\Pic}{Pic}
\DeclareMathOperator{\red}{red}

\DeclareMathOperator{\Sym}{Sym}
\DeclareMathOperator{\id}{id}

\DeclareMathOperator{\Griff}{Griff}

\newcommand{\ov}{\overline}
\newcommand{\on}{\operatorname}
\newcommand{\mc}{\mathcal}

\author{Federico Scavia}

\address{CNRS\\
Institut Galil\'ee\\
Universit\'e Sorbonne Paris Nord\\
99 avenue Jean-Baptiste Cl\'ement, 93430\\
Villetaneuse, France}

\email{scavia@math.univ-paris13.fr}

\author{Fumiaki Suzuki}

\address{BICMR\\
Peking University\\
5 Yiheyuan Road, Haidian District, Beijing 100871\\
China}

\email{suzuki@bicmr.pku.edu.cn}

\title[Vanishing of degree $3$ unramified cohomology]
{Vanishing of degree $3$ unramified cohomology over finite fields}

\definecolor{MyDarkGreen}{rgb}{0.0,0.5,0.0}

\subjclass[2020]{Primary 14C25; Secondary 14F20, 14G15.}

\keywords{Unramified cohomology, algebraic cycles, Abel--Jacobi maps,
finite fields, integral Tate conjecture, coniveau filtration, Fermat cubic, elliptic curves, abelian varieties.}

\begin{document}

\begin{abstract}
Let $k$ be a finite field of characteristic $p\neq 3$, let $E/k$ be the Fermat cubic curve, and let $\ell\neq p$ be a prime. If $p\equiv1\pmod3$, assume moreover that $\ell>3$. Then $H^3_{\mathrm{nr}}(k(E^3)/k,\mathbb{Q}_\ell/\mathbb{Z}_\ell(2))=0$ and the cycle map \[CH^2(E^3)_{\mathbb{Z}_\ell}\longrightarrow H^4(E^3,\mathbb{Z}_\ell(2))\] is surjective. In particular, $H^3_{\mathrm{nr}}(\overline{k}(E^3)/\overline{k},\mathbb{Q}_\ell/\mathbb{Z}_\ell(2))=0$.

Assuming the Tate conjecture for surfaces over finite fields, we prove an analogous surjectivity result, for all but finitely many primes $\ell\neq p$, for the integral cycle maps for $1$-cycles on every smooth projective variety of dimension $d$ over a finite field of characteristic different from $2$ which admits a smooth projective lift to the ring of Witt vectors. 

We apply our results to a conjecture of Colliot-Th\'el\`ene on the local--global principle for zero-cycles over global function fields. To further illustrate these results, we exhibit examples showing that vanishing of degree-$3$ unramified cohomology over the algebraic closure of the ground field does not imply vanishing over any finite subextension, not even for Fano varieties.
\end{abstract}

\maketitle

\section{Introduction}

Degree-$3$ unramified cohomology is expected to behave very differently over finite fields than it does over fields of characteristic zero. In this paper we make this contrast explicit, both through a particular threefold and through general results (the latter conditional on the Tate conjecture).

\subsection{The triple self-product of the Fermat cubic} Our main example is the triple self-product of the Fermat cubic
\[
E=\{x^3+y^3+z^3=0\}\subset \mathbb P^2.
\]
Over algebraically closed fields of characteristic zero, this threefold
exhibits extremely large groups of codimension-$2$ cycles: for every
algebraically closed field $k$ containing $\mathbb Q$ and every prime
$\ell>5$, the group $CH^2(E_k^3)/\ell$ is infinite, by work of Schoen \cite{schoen2002complex} and of the first
author \cite{scavia2024varieties}. In particular, the group $H^3_{\mathrm{nr}}(k(E^3)/k,\mathbb Q_\ell/\mathbb Z_\ell(2))$ has infinite corank.

We show that the opposite phenomenon occurs over finite fields.

\begin{thm}\label{main-fermat-cubic-thm}
Let $k$ be a finite field of characteristic $p\neq 3$, let $E\subset\P^2_{k}$ be the Fermat cubic, and let $\ell\neq p$ be a prime. If $p\equiv 1\pmod 3$, assume moreover that $\ell>3$. Then
\[H^3_{\nr}(k(E^3)/k,\Q_\ell/\Z_\ell(2))=0\]
and the cycle map
\[
CH^2(E^3)_{\Z_\ell}
\longrightarrow
H^4(E^3,\Z_\ell(2))
\]
is surjective. In particular, \[H^3_{\nr}(\ov{k}(E^3)/\ov{k},\Q_\ell/\Z_\ell(2))=0.\]
\end{thm}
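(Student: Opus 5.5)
We reduce both statements to a single integral Tate-type surjectivity, using the standard comparison between degree-$3$ unramified cohomology and the integral cycle map for codimension-$2$ cycles. Over a finite field $k$, Colliot-Thélène–Kahn show that for a smooth projective geometrically connected threefold $X$ with $CH_0$ supported on a curve (more generally, whenever the integral Tate conjecture for $1$-cycles holds modulo torsion issues), there is an exact sequence relating $H^3_{\nr}(k(X)/k,\Q_\ell/\Z_\ell(2))$ to the cokernel of
\[
CH^2(X)_{\Z_\ell}\to H^4(X,\Z_\ell(2)),
\]
namely the torsion of the cokernel, together with the finite-field Tate conjecture input for the non-torsion part. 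For $X=E^3$, the rational Tate conjecture for codimension-$2$ cycles is known: $E$ has CM, so $E^3$ is an abelian variety of CM type, and the Tate classes are products of divisor classes (Tate, plus the explicit Hodge/Tate ring for powers of a CM elliptic curve). Hence the cokernel is finite, and it vanishes if and only if $H^3_{\nr}(k(E^3)/k)=0$. It therefore suffices to prove the integral surjectivity.

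\textbf{Step 1: computing $H^4$.} I would compute $H^4(E^3_{\ov k},\Z_\ell(2))=\bigwedge^4 H^1(E^3)^{\oplus}$ explicitly as a Galois module, using the Künneth decomposition $H^1(E^3)=H^1(E)^{\oplus 3}$. The Galois invariants
\[
H^4(E^3,\Z_\ell(2))\twoheadrightarrow H^4(E^3_{\ov k},\Z_\ell(2))^{G}
\]
differ by $H^1(k,H^3(E^3_{\ov k},\Z_\ell(2)))$. This group is finite and is killed by the fact that Frobenius eigenvalues on $H^3(2)$ have weight $-1$, so it equals $H^3(E^3_{\ov k},\Z_\ell(2))_G$ torsion. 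We must show that these classes come from cycles as well, for instance via Abel–Jacobi and $1$-cycles $E\times\{pt\}$ with varying torsion points. For the invariant part, we use the ring structure: $H^4=\bigwedge^2 H^2$-type products. Since $E$ has CM by $\Z[\zeta_3]$, the relevant algebraic classes are generated integrally by products of divisor classes of graphs of endomorphisms $\zeta_3^i$ on $E\times E$ pulled back along projections, together with the classes $E\times pt\times pt$ and their permutations.

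\textbf{Step 2: integral generation.} This is the main obstacle. We need to show that the lattice of Tate classes in $H^4(E^3_{\ov k},\Z_\ell(2))^G$ is spanned $\ell$-integrally by intersections of divisors and curves. Over $\ov k$ one has $NS(E^2)=\Z^2\oplus \Z[\zeta_3]$, and products of divisors span a sublattice whose index can be computed by a discriminant calculation in $\bigwedge^4$ of a rank-$6$ $\Z[\zeta_3]\otimes\Z_\ell$-module. I expect this index to be a power of $3$, or more precisely divisible only by $2$ and $3$. When $p\equiv 1\pmod 3$, $E$ is ordinary, the Galois action is through $\Z[\zeta_3]$, and the extra $3$-primary defect forces the hypothesis $\ell>3$. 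When $p\equiv 2\pmod 3$, $E$ is supersingular and the Frobenius over $\F_{p^2}$ acts by scalars. In that case all of $H^4$ becomes Tate after extension, and one can use more endomorphisms, including the Frobenius graph. This should allow handling $\ell=2,3$ via curves in $E^3$ such as graphs of endomorphisms $E\to E^2$. I would first try to exhibit explicit $1$-cycles, namely images of $E$ under $x\mapsto(ax,bx,cx)$ with $a,b,c\in\operatorname{End}(E)$, and compute the determinant of their span.

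\textbf{Step 3: Galois descent and passage to $\ov k$.} Finally, we descend from $\ov k$ to $k$, with norms handled by a Hochschild–Serre argument, prime-to-$\ell$ degrees, or explicit $k$-rational cycles. The statement over $\ov k$ then follows because $H^3_{\nr}$ over $\ov k$ is the colimit over finite extensions $k'/k$, and the hypotheses are stable under finite extension of $k$.
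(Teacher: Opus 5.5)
\textbf{There is a genuine gap: the transcendental part of the Abel--Jacobi map.} Your reduction to integral surjectivity of $CH^2(E^3_k)_{\Z_\ell}\to H^4(E^3_k,\Z_\ell(2))$ is the same as the paper's. Your Step~2, however, only concerns the geometric part $H^4(E^3_{\ov k},\Z_\ell(2))^{G_k}$, which the paper simply quotes (\Cref{medium-tate}). The missing step is the one you dispose of in a single sentence, namely the surjectivity of
\[
\alpha^2_{E^3_k,\ell}\colon CH^2_{\mathrm{hom},\ell}(E^3_k)\longrightarrow H^1(k,H^3(E^3_{\ov k},\Z_\ell(2))).
\]
In the ordinary case $p\equiv1\pmod 3$, $H^3$ splits as $Q_*H^3\oplus Q'_*H^3$. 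Here $Q_*H^3\otimes\Z_\ell[\zeta_3]=\bigwedge^3V\oplus\bigwedge^3\overline V$ has rank $2$, and Frobenius acts on it (untwisted) by $\pi^3,\overline\pi^3$. By ordinarity these are not divisible by $p$, whereas every Frobenius eigenvalue on $N^1H^3$ is. Hence $N^1H^3=Q'_*H^3$ (\Cref{lem:transcendental-E3}).

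Every cycle you propose lands in the wrong summand:
\begin{itemize}
\item Differences of $E\times\{a\}\times\{b\}$, and anything algebraically trivial, have Abel--Jacobi classes in the image of $H^1(k,N^1H^3)=H^1(k,Q'_*H^3)$.
\item For a homologically trivial combination $z$ of elliptic curves $x\mapsto(ax,bx,cx)$, one has $[-1]^*z=z$ but $[-1]^*=-1$ on $H^3$. So $2\alpha(z)=0$, and $\alpha(z)=0$ for odd $\ell$.
\end{itemize}
Thus $H^1(k,Q_*H^3(E^3_{\ov k},\Z_\ell(2)))$ is never reached. Yet it is non-zero for suitable $k$: once Frobenius acts trivially on $Q_*H^3/\ell$, it surjects onto $(\Z/\ell)^2$. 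The theorem asserts surjectivity for every $k$, so this part must be hit.

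\textbf{What the paper does instead.} Hitting this part needs homologically but not algebraically trivial cycles, together with a proof that their $\ell$-adic Abel--Jacobi images span. This is the core of the paper:
\begin{itemize}
\item It uses CM cycles on ordinary fibres of Schoen's threefold $W$, the blown-up self-fibre product of the Hesse pencil.
\item Lang's Chebotarev estimate produces such fibres whose Frobenius in the cover $\breve C\to B$ lies in any prescribed conjugacy class.
\item The evaluation formula $\langle\delta_x(m),v\rangle_{\mathrm{par}}=\langle m,c_v(g_x)\rangle_M$ of \Cref{schoen-unified} then gives the spanning. Here $\mathcal K=0$ because cusp inertia is unipotent of index prime to $\ell$ in $\mathrm{PSL}_2(\F_\ell)$.
\item Nakayama passes from $\F_\ell$ to $\Z_\ell$, giving \Cref{w-aj-surjective}.
\item Finally, the dominant map $E^3\dashrightarrow W$ of $3$-power degree has a graph identifying $H^3(W_{\ov k},\Z_\ell(2))$ with $Q_*H^3(E^3_{\ov k},\Z_\ell(2))$.
\end{itemize}
Only the $Q'$-part yields to an argument like yours. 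There one multiplies divisor classes in $Q_*H^2(E^3_{\ov k},\Z_\ell(1))=H^2(E^3_{\ov k},\Z_\ell(1))^{G_k}$ by codimension-one Abel--Jacobi classes, using the cup-product surjection $H^1\otimes Q_*H^2\twoheadrightarrow Q'_*H^3$. This is also where $\ell>3$ comes from: the idempotents $Q/3$, the $3$-power degree, and Schoen's condition $\ell\nmid 6p$. It does not come from a discriminant of divisor products.

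\textbf{Two smaller points.}
\begin{itemize}
\item The implication ``cokernel zero implies $H^3_{\nr}=0$'' needs finiteness of $H^3_{\nr}$, here via $E^3\in B_{\mathrm{Tate}}$. Finiteness of the cokernel alone only gives divisibility of $H^3_{\nr}$.
\item Norm arguments as in your Step~3 cannot descend the $G_k$-invariant part through extensions of degree divisible by $\ell$. This matters in the supersingular case over small $k$. There the Abel--Jacobi part is easy, since all of $H^3$ has strong coniveau $1$, but the geometric part needs a genuinely integral argument.
\end{itemize}
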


Several remarks about \Cref{main-fermat-cubic-thm} are in order.

\begin{enumerate}
    \item \Cref{main-fermat-cubic-thm} is closely related to a question of Colliot-Th\'el\`ene and Kahn \cite[Question 5.4]{colliot2013cycles}: Does there exist a smooth projective threefold $X$ over a finite field $k$ such that
    \[H^3_{\mathrm{nr}}(k(X)/k,\mathbb Q_\ell/\mathbb Z_\ell(2))\neq0?\]
    The assumption $\on{dim}(X)=3$ is important: examples of smooth projective varieties $X/k$ of dimension $d$ such that $H^3_{\on{nr}}(k(X)/k,\Q_\ell/\Z_\ell(2))\neq 0$ were produced by Pirutka \cite{pirutka2011groupe} for $d=5$, and by the authors \cite{scavia2022cohomology} for $d=4$. This question was also posed by Colliot-Th\'el\`ene at the American Institute of Mathematics in San Jose in 2019; see \cite[Problem 1.14]{AimPLRationality}. The expectation is that the answer should be positive, and products of three elliptic curves have long been natural candidates. The triple product of the Fermat cubic is the most classical such candidate; see for example the work of Schoen \cite{schoen1995computation, schoen1999image} and Kahn \cite{kahn2012classes}. \Cref{main-fermat-cubic-thm} shows that this candidate behaves contrary to expectations.

    \item Earlier results of Colliot-Th\'el\`ene and the authors for products $C\times S$, where $C$ is a curve and $S$ is a surface, establish vanishing only under additional Galois-theoretic conditions, which are not preserved under further finite extension; see \cite{colliot2020conjecture,scavia2022autour,scavia2023coniveau}.

    \item \Cref{main-fermat-cubic-thm} gives, in particular, the first examples of non-geometrically uniruled threefolds over finite fields for which degree-$3$ unramified cohomology vanishes. Earlier vanishing results in dimension $3$ concern geometrically uniruled varieties: Parimala--Suresh proved the result for conic bundles over surfaces \cite{parimala2016degree}, while Tian proved it for threefolds admitting a fibration over a curve with smooth projective geometrically rational generic fiber \cite{tian2025local}; a key geometric input in the latter is the work of Koll\'ar--Tian \cite[Theorem~7]{kollar2025stable}.

    \item In view of the aforementioned results of Schoen and the first author, \Cref{main-fermat-cubic-thm} shows that a threefold whose degree-$3$ unramified cohomology has infinite corank in characteristic zero can have vanishing unramified cohomology after reduction to a finite field. 

    \item \Cref{main-fermat-cubic-thm} has an application to the local--global principle for zero-cycles over global function fields; see \S\ref{subsec:local-global} below.

    \item Theorem~\ref{main-fermat-cubic-thm} has remarkable consequences for codimension-$2$ cycles on $E^3$ over finite fields: we explore them in detail in \S\ref{sec:7}. We first determine in \Cref{lem:transcendental-E3} the coniveau filtration on $H^3(E^3,\mathbb Z_\ell)$ over $\overline{\F}_p$ and show in \Cref{c-sc-E3} that coniveau and strong coniveau agree in degree $3$. We then use the vanishing of degree-$3$ unramified cohomology to obtain Galois descent results for codimension-$2$ cycles: under the hypotheses of \Cref{main-fermat-cubic-thm}, $CH^2$ satisfies Galois descent after tensoring with $\mathbb Z_\ell$ (\Cref{cor:CTK}), while \Cref{cor:waj} gives a description of algebraically trivial codimension-$2$ cycles over finite extensions in terms of the algebraic Abel--Jacobi map. Finally, we compute in \Cref{griffiths} the $\ell$-primary Griffiths group over $\overline{\F}_p$, completing a calculation of Schoen \cite[\S14]{schoen1995computation}: in the ordinary case it is $(\mathbb Q_\ell/\mathbb Z_\ell)^2$, whereas in the supersingular case it vanishes.
\end{enumerate}

\subsection{Strong forms of the integral Tate conjecture}
Our methods also give a general result, conditional only on the Tate conjecture for divisors on surfaces. 

\begin{thm}\label{main-conditional-liftable}
Let $k_0$ be a finite field of characteristic $p>2$, and assume that the Tate conjecture for divisors holds for every smooth projective surface over every finite extension of $k_0$. Let $X/k_0$ be a smooth projective geometrically integral variety of dimension $d$ which admits a smooth projective lifting to the ring of Witt vectors $W(k_0)$. For all but finitely many primes $\ell\neq p$, for every finite extension $k/k_0$,
the cycle map
\[CH^{d-1}(X_k)_{\Z_\ell}\longrightarrow H^{2d-2}(X_k,\Z_\ell(d-1))\]
is surjective. Moreover, if $d=3$ the abelian group $H^3_{\nr}(k(X)/k,\Q_\ell/\Z_\ell(2))$ is divisible.
\end{thm}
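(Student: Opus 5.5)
We would reduce the surjectivity of the cycle map on $1$-cycles to a statement about the coniveau filtration, and then to surfaces, where the Tate conjecture for divisors gives integral surjectivity. The first step is to observe that, for all but finitely many $\ell$, the group $H^{2d-2}(X_{\ov k},\Z_\ell(d-1))$ is torsion-free. Indeed, a smooth projective lift $\mathcal X/W(k_0)$ gives, via smooth and proper base change, a comparison with the singular cohomology of a complex fiber, which has finitely generated integral cohomology. Hence only finitely many $\ell$ contribute torsion.

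For $\ell$ outside a finite exceptional set, we then aim to show that every class in $H^{2d-2}(X_{\ov k},\Z_\ell(d-1))$ is supported on a surface. The plan is to take a smooth complete intersection surface $S\subset X$ cut out by $d-2$ very ample hypersurface sections, defined over $k_0$ (after possibly enlarging $k_0$, but compatibly with descent; we may use Poonen-type Bertini theorems over finite fields). By the Lefschetz hyperplane theorem, which holds for $\ell$-adic cohomology over $\ov k$, the Gysin map
\[H^2(S_{\ov k},\Z_\ell(1))\longrightarrow H^{2d-2}(X_{\ov k},\Z_\ell(d-1))\]
is surjective, with torsion-free target for good $\ell$. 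Next we pass from geometric to arithmetic cohomology over $k$ using the Hochschild--Serre spectral sequence. An integral Tate class over $k$ lifts to a Galois-invariant class, and we must lift this to a Galois-invariant class on $S$. We expect this to be the main obstacle, since Galois invariants of a surjection need not surject. We would handle it by semisimplicity: for all but finitely many $\ell$, Frobenius acts semisimply on the relevant $\Z_\ell$-lattices, and the eigenvalue-$1$ parts split off integrally. This splitting should follow from a uniform-in-$\ell$ argument of Deligne/Gabber type on weights and the characteristic polynomial, since $1$ fails to be a root modulo $\ell$ of the nonunit part for large $\ell$. Granting this, the Tate conjecture for divisors on $S_k$, together with the integrality of the Tate conjecture for divisors (via the Kummer sequence, $\on{Br}$ finiteness, and the fact that $\Pic\otimes\Z_\ell\to H^2(S_k,\Z_\ell(1))$ is surjective when the Tate conjecture holds), produces divisors on $S$ whose pushforwards hit the class. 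The $H^1(k,H^{2d-3})$ term in Hochschild--Serre should be handled by the Abel--Jacobi/Albanese-type argument: it is the image of algebraically trivial cycles. We would use the Lefschetz surjection in degree $2d-3$ from a curve, or from $S$, and Lang's theorem for the Jacobian/Picard variety of $S$.

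For the final assertion with $d=3$, we would invoke the Colliot-Thélène--Kahn exact sequence
\[0\to H^3_{\nr}(k(X)/k,\Q_\ell/\Z_\ell(2))_{\mathrm{div}}\to H^3_{\nr}\to \on{Coker}(CH^2(X)_{\Z_\ell}\to H^4(X,\Z_\ell(2)))_{\tors}\to 0.\]
More precisely, the quotient of $H^3_{\nr}$ by its maximal divisible subgroup is identified with the torsion of the cokernel of the integral cycle map. Surjectivity therefore gives that $H^3_{\nr}$ is divisible.
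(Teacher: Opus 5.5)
\textbf{There is a genuine gap: the Abel--Jacobi term.} Your Hochschild--Serre split into the $G_k$-invariant part and $H^1(k,H^{2d-3}(X_{\ov k},\Z_\ell(d-1)))$ matches the paper, and so does your final step via Colliot-Th\'el\`ene--Kahn. The gap is the step you dispose of in one sentence: surjectivity of the $\ell$-adic Abel--Jacobi map onto $H^1(k,H^{2d-3}(X_{\ov k},\Z_\ell(d-1)))$. This is the heart of the theorem; the obstacle you flag as the main one is comparatively minor.

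\emph{Why your argument fails.} There is no Lefschetz surjection $H^1(S_{\ov k},\Z_\ell(1))\to H^{2d-3}(X_{\ov k},\Z_\ell(d-1))$. Dually it would need $H^3(X_{\ov k})\to H^3(S_{\ov k})$ to be injective, and weak Lefschetz only gives injectivity in degrees $\le 2=\dim S$. More fundamentally, anything built from curves, Jacobians or $\on{Pic}^0$ of surfaces, via correspondences and Lang's theorem, lands in the image of $H^1(k,N^{d-2}H^{2d-3}(X_{\ov k},\Z_\ell(d-1)))$. Over $\ov{\F}_p$ this coniveau piece is in general a proper summand. For $d=3$: geometric Frobenius eigenvalues on $N^1H^3(X_{\ov k},\Q_\ell)$ are divisible by $q$, so unit-root eigenvalues on $H^3$ lie outside it.

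The paper's own example shows this concretely. For the liftable threefold $E^3$ with $p\equiv1\pmod 3$, $N^1H^3=Q'_*H^3$ has corank $2$ (\Cref{lem:transcendental-E3}). Algebraically trivial cycles reach exactly $H^1(k,N^1H^3)$ (\Cref{cor:waj}), which misses $H^1(k,Q_*H^3)$, and the latter is non-zero for suitable $k$. So your claim that this term ``is the image of algebraically trivial cycles'' is false. You need cycles that are homologically but not algebraically trivial.

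\emph{The missing idea} is Schoen's Lefschetz-pencil method (\Cref{tate-aj-liftable-threefold}):
\begin{enumerate}
\item Reduce to a liftable threefold $Y\subset X$; the Gysin map $H^3(Y_{\ov k},\Z_\ell(2))\to H^{2d-3}(X_{\ov k},\Z_\ell(d-1))$ is surjective.
\item Use the lift to choose a Lefschetz pencil $f\colon W\to B\simeq\P^1$ with big monodromy, and work with the Leray filtration on $H^3(W_{\ov k})$ (\Cref{conditions}).
\item The outer graded pieces come from divisors on a single smooth fiber.
\item The middle piece $H^1(k,H^1(B_{\ov k},\mathcal E(1)))$ requires divisors on many different fibers $f^{-1}(x)$, $x\in B(k)$. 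Their relative Abel--Jacobi images are the localization coboundaries $\delta_x$. Chebotarev-type point counts together with the monodromy criterion \Cref{schoen-unified} show these span modulo $\ell$. The Tate conjecture for divisors on the fiber surfaces realizes the required local classes by actual cycles, and Nakayama (\Cref{schoen-2.4.3-finer}) lifts the result to $\Z_\ell$.
\end{enumerate}
This is where the Tate hypothesis and the exclusion of finitely many $\ell$ genuinely enter. \Cref{lem:Abel--Jacobi-corestriction} then handles all finite $k$ simultaneously.

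\textbf{Secondary issue: the invariant part is not uniform in $k$.} For fixed $k$ with $n=[k:k_0]$, splitting off the eigenvalue-$1$ part integrally requires $\ell$ not to divide $\prod(1-\alpha^{n})$. Here $\alpha$ runs over the eigenvalues of $\on{Frob}_{k_0}$ on $H^2(S_{\ov k},\Q_\ell(1))$ with $\alpha^n\neq1$. Suppose some $\alpha$ is not a root of unity. Then it is a unit at every prime $\lambda\mid\ell$ of $\Q(\alpha)$, so $\alpha^n\equiv 1\pmod\lambda$ whenever $n$ is divisible by its order in the residue field. Thus every $\ell\neq p$ becomes bad for some $k$, and no finite exceptional set works for all $k$.

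The fix, which is what \Cref{lem:geometric-cycle-map-large-ell} does, is integral hard Lefschetz from the lift. For almost all $\ell$, cup product with $c_1(L_0)^{d-2}$ is a $G_{k_0}$-equivariant isomorphism $H^2(X_{\ov k},\Z_\ell(1))\xrightarrow{\sim}H^{2d-2}(X_{\ov k},\Z_\ell(d-1))$. The integral Tate conjecture for divisors on $X$ itself, which follows from the surface case, then gives the invariant part for every $k$ at once. This needs no auxiliary surface and no semisimplicity argument.
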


We make some remarks about \Cref{main-conditional-liftable}.

\begin{enumerate}
    \item Unlike \Cref{main-fermat-cubic-thm}, here we do not prove the vanishing of the group $H^3_{\nr}(k(X)/k,\Q_\ell/\Z_\ell(2))$ itself, but only the vanishing of its quotient by its maximal divisible subgroup.
    \item \Cref{main-conditional-liftable} is an arithmetic strengthening of a celebrated theorem of Schoen \cite[Theorem~0.5]{schoen1998integral} in the case of liftable varieties. Under the same assumption on the Tate conjecture for divisors on surfaces, Schoen proved the integral Tate conjecture for one-cycles over the algebraic closure of $k_0$: for all primes $\ell\neq p$, the cycle map
    \[CH^{d-1}(X_{\overline{k}_0})_{\Z_\ell}\longrightarrow H^{2d-2}(X_{\overline{k}_0},\Z_\ell(d-1))^{(1)}\]
    is surjective, where the superscript $(1)$ denotes the subgroup of classes fixed by an open subgroup of $G_{k_0}$. For all but finitely many primes $\ell\neq p$, this is implied by \Cref{main-conditional-liftable}. Conversely, Schoen's theorem together with the surjectivity of the $\ell$-adic Abel--Jacobi map over all finite extensions $k/k_0$ implies \Cref{main-conditional-liftable}.
    \item \Cref{main-conditional-theorem} is optimal for $d\geq 4$, in view of the fourfold examples $X/k_0$ in any odd characteristic constructed by the authors \cite[Theorem 1.5]{scavia2022cohomology}, where the group $H^3_{\nr}(k(X)/k,\Q_2/\Z_2(2))$ is not divisible for all finite extensions $k/k_0$.
    Taking the product of $X$ with a suitable projective space yields examples of arbitrary dimension $\geq 4$ with the same property.
    \item Under a stronger form of the Tate conjecture (\Cref{tate-conjecture}), we obtain corresponding results without the liftability assumption, for $1$-cycles as well as codimension-$2$ cycles; see \Cref{main-conditional-theorem}. 
    \item Let $\widetilde{X}$ be a smooth projective lift of $X$ over the fraction field $K$ of $W(k_0)$, and fix an embedding $K\subset \mathbb{C}$. It is possible that the integral Hodge conjecture for $1$-cycles fails for $\widetilde{X}$. For example, $X$ could be the mod $p$ reduction of the product of an Enriques surface with a very general elliptic curve; see \cite{benoist2020failure}.
    \item We also apply these results to the local--global principle for zero-cycles over global function fields; see \S\ref{subsec:local-global} below.
\end{enumerate}

\subsection{Applications to the local--global principle over function fields}
\label{subsec:local-global}

One motivation for strong forms of the integral Tate conjecture over finite fields comes from the local--global principle for zero-cycles over global function fields. We recall the $\ell$-primary form of a conjecture of Colliot-Th\'el\`ene; see \cite[Conjecture~2.2]{colliot1999conjectures} and \cite[Conjecture~7.1]{colliot2013cycles}.

Let $k$ be a finite field, let $C/k$ be a smooth projective geometrically integral curve, and set $K\coloneqq k(C)$. Let $X/k$ be a smooth, projective, geometrically integral variety of dimension $d=d'+1$, equipped with a dominant morphism $f\colon X\to C$ whose generic fiber $V/K$ is smooth and geometrically integral of dimension $d'$. For every place $v$ of $C$, let $K_v$ be the completion of $K$ at $v$ and set $V_v\coloneqq V\times_KK_v$.

\begin{conj}[Colliot-Th\'el\`ene]\label{conj-bmo}
Let $\ell$ be a prime invertible in $k$, and for every closed point $v$ of
$C$ let $z_v\in CH_0(V_v)$. Suppose that
\[
\sum_v\operatorname{inv}_v(A|_{z_v})=0\quad \text{in}\quad \Q_\ell/\Z_\ell
\]
for every $A\in\operatorname{Br}(V)\{\ell\}$. Then, for every $n>0$, there exists $z_n\in CH_0(V)$ such that, for every closed point $v$ of $C$,
\[
\operatorname{cl}(z_n)=\operatorname{cl}(z_v)
\quad\text{in}\quad
H^{2d'}(V_v,\mu_{\ell^n}^{\otimes d'}).
\]
\end{conj}

Colliot-Th\'el\`ene proved that \Cref{conj-bmo} follows from the
surjectivity of the integral cycle map
\[
CH^{d-1}(X)_{\Z_\ell}
\longrightarrow
H^{2d-2}(X,\Z_\ell(d-1));
\]
see \cite[Proposition~3.2]{colliot1999conjectures}. Thus
\Cref{main-fermat-cubic-thm} immediately gives an unconditional example
where the generic fiber is an abelian surface.

\begin{cor}\label{cor:local-global-fermat}
Let $p\neq3$ be a prime, let $E/\F_p$ be the Fermat cubic, and let $\ell\neq p$ be a prime.
If $p\equiv 1\pmod 3$, assume moreover that $\ell>3$. Then, for every finite extension $k/\F_p$, \Cref{conj-bmo} holds for $(E\times_kE)_{k(E)}/k(E)$ and the prime $\ell$.
\end{cor}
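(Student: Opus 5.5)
The corollary follows by combining \Cref{main-fermat-cubic-thm} with Colliot-Th\'el\`ene's reduction \cite[Proposition~3.2]{colliot1999conjectures}. First we set up the fibration. Fix a finite extension $k/\F_p$ and put $C\coloneqq E_k$, $K\coloneqq k(C)$, and $X\coloneqq E_k^3$. Let $f\colon X\to C$ be the projection onto the third factor. Since $E$ is a smooth projective geometrically integral curve (because $p\neq 3$), $X$ is smooth, projective and geometrically integral of dimension $d=3$. The morphism $f$ is smooth and dominant, and its generic fiber is $V=(E\times_k E)_{K}$, which is a smooth geometrically integral abelian surface over $K$. So we are in the setting of \Cref{conj-bmo} with $d'=2$.

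Next we supply the cycle-theoretic input. By \cite[Proposition~3.2]{colliot1999conjectures}, \Cref{conj-bmo} for $V/K$ and the prime $\ell$ holds as soon as the integral cycle map
\[
CH^{d-1}(X)_{\Z_\ell}=CH^2(E_k^3)_{\Z_\ell}\longrightarrow H^4(E_k^3,\Z_\ell(2))
\]
is surjective. Now $E_k$ is the Fermat cubic over the finite field $k$ of characteristic $p\neq3$, and $\ell\neq p$, with $\ell>3$ when $p\equiv1\pmod 3$. These are exactly the hypotheses of \Cref{main-fermat-cubic-thm} applied over $k$, so that theorem gives the required surjectivity. Note that \Cref{main-fermat-cubic-thm} is stated for an arbitrary finite field of the given characteristic, so it applies to every finite extension $k/\F_p$ and not only to $\F_p$.

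The argument is a direct concatenation, so I expect no serious obstacle. The one point to check is that the hypotheses of \cite[Proposition~3.2]{colliot1999conjectures} match our situation exactly. That proposition should require only that $X$ be smooth projective over a finite field and fibred over a curve with smooth geometrically integral generic fiber, plus surjectivity of the cycle map on $1$-cycles of $X$. For $d=3$, $1$-cycles are the same as codimension-$2$ cycles, which is precisely what \Cref{main-fermat-cubic-thm} controls. If the reference is stated with $\Z_\ell$-coefficients on the nose, or with finite coefficients $\mu_{\ell^n}^{\otimes 2}$, a short limit argument passes between the two: surjectivity onto $H^4(X,\Z_\ell(2))$ gives surjectivity onto the image of $H^4(X,\Z_\ell(2))$ in $H^4(X,\mu_{\ell^n}^{\otimes2})$ for every $n$. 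That image is the part used in Colliot-Th\'el\`ene's argument.
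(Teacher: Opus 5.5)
Your proposal is correct and follows essentially the same route as the paper: you realize $(E\times_k E)_{k(E)}$ as the generic fiber of a projection $E_k^3\to E_k$, and you combine the surjectivity of $CH^2(E_k^3)_{\Z_\ell}\to H^4(E_k^3,\Z_\ell(2))$ from \Cref{main-fermat-cubic-thm}, applied over $k$, with \cite[Proposition~3.2]{colliot1999conjectures}. The paper projects onto the first factor instead of the third, which makes no difference, and your remark on passing to finite coefficients is harmless but unnecessary.
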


Indeed, $(E\times_kE)_{k(E)}$ is the generic fiber of the first projection $E_k^3\to E_k$ and the required cycle map is surjective by \Cref{main-fermat-cubic-thm}. 

No result of this sort is known over number fields; see for example the introduction of \cite{wills2025local}. There is some evidence for products of elliptic curves; see the work of Gazaki--Koutsianas \cite{gazaki2024weak} and, for CM self-products, Wills \cite{wills2025local}. 

Our conditional results give the same conclusion in greater generality.

\begin{cor}\label{cor:local-global-conditional}
Let $k_0$ be a finite field of characteristic $p>2$, let $C/k_0$ be a smooth projective geometrically connected curve, and let $f\colon X\to C$ be a dominant morphism whose generic fiber $V/k_0(C)$ is smooth and geometrically integral. Assume that the Tate conjecture for divisors holds for every smooth projective surface over every finite extension of $k_0$, and that $X$ admits a smooth projective lifting to $W(k_0)$.

For all but finitely many primes $\ell\neq p$, for every finite extension $k/k_0$,
\Cref{conj-bmo} holds for $V_{k(C)}/k(C)$.
\end{cor}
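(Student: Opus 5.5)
This corollary should follow by the same mechanism as \Cref{cor:local-global-fermat}: combine \Cref{main-conditional-liftable} with Colliot-Th\'el\`ene's reduction \cite[Proposition~3.2]{colliot1999conjectures}.

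First we put $X$ in the form required by the setting of \Cref{conj-bmo}. There $X/k$ must be smooth, projective and geometrically integral of dimension $d=d'+1$, equipped with a dominant morphism $f\colon X\to C$ whose generic fiber $V$ is smooth and geometrically integral of dimension $d'$. Since $V$ is geometrically integral and dense in $X$, the variety $X$ is geometrically integral. By hypothesis $X$ is smooth projective and admits a smooth projective lifting to $W(k_0)$, so $X/k_0$ satisfies all hypotheses of \Cref{main-conditional-liftable}, with $d=\dim X$.

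Next we apply \Cref{main-conditional-liftable}. It gives a finite set $S$ of primes such that, for every prime $\ell\notin S\cup\{p\}$ and every finite extension $k/k_0$, the cycle map
\[
CH^{d-1}(X_k)_{\Z_\ell}\longrightarrow H^{2d-2}(X_k,\Z_\ell(d-1))
\]
is surjective. The set $S$ is independent of $k$, which is exactly the uniformity the corollary asserts.

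Then we base change the fibration. For a finite extension $k/k_0$, the map $f_k\colon X_k\to C_k$ is a dominant morphism from a smooth projective geometrically integral variety to a smooth projective geometrically connected curve; $C_k$ is geometrically integral because $C$ is smooth and geometrically connected. Its generic fiber is $V_{k(C)}$, which is smooth and geometrically integral. Applying \cite[Proposition~3.2]{colliot1999conjectures} to $f_k$, surjectivity of the integral cycle map on $X_k$ implies \Cref{conj-bmo} for $V_{k(C)}/k(C)$ and the prime $\ell$.

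The only point requiring care is the bookkeeping: Colliot-Th\'el\`ene's proposition is applied over the base field $k$ rather than $k_0$, and the finiteness of the exceptional set of primes must be uniform in $k$. Both are supplied directly by the statement of \Cref{main-conditional-liftable}, so no real obstacle is expected.
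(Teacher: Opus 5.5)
Your proposal is correct and is the paper's argument: apply \Cref{main-conditional-liftable} to get surjectivity of $CH^{d-1}(X_k)_{\Z_\ell}\to H^{2d-2}(X_k,\Z_\ell(d-1))$ for all $\ell$ outside a finite set independent of $k$, then invoke \cite[Proposition~3.2]{colliot1999conjectures} for $f_k\colon X_k\to C_k$. One small point: your density argument for geometric integrality of $X$ assumes every component of $X$ dominates $C$, which the hypotheses do not give; this is moot, since $X$ is geometrically integral by the paper's standing convention that varieties are geometrically integral.
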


This follows immediately from \Cref{main-conditional-liftable} and \cite[Proposition~3.2]{colliot1999conjectures}. As we mentioned above, under the stronger form of the Tate conjecture considered in \Cref{tate-conjecture}, the liftability assumption can be removed; see \Cref{main-conditional-theorem}.

\subsection{A strengthening of Schoen's connecting-map criteria}
\label{subsec:schoen-criterion}

A key technical input in this paper which is of independent interest is a strengthening of two criteria of Schoen for the non-vanishing and surjectivity of connecting maps arising from the localization sequence on a curve. The first version appears in \cite[\S\S6--9]{schoen1995computation}, in the course of Schoen's study of complex multiplication cycles on self-fiber-products of elliptic surfaces. A more general version was developed in \cite[\S\S2--3]{schoen1999image} in order to study the image of the $\ell$-adic Abel--Jacobi map over the algebraic closure of a finite field. Our \Cref{schoen-unified} gives a strengthening and a shorter proof of both versions at the same time.

Let $M$ be a finite-dimensional $\F_\ell$-representation of the Galois group of the function field of a smooth projective curve $B$ over a finite field $k$, let $g\colon \eta\to  B$ be the inclusion of the generic point, let $\mathcal M=g_*M$, and let $\Gamma$ be the geometric monodromy group. For $x$ in a suitable open subset $U\subset B$, localization gives a connecting map
\[\delta_x\colon H^2_{\bar x}(B_{\bar k},\mathcal M)_0^{G_k} \longrightarrow H^1(k,H^1(B_{\bar k},\mathcal M)).\]
The goal is to understand whether the target is spanned by the images of these maps as $x$ varies. This is the basic mechanism used in both
\cite{schoen1995computation} and \cite{schoen1999image}.

Associated with the arithmetic monodromy cover of $M$ is an extension
\[
1\longrightarrow A\longrightarrow G\longrightarrow\Gamma\longrightarrow1.
\]
Let
\[
\mathcal K\coloneqq \Ker [
H^1(\Gamma,M^\vee(1))
\longrightarrow
\prod_{b\in B_{\bar k}\setminus U_{\bar k}}
H^1(I_b,M^\vee(1)) ].
\]
Suppose that $M$ is irreducible as an $\F_\ell[\Gamma]$-module and that
there exists $\xi\in\Gamma$ of order prime to $\ell$ such that $\dim_{\F_\ell}M^{\langle\xi\rangle}=1$. Then, under geometric assumptions which can be arranged after replacing $k$ by a finite extension (see \Cref{schoen-situation-projective}), we prove in \Cref{schoen-unified} the following formula:
\[
\bigcup_{\substack{x\in U(k)\\
\kappa_k(\operatorname{Frob}_x)\sim\xi}}
\operatorname{Im}(\delta_x)
=
(\operatorname{Inf}(\mathcal K))^\perp
\subset
H^1(k,H^1(B_{\bar k},\mathcal M)).
\]
In particular, if $\mathcal K=0$, then for every $\epsilon\in H^1(k,H^1(B_{\bar k},\mathcal M))$ there exists $x\in U(k)$ such that $\epsilon\in\operatorname{Im}(\delta_x)$; see \Cref{schoen-unified-cor}.

This should be compared first with \cite[Theorems~6.7 and~6.8]{schoen1995computation}. There Schoen gives a criterion for a certain coboundary map to be non-zero in terms of an indecomposable projective $\F_\ell[\Gamma]$-module and its radical, and then obtains an injectivity statement in the particular case when the monodromy group $\Gamma$  is $\operatorname{PSL}_2(\F_\ell)$ and $M=\operatorname{Sym}^2(\F_\ell^2)$, under an additional condition on the ramification of the monodromy cover. The proof occupies \cite[\S\S7--9]{schoen1995computation} and uses the structure of projective
$\F_\ell[\Gamma]$-modules in an essential way. Our \Cref{schoen-unified-cor} is not tied to the special form of the monodromy group and the monodromy representation: we only use the irreducibility of $M$ and the existence of a prime-to-$\ell$ element
$\xi\in \Gamma$ such that $M^{\langle \xi\rangle}$ is one-dimensional.

The comparison with \cite{schoen1999image} is even more direct. Our
Assumptions~\ref{schoen-situation-projective}(1)--
\ref{schoen-situation-projective}(8) coincide with Schoen's assumptions
\cite[(3.2.1)--(3.2.6), (3.3.1)--(3.3.2)]{schoen1999image}.
Schoen further assumes that $M$ is absolutely irreducible, whereas we
only assume that it is irreducible. Moreover, in place of
\cite[(3.3.4)--(3.3.5)]{schoen1999image},
\[
H^1(\Gamma,M)=0,
\qquad
H^1(\Gamma,M^\vee)=0,
\]
we only require the existence of an element $\xi\in \Gamma$ such that $M^{\langle \xi\rangle}$ is one-dimensional. Finally,
Schoen's injectivity assumption
\[
H^1(\Gamma,M^\vee(1))
\longrightarrow
\prod_{b\in B_{\bar k}\setminus U_{\bar k}}
H^1(I_b,M^\vee(1))
\]
amounts in our notation to the vanishing of $\mathcal K$. We do not need
to assume this: \Cref{schoen-unified} describes exactly what is obtained
when $\mathcal K$ is non-zero.

Besides \Cref{schoen-unified} being more general, its proof is also considerably shorter and in our view quite transparent. The main point is the formula
\[
\langle\delta_x(m),v\rangle_{\mathrm{par}}
=
\langle m,c_v(g_x)\rangle_M,
\]
where $g_x\in G$ is a Frobenius element and $c_v\colon G\to M^\vee$ is a cocycle representing $v$. We obtain this identity from the compatibility between localization and the Hochschild--Serre edge maps established in \Cref{lem:localization-hochschild-serre}. Once the formula is known, the rest of the proof is a finite-group theory argument which avoids the projective-module calculations of \cite[\S\S8--9]{schoen1995computation}.

\subsection{Absolutely but not potentially vanishing  \texorpdfstring{$H^3_{\nr}$}{H3nr}}

In Theorems~\ref{main-fermat-cubic-thm} and \ref{main-conditional-liftable}, the subtle point is to establish the statements over {\it every} finite extension $k/k_0$.
Namely, the vanishing of $H^3_{\nr}$ over $k$ does not, in general, follow from its vanishing after passing to a finite extension; see \cite{pirutka2011groupe}.
We point out another such phenomenon by showing that the vanishing of $H^3_{\nr}$ over sufficiently large finite extensions $k/k_0$ cannot, in general, be deduced from its vanishing over $\ov{k}_0$.

\begin{thm}\label{thm:potential-vs-absolute}
For every $d\geq 6$, there exist a finite field $k_0$ of odd characteristic and a smooth projective geometrically integral $k_0$-variety $X$ of dimension $d$ such that we have
\[H^3_{\nr}(\ov{k}_0(X)/\ov{k}_0,\Q_2/\Z_2(2))=0,\]
and yet for every finite extension $k/k_0$, we have
\[H^3_{\nr}(k(X)/k,\Q_2/\Z_2(2))\simeq\Z/2.\]
\end{thm}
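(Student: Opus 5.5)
The plan is to build $X$ from the authors' earlier fourfold examples \cite[Theorem 1.5]{scavia2022cohomology}, or from Pirutka's examples \cite{pirutka2011groupe}, so that the nonzero class in $H^3_{\nr}(k(X)/k,\Q_2/\Z_2(2))$ is \emph{arithmetic}: it should come from Galois cohomology of $k$ cupped with a geometric class of lower degree, and so die over $\ov{k}_0$. Concretely, I would look for a variety $Y$ over $k_0$ and a class of the form $\alpha=\theta\cup\beta$. Here $\theta\in H^1(k_0,\Q_2/\Z_2)$ is a character of $G_{k_0}$ of order $2$, or more generally a class in the image of $H^1(k,\Q_2/\Z_2)$, and $\beta$ is an unramified class in $H^2_{\nr}(\ov{k}_0(Y),\Z_2(1))$ or $H^2_{\nr}(k(Y),\Q_2/\Z_2(1))$, i.e.\ a Brauer-type class. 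The Hochschild--Serre spectral sequence for $\ov{k}_0/k$, with $\on{cd}(k)=1$, gives an exact sequence
\[0\to H^1(k,H^2_{\nr}(\ov{k}(Y),\Q_2/\Z_2(2)))\to H^3_{\nr}(k(Y)/k,\Q_2/\Z_2(2))\to H^3_{\nr}(\ov{k}(Y)/\ov{k},\Q_2/\Z_2(2))^{G_k}\]
(up to the usual care with unramified versus ordinary cohomology). The strategy is then to arrange three things: the geometric term $H^3_{\nr}$ over $\ov{k}_0$ vanishes; the group $H^2_{\nr}(\ov{k}_0(Y),\Q_2/\Z_2(2))=\on{Br}(Y_{\ov{k}_0})\{2\}(1)$ is $\Z/2$ with trivial Galois action; and the coinvariants $H^1(k,\Z/2)=\Z/2$ stay nonzero for every $k$. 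That would give exactly $\Z/2$ for all finite $k/k_0$.

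For a model, I would take $Y=S\times Z$ with $S$ an Enriques surface over $k_0$, so that $\on{Br}(S_{\ov{k}_0})=\Z/2$. The factor $Z$ is then chosen so that the class $\theta\cup\beta$ survives and is unramified while the geometric $H^3_{\nr}$ vanishes. Dimension $d\ge 6$ suggests $Z$ of dimension $\ge 4$, for instance a product of $S$-related pieces, or a quadric bundle killing extra geometric classes, then multiplied by projective spaces to reach every $d\ge6$. This is harmless: $H^3_{\nr}$ is a stable birational invariant, so $X\times\P^m$ works.

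The key steps, in order, are as follows.

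\textbf{Step 1.} Compute $H^3_{\nr}(\ov{k}_0(X)/\ov{k}_0,\Q_2/\Z_2(2))=0$. I would use the identification with the cokernel of the integral cycle map on $H^4$ modulo torsion issues, or with the failure of integral Tate for codimension-$2$ cycles over $\ov{k}_0$. A K\"unneth computation then shows that all of $H^4(X_{\ov{k}_0},\Z_2(2))^{(1)}$ is algebraic; torsion products such as $\on{Br}\otimes\on{Pic}$ need to be checked to be algebraic, which is where the choice of $Z$ matters.

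\textbf{Step 2.} Show that $\theta\cup\beta$ is nonzero and unramified over every finite $k$. Injectivity of $H^1(k,\cdot)$ into $H^3_{\nr}$ comes from the spectral sequence above. Nonvanishing amounts to the fact that $\beta$ generates a $\Z/2$ with trivial Galois action, so its $G_k$-coinvariants are $\Z/2$ for every $k$.

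\textbf{Step 3.} Show that nothing else contributes. This means computing $H^1(k,H^2_{\nr}(\ov{k}(X)))$ exactly, so that the total is $\Z/2$ and not larger.

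The main obstacle I expect is Step 1 combined with the requirement that $\theta\cup\beta$ remain nonzero. Over $\ov{k}_0$, products of Enriques surfaces typically carry nonalgebraic torsion classes in $H^4$, coming from $\on{Br}\otimes\on{Br}$ or $\on{Tor}(H^2,H^3)$ terms, and these produce nonzero geometric $H^3_{\nr}$. The second factor must therefore be chosen so that these are killed, for example by making its Brauer group and its odd cohomology torsion vanish, while the arithmetic part is retained. My first attempt would be to adapt the construction of \cite[Theorem 1.5]{scavia2022cohomology} and replace the arithmetic input there by the unramified character $\theta$. I would then verify geometric vanishing via Schoen-type integral Tate results for the relevant factors, which is possible over $\ov{k}_0$ because the Tate conjecture is known for the surfaces involved.
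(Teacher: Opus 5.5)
\textbf{There is a genuine gap in Steps 2 and 3.} They rest on the claim that
$0\to H^1(k,H^2_{\nr}(\ov{k}(Y),\Q_2/\Z_2(2)))\to H^3_{\nr}(k(Y)/k,\Q_2/\Z_2(2))$ is exact, and this left exactness is false.

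Since $H^2(k,-)=0$ on torsion modules, $H^1(k,-)$ is only right exact. So the map $H^1(k,H^2_{\nr}(\ov{k}(Y)))\to H^1(k,H^2(\ov{k}(Y)))$ has a kernel, namely the image of $(H^2(\ov{k}(Y))/H^2_{\nr}(\ov{k}(Y)))^{G_k}$ under the connecting map. In the correct version of this picture, the Colliot-Th\'el\`ene--Kahn sequence \cite[Th\'eor\`eme~6.8]{colliot2013cycles}, the group $H^1(k,H^3(Y_{\ov{k}},\Z_2(2))_{\tors})$ can be entirely absorbed by $\Ker(CH^2(Y)\to CH^2(Y_{\ov{k}})^{G_k})$.

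Your own model shows that this absorption actually happens. Take an Enriques surface $S$ over a finite field of odd characteristic. Then $H^1(k,\operatorname{Br}(S_{\ov{k}})\{2\}(1))=\Z/2$ for every $k$. Yet $H^3_{\nr}(k(S)/k,\Q_2/\Z_2(2))=0$, by the theorem of Colliot-Th\'el\`ene--Sansuc--Soul\'e and Kato for surfaces over finite fields. Moreover, $\theta$ and $\beta$ both come from $S$, so $\theta\cup\beta$ on $S\times Z$ is the pullback of a class in $H^3_{\nr}(k(S)/k,\Q_2/\Z_2(2))=0$. It therefore vanishes whatever $Z$ you choose. The Enriques model is not merely incomplete: it cannot produce the required class.

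\textbf{The missing idea} is a criterion that forces the arithmetic class to survive over every finite $k$. Concretely, you need the geometrically trivial class in $H^4(X_k,\Z_2(2))$ coming from $H^1(k,H^3(X_{\ov{k}},\Z_2(2))_{\tors})$ to be non-algebraic. The paper argues as follows.
\begin{enumerate}
\item It takes the Ottem--Rennemo Fano sixfold $X_{6,11}$ \cite{ottem2024fano}. Its $H^3$ is $\Z/2$, which is morally your $\beta$, since for this Fano the torsion in $H^3$ is the geometric Brauer group.
\item The generator $u$ satisfies $\operatorname{Sq}^3(\bar u)=\bar u^{2}\neq 0$. By contrast, a class pulled back from a surface has $\bar u^2=0$, which is another way to see why the Enriques model fails.
\item Then \cite[Proposition~3.8]{scavia2022cohomology} gives non-algebraicity over every finite $k$. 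The result \cite[Th\'eor\`eme~2.2]{colliot2013cycles} turns this into $\Z/2$ modulo the divisible part.
\end{enumerate}

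Your sketch also has no control of divisible parts. You need this both for the exact answer $\Z/2$ and for the geometric vanishing. The paper gets it in two steps:
\begin{enumerate}
\item Rational connectedness and a decomposition of the diagonal \cite{bloch1983remarks} show that $H^3_{\nr}$ is killed by a fixed integer $N$, over every $k$ and over $\ov{k}_0$.
\item Geometric divisibility comes from the integral Hodge conjecture for codimension-$2$ cycles on $X_{6,11}$, proved via $BGO(4)^\circ$. This is specialized to $\ov{k}_0$ and fed into \cite[Theorem~1.1]{kahn2012classes}.
\end{enumerate}
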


\Cref{thm:potential-vs-absolute} is based on a construction of Ottem--Rennemo \cite{ottem2024fano} and a non-algebraicity criterion for cohomology classes over finite fields due to the authors \cite{scavia2022cohomology}. In particular, the varieties used to prove \Cref{thm:potential-vs-absolute} are Fano. To establish the vanishing over $\ov{k}_0$, we prove the integral Hodge conjecture for codimension-$2$ cycles on the classifying space $BGO(4)^\circ$.

\subsection{Organization of the paper}\label{subsec:organization} We now summarize the content of each section. \Cref{sec:2} develops the absolute and relative $\ell$-adic Abel--Jacobi maps used throughout. \Cref{sec:connecting-map} refines Schoen's connecting-map criterion and keeps track of the degree of the required constant-field extension. \Cref{sec:4} proves Theorems~\ref{main-conditional-liftable} and \ref{main-conditional-theorem}. \Cref{sec:5} recalls the auxiliary Schoen threefold and proves the required Abel--Jacobi surjectivity, including the case $\ell=5$. \Cref{sec:6} transfers the result to $E^3$ and proves \Cref{main-fermat-cubic-thm}. In \Cref{sec:7} we record consequences of \Cref{main-fermat-cubic-thm} for codimension-$2$ cycles on $E^3$, and in \Cref{sec:8} we construct examples separating potential from absolute vanishing, with an obstruction that remains nondivisible after every finite extension.

\subsection*{Notation}

For a field $k$, we let $\ov{k}$ be a separable closure of $k$, and we let $G_k\coloneqq \on{Gal}(\ov{k}/k)$ be the absolute Galois group of $k$. If $K/k$ is a field extension and $X$ is a $k$-scheme, we write $X_K\coloneqq X\times_k K$.

A variety over $k$ is a geometrically integral separated $k$-scheme of finite type; this agrees with the conventions made near the definition of the Abel--Jacobi map in \cite[\S 1, p.4]{schoen1995computation}, and the global conventions in \cite[p. 798]{schoen1999image}. For a $k$-variety $X$, we write $Z^i(X)$ for the free abelian group on integral closed subschemes of codimension $i$, $Z^i_{\mathrm{rat}}(X)$ for the subgroup of cycles rationally equivalent to zero, and we let $CH^i(X)\coloneqq Z^i(X)/Z^i_{\mathrm{rat}}(X)$ be the Chow group of codimension-$i$ cycles. For an abelian group $M$, subscripts such as $M_{\Z_\ell}\coloneqq M\otimes_{\Z}\Z_\ell$ and $M_{\Q_\ell}\coloneqq M\otimes_{\Z}\Q_\ell$ denote extension of scalars. We write $M_{\tors}$ for the torsion subgroup, $M\{\ell\}$ for the $\ell$-primary torsion subgroup, and $M_{\on{div}}$ for the maximal divisible subgroup of $M$. If $M$ is a discrete or $\ell$-adic $G_k$-module, we write $M^{(1)}\coloneqq\cup_{k\subset k'\subset\ov{k}} M^{G_{k'}}$, where $k'/k$ ranges over the finite extensions contained in $\ov{k}$; thus $M^{(1)}$ is the subgroup of elements fixed by an open subgroup of $G_k$.

If $X/k$ is smooth and geometrically integral and $A$ is a torsion coefficient $G_k$-module such that $A[p]=0$ if $\on{char}(k)=p>0$, we write $H^i_{\nr}(k(X)/k,A)$ for the subgroup of $H^i(k(X),A)$ consisting of classes unramified at every codimension-$1$ point of $X$. For $X$ proper, this agrees with the usual definition using divisorial discrete valuations of $k(X)$ trivial on $k$.

For a connected scheme $U$ with geometric point $\ov{u}$, we write $\pi_1(U,\ov{u})$ for its \'etale fundamental group and $\pi_1^t(U,\ov{u})$ for its tame quotient.

When $k$ is a finite field or an algebraically closed field, for a prime $\ell$ invertible in $k$, we write $H^i(X,\Z_\ell(r))$ for the $i$-th $\ell$-adic cohomology group with Tate twist $r$. When a complex variety is considered with integral coefficients, $H^i(X,\Z)$ denotes singular cohomology of its associated analytic space. 

Let $k$ be an algebraically closed field, let $X$ be a smooth projective $k$-variety, let $\ell$ be invertible in $k$, and let $c,i\geq0$. The coniveau subgroup $N^cH^i(X,\Z_\ell)\subset H^i(X,\Z_\ell)$ is the $\Z_\ell$-module generated by classes supported on closed subsets of codimension at least $c$, and the strong coniveau subgroup $\widetilde N^cH^i(X,\Z_\ell)$ is generated, as a $\Z_\ell$-module, by the images of Gysin maps $f_*\colon H^{i-2c}(Y,\Z_\ell(-c))\to H^i(X,\Z_\ell)$,  where $Y$ is smooth projective and $f\colon Y\to X$ is a morphism with $\dim Y=\dim X-c$. 

For a finite group $H$, we let $e(H)$ denote the exponent of $H$.

\section{The \texorpdfstring{$\ell$}{l}-adic Abel--Jacobi map}\label{sec:2}

\subsection{The Hochschild--Serre spectral sequence}
Let $k$ be a finite field, let $\ell$ be a prime number invertible in $k$, let $r\geq 0$ be an integer, and let $W$ be a smooth proper $k$-variety. Since $k$ has cohomological dimension $1$, for all $n\geq 1$, the $E_2$-page of the Hochschild--Serre spectral sequence
    \[E_2^{ij}\coloneqq H^i(k,H^j(W_{\ov{k}},\mu_{\ell^n}^{\otimes r}))\Longrightarrow H^{i+j}(W,\mu_{\ell^n}^{\otimes r})\]
    is concentrated in the first two columns. We obtain, for each $n\geq 1$, a short exact sequence
    \[
\begin{tikzcd}[column sep=small]
   0 \arrow[r]
   & H^1(k,H^{2r-1}(W_{\ov{k}},\mu_{\ell^n}^{\otimes r})) \arrow[r]
   & H^{2r}(W,\mu_{\ell^n}^{\otimes r}) \arrow[r]
   & H^{2r}(W_{\ov{k}},\mu_{\ell^n}^{\otimes r})^{G_k} \arrow[r]
   & 0.
\end{tikzcd}
\]
    These exact sequences are compatible in $n$. Since $k$ is finite, all the abelian groups appearing in the sequence are finite. It follows that the Mittag-Leffler condition is satisfied, and hence passing to the inverse limit in $n$ yields a short exact sequence of $\Z_\ell$-modules of finite type
    \begin{equation}\label{eq:hochschild-serre-short-exact}
    \begin{tikzcd}[column sep=small]
       0 \arrow[r] & H^1(k,H^{2r-1}(W_{\ov{k}},\Z_\ell(r))) \arrow[r] & H^{2r}(W,\Z_\ell(r)) \arrow[r] & H^{2r}(W_{\ov{k}},\Z_\ell(r))^{G_k} \arrow[r] & 0.
    \end{tikzcd}
    \end{equation}
By a weight argument \cite[p. 781]{colliot1983torsion}, $1-\on{Frob}_k$ induces an automorphism on $H^{2r-1}(W_{\ov{k}},\Q_\ell(r))$, and hence $H^1(k,H^{2r-1}(W_{\ov{k}},\Z_\ell(r)))$ is finite.

\subsection{The \texorpdfstring{$\ell$}{l}-adic Abel--Jacobi map}\label{paragraph-aj}
Let $k$ be a finite field, let $\ell$ be a prime number invertible in $k$, let $r\geq 0$ be an integer, and let $W$ be a smooth proper $k$-variety. We define
\[Z^r_{\mathrm{hom},\ell}(W)\coloneqq \on{Ker}[Z^r(W)\to H^{2r}(W_{\ov{k}},\Z_\ell(r))].\]
We also set
\[
CH^r_{\mathrm{hom},\ell}(W)
\coloneqq Z^r_{\mathrm{hom},\ell}(W)/Z^r_{\mathrm{rat}}(W).
\]
There is an \emph{$\ell$-adic Abel--Jacobi map}
\begin{equation}\label{eq:AJ_map_k}
\alpha^r_{W,\ell} : Z^r_{\mathrm{hom},\ell}(W) \longrightarrow H^1(k,H^{2r-1}(W_{\bar{k}},\Z_\ell(r)))
\end{equation}
which is defined in \cite[1.2]{schoen1999image} as follows. (Contrary to \cite[1.2]{schoen1999image}, we do not quotient $H^{2r-1}(W_{\bar{k}},\Z_\ell(r))$ by its torsion subgroup in the definition.)
Given a cycle $z\in Z^r_{\mathrm{hom},\ell}(W)$, let $|z|\subset W$ be the support of $z$, let $|z|_{\bar{k}}\subset W_{\bar{k}}$ be the inverse image of $|z|$ under the projection $W_{\bar{k}}\to W$, let
\begin{equation*}
H^{2r}_{|z|_{\ov{k}}}(W_{\bar{k}}, \mathbb{Z}_\ell(r))_0 \coloneqq \operatorname{Ker}[H^{2r}_{|z|_{\ov{k}}}(W_{\bar{k}}, \mathbb{Z}_\ell(r)) \to H^{2r}(W_{\bar{k}}, \mathbb{Z}_\ell(r))],
\end{equation*}
and let $\lfloor z \rfloor \in H^{2r}_{|z|_{\ov{k}}}(W_{\bar{k}}, \mathbb{Z}_\ell(r))_0$ be the fundamental class of $z$. By purity \cite[VI.5.1]{milne1980etale} we have $H^{2r-1}_{|z|_{\ov{k}}}(W_{\bar{k}}, \mathbb{Z}_\ell(r)) = 0$, and hence there is a short exact sequence of $G_k$-modules
\begin{equation}\label{eq:ses_AJ}
0 \longrightarrow H^{2r-1}(W_{\bar{k}}, \mathbb{Z}_\ell(r)) \longrightarrow H^{2r-1}((W - |z|)_{\bar{k}}, \mathbb{Z}_\ell(r)) \longrightarrow H^{2r}_{|z|_{\ov{k}}}(W_{\bar{k}}, \mathbb{Z}_\ell(r))_0 \longrightarrow 0.
\end{equation}
By definition, $\alpha^r_{W,\ell}(z)$ is the image of $\lfloor z \rfloor$ under the connecting map
\[
H^{2r}_{|z|_{\ov{k}}}(W_{\bar{k}}, \mathbb{Z}_\ell(r))^{G_k}_0 \longrightarrow H^1(k, H^{2r-1}(W_{\bar{k}}, \mathbb{Z}_\ell(r)))
\]
induced by (\ref{eq:ses_AJ}).

\begin{rem}\label{rmk:AJ_properties}
    We collect here some properties of the $\ell$-adic Abel--Jacobi map. All of them follow from the functoriality of the construction of Jannsen \cite[\S 9.4]{jannsenn1990mixed}; see also Schoen \cite[\S 1.2]{schoen1999image}. For restriction and corestriction in Galois cohomology, see Serre \cite[Ch.~I, \S 2.5]{serre1997galois}.

    (1) (Cycle class map.) We have a cartesian square
    \[
    \begin{tikzcd}
       Z_{\mathrm{hom},\ell}^r(W) \arrow[d,"\alpha^r_{W,\ell}"] \arrow[r,hook]  & Z^r(W) \arrow[d,"\mathrm{cl}_W^r"] \\
       H^1(k,H^{2r-1}(W_{\ov{k}},\Z_\ell(r))) \arrow[r,hook] & H^{2r}(W,\Z_\ell(r))
    \end{tikzcd}
    \]
    where $\mathrm{cl}_W^r$ is the cycle map and the bottom horizontal map comes from \eqref{eq:hochschild-serre-short-exact}. (The commutativity of the square is a special case of \cite[9.4]{jannsenn1990mixed}, and the fact that it is cartesian is immediate from the definition of $Z^r_{\mathrm{hom},\ell}$.)

    (2) (Correspondences.) Let $r$ and $s$ be non-negative integers, let $V$ be another smooth proper $k$-variety, and let $\Gamma \in CH^{\dim V+s-r}(V \times W)$ be a correspondence. We have the following commutative diagram:
        \[
        \begin{tikzcd}
            Z^r_{\mathrm{hom},\ell}(V) \arrow[r, "\alpha^r_{V,\ell}"] \arrow[d, "\Gamma_*"] & H^1(k, H^{2r-1}(V_{\bar{k}}, \Z_\ell(r))) \arrow[d, "{H^1(k,\Gamma_*})"] \\
            Z^s_{\mathrm{hom},\ell}(W) \arrow[r, "\alpha^s_{W,\ell}"] & H^1(k, H^{2s-1}(W_{\bar{k}}, \Z_\ell(s))).
        \end{tikzcd}
        \]
        In particular, the Abel--Jacobi map is compatible with pullback and pushforward along arbitrary morphisms $f\colon V\to W$.

    (3) (Rational Equivalence.) For every $r\geq 0$, the map $\alpha^r_{W,\ell}$ vanishes on cycles rationally equivalent to zero. Consequently, it factors through the Chow group, inducing a well-defined homomorphism:
        \[CH^r_{\mathrm{hom},\ell}(W) \longrightarrow H^1(k, H^{2r-1}(W_{\bar{k}}, \Z_\ell(r))). \]

    (4) (Field Extensions.) For every finite field extension $k'/k$, the following diagram commutes:
        \[
        \begin{tikzcd}
            Z^r_{\mathrm{hom},\ell}(W) \arrow[r, "\alpha^r_{W,\ell}"] \arrow[d, "(-)_{k'}"] & H^1(k, H^{2r-1}(W_{\bar{k}}, \Z_\ell(r))) \arrow[d, "\on{Res}_{G_{k'}}^{G_{k}}"] \\
            Z^r_{\mathrm{hom},\ell}(W_{k'}) \arrow[r, "\alpha^r_{W_{k'},\ell}"] & H^1(k', H^{2r-1}(W_{\bar{k}}, \Z_\ell(r))),
        \end{tikzcd}
        \]
        where we view $W_{k'}$ as a $k'$-variety.

    (5) (Norms.) For every finite separable extension $k'/k$, letting \[N_{k'/k}\colon Z^r_{\mathrm{hom},\ell}(W_{k'})\to Z^r_{\mathrm{hom},\ell}(W)\] be the norm map, the following diagram commutes:
        \[
        \begin{tikzcd}
            Z^r_{\mathrm{hom},\ell}(W_{k'}) \arrow[r, "\alpha^r_{W_{k'},\ell}"] \arrow[d, "N_{k'/k}"] & H^1(k', H^{2r-1}(W_{\bar{k}}, \Z_\ell(r))) \arrow[d, "\mathrm{Cor}_{G_{k'}}^{G_k}"] \\
            Z^r_{\mathrm{hom},\ell}(W) \arrow[r, "\alpha^r_{W,\ell}"] & H^1(k, H^{2r-1}(W_{\bar{k}}, \Z_\ell(r))).
        \end{tikzcd}
        \]
        For every finite Galois extension $k'/k$, the Abel--Jacobi map $\alpha^r_{W_{k'},\ell}$ is $\on{Gal}(k'/k)$-equivariant.

    (6) (Prime-to-$\ell$ restriction-corestriction.) Suppose given a finite extension $k'/k$ of prime-to-$\ell$ degree such that (\ref{eq:AJ_map_k}) is surjective over $k'$. Then (\ref{eq:AJ_map_k}) is surjective over $k$. Indeed, (4) and (5) give a commutative diagram
\[
\adjustbox{max width=\textwidth}{
\begin{tikzcd}
    Z^r_{\mathrm{hom},\ell}(W)
        \arrow[d,"\alpha^r_{W,\ell}"]
        \arrow[r,"(-)_{k'}"]
    & Z^r_{\mathrm{hom},\ell}(W_{k'})
        \arrow[d,"\alpha^r_{W_{k'},\ell}"]
        \arrow[r,"N_{k'/k}"]
    & Z^r_{\mathrm{hom},\ell}(W)
        \arrow[d,"\alpha^r_{W,\ell}"] \\
    H^1(k,H^{2r-1}(W_{\overline{k}},\mathbb{Z}_\ell(r)))
        \arrow[r,"\on{Res}^{G_k}_{G_{k'}}"]
    & H^1(k',H^{2r-1}(W_{\overline{k}},\mathbb{Z}_\ell(r)))
        \arrow[r,"\on{Cor}^{G_k}_{G_{k'}}"]
    & H^1(k,H^{2r-1}(W_{\overline{k}},\mathbb{Z}_\ell(r)))
\end{tikzcd}}
\]
   and the composite of each row is given by multiplication by $[k':k]$. Since the target is a finite $\ell$-primary group, multiplication by the prime-to-$\ell$ integer $[k':k]$ is an automorphism, and hence surjectivity over $k'$ implies surjectivity over $k$.

   (7) (Codimension one.) Suppose that $r=1$, and let $A\coloneqq (\Pic^0_{W/k})_{\red}$. The canonical isomorphism
   \[
   T_\ell A_{\ov{k}}\xlongrightarrow{\sim}H^1(W_{\ov{k}},\Z_\ell(1))
   \]
   identifies the restriction of $\alpha^1_{W,\ell}$ to $\Pic^0(W)(k)$ with the inverse limit of the Kummer connecting maps
   \[
   A(k)/\ell^n\longrightarrow H^1(k,A_{\ov{k}}[\ell^n]).
   \]
   These maps are surjective by Lang's theorem. Consequently $\alpha^1_{W,\ell}$ is surjective. This is also immediate from (1) and the description of the cycle map in codimension one; see \cite[Lemma~3.26]{jannsen1988continuous}.
\end{rem}

\begin{lem}\label{lem:Abel--Jacobi-corestriction}
Let $k$ be a finite field, let $\ell\neq\operatorname{char}(k)$ be a prime, let $W/k$ be a smooth proper variety, and let $r\geq0$. Let $k'/k$ be a finite extension. If the Abel--Jacobi map
\[
\alpha^r_{W_{k'},\ell}\colon CH^r_{\mathrm{hom},\ell}(W_{k'})
\longrightarrow
H^1(k',H^{2r-1}(W_{\ov{k}},\Z_\ell(r)))
\]
is surjective, then
\[
\alpha^r_{W,\ell}\colon CH^r_{\mathrm{hom},\ell}(W)
\longrightarrow
H^1(k,H^{2r-1}(W_{\ov{k}},\Z_\ell(r)))
\]
is surjective.
\end{lem}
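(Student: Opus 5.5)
The plan is to deduce the statement from the norm compatibility in \Cref{rmk:AJ_properties}(5), together with the fact that for a finite field corestriction on $H^1$ is \emph{always} surjective. Unlike \Cref{rmk:AJ_properties}(6), no restriction to prime-to-$\ell$ degree should be needed. Put $M\coloneqq H^{2r-1}(W_{\ov{k}},\Z_\ell(r))$, a finitely generated $\Z_\ell$-module with continuous $G_k$-action.

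Since finite fields are perfect, $k'/k$ is separable. \Cref{rmk:AJ_properties}(5) then gives $\alpha^r_{W,\ell}\circ N_{k'/k}=\mathrm{Cor}^{G_k}_{G_{k'}}\circ\alpha^r_{W_{k'},\ell}$ on $Z^r_{\mathrm{hom},\ell}(W_{k'})$. The norm sends rationally trivial cycles to rationally trivial cycles, so by \Cref{rmk:AJ_properties}(3) this identity descends to $CH^r_{\mathrm{hom},\ell}$. Hence the image of $\alpha^r_{W,\ell}$ contains $\mathrm{Cor}(\on{Im}\alpha^r_{W_{k'},\ell})$, which equals $\mathrm{Cor}(H^1(k',M))$ by hypothesis. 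It therefore suffices to show that
\[\mathrm{Cor}^{G_k}_{G_{k'}}\colon H^1(k',M)\to H^1(k,M)\]
is surjective.

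This surjectivity is the main step. Write $G_k\simeq\widehat{\Z}$ with topological generator $F=\on{Frob}_k$, and let $n=[k':k]$, so that $G_{k'}$ is generated by $F^n$. For a finite continuous $G_k$-module $N$, evaluating cocycles at the generator identifies $H^1(k,N)\simeq N/(F-1)N$ and $H^1(k',N)\simeq N/(F^n-1)N$. Under these identifications, I would check the following on cocycles, for instance through the explicit transfer formula for the coset representatives $1,F,\dots,F^{n-1}$:
\begin{itemize}
\item restriction is induced by the norm element $1+F+\dots+F^{n-1}$;
\item corestriction is induced by the identity of $N$, i.e.\ it is the natural projection $N/(F^n-1)N\to N/(F-1)N$.
\end{itemize}
In particular corestriction is surjective for every finite $N$. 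As a consistency check, $\mathrm{Cor}\circ\mathrm{Res}$ is then multiplication by $1+F+\dots+F^{n-1}$, which acts as $n$ on $N/(F-1)N$. If the cocycle computation turns out to be awkward, I would instead argue via the fact that $G_k$ has cohomological dimension $1$, so that $H^1$ is right exact on torsion modules and corestriction agrees with the trace on the top cohomology.

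Finally I pass to $M$ itself. Apply the finite-level statement to $N=M/\ell^m M$. By continuity, $H^1(k,M)=\varprojlim_m H^1(k,M/\ell^m)$, and likewise over $k'$; the transition maps are those of finite groups, so Mittag-Leffler holds as in \eqref{eq:hochschild-serre-short-exact}. Alternatively, use the identification $H^1(k,M)\simeq M/(F-1)M$ directly for the profinite module $M$, where the same projection argument applies verbatim. A compatible system of surjections between inverse systems of finite groups has surjective limit by compactness, so $\mathrm{Cor}\colon H^1(k',M)\to H^1(k,M)$ is surjective, and the lemma follows.
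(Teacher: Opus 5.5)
Your proposal is correct and is essentially the paper's argument: you combine the norm compatibility of \Cref{rmk:AJ_properties}(5) with the observation that, after evaluating cocycles at Frobenius, corestriction becomes the natural surjection $H^{2r-1}(W_{\ov{k}},\Z_\ell(r))/(\operatorname{Frob}_{k'}-1)\to H^{2r-1}(W_{\ov{k}},\Z_\ell(r))/(\operatorname{Frob}_k-1)$. Your transfer-formula check and your passage from finite coefficients to the $\ell$-adic module only make explicit details that the paper's proof leaves implicit.
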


\begin{proof}
Set $d\coloneqq [k':k]$. We have $\operatorname{Frob}_{k'}=(\on{Frob}_k)^d$, so that \[(\operatorname{Frob}_{k'}-1)H^{2r-1}(W_{\ov{k}},\Z_\ell(r))\subset (\operatorname{Frob}_k-1)H^{2r-1}(W_{\ov{k}},\Z_\ell(r)),\] and evaluation of $1$-cocycles at $\on{Frob}_k$ and $\on{Frob}_{k'}$ gives rise to the following commutative diagram:
\[
\begin{tikzcd}[column sep = small]
   CH^r_{\mathrm{hom},\ell}(W_{k'}) \arrow[d,"N_{k'/k}"]  \arrow[r,"\alpha^r_{W_{k'},\ell}"]  & H^1(k',H^{2r-1}(W_{\ov{k}},\Z_\ell(r)))\arrow[r,"\sim"] \arrow[d,"\operatorname{Cor}_{k'/k}"] & H^{2r-1}(W_{\ov{k}},\Z_\ell(r))/(\operatorname{Frob}_{k'}-1) \arrow[d,->>] \\
  CH^r_{\mathrm{hom},\ell}(W) \arrow[r,"\alpha^r_{W,\ell}"]  & H^1(k,H^{2r-1}(W_{\ov{k}},\Z_\ell(r)))\arrow[r,"\sim"] & H^{2r-1}(W_{\ov{k}},\Z_\ell(r))/(\operatorname{Frob}_k-1).
\end{tikzcd}
\]
The vertical map on the right is the obvious surjection, and the top Abel--Jacobi map is surjective by assumption. Thus, the commutativity of the diagram implies that $\alpha^r_{W,\ell}$ is surjective.
\end{proof}

\subsection{A relative \texorpdfstring{$\ell$}{l}-adic Abel--Jacobi map}\label{paragraph-relative-aj}
Let $k$ be a finite field, let $\ell$ be a prime number invertible in $k$, and let $W$ be a smooth proper $k$-variety. Suppose that $\on{dim}(W)=2m+1$ for some integer $m\geq 1$,  let $B$ be a smooth proper $k$-curve, and suppose given a flat generically smooth morphism $f\colon W \to B$. Let $\dot{B}\subset B$ be the largest open subscheme over which $f$ is smooth. For every $x\in \dot{B}(k)$, write $i_x\colon f^{-1}(x)\hookrightarrow W$ for the inclusion and let
\[Z^m(f^{-1}(x))_0\coloneqq \{z\in Z^m(f^{-1}(x))\mid i_{x*}z\in Z^{m+1}_{\mathrm{hom},\ell}(W)\},\]
and we define
\[Z_f^{m+1}(W)\coloneqq \bigoplus_{x\in \dot{B}(k)}Z^m(f^{-1}(x))_0\subset Z^{m+1}_{\mathrm{hom},\ell}(W).\]
We also define
\[Z_f^{m+1}(W_{\overline{k}})\coloneqq \varinjlim Z_f^{m+1}(W_{k'})\subset Z^{m+1}(W_{\ov{k}})\]
where the limit is over all finite extensions $k\subset k'\subset \overline{k}$.

For every $n\geq 1$, we have the Leray spectral sequence
\[E_2^{ij}\coloneqq H^i(B_{\overline{k}},R^j(f_{\overline{k}})_*\mu_{\ell^n}^{\otimes(m+1)})\Longrightarrow H^{i+j}(W_{\overline{k}},\mu_{\ell^n}^{\otimes(m+1)}).\]
Since $k$ is finite, all abelian groups appearing in the spectral sequence are finite. Therefore, taking the inverse limit as $n\to\infty$, the Mittag-Leffler condition is satisfied, and hence we obtain the spectral sequence
\begin{equation}\label{leray}E_2^{ij}\coloneqq H^i(B_{\overline{k}},R^j(f_{\overline{k}})_*\Z_\ell(m+1))\Longrightarrow H^{i+j}(W_{\overline{k}},\Z_\ell(m+1)).
\end{equation}
Since $B$ is a curve, the $E_2$ page of (\ref{leray}) is concentrated in the first three columns, and hence (\ref{leray}) induces a filtration $\{0\}\subset L^2\subset L^1\subset L^0$ of $H^{2m+1}(W_{\overline{k}},\Z_\ell(m+1))$, where
\begin{align*}
    L^0\coloneqq& H^{2m+1}(W_{\overline{k}},\Z_\ell(m+1)), \\
    L^1\coloneqq& \Ker[H^{2m+1}(W_{\overline{k}},\Z_\ell(m+1))\to H^0(B_{\overline{k}},R^{2m+1}(f_{\overline{k}})_*\Z_\ell(m+1))], \\
    L^2\coloneqq& \Ker[L^1\to H^1(B_{\overline{k}},R^{2m}(f_{\overline{k}})_*\Z_\ell(m+1))] \\
     =&\Image[H^2(B_{\overline{k}},R^{2m-1}(f_{\overline{k}})_*\Z_\ell(m+1))\to H^{2m+1}(W_{\overline{k}},\Z_\ell(m+1))].
\end{align*}
There is a \emph{relative $\ell$-adic Abel--Jacobi map}
\begin{equation}\label{eq:AJ_map_f_k}
\alpha_{f,\ell}\colon Z^{m+1}_f(W) \to H^1(k, L^1/L^2),
\end{equation}
which is defined in \cite[2.1]{schoen1999image} as follows. Given $z\in Z^{m+1}_f(W)$, suppose $z$ is supported on the fiber $V \coloneqq f^{-1}(x)$ for some $x \in \dot{B}(k)$. Let
\begin{equation}\label{eq:fund_class_fiber}
H^{2m+2}_{V_{\ov{k}}}(W_{\ov{k}}, \mathbb{Z}_\ell(m+1))_0 \coloneqq \operatorname{Ker}\left[H^{2m+2}_{V_{\ov{k}}}(W_{\ov{k}}, \mathbb{Z}_\ell(m+1)) \to H^{2m+2}(W_{\ov{k}}, \mathbb{Z}_\ell(m+1))\right]
\end{equation}
and let $\lfloor z \rfloor_V \in H^{2m+2}_{V_{\ov{k}}}(W_{\ov{k}}, \mathbb{Z}_\ell(m+1))_0$
be the fundamental class of $z$ relative to the fiber $V$, that is, the image of $\lfloor z \rfloor$ under the natural map
\begin{equation}\label{eq:support-to-fiber}
H^{2m+2}_{|z|_{\bar{k}}}(W_{\bar{k}},\Z_\ell(m+1))_0
\longrightarrow
H^{2m+2}_{V_{\bar{k}}}(W_{\bar{k}},\Z_\ell(m+1))_0.
\end{equation}

Define $B'\coloneqq B\setminus\{x\}$, let $W'\coloneqq f^{-1}(B')=W\setminus V$, and let $f'\colon W'\to B'$ be the restriction of $f$ to $W'$. The Leray spectral sequence for $(f')_{\overline{k}}$
\begin{equation}\label{leray'}E_2^{ij}\coloneqq H^i(B'_{\overline{k}},R^j((f')_{\overline{k}})_*\Z_\ell(m+1))\Longrightarrow H^{i+j}((W')_{\overline{k}},\Z_\ell(m+1))
\end{equation}
induces a filtration $\{0\}\subset (L')^2\subset (L')^1\subset (L')^0$ on $H^{2m+1}((W')_{\overline{k}},\Z_\ell(m+1))$. Since $B'$ is affine, by Artin vanishing we have $(L')^2=0$. By \cite[(2.1.5)]{schoen1999image}, we have a short exact sequence
\begin{equation}\label{eq:ses_AJ_f}
0 \longrightarrow L^1/L^2 \longrightarrow (L')^1 \longrightarrow H^{2m+2}_{V_{\bar{k}}}(W_{\ov{k}}, \mathbb{Z}_\ell(m+1))_0 \longrightarrow 0,
\end{equation}
where the map $(L')^1 \to H^{2m+2}_{V_{\ov{k}}}(W_{\ov{k}}, \mathbb{Z}_\ell(m+1))_0$ is the restriction of the surjection $H^{2m+1}((W')_{\overline{k}},\Z_\ell(m+1))\to H^{2m+2}_{V_{\ov{k}}}(W_{\ov{k}}, \mathbb{Z}_\ell(m+1))_0$ coming from the excision sequence, and where the map $L^1/L^2\to (L')^1/(L')^2=(L')^1$ is induced from the restriction map $H^{2m+1}(W_{\overline{k}},\Z_\ell(m+1))\to H^{2m+1}((W')_{\overline{k}},\Z_\ell(m+1))$, using the fact that $(L')^2=0$. By definition, $\alpha_{f,\ell}(z)$ is the image of $\lfloor z \rfloor_V$ under the connecting map
\[
\delta_{f}: H^{2m+2}_{V_{\ov{k}}}(W_{\ov{k}}, \mathbb{Z}_\ell(m+1))_0^{G_k} \longrightarrow H^1(k, L^1/L^2)
\]
induced by \eqref{eq:ses_AJ_f}.

\begin{lem}\label{relative-aj-compatibility}
    We have a commutative diagram
    \[
    \begin{tikzcd}
        Z^{m+1}_f(W) \arrow[rr,"\alpha_{f,\ell}"] \arrow[d,hook] && H^1(k,L^1/L^2) \arrow[d,"\zeta_1"] \\
        Z^{m+1}_{\mathrm{hom},\ell}(W) \arrow[r,"\alpha_{W,\ell}^{m+1}"]  & H^1(k,L^0) \arrow[r,->>,"\zeta_0"] & H^1(k,L^0/L^2).
    \end{tikzcd}
    \]
\end{lem}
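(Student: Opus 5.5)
We reduce to a comparison of two short exact sequences of $G_k$-modules and use naturality of connecting maps. Fix $z\in Z^{m+1}_f(W)$ supported on $V=f^{-1}(x)$ with $x\in\dot B(k)$; by additivity it suffices to treat such $z$. The maps $\zeta_1$ and $\zeta_0$ are induced by $L^1/L^2\hookrightarrow L^0/L^2$ and $L^0\twoheadrightarrow L^0/L^2$. Since $k$ has cohomological dimension $1$, $\zeta_0$ is surjective.

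First we replace the support $|z|$ by the fiber $V$ in the absolute Abel--Jacobi map. Since $|z|\subset V$, functoriality of excision sequences for the inclusion $W\setminus V\subset W\setminus|z|$ gives a morphism from \eqref{eq:ses_AJ} to the analogous sequence
\[0\to H^{2m+1}(W_{\ov k},\Z_\ell(m+1))/(\text{image of }H^{2m+1}_{V_{\ov k}})\to H^{2m+1}(W'_{\ov k},\Z_\ell(m+1))\to H^{2m+2}_{V_{\ov k}}(W_{\ov k},\Z_\ell(m+1))_0\to0,\]
with the right-hand map \eqref{eq:support-to-fiber}. By purity, $H^{2m+1}_{V_{\ov k}}(W_{\ov k},\Z_\ell(m+1))\cong H^{2m-1}(V_{\ov k},\Z_\ell(m))$, and its image in $H^{2m+1}(W_{\ov k})$ lies in $L^2$. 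This holds because Gysin classes from a fiber lie in the image of $H^2(B_{\ov k},R^{2m-1}f_*)$; equivalently, they die in $H^{2m+1}(W'_{\ov k})$ modulo $(L')^2=0$, and the kernel of restriction to $W'$ is exactly $L^2$ by \cite[(2.1.5)]{schoen1999image}. Pushing out along $L^0\to L^0/L^2$, we obtain a short exact sequence
\[0\to L^0/L^2\to H^{2m+1}(W'_{\ov k},\Z_\ell(m+1))\to H^{2m+2}_{V_{\ov k}}(W_{\ov k},\Z_\ell(m+1))_0\to0.\]
Naturality of connecting maps then shows that $\zeta_0(\alpha^{m+1}_{W,\ell}(z))$ is the image of $\lfloor z\rfloor_V$ under the connecting map of this sequence.

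Second, \eqref{eq:ses_AJ_f} maps to this sequence via the inclusions $L^1/L^2\hookrightarrow L^0/L^2$ and $(L')^1\hookrightarrow H^{2m+1}(W'_{\ov k})$, with the identity on the third term. Both squares commute, since both horizontal maps are restriction and the excision boundary respectively. Naturality of connecting maps gives $\zeta_1\circ\delta_f=\delta$, and evaluating at $\lfloor z\rfloor_V$ yields the claimed diagram.

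The main obstacle is the precise identification used in the first step. One must check that the kernel of $H^{2m+1}(W_{\ov k})\to H^{2m+1}(W'_{\ov k})$ equals $L^2$, or at least lies in it, and that the relevant exactness holds integrally. We would extract both facts from Schoen's derivation of \eqref{eq:ses_AJ_f} via the map of Leray spectral sequences for $f$ and $f'$, and check the signs of the connecting maps against Jannsen's conventions.
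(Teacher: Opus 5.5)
Your argument is correct and is essentially the paper's proof. The paper maps the pushout of \eqref{eq:ses_AJ} along $L^0\to L^0/L^2$ to the pushout of \eqref{eq:ses_AJ_f} along $L^1/L^2\hookrightarrow L^0/L^2$. The middle term $(L')^1\oplus_{L^1/L^2}(L^0/L^2)$ of that second pushout is canonically your $H^{2m+1}(W'_{\bar k},\Z_\ell(m+1))$. So you have made explicit the intermediate sequence that the paper uses implicitly to define its middle vertical map, together with the fact that the kernel of restriction to $W'$ is exactly $L^2$ (which also follows from the Gysin computation in the proof of \Cref{conditions}).

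One wording fix: since that kernel is exactly $L^2$, no pushout is needed in your first step, because $L^0/L^2$ already injects into $H^{2m+1}(W'_{\bar k},\Z_\ell(m+1))$.
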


\begin{proof}
    By additivity, it is enough to prove the assertion for a cycle
$z\in Z^m(f^{-1}(x))_0$ supported on a single smooth fiber
$V=f^{-1}(x)$, with $x\in \dot{B}(k)$. Let $U\coloneqq W-|z|$ and
$W'\coloneqq W-V$. Since $|z|\subset V$, we have an open immersion
$W'\hookrightarrow U$.

The restriction map
\[
H^{2m+1}(U_{\bar{k}},\Z_\ell(m+1))
\longrightarrow
H^{2m+1}((W')_{\bar{k}},\Z_\ell(m+1))
\]
together with the natural map on cohomology with supports gives a morphism
from the exact sequence \eqref{eq:ses_AJ} for $z$, namely
\[
0\longrightarrow L^0\longrightarrow
H^{2m+1}(U_{\bar{k}},\Z_\ell(m+1))
\longrightarrow
H^{2m+2}_{|z|_{\bar{k}}}(W_{\bar{k}},\Z_\ell(m+1))_0
\longrightarrow 0,
\]
to the exact sequence \eqref{eq:ses_AJ_f} defining $\alpha_{f,\ell}$, after pushing the former out by the quotient map $L^0\to L^0/L^2$ and the latter out by the inclusion $L^1/L^2\hookrightarrow L^0/L^2$. In other
words, we have a commutative diagram with exact rows:
\[
\begin{tikzcd}[column sep=small]
    0 \arrow[r] &
    L^0/L^2 \arrow[r] \arrow[d,equal] &
    H^{2m+1}(U_{\bar{k}},\Z_\ell(m+1))/L^2 \arrow[r] \arrow[d] &
    H^{2m+2}_{|z|_{\bar{k}}}(W_{\bar{k}},\Z_\ell(m+1))_0
    \arrow[r] \arrow[d] & 0 \\
    0 \arrow[r] &
    L^0/L^2 \arrow[r] &
    (L')^1\oplus_{L^1/L^2}(L^0/L^2) \arrow[r] &
    H^{2m+2}_{V_{\bar{k}}}(W_{\bar{k}},\Z_\ell(m+1))_0
    \arrow[r] & 0.
\end{tikzcd}
\]
Applying $G_k$-cohomology to this diagram, the image of
$\lfloor z\rfloor$ under the connecting homomorphism for the first row is
precisely $\zeta_0(\alpha^{m+1}_{W,\ell}(z))$ \eqref{eq:AJ_map_k}, while the image of
$\lfloor z\rfloor_V$ under the connecting homomorphism for the second row
is precisely $\zeta_1(\alpha_{f,\ell}(z))$ \eqref{eq:AJ_map_f_k}. Since $\lfloor z\rfloor$ maps to
$\lfloor z\rfloor_V$, the two classes
in $H^1(k,L^0/L^2)$ are equal. This proves the commutativity of the
diagram.
\end{proof}

\begin{lem}\label{conditions}
     Assume the following.
    \begin{itemize}
        \item[(i)] There exists $x\in B(k)$ such that $V\coloneqq f^{-1}(x)$ is smooth and $\alpha^m_{V,\ell}$ is surjective.
        \item[(ii)] The map $\alpha_{f,\ell}\colon Z^{m+1}_f(W)\to H^1(k,L^1/L^2)$ is surjective.
        \item[(iii)] The composite \[Z^{m+1}_f(W)\xlongrightarrow{\alpha_{W,\ell}^{m+1}} H^1(k,L^0) \longrightarrow H^1(k,L^0/L^1)\] is surjective.
    \end{itemize}
    Then the map $\alpha^{m+1}_{W,\ell}$ is surjective.
\end{lem}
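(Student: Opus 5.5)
The plan is a dévissage along the Leray filtration $L^2\subset L^1\subset L^0$. The key input is that $k$ is finite and so has cohomological dimension $1$: for any short exact sequence $0\to M'\to M\to M''\to 0$ of finitely generated $\Z_\ell$-modules with continuous $G_k$-action, the sequence
\[
H^1(k,M')\to H^1(k,M)\to H^1(k,M'')\to 0
\]
is exact, i.e.\ $H^1(k,-)$ is right exact. Since $Z^{m+1}_f(W)\subset Z^{m+1}_{\mathrm{hom},\ell}(W)$, I will show two things:
\begin{itemize}
\item the image of $\alpha^{m+1}_{W,\ell}$ contains the image of $H^1(k,L^2)\to H^1(k,L^0)$;
\item the composite $\zeta_0\circ\alpha^{m+1}_{W,\ell}$ is surjective onto $H^1(k,L^0/L^2)$.
\end{itemize}
Applying right exactness to $0\to L^2\to L^0\to L^0/L^2\to 0$ then gives surjectivity of $\alpha^{m+1}_{W,\ell}$.

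\emph{Step 1 (the quotient $L^0/L^2$).} Apply right exactness to $0\to L^1/L^2\to L^0/L^2\to L^0/L^1\to 0$. It then suffices that the image of $\zeta_0\circ\alpha^{m+1}_{W,\ell}$ contains $\zeta_1(H^1(k,L^1/L^2))$ and surjects onto $H^1(k,L^0/L^1)$.
\begin{itemize}
\item The first inclusion follows from (ii) together with the commutative square of \Cref{relative-aj-compatibility}, restricted to $Z^{m+1}_f(W)$.
\item The second is exactly hypothesis (iii).
\end{itemize}
Given a class in $H^1(k,L^0/L^2)$, I will lift its image in $H^1(k,L^0/L^1)$ using (iii) and correct the difference, which lies in $\zeta_1(H^1(k,L^1/L^2))$, using (ii).

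\emph{Step 2 (the subgroup $L^2$).} I will use the fiber $V=f^{-1}(x)$ from (i), with inclusion $i_x$. For $z\in Z^m_{\mathrm{hom},\ell}(V)$, the pushforward $i_{x*}z$ lies in $Z^{m+1}_{\mathrm{hom},\ell}(W)$. By \Cref{rmk:AJ_properties}(2) we have
\[
\alpha^{m+1}_{W,\ell}(i_{x*}z)=H^1(k,i_{x*})\bigl(\alpha^m_{V,\ell}(z)\bigr),
\]
where $i_{x*}\colon H^{2m-1}(V_{\ov{k}},\Z_\ell(m))\to H^{2m+1}(W_{\ov{k}},\Z_\ell(m+1))$ is the Gysin map. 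Since $\alpha^m_{V,\ell}$ is surjective by (i), the image of $\alpha^{m+1}_{W,\ell}$ contains the image of $H^1(k,i_{x*})$.

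It therefore suffices to show that the Gysin map has image exactly $L^2$ and surjects onto it. Given that, right exactness shows that $H^1(k,H^{2m-1}(V_{\ov{k}}))\to H^1(k,L^2)$ is surjective, and composing with $H^1(k,L^2)\to H^1(k,L^0)$ finishes the step.

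\emph{Identification of the Gysin image (main obstacle).} I expect the Gysin map to factor, compatibly with the Leray spectral sequence (\ref{leray}), as
\[
H^{2m-1}(V_{\ov{k}})=(R^{2m-1}f_*)_{\bar x}(-1)\cong H^2_{\bar x}\bigl(B_{\ov{k}},R^{2m-1}(f_{\ov{k}})_*\bigr)\to H^2\bigl(B_{\ov{k}},R^{2m-1}(f_{\ov{k}})_*\bigr)\twoheadrightarrow L^2,
\]
with the appropriate Tate twists. The isomorphism in the middle holds because the sheaf is lisse near $x$, as $V$ is smooth. The map $H^2_{\bar x}\to H^2(B_{\ov{k}},\cdot)$ is surjective because $H^2(B_{\ov{k}}\setminus\{\bar x\},\cdot)=0$ by Artin vanishing on an affine curve. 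The last map is surjective by the definition of $L^2$.

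The delicate point is the compatibility of the Gysin map for $i_x$ with this localization-and-edge description. I would check it at finite level with $\mu_{\ell^n}$ coefficients via the excision sequence for $W'=W\setminus V\subset W$, much as in the proof of \Cref{relative-aj-compatibility}, and then pass to the limit using Mittag-Leffler.
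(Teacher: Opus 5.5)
Your proof is correct and essentially matches the paper's: both identify the Gysin image of $H^{2m-1}(V_{\ov{k}},\Z_\ell(m))$ with $L^2$ through the purity/Artin-vanishing square, then run a diagram chase using \Cref{relative-aj-compatibility}, hypotheses (ii)--(iii), and right exactness of $H^1(k,-)$. The only difference is the order of the chase. You first show that $\zeta_0\circ\alpha^{m+1}_{W,\ell}$ surjects onto $H^1(k,L^0/L^2)$ and then absorb $\operatorname{Ker}(\zeta_0)$. The paper instead builds up from $L^2H^1(k,L^0)$, to $L^1H^1(k,L^0)$, and then uses (iii).
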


\begin{proof}
Recall that $L^0=H^{2m+1}(W_{\ov{k}},\Z_\ell(m+1))$. For $i=0,1,2$, we let \[L^iH^1(k,L^0)\coloneqq \on{Im}[H^1(k,L^i)\to H^1(k,L^0)].\]
Let $x\in B(k)$ be as in (i). We have a commutative square
\[
\begin{tikzcd}
    H^2_{\ov{x}}(B_{\ov{k}},R^{2m-1}(f_{\ov{k}})_*\Z_\ell(m+1)) \arrow[r]\arrow[d,"\wr"]  & H^2(B_{\ov{k}},R^{2m-1}(f_{\ov{k}})_*\Z_\ell(m+1)) \arrow[d] \\
    H^{2m+1}_{V_{\ov{k}}}(W_{\ov{k}},\Z_\ell(m+1)) \arrow[r,"i_{V*}"] & H^{2m+1}(W_{\ov{k}},\Z_\ell(m+1)),
\end{tikzcd}
\]
where the left vertical map is an isomorphism by purity, and the top horizontal map is surjective by Artin vanishing applied to the affine curve $B\setminus\{x\}$. This implies that
\[i_{V*}(H^{2m+1}_{V_{\ov{k}}}(W_{\ov{k}},\Z_\ell(m+1)))=L^2.\]
It follows that the composite
\[H^{2m-1}(V_{\ov{k}},\Z_\ell(m))\xlongrightarrow{\sim}H^{2m+1}_{V_{\ov{k}}}(W_{\ov{k}},\Z_\ell(m+1))\longrightarrow H^{2m+1}(W_{\ov{k}},\Z_\ell(m+1))\]
has image equal to $L^2$. Passing to Galois cohomology, we deduce that the image of the map
    \[H^1(k,H^{2m-1}(V_{\ov{k}},\Z_\ell(m)))\longrightarrow H^1(k,H^{2m+1}(W_{\ov{k}},\Z_\ell(m+1)))\]
    is equal to $L^2H^1(k,H^{2m+1}(W_{\ov{k}},\Z_\ell(m+1)))$. Now, using the commutative square
\[
\begin{tikzcd}
    Z^m_{\mathrm{hom},\ell}(V) \arrow[d,hook,"i_{V*}"] \arrow[r,"\alpha^m_{V,\ell}"]  & H^1(k,H^{2m-1}(V_{\ov{k}},\Z_\ell(m))) \arrow[d,"i_{V*}"]  \\
    Z^{m+1}_{\mathrm{hom},\ell}(W)\arrow[r,"\alpha^{m+1}_{W,\ell}"] & H^1(k,H^{2m+1}(W_{\ov{k}},\Z_\ell(m+1))),
\end{tikzcd}
\]
we deduce that \[L^2H^1(k,L^0)\subset \alpha^{m+1}_{W,\ell}(Z^m_{\mathrm{hom},\ell}(V))\subset \alpha^{m+1}_{W,\ell}(Z^{m+1}_f(W)).\]
The commutative diagram with exact rows
\[
\begin{tikzcd}
    0 \arrow[r]  & L^1 \arrow[r] \arrow[d] & L^0 \arrow[r] \arrow[d] & L^0/L^1 \arrow[r] \arrow[d,equal] & 0 \\
    0 \arrow[r] & L^1/L^2 \arrow[r] & L^0/L^2 \arrow[r] & L^0/L^1 \arrow[r] & 0
\end{tikzcd}
\]
yields a commutative diagram with exact rows
\[
\begin{tikzcd}
    H^1(k,L^1) \arrow[r] \arrow[d] & H^1(k,L^0) \arrow[r] \arrow[d,"\zeta_0"] & H^1(k,L^0/L^1) \arrow[r] \arrow[d,equal] & 0 \\
    H^1(k,L^1/L^2) \arrow[r,"\zeta_1"] & H^1(k,L^0/L^2) \arrow[r] & H^1(k,L^0/L^1) \arrow[r] & 0.
\end{tikzcd}
\]
We deduce
\[L^1H^1(k,L^0)=\zeta_0^{-1}(\zeta_1(H^1(k,L^1/L^2)))=\zeta_0^{-1}(\zeta_1(\alpha_{f,\ell}(Z^{m+1}_f(W)))),\]
where for the second equality we have used (ii). Since $\on{Ker}(\zeta_0)=L^2H^1(k,L^0)$ is contained in $\alpha_{W,\ell}^{m+1}(Z_f^{m+1}(W))$, this implies that
\[L^1H^1(k,L^0)\subset \alpha_{W,\ell}^{m+1}(Z_f^{m+1}(W)).\]
In order to conclude, it now suffices to prove the surjectivity of the composite
\[Z^{m+1}_f(W)\xlongrightarrow{\alpha_{W,\ell}^{m+1}} H^1(k,L^0) \longrightarrow H^1(k,L^0/L^1),\]
which is guaranteed by (iii).
\end{proof}
\subsection{An \texorpdfstring{$\ell$}{l}-adic Abel--Jacobi map associated to a constructible \texorpdfstring{$\ell$}{l}-adic sheaf}
Let $k$ be a finite field, and let $B$ be a smooth projective geometrically connected curve over $k$. Let $\mc{E}=\{\mc{E}_n\}_{n\geq 1}$ be a constructible $\ell$-adic sheaf, set $\mc{E}(1) \coloneqq \mc{E} \otimes \Z_\ell(1)$, and for every $x\in B(k)$ let
\[H^2_{\ov{x}}(B_{\ov{k}},\mc{E}(1))_0\coloneqq \on{Ker}[H^2_{\ov{x}}(B_{\ov{k}},\mc{E}(1))\to H^2(B_{\ov{k}},\mc{E}(1))].\]
Define
\[
Z(\mc{E})\coloneqq \bigoplus_{x \in B(k)} H^2_{\ov{x}}(B_{\ov{k}},\mc{E}(1))_0^{G_k}.
\]
Following \cite[\S 2.3]{schoen1999image}, we construct a \emph{relative $\ell$-adic Abel--Jacobi map associated to the sheaf} $\mc{E}$
\begin{equation}\label{eq:AJ_map_E_k}
\alpha_{\mc{E}} : Z(\mc{E}) \longrightarrow H^1(k, H^1(B_{\ov{k}}, \mc{E}(1)))
\end{equation}
as follows. For every $x\in B(k)$, letting $B'\coloneqq B\setminus\{x\}$, we have the short exact sequence of $G_k$-modules
\begin{equation}\label{eq:ses_AJ_E}
0 \longrightarrow H^1(B_{\ov{k}}, \mc{E}(1)) \longrightarrow H^1(B'_{\ov{k}}, \mc{E}(1)) \longrightarrow H^2_{\ov{x}}(B_{\ov{k}}, \mc{E}(1))_0 \longrightarrow 0.
\end{equation}
Given $z\in H^2_{\ov{x}}(B_{\ov{k}},\mc{E}(1))_0^{G_k}$, by definition $\alpha_{\mc{E}}(z)$ is the image of $z$ under the connecting map
\[
\delta_{\mc{E},x}: H^2_{\ov{x}}(B_{\ov{k}}, \mc{E}(1))_0^{G_k} \longrightarrow H^1(k, H^1(B_{\ov{k}}, \mc{E}(1)))
\]
induced by \eqref{eq:ses_AJ_E}.

Similarly, we have an exact sequence
\[
0 \longrightarrow H^1(B_{\ov{k}}, \mc{E}_1(1)) \longrightarrow H^1(B'_{\ov{k}}, \mc{E}_1(1)) \longrightarrow H^2_{\ov{x}}(B_{\ov{k}}, \mc{E}_1(1))_0 \longrightarrow 0
\]
and hence a connecting map
\[\delta_{\mc{E}_1,x}: H^2_{\ov{x}}(B_{\ov{k}}, \mc{E}_1(1))_0^{G_k} \longrightarrow H^1(k, H^1(B_{\ov{k}}, \mc{E}_1(1))).\]
The following surjectivity criterion for $\delta_{\mc{E}}$ will be useful.

\begin{lem}\label{schoen-2.4.3-finer}
    Suppose that $\mc{E}_n$ is flat over $\Z/\ell^n$ for every $n\geq 1$. Moreover, let $\dot{B}\subset B$ be a dense open subscheme of $B$, write $j\colon \dot{B}\hookrightarrow B$ for the corresponding open immersion, and assume that $H^0(\dot{B}_{\ov{k}},\mc{E}_1^\vee)=0$ and that the sheaf $j^*\mc{E}_n$ is locally constant for all $n\geq 1$. Suppose given $x_1,\dots,x_t\in \dot{B}(k)$ and $z_i\in H^2_{\ov{x}_i}(B_{\ov{k}},\mc{E}(1))_0^{G_k}$.    Let $\overline z_i$ denote the reduction of $z_i$ in
    $H^2_{\ov{x}_i}(B_{\ov{k}},\mc{E}_1(1))_0^{G_k}$. If the classes
    $\delta_{\mc{E}_1,x_i}(\overline z_i)$ generate
    $H^1(k,H^1(B_{\ov{k}},\mc{E}_1(1)))$, then the classes
    $\alpha_{\mc{E}}(z_i)$ generate the $\Z_\ell$-module
    $H^1(k,H^1(B_{\ov{k}},\mc{E}(1)))$.
\end{lem}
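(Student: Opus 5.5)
This is a Nakayama-type argument. Set $T\coloneqq H^1(k,H^1(B_{\ov{k}},\mc{E}(1)))$ and $T_1\coloneqq H^1(k,H^1(B_{\ov{k}},\mc{E}_1(1)))$. The plan is to show that $T$ is a finitely generated (indeed finite) $\Z_\ell$-module and that the natural map $T/\ell T\to T_1$ induced by reduction $\mc{E}\to\mc{E}_1$ is injective. We also check that this reduction is compatible with the connecting maps, i.e.\ $\alpha_{\mc{E}}(z_i)\mapsto \delta_{\mc{E}_1,x_i}(\overline z_i)$. Granting these, the images of the $\alpha_{\mc{E}}(z_i)$ in $T/\ell T$ map to generators of $T_1$. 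Since $T/\ell T\hookrightarrow T_1$, they generate $T/\ell T$, and Nakayama's lemma over $\Z_\ell$ shows that they generate $T$.

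\textbf{Compatibility.} The reduction maps $\mc{E}(1)\to\mc{E}_1(1)$ induce a morphism from the sequence \eqref{eq:ses_AJ_E} for $\mc{E}$ to the analogous sequence for $\mc{E}_1$. Naturality of connecting maps in Galois cohomology then gives $\delta_{\mc{E}_1,x_i}(\overline z_i)=\rho(\alpha_{\mc{E}}(z_i))$, where $\rho\colon T\to T_1$ is the induced map.

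\textbf{Injectivity of $T/\ell\to T_1$.} Flatness of each $\mc{E}_n$ over $\Z/\ell^n$ gives exact sequences $0\to\mc{E}_{n-1}\xrightarrow{\ell}\mc{E}_n\to\mc{E}_1\to0$. Passing to the limit, this yields a short exact sequence $0\to \mc{E}(1)\xrightarrow{\ell}\mc{E}(1)\to\mc{E}_1(1)\to 0$ of $\ell$-adic sheaves, with Mittag-Leffler holding because all groups are finite. The long exact sequence gives
\[
H^1(B_{\ov{k}},\mc{E}(1))/\ell\hookrightarrow H^1(B_{\ov{k}},\mc{E}_1(1)),
\]
with cokernel $H^2(B_{\ov{k}},\mc{E}(1))[\ell]$. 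Write $H\coloneqq H^1(B_{\ov{k}},\mc{E}(1))$. Over a finite field, $H^1(k,-)$ is computed by coinvariants $M/(\mathrm{Frob}-1)$, which is right exact. Hence $T/\ell T = H^1(k,H/\ell H)$, and it remains to show that $H^1(k,H/\ell H)\to H^1(k,H^1(B_{\ov k},\mc{E}_1(1)))$ is injective. From the sequence $0\to H/\ell\to H^1(\mc{E}_1(1))\to Q\to 0$, the kernel of this map is a quotient of $Q^{G_k}$, where $Q\subset H^2(B_{\ov{k}},\mc{E}(1))[\ell]$. It therefore suffices to show $H^2(B_{\ov{k}},\mc{E}(1))[\ell]=0$. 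By the same $\ell$-multiplication sequence one degree up, this follows from $H^1(B_{\ov{k}},\mc{E}_1(1))\to H^2(B_{\ov{k}},\mc{E}(1))$ being zero, or more simply from $H^2(B_{\ov k},\mc{E}(1))$ being torsion-free. I expect this to be the main obstacle.

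\textbf{Vanishing of $H^2[\ell]$.} The plan is to use the hypotheses $H^0(\dot B_{\ov k},\mc{E}_1^\vee)=0$ and local constancy on $\dot B$. First, $H^2(B_{\ov k},\mc{F})$ for a constructible sheaf $\mc{F}$ only sees $j_!j^*$, since skyscraper parts have no $H^2$. For the locally constant sheaf $j^*\mc{E}_n$, Poincaré duality gives $H^2_c(\dot B_{\ov k},j^*\mc{E}_n(1))\cong H^0(\dot B_{\ov k},j^*\mc{E}_n^\vee)^\vee$. Filtering $\mc{E}_n^\vee$ by $\ell$-powers via flatness, the vanishing of $H^0(\mc{E}_1^\vee)$ forces $H^0(\dot B_{\ov k},\mc{E}_n^\vee)=0$ for all $n$. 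Hence $H^2(B_{\ov{k}},\mc{E}_n(1))=0$ for all $n$, so $H^2(B_{\ov k},\mc{E}(1))=0$, and the argument above applies.

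If this route gives trouble, I would instead show directly that $H^2(B_{\ov k},\mc{E}_1(1))=0$ by the same duality. The long exact sequence then makes $H/\ell\to H^1(\mc{E}_1(1))$ an isomorphism with $Q=0$. Finally, $T$ is finite, being $H^1$ of a finite field with coefficients in a finitely generated $\Z_\ell$-module, so Nakayama's lemma applies.
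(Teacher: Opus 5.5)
Your proposal is correct and is essentially the paper's argument: compatibility of the connecting maps with reduction mod $\ell$, the isomorphism $H^1(B_{\ov{k}},\mc{E}(1))/\ell\simeq H^1(B_{\ov{k}},\mc{E}_1(1))$ (which the paper cites from Schoen's Lemma (2.4.1) and you reprove via the vanishing of $H^2(B_{\ov{k}},\mc{E}_n(1))$ by Poincar\'e duality), right exactness of Frobenius coinvariants to get $T/\ell T\simeq T_1$, and Nakayama. One harmless slip: Nakayama only needs $T$ to be of finite type, which holds because it is a quotient of $H$; your stated reason for $T$ being finite is false in general, since e.g.\ $H^1(k,\Z_\ell)\simeq\Z_\ell$.
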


\begin{proof}
By \cite[Lemma (2.4.1)]{schoen1999image}, the reduction map $H^1(B_{\ov{k}},\mc{E}(1))\to H^1(B_{\ov{k}},\mc{E}_1(1))$ induces an isomorphism
    \[H^1(B_{\ov{k}},\mc{E}(1))/\ell\xlongrightarrow{\sim} H^1(B_{\ov{k}},\mc{E}_1(1)).\]
    Since $k$ is a finite field, this implies the surjectivity of the right vertical map in the commutative square
    \[
    \begin{tikzcd}
        H^2_{\ov{x}}(B_{\ov{k}}, \mc{E}(1))_0^{G_k} \arrow[d] \arrow[r,"\delta_{\mc{E},x}"] &  H^1(k, H^1(B_{\ov{k}}, \mc{E}(1))) \arrow[d,->>] \\
        H^2_{\ov{x}}(B_{\ov{k}}, \mc{E}_1(1))_0^{G_k} \arrow[r,"\delta_{\mc{E}_1,x}"] &  H^1(k, H^1(B_{\ov{k}}, \mc{E}_1(1)))
    \end{tikzcd}
    \]
    for every $x\in \dot{B}(k)$.
    It follows that the $\alpha_{\mc{E}}(z_i)$ generate $H^1(k,H^1(B_{\ov{k}},\mc{E}(1)))/\ell$. Since $H^1(k,H^1(B_{\ov{k}},\mc{E}(1)))$ is a $\Z_\ell$-module of finite type, the conclusion follows from Nakayama's lemma.
\end{proof}

Now, as in \S\ref{paragraph-relative-aj}, let $W$ be a smooth proper variety of dimension $2m+1$ for some $m\geq 1$, and let $f\colon W \to B$ be a flat, generically smooth morphism. Consider the $\ell$-adic sheaf $\mc{H}\coloneqq R^{2m}f_*\Z_\ell(m+1)$, and suppose that $\mc{E}$ is a direct summand of $\mc{H}(-1)$. Fix a projection $q\colon \mc{H}(-1) \to \mc{E}$. We obtain a map \begin{equation}\label{q-sharp}q_{\#}\colon  Z^{m+1}_f(W) \longrightarrow Z(\mc{E})\end{equation} which, for every $x\in B(k)$ and every $z\in Z^m(f^{-1}(x))_0$, sends $z$ to its image under the composite
\[Z^m(f^{-1}(x))_0\to H^{2m+2}_{f^{-1}(x)_{\ov{k}}}(W_{\ov{k}},\Z_\ell(m+1))^{G_k}\xrightarrow{\sim} H^2_{\ov{x}}(B_{\ov{k}},\mc{H})^{G_k}_0\xrightarrow{q_*} H^2_{\ov{x}}(B_{\ov{k}},\mc{E}(1))_0^{G_k}.\]
It also induces a map \[q_{*}\colon H^1(k, L^1/L^2)\xlongrightarrow{\sim} H^1(k,H^1(B_{\ov{k}},\mc{H})) \longrightarrow H^1(k, H^1(B_{\ov{k}}, \mc{E}(1))).\]

\begin{lem}\label{lem:comp_f_E}
We have a commutative square
\[
\begin{tikzcd}
    Z_f^{m+1}(W) \arrow[r,"\alpha_{f,\ell}"] \arrow[d,"q_{\#}"] & H^1(k,L^1/L^2) \arrow[d,"q_*"] \\
    Z(\mc{E}) \arrow[r,"\alpha_{\mc{E}}"] & H^1(k,H^1(B_{\ov{k}},\mc{E}(1))).
\end{tikzcd}
\]
\end{lem}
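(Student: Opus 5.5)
By additivity, it suffices to check commutativity on a single cycle $z\in Z^m(f^{-1}(x))_0$ supported on a smooth fiber $V=f^{-1}(x)$, with $x\in\dot B(k)$. The plan is to compare the two short exact sequences of $G_k$-modules whose connecting maps define $\alpha_{f,\ell}$ and $\alpha_{\mc{E}}$. These are \eqref{eq:ses_AJ_f} for $f$ and \eqref{eq:ses_AJ_E} for $\mc{E}$ at the point $x$. Once they are linked by a morphism of extensions compatible with $q$, naturality of connecting homomorphisms in Galois cohomology gives the claim.

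\emph{Step 1: rewrite \eqref{eq:ses_AJ_f} in terms of $\mc{H}$.} Set $B'=B\setminus\{x\}$. Consider the Leray spectral sequences for $f_{\ov k}$ and $f'_{\ov k}$. Since $(L')^2=0$ by Artin vanishing, there are identifications
\[
L^1/L^2\simeq H^1(B_{\ov k},R^{2m}f_*\Z_\ell(m+1))=H^1(B_{\ov k},\mc{H})
\]
and $(L')^1\simeq H^1(B'_{\ov k},\mc{H})$. On the support term, the isomorphism
\[
H^{2m+2}_{V_{\ov k}}(W_{\ov k},\Z_\ell(m+1))\simeq H^2_{\ov x}(B_{\ov k},\mc{H})
\]
comes from the Leray spectral sequence with supports. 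This is the same identification already used in defining $q_\#$. It holds because $f$ is smooth near $V$, so the local terms from $R^{2m+1}f_*$ and $R^{2m-1}f_*$ in degrees $1,3$ with support in a point vanish.

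I would then show that, under these identifications, \eqref{eq:ses_AJ_f} becomes the localization sequence
\[
0\to H^1(B_{\ov k},\mc{H})\to H^1(B'_{\ov k},\mc{H})\to H^2_{\ov x}(B_{\ov k},\mc{H})_0\to 0.
\]
This is essentially how Schoen derives \cite[(2.1.5)]{schoen1999image}, so I would invoke that derivation. The point to check is that the excision map on $W$ corresponds, via the edge maps of the Leray spectral sequences, to the excision map on $B$ for the sheaf $\mc{H}$. This follows from functoriality of the Leray spectral sequence with respect to the triangle $\Gamma_V\to\mathrm{id}\to j_*j^*$, since $R\Gamma_V(W,-)=R\Gamma_{\ov x}(B,Rf_*-)$.

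\emph{Step 2: apply $q$.} Applying the projection $q\colon\mc{H}\to\mc{E}(1)$, which is a morphism of sheaves, to the localization sequence above gives a morphism of short exact sequences to \eqref{eq:ses_AJ_E}. The outer vertical maps are exactly $q_*$ on $H^1(B_{\ov k},-)$ and $q_*$ on $H^2_{\ov x}(B_{\ov k},-)_0$. Here, surjectivity on the right of \eqref{eq:ses_AJ_E} uses $H^2(B'_{\ov k},-)=0$. Naturality of the connecting map then shows that $\delta_{\mc{E},x}\circ q_*=q_*\circ\delta_f$ on $G_k$-invariants. Evaluating this identity at $\lfloor z\rfloor_V$ gives $\alpha_{\mc{E}}(q_\#z)=q_*\alpha_{f,\ell}(z)$, by the very definition of $q_\#$ and of the map $q_*$ on $H^1(k,L^1/L^2)$.

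\emph{Main obstacle.} The only nontrivial point is Step 1: the compatibility of the excision boundary on $W$ with the localization boundary on $B$ under the Leray identifications, including the precise matching of $L^1/L^2\to(L')^1$ with restriction $H^1(B_{\ov k},\mc{H})\to H^1(B'_{\ov k},\mc{H})$. I would handle this at the level of derived categories. Write the excision triangle $R\Gamma_{\ov x}(B,Rf_*\Z_\ell)\to R\Gamma(B,Rf_*\Z_\ell)\to R\Gamma(B',Rf_*\Z_\ell)$ and apply the truncation $\tau_{\le 2m}$ and the projection to $\mathcal{H}^{2m}$. It is safest to work at finite level $\mu_{\ell^n}$ and then pass to the limit by Mittag-Leffler, as done earlier in the paper.
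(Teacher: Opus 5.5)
Your proposal is correct and follows essentially the same route as the paper. You reduce to a single cycle on one smooth fiber and use the Leray identifications $L^1/L^2\simeq H^1(B_{\ov{k}},\mc{H})$ and $H^{2m+2}_{V_{\ov{k}}}(W_{\ov{k}},\Z_\ell(m+1))_0\simeq H^2_{\ov{x}}(B_{\ov{k}},\mc{H})_0$ to obtain a morphism of short exact sequences from \eqref{eq:ses_AJ_f} to \eqref{eq:ses_AJ_E}, induced by $q(1)$ and by restriction to $B'$, and you conclude by naturality of connecting homomorphisms; your Step~1 merely spells out, via the excision triangle and $R\Gamma_{V}(W,-)=R\Gamma_{\ov{x}}(B,Rf_*-)$, the compatibility that the paper asserts directly following Schoen's Lemma~(2.3.5).
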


\begin{proof}
The proof is analogous to that of \cite[Lemma (2.3.5)]{schoen1999image}.
By additivity, it is enough to prove the assertion for a cycle
$z\in Z^m(f^{-1}(x))_0$ supported on a single smooth fiber
$V=f^{-1}(x)$, with $x\in \dot{B}(k)$. Set $B'\coloneqq B-\{x\}$
and $W'\coloneqq W-V$.

Recall that $\mc{H}=R^{2m}f_*\Z_\ell(m+1)$ and that
$q\colon \mc{H}(-1)\to \mc{E}$ induces, by twisting, a map
\[
q(1)\colon \mc{H}\longrightarrow \mc{E}(1).
\]
By the definition of $q_{\#}$, the element $q_{\#}(z)$ is the image of the
fundamental class
\[
\lfloor z\rfloor_V\in
H^{2m+2}_{V_{\bar{k}}}(W_{\bar{k}},\Z_\ell(m+1))_0^{G_k}
\]
under the composite
\[
H^{2m+2}_{V_{\bar{k}}}(W_{\bar{k}},\Z_\ell(m+1))_0^{G_k}
\xlongrightarrow{\sim}
H^2_{\bar{x}}(B_{\bar{k}},\mc{H})_0^{G_k}
\xlongrightarrow{q(1)_*}
H^2_{\bar{x}}(B_{\bar{k}},\mc{E}(1))_0^{G_k},
\]
where the first map is the identification used in the definition
of $q_{\#}$.

The map $q(1)\colon \mc{H}\to \mc{E}(1)$ induces a morphism from the
relative Abel--Jacobi exact sequence \eqref{eq:ses_AJ_f} to the exact
sequence \eqref{eq:ses_AJ_E}. More explicitly, using the identifications
\[
L^1/L^2\simeq H^1(B_{\bar{k}},\mc{H})
\]
and
\[
H^{2m+2}_{V_{\bar{k}}}(W_{\bar{k}},\Z_\ell(m+1))_0
\simeq H^2_{\bar{x}}(B_{\bar{k}},\mc{H})_0,
\]
we obtain a commutative diagram of short exact sequences of $G_k$-modules
\[
\begin{tikzcd}[column sep=small]
0 \arrow[r] &
L^1/L^2 \arrow[r] \arrow[d,"q_*"] &
(L')^1 \arrow[r] \arrow[d] &
H^{2m+2}_{V_{\bar{k}}}(W_{\bar{k}},\Z_\ell(m+1))_0
\arrow[r] \arrow[d,"q(1)_*"] &
0 \\
0 \arrow[r] &
H^1(B_{\bar{k}},\mc{E}(1)) \arrow[r] &
H^1(B'_{\bar{k}},\mc{E}(1)) \arrow[r] &
H^2_{\bar{x}}(B_{\bar{k}},\mc{E}(1))_0
\arrow[r] &
0.
\end{tikzcd}
\]
The middle vertical map is induced by $q(1)$ and by the restriction from
$B_{\bar{k}}$ to $B'_{\bar{k}}$.

By definition, $\alpha_{f,\ell}(z)$ is the image of $\lfloor z\rfloor_V$
under the connecting homomorphism associated with the first row
\eqref{eq:ses_AJ_f}. Similarly, $\alpha_{\mc{E}}(q_{\#}(z))$ is the image
of $q_{\#}(z)$ under the connecting homomorphism associated with the second
row \eqref{eq:ses_AJ_E}. Since the diagram above is commutative, the
functoriality of connecting homomorphisms in Galois cohomology gives $q_*(\alpha_{f,\ell}(z))=\alpha_{\mc{E}}(q_{\#}(z))$, as desired.
\end{proof}

\subsection{A localization compatibility for Abel--Jacobi constructions}

The Abel--Jacobi maps considered above are defined using connecting homomorphisms arising from localization sequences. We conclude this section by recording a compatibility between such a localization coboundary map and the Hochschild--Serre edge maps for ordinary cohomology and cohomology with support. This compatibility will be used in the proof of \Cref{schoen-unified}.

\begin{lem}\label{lem:localization-hochschild-serre}
Let $k$ be a finite field, let $\ell\neq\operatorname{char}(k)$ be a prime, let $B$ be a smooth projective geometrically connected curve over $k$, and let $\mathcal F$ be a constructible $\F_\ell$-sheaf on $B$. Let $x\in B$ be a closed point such that $\mathcal F$ is locally constant in a neighborhood of $x$, and set $U_x\coloneqq B\setminus\{x\}$. Assume that $H^2(B_{\ov{k}},\mathcal F)=0$. 

\begin{enumerate}
\item The geometric localization sequence induces a short exact sequence
\begin{equation}\label{eq:localization-H1-H2}
0\longrightarrow H^1(B_{\ov{k}},\mathcal F)\longrightarrow H^1((U_x)_{\ov{k}},\mathcal F) \longrightarrow H^2_{\ov{x}}(B_{\ov{k}},\mathcal F) \longrightarrow 0.
\end{equation}
Let \[\delta_x\colon H^2_{\ov{x}}(B_{\ov{k}},\mathcal F)^{G_k} \longrightarrow H^1(k,H^1(B_{\ov{k}},\mathcal F))\] be the connecting homomorphism in Galois cohomology associated with \eqref{eq:localization-H1-H2}.

\item The Hochschild--Serre spectral sequences for ordinary cohomology and cohomology with support of $\mathcal F$ give canonical isomorphisms
\[
\epsilon_x\colon
H^2_x(B,\mathcal F)
\xlongrightarrow{\sim}
H^2_{\ov{x}}(B_{\ov{k}},\mathcal F)^{G_k}
\]
and
\[
\epsilon_B\colon
H^2(B,\mathcal F)
\xlongrightarrow{\sim}
H^1(k,H^1(B_{\ov{k}},\mathcal F)).
\]

\item With respect to these isomorphisms, the following diagram commutes:
\[
\begin{tikzcd}
H^2_x(B,\mathcal F)
    \arrow[r,"i_{x*}"]
    \arrow[d,"\epsilon_x"']
&
H^2(B,\mathcal F)
    \arrow[d,"\epsilon_B"]
\\
H^2_{\ov{x}}(B_{\ov{k}},\mathcal F)^{G_k}
    \arrow[r,"\delta_x"]
&
H^1(k,H^1(B_{\ov{k}},\mathcal F)).
\end{tikzcd}
\]
\end{enumerate}
\end{lem}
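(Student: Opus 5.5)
For part (1), I would write the geometric localization sequence for the pair $(B_{\ov{k}}, (U_x)_{\ov{k}})$:
\[
H^1_{\ov{x}}(B_{\ov{k}},\mathcal F)\to H^1(B_{\ov{k}},\mathcal F)\to H^1((U_x)_{\ov{k}},\mathcal F)\to H^2_{\ov{x}}(B_{\ov{k}},\mathcal F)\to H^2(B_{\ov{k}},\mathcal F).
\]
Since $\mathcal F$ is locally constant near $x$, purity for the codimension-one point identifies $H^i_{\ov{x}}(B_{\ov{k}},\mathcal F)$ with $\bigoplus_{\ov{x}}\mathcal F_{\ov{x}}(-1)$ placed in degree $i=2$. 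In particular the group vanishes for $i=1$. The right-hand term is zero by the hypothesis $H^2(B_{\ov{k}},\mathcal F)=0$, and this gives \eqref{eq:localization-H1-H2}.

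For part (2), I would use the Hochschild--Serre spectral sequences
\[
H^i(k,H^j(B_{\ov{k}},\mathcal F))\Rightarrow H^{i+j}(B,\mathcal F),
\qquad
H^i(k,H^j_{\ov{x}}(B_{\ov{k}},\mathcal F))\Rightarrow H^{i+j}_x(B,\mathcal F).
\]
The second one comes from writing $R\Gamma_x(B,-)=R\Gamma(k,R\Gamma_{\ov{x}}(B_{\ov{k}},-))$. Since $\mathrm{cd}(k)=1$, both sequences have only two nonzero columns.

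For supports, $H^j_{\ov{x}}$ vanishes for $j\ne 2$, so $H^2_x(B,\mathcal F)\simeq H^0(k,H^2_{\ov{x}})$; this is $\epsilon_x$. For $B$, the hypothesis $H^2(B_{\ov{k}},\mathcal F)=0$ kills the $(0,2)$ term, so the filtration on $H^2(B,\mathcal F)$ has the single graded piece $H^1(k,H^1(B_{\ov{k}},\mathcal F))$; this is $\epsilon_B$.

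For part (3), I would work at the level of complexes. Let $C=R\Gamma(B_{\ov{k}},\mathcal F)$, $C'=R\Gamma((U_x)_{\ov{k}},\mathcal F)$ and $C_x=R\Gamma_{\ov{x}}(B_{\ov{k}},\mathcal F)$. These are complexes of discrete $G_k$-modules, and they sit in a $G_k$-equivariant distinguished triangle
\[
C_x\to C\to C'\xrightarrow{+1}.
\]
Applying $R\Gamma(k,-)$ gives the arithmetic localization triangle, and $i_{x*}$ is $R\Gamma(k,C_x\to C)$ in degree $2$.

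Next, the cohomology of the complexes is concentrated in degrees $\le 1$ (for $C$ and $C'$) and in degree $2$ (for $C_x$), the latter being $M\coloneqq H^2_{\ov{x}}$. Using truncations, the map $C_x\to C$ is then the same as a map
\[
M[-2]\to \tau_{\le1}C=C.
\]
Since $H^2(C)=0$, this map factors through $\tau_{\ge1}C\simeq H^1(C)[-1]$. The resulting morphism $M[-2]\to H^1(C)[-1]$ is an element of $\on{Ext}^1_{G_k}(M,H^1(C))$, and I claim it is exactly the extension class of \eqref{eq:localization-H1-H2}, possibly up to a sign convention. Indeed, rotating the triangle, $C'$ is the cone of $C_x\to C$, and its $H^1$ is the middle term of \eqref{eq:localization-H1-H2}.

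Given this, the composite
\[
H^2_x(B,\mathcal F)=H^0(k,M)\to H^2(B,\mathcal F)\to H^2(k,\tau_{\ge1}C)=H^1(k,H^1(C))
\]
is the map induced on $H^0(k,-)\to H^1(k,-)$ by that extension class, which is the connecting map $\delta_x$. The step $H^2(B,\mathcal F)\to H^2(k,\tau_{\ge1}C)$ is the edge map $\epsilon_B$, since $H^2(k,H^0(C))=0$. This yields the commutative square.

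The main obstacle is making this identification rigorous: namely, identifying the edge maps of the Hochschild--Serre spectral sequences with the maps induced by truncation, and checking that the Yoneda class of a morphism $M[-2]\to N[-1]$ acts on $H^0(k,-)$ as the Galois connecting homomorphism. I would try to do this directly, by choosing injective resolutions compatible with the triangle, if possible via a short exact sequence of complexes $0\to \Gamma_x\to\Gamma\to\Gamma'$ of $G_k$-modules. I would then compute the connecting map by hand on cocycles evaluated at Frobenius, keeping track of the sign.
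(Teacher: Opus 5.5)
Your proposal is correct and follows essentially the same route as the paper: purity together with $H^2(B_{\ov{k}},\mathcal F)=0$ for (1), degeneration of the two Hochschild--Serre spectral sequences (via $\operatorname{cd}_\ell(k)=1$ and purity) for (2), and for (3) applying $R\Gamma(k,-)$ to the $G_k$-equivariant localization triangle and recognizing the degree-$2$ map as the connecting homomorphism of \eqref{eq:localization-H1-H2}, viewed as a morphism $H^2_{\ov{x}}(B_{\ov{k}},\mathcal F)\to H^1(B_{\ov{k}},\mathcal F)[1]$. Your truncation step, which uses $H^2(k,H^0(B_{\ov{k}},\mathcal F))=0$ to identify the edge map $\epsilon_B$, gives slightly more detail than the paper, and the paper likewise does not track the sign convention you flag.
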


\begin{proof}
(1) Since $\mathcal F$ is locally constant in a neighborhood of $x$ and
$B_{\ov{k}}$ is smooth of dimension one, purity \cite[VI.5.1]{milne1980etale} gives $H^q_{\ov{x}}(B_{\ov{k}},\mathcal F)=0$ for all $q\neq 2$. The localization long exact sequence \cite[III, Proposition~1.25]{milne1980etale}, together with the assumption $H^2(B_{\ov{k}},\mathcal F)=0$, therefore gives \eqref{eq:localization-H1-H2}.

(2) We first construct $\epsilon_B$. Consider the Hochschild--Serre spectral sequence for the ordinary cohomology of $\mathcal F$. Since $k$ is finite, $\operatorname{cd}_\ell(k)=1$. Thus, in degree $2$, the only
possibly non-zero terms are $H^2(B_{\ov{k}},\mathcal F)^{G_k}$, which vanishes by assumption, and $H^1(k,H^1(B_{\ov{k}},\mathcal F))$. Hence the edge map gives an isomorphism
\[
H^1(k,H^1(B_{\ov{k}},\mathcal F))
\xlongrightarrow{\sim}
H^2(B,\mathcal F),
\]
and we denote its inverse by $\epsilon_B$.

We now construct $\epsilon_x$. Consider the Hochschild--Serre spectral sequence for the cohomology with support of $\mathcal F$. By purity, $H^0_{\ov{x}}(B_{\ov{k}},\mathcal F)=H^1_{\ov{x}}(B_{\ov{k}},\mathcal F)=0$. Thus, in total degree $2$, the only non-zero term is $H^2_{\ov{x}}(B_{\ov{k}},\mathcal F)^{G_k}$. It follows that restriction to $\ov{k}$ gives the canonical isomorphism
\[
\epsilon_x\colon
H^2_x(B,\mathcal F)
\xlongrightarrow{\sim}
H^2_{\ov{x}}(B_{\ov{k}},\mathcal F)^{G_k}.
\]

(3) Consider the $G_k$-equivariant localization triangle
\[
R\Gamma_{\ov{x}}(B_{\ov{k}},\mathcal F)
\longrightarrow
R\Gamma(B_{\ov{k}},\mathcal F)
\longrightarrow
R\Gamma((U_x)_{\ov{k}},\mathcal F)
\xlongrightarrow{+1}.
\]
Applying the derived functor of $G_k$-invariants gives the localization
triangle over $k$. The morphism $R\Gamma_{\ov{x}}(B_{\ov{k}},\mathcal F)
\to R\Gamma(B_{\ov{k}},\mathcal F)$ therefore induces the map
\[
i_{x*}\colon
H^2_x(B,\mathcal F)
\longrightarrow
H^2(B,\mathcal F).
\]
By purity and the assumption $H^2(B_{\ov{k}},\mathcal F)=0$, the
cohomology exact sequence of the localization triangle yields a short
exact sequence
\[
0\longrightarrow
H^1(B_{\ov{k}},\mathcal F)
\longrightarrow
H^1((U_x)_{\ov{k}},\mathcal F)
\longrightarrow
H^2_{\ov{x}}(B_{\ov{k}},\mathcal F)
\longrightarrow0.
\]
Equivalently, this extension determines a morphism $H^2_{\ov{x}}(B_{\ov{k}},\mathcal F)
\to H^1(B_{\ov{k}},\mathcal F)[1]$ in the derived category. After applying $R\Gamma(k,-)$, the induced map in degree $2$ is, under the identifications of (2), of the form
\[
H^2_{\ov{x}}(B_{\ov{k}},\mathcal F)^{G_k}
\longrightarrow
H^1(k,H^1(B_{\ov{k}},\mathcal F)).
\]
By the standard description of the connecting homomorphism associated
with the short exact sequence above, this map sends $q\in H^2_{\ov{x}}(B_{\ov{k}},\mathcal F)^{G_k}$
to the cohomology class of the $1$-cocycle $\sigma\mapsto \sigma w-w$, where $w\in H^1((U_x)_{\ov{k}},\mathcal F)$ is any lift of $q$. Hence this map is precisely the connecting homomorphism
\[
\delta_x\colon
H^2_{\ov{x}}(B_{\ov{k}},\mathcal F)^{G_k}
\longrightarrow
H^1(k,H^1(B_{\ov{k}},\mathcal F))
\]
associated with \eqref{eq:localization-H1-H2}. It follows that $\epsilon_B\circ i_{x*}=\delta_x\circ\epsilon_x$.
\end{proof}

\section{A strengthening of Schoen's non-vanishing criteria}\label{sec:connecting-map}

Let $k_0$ be a finite field of characteristic $p>2$, let $\ell\neq p$ be a prime, let $B$ be a smooth projective geometrically connected curve over $k_0$, let $\dot{B}\subset B$ be a dense affine open subscheme, and let $g\colon \eta\to  B$ denote the inclusion of the generic point. Let $M$ be a finite-dimensional $\F_\ell$-vector space, and let $\kappa\colon G_{k_0(B)}\to \on{GL}(M)$ be a continuous homomorphism.

To each finite extension $k/k_0$, we associate the following data. Consider the short exact sequence
\begin{equation}\label{eq:galois-b}1\longrightarrow G_{\ov{k}(B)}\longrightarrow G_{k(B)}\longrightarrow G_k \longrightarrow 1.\end{equation}
Let $\kappa_k\colon G_{k(B)}\to \on{GL}(M)$ and $\ov{\kappa}\colon G_{\ov{k}(B)} \to \on{GL}(M)$ be the restrictions of $\kappa$ to $G_{k(B)}$ and $G_{\ov{k}(B)}$, respectively. Let $\Gamma_{\mathrm{arith},k}\coloneqq \mathrm{Im}(\kappa_k)$ and $\Gamma_{\mathrm{geom}}\coloneqq \mathrm{Im}(\ov{\kappa})$ be the arithmetic monodromy and geometric monodromy groups of $M$, respectively. Since $G_{\ov{k}(B)}$ is normal in $G_{k(B)}$, the group $\Gamma_{\mathrm{geom}}$ is normal in $\Gamma_{\mathrm{arith},k}$. Moreover, letting $k_c$ be the algebraic closure of $k$ in $\ov{k(B)}^{\on{Ker}\kappa_k}$, sequence \eqref{eq:galois-b} fits into a commutative diagram with exact rows
\begin{equation}\label{eq:monodromy-diagram}
\begin{tikzcd}
    1 \arrow[r] & G_{\ov{k}(B)} \arrow[d,->>,"\ov{\kappa}"] \arrow[r] & G_{k(B)} \arrow[r] \arrow[d,->>,"\kappa_k"] & G_k \arrow[r]  \arrow[d,->>] & 1 \\
    1 \arrow[r] & \Gamma_{\mathrm{geom}} \arrow[r] & \Gamma_{\mathrm{arith},k} \arrow[r] & \operatorname{Gal}(k_c/k) \arrow[r] & 1 
\end{tikzcd}
\end{equation}

We let $\rho\colon C\to B_k$ be a surjective morphism from a smooth projective connected curve over $k$ such that $k(C)=\ov{k(B)}^{\on{Ker}\kappa_k}$ as subfields of $\overline{k(B)}$, and we let $\Sigma\subset B_k$ be the branch locus of $\rho$. The curve $C$ and the morphism $\rho$ are uniquely determined up to isomorphism. The morphism $\rho\colon C\to B_k$ is a Galois cover with Galois group $\Gamma_{\mathrm{arith},k}$, and each of the $[k_c:k]$ connected components of $\rho_{\ov{k}}\colon C_{\ov{k}}\to B_{\ov{k}}$ is a Galois cover with Galois group $\Gamma_{\mathrm{geom}}$. In particular, $k$ is algebraically closed in $k(C)$ if and only if $\Gamma_{\mathrm{arith},k}=\Gamma_{\mathrm{geom}}$. When this is the case, we will write $\Gamma$ for $\Gamma_{\mathrm{arith},k}=\Gamma_{\mathrm{geom}}$; see also \Cref{schoen-situation-projective}(1) below.

View $M$ as an \'etale sheaf over $\eta$, and consider the \'etale sheaf $\mc{M}\coloneqq g_*M$ over $B$. For every $x\in B_k(k)$ we have an exact sequence of $G_k$-modules
\[0\longrightarrow H^1(B_{\ov{k}},\mc{M})\longrightarrow H^1((B_k\setminus \{x\})_{\ov{k}},\mc{M})\longrightarrow H^2_{\ov{x}}(B_{\ov{k}},\mc{M})_0\longrightarrow 0,\]
where \[H^2_{\ov{x}}(B_{\ov{k}},\mc{M})_0\coloneqq \on{Ker}[H^2_{\ov{x}}(B_{\ov{k}},\mc{M})\to H^2(B_{\ov{k}},\mc{M})],\]
and hence a coboundary map
\begin{equation}\label{coboundary-situation}\delta_x\colon H^2_{\ov{x}}(B_{\ov{k}},\mc{M})_0^{G_k}\longrightarrow H^1(k,H^1(B_{\ov{k}},\mc{M})).
\end{equation}

For every $x_0\in B_k(k)$ and $c_0\in \rho^{-1}(x_0)(k)$, if they exist, we may consider the cartesian square
\begin{equation}\label{c-breve}
    \begin{tikzcd}
        \breve{C} \arrow[r] \arrow[d] &  \on{Pic}^0(C) \arrow[d,"\times \ell"]  \\
        C \arrow[r,"i_0"] & \on{Pic}^0(C),
    \end{tikzcd}
\end{equation}
where the map $i_0\colon C\to\on{Pic}^0(C)$ is given by $c\mapsto\mc{O}_C(c-c_0)$. Under Assumptions
\ref{schoen-situation-projective}(1,4,5) below, the lemma of \cite[p.~806]{schoen1999image} shows that $k(\breve C)/k(B)$ is Galois.

Whenever $C$ is geometrically connected, for every geometric point $b\in B_{\ov{k}}$ let $I_b\subset\Gamma$ denote the image of a geometric inertia subgroup in $\Gamma$. The subgroup $I_b\subset\Gamma$ is well-defined up to conjugacy, and it is trivial if $b\in(\dot B_k\setminus\Sigma)_{\ov{k}}$.

\begin{ass}\label{schoen-situation-projective}
Consider the following assumptions on $k$.
\begin{enumerate}
\item[(1)] The field $k$ is algebraically closed in $k(C)=\ov{k(B)}^{\on{Ker}\kappa_k}$, that is, $C$ is geometrically connected over $k$. Thus $\Gamma_{\mathrm{geom}}=\Gamma_{\mathrm{arith},k}$, and we let
\[
\Gamma\coloneqq\Gamma_{\mathrm{geom}}=\Gamma_{\mathrm{arith},k}\subset\operatorname{GL}(M).
\]
\item[(2)] The field $k$ contains a primitive $\ell$-th root of unity.
\item[(3)] There exists $x_0\in B_k(k)$ such that every closed point of $\rho^{-1}(x_0)\subset C$ has residue field $k$. We fix one such point $x_0$ and a point $c_0\in\rho^{-1}(x_0)(k)$.
\item[(4)] For every $b_0,b_0'\in\rho^{-1}(x_0)(k)$, the class $b_0-b_0'\in\operatorname{Pic}^0(C)(k)$ is divisible by $\ell$. Equivalently, for every $b_0\in\rho^{-1}(x_0)(k)$, the class $b_0-c_0$ is divisible by $\ell$ in $\operatorname{Pic}^0(C)(k)$.
\item[(5)] The $G_k$-action on $H^1(C_{\ov{k}},\mu_\ell)$ is trivial.
\end{enumerate}
Fix a dense open subscheme $U\subset \dot B_k\setminus\Sigma$. We further assume:
\begin{enumerate}
\item[(6)] For every conjugacy class $F\subset\operatorname{Gal}(k(\breve C)/k(B))$, there exists $x\in U(k)$ such that $F=\operatorname{Frob}_x$.
\end{enumerate}
Let $M^\vee\coloneqq\operatorname{Hom}_{\F_\ell}(M,\F_\ell)$. We also assume:
\begin{enumerate}
\item[(7)] The representation $\kappa_k$ is tamely ramified, that is, it factors through $\pi_1^t(\dot B_k)\to\operatorname{GL}(M)$.
\item[(8)] $(M^\vee)^\Gamma=0$.
\item[(9)] $M$ is irreducible as an $\F_\ell[\Gamma]$-module.
\item[(10)] There exists $\xi\in\Gamma$ of order prime to $\ell$ such that $M^{\langle\xi\rangle}\simeq\F_\ell$.
\end{enumerate}
\end{ass}

\begin{rem}
Assumptions~\ref{schoen-situation-projective}(1)--\ref{schoen-situation-projective}(8) are Schoen's assumptions \cite[(3.2.1)--(3.2.6), (3.3.1)--(3.3.2)]{schoen1999image}. Assumption~\ref{schoen-situation-projective}(9) is weaker than \cite[(3.3.3)]{schoen1999image}, where it is assumed that $M$ is absolutely irreducible, that is, that $M\otimes_{\F_\ell}\ov{\F}_\ell$ is an irreducible $\ov{\F}_\ell[\Gamma]$-module.
In place of our Assumption~\ref{schoen-situation-projective}(10), Schoen assumes the stronger conditions \cite[(3.3.4)-(3.3.5)]{schoen1999image}, that is, $H^1(\Gamma,M)=0, H^1(\Gamma,M^\vee)=0$.
\end{rem}

Suppose that Assumptions~\ref{schoen-situation-projective}(1)--\ref{schoen-situation-projective}(9) hold. Then $\breve C$ is geometrically connected and $k(\breve C)/k(B)$ is Galois; see \cite[Lemma p.~806]{schoen1999image}. Let
\[
G\coloneqq\operatorname{Gal}(k(\breve C)/k(B)),\qquad
A\coloneqq\operatorname{Gal}(k(\breve C)/k(C)).
\]
Then $A$ is an elementary abelian $\ell$-group and there is an exact
sequence
\begin{equation}\label{eq:a-g-gamma}
1\longrightarrow A\longrightarrow G\longrightarrow\Gamma
\longrightarrow1.
\end{equation}

For every finite-dimensional $\F_\ell[\Gamma]$-module $N$, let
$\mc N\coloneqq g_*N$. Define
\[
V\coloneqq C\times_BU.
\]
The pullback of $\mathcal N|_{U_{\ov{k}}}$ to $V_{\ov{k}}$ is the constant sheaf $N$.
For every $b\in B_{\ov{k}}\setminus U_{\ov{k}}$, let
\[
U_b^{\mathrm{sh}}
\coloneqq
\operatorname{Spec}(\mathcal O^{\mathrm{sh}}_{B_{\ov{k}},b})
\times_{B_{\ov{k}}}U_{\ov{k}}.
\]
We define
\begin{equation}\label{eq:def-KN-etale}
\mathcal K_N
\coloneqq
\Ker [
H^1(U_{\ov{k}},\mathcal N)
\longrightarrow
H^1(V_{\ov{k}},N)\oplus
\prod_{b\in B_{\ov{k}}\setminus U_{\ov{k}}}
H^1(U_b^{\mathrm{sh}},\mathcal N)],
\end{equation}
where the two components are pullback maps.

The Leray spectral sequence for
$j_U\colon U_{\ov{k}}\hookrightarrow B_{\ov{k}}$ gives an exact
sequence
\begin{equation}\label{eq:parabolic-etale}
0\longrightarrow H^1(B_{\ov{k}},\mc N)
\longrightarrow H^1(U_{\ov{k}},\mathcal N)
\longrightarrow
\prod_{b\in B_{\ov{k}}\setminus U_{\ov{k}}}
H^1(U_b^{\mathrm{sh}},\mathcal N).
\end{equation}
Thus every element of $\mathcal K_N$ has a unique extension to
$H^1(B_{\ov{k}},\mc N)$. We denote the resulting injective map by
\[
\operatorname{Inf}\colon
\mathcal K_N\longrightarrow H^1(B_{\ov{k}},\mc N).
\]
The notation is justified by the following description.

\begin{lem}\label{lem:KN-group-description}
\begin{enumerate}
    \item For every finite-dimensional $\F_\ell[\Gamma]$-module $N$, there is a unique isomorphism
\[
\mathcal K_N \xlongrightarrow{\sim}\on{Ker}[H^1(\Gamma,N)\longrightarrow \prod_{b\in B_{\ov{k}}\setminus U_{\ov{k}}}H^1(I_b,N)]
\]
making the following diagram commute:
\[
\begin{tikzcd}
\mathcal K_N
\arrow[r,hook,"\operatorname{Inf}"]
\arrow[d,"\wr"]
&
H^1(B_{\ov{k}},\mc N)
\arrow[d,hook,"\operatorname{Res}"]
\\
\displaystyle
\Ker [
H^1(\Gamma,N)\to
\prod_{b\in B_{\ov{k}}\setminus U_{\ov{k}}}H^1(I_b,N)]
\arrow[r,hook,"\operatorname{Inf}"']
&
H^1(U_{\ov{k}},\mathcal N)=H^1(\pi_1(U_{\ov{k}}),N).
\end{tikzcd}
\]
\item There is a unique injective map $\iota_N\colon H^1(B_{\ov{k}},\mc N)\hookrightarrow H^1(G,N)$ making the following square commute:
\[
\begin{tikzcd}
    H^1(B_{\ov{k}},\mc N) \arrow[d,hook,"\iota_N"] \arrow[r,hook,"\on{Res}"]  & H^1(U_{\ov{k}},\mc{N}) \arrow[d,equal] \\
    H^1(G,N) \arrow[r,hook,"\on{Inf}"] & H^1(\pi_1(U_{\ov{k}}),N).
\end{tikzcd}
\]
\item Letting
\[
r_N\coloneqq
\operatorname{Res}_A\circ\iota_N\colon
H^1(B_{\ov{k}},\mc N)\longrightarrow
\operatorname{Hom}_\Gamma(A,N),
\]
we have an exact sequence
\begin{equation}\label{eq:KN-exact}
0\longrightarrow
\mathcal K_N
\xlongrightarrow{\on{Inf}}
H^1(B_{\ov{k}},\mc N)
\xlongrightarrow{r_N}
\operatorname{Hom}_\Gamma(A,N).
\end{equation}
\end{enumerate}
\end{lem}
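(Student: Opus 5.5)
The plan is to translate everything into group cohomology of the tame fundamental group $\pi\coloneqq\pi_1(U_{\ov{k}})$, which acts on $N$ through the quotient $\pi\to\Gamma$ given by the connected cover $V_{\ov{k}}\to U_{\ov{k}}$. Since $\mathcal N|_{U_{\ov k}}$ is locally constant with fibre $N$, we have $H^1(U_{\ov{k}},\mathcal N)=H^1(\pi,N)$. Likewise $H^1(V_{\ov{k}},N)=\operatorname{Hom}(\pi_1(V_{\ov{k}}),N)$, with $\pi_1(V_{\ov{k}})=\Ker(\pi\to\Gamma)$, and for $b\notin U$ we have $H^1(U_b^{\mathrm{sh}},\mathcal N)=H^1(I_b^{\pi},N)$, where $I_b^{\pi}\subset\pi$ is an inertia group at $b$ (the fundamental group of the punctured strictly henselian disc).

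For (1), the first component of the map in \eqref{eq:def-KN-etale} is restriction to $\pi_1(V_{\ov k})$. By inflation--restriction for $1\to\pi_1(V_{\ov{k}})\to\pi\to\Gamma\to1$, its kernel is exactly $\operatorname{Inf}H^1(\Gamma,N)$, and inflation is injective. For a class $\operatorname{Inf}(c)$, its restriction to $I_b^{\pi}$ is the inflation of $c|_{I_b}$ along the surjection $I_b^{\pi}\to I_b$, which is again injective on $H^1$ by inflation--restriction. Hence the second component vanishes if and only if $c|_{I_b}=0$ for all $b$. This identifies $\mathcal K_N$ with the stated kernel. Commutativity of the square is then the definition of $\operatorname{Inf}$ on $\mathcal K_N$ via the unique extension in \eqref{eq:parabolic-etale}, and uniqueness follows from the injectivity of the bottom arrow.

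For (2), the main point is that every class in $\operatorname{Res}(H^1(B_{\ov{k}},\mathcal N))\subset H^1(\pi,N)$ becomes trivial after pulling back along $\pi_1(\breve V_{\ov k})$, where $\breve V\coloneqq\breve C\times_BU$. Once this is shown, inflation--restriction for $\pi\to G$ gives the factorization through the injective map $H^1(G,N)\to H^1(\pi,N)$, and $\iota_N$ is then unique and injective because $\operatorname{Res}$ is injective by \eqref{eq:parabolic-etale}. To prove the vanishing, restrict a class $e$ to $\pi_1(V_{\ov{k}})$. Since $N$ is trivial there, the restriction is a homomorphism to $N\cong\F_\ell^{r}$, that is, a $\mu_\ell$-torsor on $V_{\ov{k}}$ (using Assumption~(2) to identify $\F_\ell\cong\mu_\ell$). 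This torsor is unramified at the points of $C\setminus V$, because $e$ extends over $B$: unramifiedness of $e$ at $b$ means triviality on $I_b^\pi$, hence on its finite-index subgroup coming from $C$. So it comes from $H^1(C_{\ov{k}},\mu_\ell)=\operatorname{Pic}^0(C)[\ell]$. Every $\mu_\ell$-torsor on $C_{\ov k}$ is split by the pullback of $[\ell]\colon\operatorname{Pic}^0\to\operatorname{Pic}^0$ along the Abel--Jacobi embedding, which is $\breve C_{\ov k}\to C_{\ov k}$ (geometrically connected by the cited lemma). This is the classical fact that $H^1(C,\mu_\ell)\to H^1(\breve C,\mu_\ell)$ is zero. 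The step I expect to be the main obstacle is making this identification carefully enough to see that the splitting happens over $\breve C_{\ov k}$ itself, rather than over some further cover.

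For (3), the restriction-to-$A$ of $H^1(G,N)$ lands in $\operatorname{Hom}(A,N)^{G}=\operatorname{Hom}_\Gamma(A,N)$, since $A$ acts trivially on $N$. Inflation--restriction for $1\to A\to G\to\Gamma\to1$ gives the exact sequence $0\to H^1(\Gamma,N)\to H^1(G,N)\to\operatorname{Hom}_\Gamma(A,N)$. Intersecting it with $\iota_N(H^1(B_{\ov k},\mathcal N))$, the kernel of $r_N$ consists of those classes in $H^1(B_{\ov k},\mathcal N)$ coming from $H^1(\Gamma,N)$. It remains to see that these are exactly the classes vanishing on each $I_b$. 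A class in $H^1(\Gamma,N)$ lies in $H^1(B_{\ov k},\mathcal N)$ if and only if it is unramified at every $b$, and by the argument in (1) this happens if and only if it vanishes on each $I_b$. By (1) this is precisely $\operatorname{Inf}(\mathcal K_N)$, which gives exactness of \eqref{eq:KN-exact}.
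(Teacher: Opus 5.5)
Your proposal is correct and follows essentially the same route as the paper: inflation--restriction for $\pi_1(V_{\ov k})\subset\pi_1(U_{\ov k})$ and for the surjection of the full inertia group onto $I_b$ in (1), Cartan--Leray for the $G$-cover $\breve V_{\ov k}\to U_{\ov k}$ together with the vanishing of the pullback to $\breve C_{\ov k}$ in (2), and inflation--restriction for $1\to A\to G\to\Gamma\to1$ in (3). The point you flag in (2) is exactly what the paper uses when it says that $A$ is the maximal elementary abelian $\ell$-quotient of $\pi_1(C_{\ov k})$, and it is settled as follows: with $J=\operatorname{Pic}^0(C)$, the map $H^1(J_{\ov k},\Z/\ell)\to H^1(C_{\ov k},\Z/\ell)$ is an isomorphism and $[\ell]^*$ kills $H^1(J_{\ov k},\Z/\ell)$, so every class on $C_{\ov k}$ dies on $\breve C_{\ov k}=C_{\ov k}\times_{J_{\ov k},[\ell]}J_{\ov k}$ itself, with no further cover needed.
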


\begin{proof}
(1) The Cartan--Leray spectral sequence for the Galois cover
$V_{\ov{k}}\to U_{\ov{k}}$ gives
\[
0\longrightarrow H^1(\Gamma,N)
\xlongrightarrow{\operatorname{Inf}}
H^1(U_{\ov{k}},\mathcal N)
\longrightarrow H^1(V_{\ov{k}},N)^\Gamma.
\]
Hence $H^1(\Gamma,N)$ is identified with the kernel of the first
component in \eqref{eq:def-KN-etale}. Under this identification,
restriction to $U_b^{\mathrm{sh}}$ is induced by restriction from
$\Gamma$ to $I_b$. Indeed, if $\mathcal I_b$ denotes the geometric
inertia group at $b$, inflation--restriction for
$\mathcal I_b\twoheadrightarrow I_b$ shows that
\[
H^1(I_b,N)\longrightarrow H^1(\mathcal I_b,N)
\simeq H^1(U_b^{\mathrm{sh}},\mathcal N)
\]
is injective. 

(2) Set
\[
\breve V\coloneqq
\breve C\times_BU.
\]
The cover $\breve V_{\ov{k}}\to U_{\ov{k}}$ is Galois with group $G$, and
Cartan--Leray gives
\[
0\longrightarrow H^1(G,N)
\xlongrightarrow{\operatorname{Inf}}
H^1(U_{\ov{k}},\mathcal N)
\longrightarrow H^1(\breve V_{\ov{k}},N)^G.
\]
We claim that the image of
$H^1(B_{\ov{k}},\mc N)$ in $H^1(U_{\ov{k}},\mathcal N)$ is contained
in the image of $H^1(G,N)$. Indeed, let
$v\in H^1(B_{\ov{k}},\mc N)$. Its pullback to $V_{\ov{k}}$ extends, by
\eqref{eq:parabolic-etale} applied to
$V_{\ov{k}}\subset C_{\ov{k}}$, to a class in $H^1(C_{\ov{k}},N)$. 

We have \[H^1(C_{\ov{k}},N)=H^1(\pi_1(C_{\ov{k}}),N)=\on{Hom}_{\on{cts}}(\pi_1(C_{\ov{k}}),N),\] where the second equality follows from the fact that $\pi_1(C_{\ov{k}})$ acts trivially on $N$. Since $N$
is an $\F_\ell$-vector space, every continuous homomorphism $\pi_1(C_{\ov{k}})\to N$ factors through the maximal elementary abelian $\ell$-quotient of
$\pi_1(C_{\ov{k}})$, which is the Galois group $A$ of
$\breve C_{\ov{k}}\to C_{\ov{k}}$. Its pullback to $\breve C_{\ov{k}}$
therefore vanishes, and hence so does the pullback of $v$ to
$\breve V_{\ov{k}}$. This proves the claim and defines $\iota_N$ uniquely by
\[
\operatorname{Inf}(\iota_N(v))=v|_{U_{\ov{k}}}.
\]
It is injective by \eqref{eq:parabolic-etale}.

(3) The inflation--restriction sequence for \eqref{eq:a-g-gamma} gives
\[
0\longrightarrow H^1(\Gamma,N)
\xlongrightarrow{\operatorname{Inf}}
H^1(G,N)
\xlongrightarrow{\operatorname{Res}_A}
\operatorname{Hom}_\Gamma(A,N).
\]
Together with the preceding description of $\mathcal K_N$, this shows
that $\Ker(r_N)=\operatorname{Inf}(\mathcal K_N)$, proving
\eqref{eq:KN-exact}.
\end{proof}

Fix a trivialization $\F_\ell(1)\simeq\F_\ell$, and set $\mc M^\dagger\coloneqq g_*(M^\vee(1))$. We write \[\mathcal K\coloneqq\mathcal K_{M^\vee(1)},\qquad 
r\coloneqq r_{M^\vee(1)}\colon
H^1(B_{\ov{k}},\mc M^\dagger)
\longrightarrow
\operatorname{Hom}_\Gamma(A,M^\vee).
\]

\begin{thm}\label{schoen-unified}
Suppose that Assumptions~\ref{schoen-situation-projective}(1)--\ref{schoen-situation-projective}(10) are satisfied, and let $\xi\in\Gamma$ be as in Assumption~\ref{schoen-situation-projective}(10).

\begin{enumerate}
    \item The $G_k$-action on $H^1(B_{\ov{k}},\mc M)$ and
$H^1(B_{\ov{k}},\mc M^\dagger)$ is trivial.
\item Cup product followed by evaluation at Frobenius induces a perfect pairing
\[\langle -,-\rangle_{\mathrm{par}}\colon H^1(k,H^1(B_{\ov{k}},\mc M)) \times H^1(B_{\ov{k}},\mc M^\dagger)\longrightarrow H^1(k,\F_\ell)\xlongrightarrow{\operatorname{ev}_{\operatorname{Frob}_k}}\F_\ell.\]
\item For every $x\in U(k)$ such that
$\kappa_k(\operatorname{Frob}_x)$ is conjugate to $\xi$, the $\F_\ell$-vector space $H^2_{\ov{x}}(B_{\ov{k}},\mc M)_0^{G_k}$ is one-dimensional.
\item Consider the evaluation pairing 
\[\langle -,- \rangle_M\colon M\times M^\vee\longrightarrow\F_\ell,
 \qquad \langle m,\lambda\rangle_M=\lambda(m).
\]
Let $x\in U(k)$, let $\widetilde x$ be a geometric point of $\breve C$ above $x$, let $g_x\in G$ be a Frobenius element for $x$, and let $\gamma_x\in\Gamma$ be the image of $g_x$ under the surjection $G\to \Gamma$. Let $v\in H^1(B_{\ov{k}},\mc M^\dagger)$, let $c_v\colon G\to M^\vee$ be a cocycle representing $\iota_{M^\vee(1)}(v)$, where we use the fixed trivialization $\F_\ell(1)\simeq\F_\ell$. Then, for every $m\in M^{\langle\gamma_x\rangle}$, we have
\begin{equation}\label{eq:schoen-evaluation}
 \langle\delta_x(m),v\rangle_{\mathrm{par}}
 =
 \langle m,c_v(g_x)\rangle_M.
\end{equation}
\item With respect to the pairing $\langle -,-\rangle_{\mathrm{par}}$, we have
\begin{equation}\label{eq:union-images-delta}
\bigcup_{\substack{x\in U(k)\\
\kappa_k(\operatorname{Frob}_x)\sim\xi}}
\operatorname{Im}(\delta_x)
=
(\on{Inf}(\mathcal K))^\perp
\subset
H^1(k,H^1(B_{\ov{k}},\mc M)).
\end{equation}
\end{enumerate}
\end{thm}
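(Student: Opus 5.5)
We prove the five parts in order; (4) is the technical heart and (5) is a finite-group argument built on it.

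For (1), the plan is to use \Cref{lem:KN-group-description}(2). This embeds $H^1(B_{\ov{k}},\mc N)$ $G_k$-equivariantly into $H^1(G,N)$ for $N=M$ or $M^\vee(1)$. The action of $G_k$ on $H^1(U_{\ov{k}},\mc N)$ is through conjugation by lifts in $\pi_1(U)$. Under Assumption~\ref{schoen-situation-projective}(1), the Galois group of $k(\breve C)/k(B)$ equals the geometric one, so every Frobenius lift acts on $H^1(G,N)$ by conjugation by an element of $G$. Conjugation by a group element acts trivially on group cohomology, which gives triviality. Assumption (2) is used so that the twist $(1)$ carries trivial Frobenius action.

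For (2), we identify $H^1(k,H^1(B_{\ov{k}},\mc M))=H^1(B_{\ov{k}},\mc M)_{\operatorname{Frob}}=H^1(B_{\ov{k}},\mc M)$ by (1). Poincaré duality for $g_*$-sheaves on the proper curve $B_{\ov{k}}$ gives a perfect pairing $H^1(B_{\ov{k}},\mc M)\times H^1(B_{\ov{k}},\mc M^\dagger)\to H^2(B_{\ov{k}},\mu_\ell)=\F_\ell$. The pairing $\langle-,-\rangle_{\mathrm{par}}$ is this pairing composed with evaluation at Frobenius, which is an isomorphism $H^1(k,\F_\ell)\simeq\F_\ell$.

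For (3), purity at $x\in U(k)$ gives $H^2_{\ov{x}}(B_{\ov{k}},\mc M)\simeq M(-1)$ with $G_k$ acting via $\kappa_k(\operatorname{Frob}_x)$. Assumption (2) removes the twist. The invariants are then $M^{\langle\gamma_x\rangle}$, which is one-dimensional by (10). The subscript $0$ costs nothing, because $H^2(B_{\ov{k}},\mc M)$ is dual to $(M^\vee)^\Gamma(1)=0$ by (8).

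For (4), we apply \Cref{lem:localization-hochschild-serre}, which rewrites $\delta_x$ as $i_{x*}\colon H^2_x(B,\mc M)\to H^2(B,\mc M)$. Hence $\langle\delta_x(m),v\rangle_{\mathrm{par}}$ is the arithmetic cup product pairing $H^2(B,\mc M)\times H^1(B_{\ov{k}},\mc M^\dagger)$, after lifting $v$ to $H^1(B,\mc M^\dagger)$ (possible by (1) and $\operatorname{cd}(k)=1$). By the projection formula, $i_{x*}(m)\cup v=i_{x*}(m\cup i_x^*v)$. Here $i_x^*v\in H^1(x,M^\vee(1))=H^1(k,M^\vee)$ is evaluation of the cocycle at $\operatorname{Frob}_x$, namely $c_v(g_x)$ up to the correction from the chosen lift. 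Pairing with $m\in M^{\langle\gamma_x\rangle}$ and using that the trace $H^2_x(B,\mu_\ell)\to\F_\ell$ corresponds to evaluation at Frobenius gives \eqref{eq:schoen-evaluation}. For this to be meaningful, $\langle m,c_v(g_x)\rangle$ must be independent of the cocycle and of the choice of $g_x$ in its conjugacy class. We check this by noting that changes are by coboundaries $(\gamma_x-1)\lambda$, which pair to zero against $\gamma_x$-invariant $m$. \textbf{This step is the main obstacle:} we must get the sign and normalization conventions, and the comparison between the geometric cocycle $\iota$ and the arithmetic restriction to $x$, exactly right.

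For (5), the inclusion $\subseteq$ follows from (4). If $v\in\operatorname{Inf}(\mathcal K)$, then $c_v$ is inflated from $\Gamma$ and vanishes on $A$ by \eqref{eq:KN-exact}. Let $x$ have Frobenius conjugate to $\xi$; its Frobenius element $g_x\in G$ lies over $\xi$. Since $\xi$ has order $n$ prime to $\ell$, its lifts to $G$ form the coset $\tilde\xi A$, and one lift of order $n$ is conjugate to all lifts in this coset. We may therefore choose a lift $g$ of order $n$. Summing the cocycle relation over powers of $g$ gives $\sum_{i<n}\xi^i c(g)=0$. Averaging over $\langle\xi\rangle$ then shows that $c(g)$ pairs trivially with $M^{\langle\xi\rangle}$, so the pairing in \eqref{eq:schoen-evaluation} vanishes.

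For $\supseteq$, by (3) each $\operatorname{Im}(\delta_x)$ is a line. By duality it suffices to show the following: for every $\epsilon$ orthogonal to $\operatorname{Inf}(\mathcal K)$, there is $g\in G$ lying over a conjugate of $\xi$ with $\epsilon(v)=\langle m,c_v(g)\rangle$ for all $v$. Assumption (6) then produces a point $x$ with $g_x=g$. As $g$ ranges over the coset $\tilde\xi A$, with $g=\tilde\xi a$, we get $c_v(\tilde\xi a)=c_v(\tilde\xi)+\xi\, r(v)(a)$. Projecting onto the line dual to $M^{\langle\xi\rangle}$ gives a functional on $H^1(B_{\ov{k}},\mc M^\dagger)$ of the form $v\mapsto\phi_0(v)+\langle m,r(v)(a)\rangle$. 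The $\phi_0$ term vanishes by the argument for $\subseteq$ once $\tilde\xi$ is chosen of order prime to $\ell$.

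Realizing every functional vanishing on $\ker r=\operatorname{Inf}(\mathcal K)$ then amounts to showing that the maps $\varphi\mapsto\langle m,\varphi(a)\rangle$, for $a\in A$ (also varying over conjugates of $\xi$), span the dual of $\operatorname{Hom}_\Gamma(A,M^\vee)$. We plan to prove this from the irreducibility (9): the $\Gamma$-translates of $m$ span $M$. Hence a $\Gamma$-homomorphism $\varphi\colon A\to M^\vee$ with $\langle \gamma m,\varphi(a)\rangle=0$ for all $\gamma$ and $a$ is zero, and linear combinations of these functionals give everything. Since each $\delta_x$-image is a full line, the union of images is all of $(\operatorname{Inf}\mathcal K)^\perp$.
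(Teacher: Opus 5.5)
Your arguments for (1)--(4), and for the inclusion $\subseteq$ in (5), follow the paper's route. The ingredients match: the embedding $\iota_N$ and invariance of group cohomology under inner conjugation; Poincar\'e duality composed with evaluation at Frobenius; purity together with (8) and (10); \Cref{lem:localization-hochschild-serre} with the projection formula; and the vanishing of $H^1$ of a cyclic group of order prime to $\ell$.

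One assertion in your $\subseteq$ step is false: the lifts of $\xi$ in the coset $\tilde\xi A$ are not all conjugate to a lift of order $n$. If $1\neq a\in A$ commutes with $\tilde\xi$, then $\tilde\xi a$ has order $n\ell$. Such $a$ exist whenever $r\neq 0$, since then $A\to M^\vee$ is onto and $(M^\vee)^{\langle\xi\rangle}\neq 0$. Indeed, if all lifts were conjugate, your own averaging computation would make every $\delta_x$ with $\gamma_x\sim\xi$ zero. You do not need the claim there. For $v\in\operatorname{Inf}(\mathcal K)$ the cocycle $c_v$ vanishes on $A$, so $c_v(ga)=c_v(g)$ and $c_v(g_x)$ depends only on $\gamma_x$.

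The genuine gap is at the end of $\supseteq$. Statement (5) concerns the \emph{union} of the lines $\operatorname{Im}(\delta_x)$. So each $\epsilon\in(\operatorname{Inf}\mathcal K)^\perp$ must be realized by a single Frobenius element $g$, which your own reduction correctly demands. What you then prove is only that the functionals $\varphi\mapsto\langle\gamma m,\varphi(a)\rangle_M$, over all $a$ and all conjugates of $\xi$, span the relevant dual. You then conclude via ``linear combinations'' and ``each $\delta_x$-image is a full line''. That does not follow. A sum of functionals attached to different points is not a priori attached to any single point, and a union of lines spanning a space of dimension $\geq 2$ is not that space.

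The missing observation is linearity in $a$. Fix a lift $\tilde\xi$ of order $n$ and $0\neq m\in M^{\langle\xi\rangle}$. The assignment $a\mapsto\bigl(v\mapsto\langle m,r(v)(a)\rangle_M\bigr)$ is an $\F_\ell$-linear map $A\to\bigl(H^1(B_{\overline{k}},\mathcal M^\dagger)/\operatorname{Inf}(\mathcal K)\bigr)^\vee$, because each $r(v)$ is a homomorphism on $A$. Hence its image is already a subspace. Moreover, the conjugates of $\xi$ add nothing: $\langle\gamma m,\varphi(a)\rangle_M=\langle m,\varphi(\gamma^{-1}\cdot a)\rangle_M$ by $\Gamma$-equivariance of $\varphi=r(v)$, where $\Gamma$ acts on $A$ by conjugation.

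With these two points, your separation argument says exactly that the transpose of this linear map is injective. So the map is surjective, and each $\epsilon$ is the functional of a single $a\in A$. Assumption (6) then yields a single $x\in U(k)$ with $g_x=\tilde\xi a$, hence $\gamma_x=\xi$ and $\delta_x(m)=\epsilon$ by (4) and the perfect pairing (2). This is precisely how the paper concludes: it proves surjectivity of the map from $A$ to $\bigl(H^1(B_{\overline{k}},\mathcal M^\dagger)/\operatorname{Inf}(\mathcal K)\bigr)^\vee$ by showing its transpose is injective.
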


\begin{proof}
(1) Let $N$ be either $M$ or $M^\vee(1)$, let $\mathcal N\coloneqq g_*N$, and let $v\in H^1(B_{\ov{k}},\mc N)$. Via $\iota_N$, write $v$ as a class in $H^1(G,N)$ represented by a cocycle $c\colon G\to N$. Consider the short exact sequence\[
1\longrightarrow \pi_1(U_{\ov{k}})\longrightarrow \pi_1(U)\longrightarrow G_k\longrightarrow 1.
\]
If $\sigma\in G_k$, choose a lift of $\sigma$ to $\pi_1(U)$, and let $h\in G$ be its image. By the definition of the conjugation action on group cohomology (see \cite[Chapter~I, \S5]{neukirch2008cohomology} or \cite[(7.8)]{schoen1995computation}), $\sigma(v)$ is represented by the cocycle
\[
g\longmapsto hc(h^{-1}gh).
\]
The cocycle identity gives
\[ hc(h^{-1}gh)=c(g)+(g-1)c(h),\]
so this cocycle differs from $c$ by a coboundary. Thus $G_k$ acts trivially on $H^1(B_{\ov{k}},\mc N)$. For $N=M^\vee(1)$ we use Assumption~\ref{schoen-situation-projective}(2), which makes the Tate twist trivial over $k$. This proves (1).

(2) Cup product, followed by the trace map $\operatorname{Tr}_{B_{\ov{k}}}\colon H^2(B_{\ov{k}},\F_\ell(1))\xrightarrow{\sim}\F_\ell$, induces a pairing
\begin{equation}\label{eq:geometric-parabolic-pairing}H^1(B_{\ov{k}},\mc M)\times H^1(B_{\ov{k}},\mc M^\dagger)\longrightarrow \F_\ell.
\end{equation} 
Let $j_U\colon U_{\ov{k}}\hookrightarrow
B_{\ov{k}}$ be the inclusion and set $\mathcal L=\mc M|_{U_{\ov{k}}}$.
Using purity and the natural morphism $(j_U)_!\mathcal L\to R(j_U)_*\mathcal L$, we obtain
\[
 \on{Im}[H^1(B_{\ov{k}},\mc M)
 \hookrightarrow H^1(U_{\ov{k}},\mathcal L)] =
 \operatorname{Im}[
 H_c^1(U_{\ov{k}},\mathcal L)\to
 H^1(U_{\ov{k}},\mathcal L)],
\]
where the injection on the left comes from \eqref{eq:parabolic-etale}.
Similarly,
\[
\on{Im}[H^1(B_{\ov{k}},\mc M^\dagger)\hookrightarrow H^1(U_{\ov{k}},\mathcal L^\vee(1))]
 =\operatorname{Im}[ 
 H_c^1(U_{\ov{k}},\mathcal L^\vee(1))\to
 H^1(U_{\ov{k}},\mathcal L^\vee(1))].
\]
By Poincar\'e duality with compact supports, the two maps
$H_c^1\to H^1$ occurring here are transposes of one another, and hence the pairing \eqref{eq:geometric-parabolic-pairing} is perfect; see, for example, \cite[V.2.2]{milne1980etale} or \cite[\S7]{schoen1995computation}.

Since $G_k$ acts trivially on both factors of
\eqref{eq:geometric-parabolic-pairing}, cup product gives
\[
 H^1(k,H^1(B_{\ov{k}},\mc M))\times
 H^1(B_{\ov{k}},\mc M^\dagger)
 \longrightarrow H^1(k,\F_\ell).
\]
Evaluation at $\operatorname{Frob}_k$ identifies $H^1(k,\F_\ell)$ with $\F_\ell$.
Moreover, since $G_k$ acts trivially on $H^1(B_{\ov{k}},\mc M)$,
evaluating cocycles at $\operatorname{Frob}_k$ gives
\[H^1(k,H^1(B_{\ov{k}},\mc M))\xlongrightarrow{\sim} H^1(B_{\ov{k}},\mc M).\]
We conclude that the pairing $\langle -,-\rangle_{\mathrm{par}}$ is perfect. This proves (2).

(3) Let $x\in U(k)$. Choose a Frobenius element $g_x\in G$ at $x$, and
let $\gamma_x\in\Gamma$ be its image under the surjection $G\to \Gamma$. By
Assumption~\ref{schoen-situation-projective}(8) and Poincar\'e duality, the group $H^2(B_{\ov{k}},\mc M)$ is dual to
$H^0(B_{\ov{k}},\mathcal M^\vee(1))=(M^\vee(1))^\Gamma=0$.
Since $x\in U(k)$, purity and
Assumption~\ref{schoen-situation-projective}(2) give
\begin{equation}\label{eq:local-source-fixed-space}
 H^2_{\ov{x}}(B_{\ov{k}},\mc M)_0^{G_k}
 =H^2_{\ov{x}}(B_{\ov{k}},\mc M)^{G_k}
 \simeq M^{\langle\gamma_x\rangle}.
\end{equation}
In particular, if $\gamma_x$ is conjugate to $\xi$, then
Assumption~\ref{schoen-situation-projective}(10) shows that the group in
\eqref{eq:local-source-fixed-space} is one-dimensional. This proves (3).

(4) Assumption~\ref{schoen-situation-projective}(8) gives $H^0(B_{\ov{k}},\mc M^\dagger)
=(M^\vee(1))^\Gamma=0$. Since $\on{cd}(k)=1$, the
Hochschild--Serre spectral sequence gives an isomorphism
\begin{equation}\label{eq:descend-v}
H^1(B_k,\mc M^\dagger)
\xlongrightarrow{\sim}
H^1(B_{\ov{k}},\mc M^\dagger)^{G_k}=H^1(B_{\ov{k}},\mc M^\dagger),
\end{equation}
where we have used that $G_k$ acts trivially on $H^1(B_{\ov{k}},\mc M^\dagger)$. Let $\widetilde v\in H^1(B_k,\mc M^\dagger)$ be the unique class whose restriction to $B_{\ov{k}}$ is $v$.

We next describe the restriction of $\widetilde v$ to $x$. Over $U$ the
cover $\breve C\to B_k$ gives a quotient
\[
q\colon\pi_1(U)\longrightarrow G.
\]
Inflating $c_v$ along $q$ gives a class on $U$ whose geometric restriction is $v|_{U_{\ov{k}}}$. Since the restrictions of $c_v$ to the inertia groups at $B_{\ov{k}}\setminus U_{\ov{k}}$ are trivial in cohomology, the localization sequence for $U\hookrightarrow B_k$ shows that this class extends to a class in $H^1(B_k,\mc M^\dagger)$. By the uniqueness in \eqref{eq:descend-v}, the resulting class is $\widetilde v$. 

Let
\[
s_x\colon G_k\longrightarrow\pi_1(U)
\]
be the section associated with the chosen geometric point
$\widetilde x$ above $x$. By the definition of $g_x$,
$q(s_x(\on{Frob}_k))=g_x$. It follows that the pullback $x^*\widetilde v\in H^1(k,M^\vee(1))$ 
is represented by the crossed homomorphism $\sigma\mapsto c_v(q(s_x(\sigma)))$, and in particular
\begin{equation}\label{eq:restriction-v-frob}
(x^*\widetilde v)(\on{Frob}_k)=c_v(g_x).
\end{equation}

It remains to compare this restriction with the connecting map
$\delta_x$. As observed in the proof of (3), Assumption
\ref{schoen-situation-projective}(8) and Poincar\'e duality give $H^2(B_{\ov{k}},\mc M)=0.$ In particular,
\[
H^2_{\ov{x}}(B_{\ov{k}},\mc M)_0
=
H^2_{\ov{x}}(B_{\ov{k}},\mc M),
\]
so the map $\delta_x$ of \eqref{coboundary-situation} is the connecting
homomorphism considered in
\Cref{lem:localization-hochschild-serre}. Since $x\in U(k)$, the sheaf
$\mc M$ is locally constant in a neighborhood of $x$, and hence that
lemma applies. Let $\widetilde m\in H^2_x(B_k,\mc M)$ be the unique class such that $\epsilon_x(\widetilde m)=m$. 
By \Cref{lem:localization-hochschild-serre}(3), we have $\epsilon_B(i_{x*}\widetilde m)=\delta_x(m)$. Thus, under the isomorphism
\[
H^2(B_k,\mc M)
\xlongrightarrow{\sim}
H^1(k,H^1(B_{\ov{k}},\mc M)),
\]
coming from the spectral sequence, the class $i_{x*}\widetilde m$ corresponds to $\delta_x(m)$.

We can now compute the pairing. 
Applying $R\Gamma(k,-)$ to the trace morphism $
R\Gamma(B_{\ov{k}},\F_\ell(1))\to \F_\ell[-2]$ gives a morphism
\[
R\Gamma(k,R\Gamma(B_{\ov{k}},\F_\ell(1)))
\longrightarrow
R\Gamma(k,\F_\ell)[-2].
\]
Using the canonical identification $R\Gamma(B_k,\F_\ell(1))
\simeq R\Gamma(k,R\Gamma(B_{\ov{k}},\F_\ell(1)))$, we obtain in degree $3$ a map
\[\on{Tr}_{B/k}\colon 
H^3(B_k,\F_\ell(1))
\longrightarrow
H^1(k,\F_\ell).
\]
By the compatibility of the Hochschild--Serre spectral sequence with
cup products and the functoriality of its edge maps with respect to the
trace morphism, we have
\[
\delta_x(m)\cup v
=
\on{Tr}_{B/k}(i_{x*}\widetilde m\cup\widetilde v\bigr)
\quad\text{in }H^1(k,\F_\ell).
\]
The projection formula for the Gysin map \cite[VI.6.5]{milne1980etale} gives
\[
i_{x*}\widetilde m\cup\widetilde v
=
i_{x*}(\widetilde m\cup x^*\widetilde v) \qquad\text{in }H^3(B_k,\F_\ell(1)).
\]
Compatibility of the trace with the Gysin map therefore gives
\[
\langle\delta_x(m),v\rangle_{\mathrm{par}}
=
\operatorname{ev}_{\on{Frob}_k}
(\widetilde m\cup x^*\widetilde v).
\]
The latter cup product is the usual evaluation pairing
between $M$ and $M^\vee$. Using \eqref{eq:restriction-v-frob}, we obtain
\[
\langle\delta_x(m),v\rangle_{\mathrm{par}}
=
\langle m,(x^*\widetilde v)(\on{Frob}_k)\rangle_M
=
\langle m,c_v(g_x)\rangle_M,
\]
which proves the claim.

(5) Let $v\in\operatorname{Inf}(\mathcal K)$. Thus $v=\operatorname{Inf}(\alpha)$
for some $\alpha\in\mathcal K\subset H^1(\Gamma,M^\vee(1))$. Let $x\in U(k)$ and suppose
that $\gamma_x$ is conjugate to $\xi$. Since
$|\langle\gamma_x\rangle|$ is prime to $\ell$, we have $H^1(\langle\gamma_x\rangle,M^\vee)=0$. Therefore the restriction of $\alpha$ to
$\langle\gamma_x\rangle$ is a coboundary. 
This implies that for every $m\in 
H^2_{\ov{x}}(B_{\ov{k}},\mc M)_0^{G_k}\simeq
M^{\langle\gamma_x\rangle}$, we have $\langle m,c_v(g_x)\rangle_M=0$: 
indeed, after choosing a representative of $\alpha$, we may write
$c_v(g_x)=\gamma_x n-n$ for some $n\in M^\vee$, and then
\[
 \langle m,c_v(g_x)\rangle_M=\langle m,\gamma_xn-n\rangle_M
 =\langle\gamma_x^{-1}m,n\rangle_M-\langle m,n\rangle_M=0.
\]
Therefore, by \eqref{eq:schoen-evaluation} we have $\langle \delta_x(m),v\rangle=0$,
and every element of $\operatorname{Im}(\delta_x)$ thus annihilates
$\operatorname{Inf}(\mathcal K)$. Hence
\[
 \bigcup_{\substack{x\in U(k)\\
 \kappa_k(\operatorname{Frob}_x)\sim\xi}}
 \operatorname{Im}(\delta_x)
 \subset
 (\operatorname{Inf}(\mathcal K))^\perp.
\]

It remains to prove the reverse inclusion. Let $e$ be the order of $\xi$, and let $G_\xi\subset G$ be the inverse image of $\langle\xi\rangle$. Since $A$ is an $\ell$-group and $(e,\ell)=1$, the subgroup $A\subset G_\xi$ is a normal Hall subgroup. By the Schur--Zassenhaus theorem, it admits a complement: there exists a subgroup $S\subset G_\xi$ such that $G_\xi=A\rtimes S$, $A\cap S=1$, and the quotient map $G_\xi\to\langle\xi\rangle$ restricts to an isomorphism $S\xrightarrow{\sim}\langle\xi\rangle$. Choosing $s\in S$ mapping to $\xi$, we obtain an element $s\in G$ of order $e$ mapping to $\xi$. Fix $0\neq m\in M^{\langle\xi\rangle}\simeq \F_\ell$.

For every $v\in H^1(B_{\ov{k}},\mc M^\dagger)$, the restriction of
$c_v$ to $\langle s\rangle$ is a coboundary, since
$H^1(\langle s\rangle,M^\vee)=0$. Thus
$c_v(s)=sn-n$ for some $n\in M^\vee$, and, since $s$ acts on $M$ as
$\xi$, we have
\begin{equation}\label{eq:schoen-s-zero}
 \langle m,c_v(s)\rangle_M=\langle m, sn-n \rangle_M= \langle s^{-1}m,n\rangle_M - \langle m,n\rangle_M=0.
\end{equation}

By \eqref{eq:KN-exact}, we have $\operatorname{Ker}(r)=\operatorname{Inf}(\mathcal K)$. Consider the linear map
\begin{equation}\label{eq:T-map}
 A\longrightarrow
 (
 H^1(B_{\ov{k}},\mc M^\dagger)/\operatorname{Inf}(\mathcal K)
 )^\vee,
 \qquad
 a\longmapsto
 (\bar v\longmapsto\langle m,r(v)(a)\rangle_M).
\end{equation}
We claim that \eqref{eq:T-map} is surjective. It is enough to show that
its transpose is injective. Let $\bar v\neq0$. Then $r(v)\neq0$ because
$\operatorname{Ker}(r)=\operatorname{Inf}(\mathcal K)$. The image of
the $\Gamma$-equivariant map $r(v)\colon A\to M^\vee$ is therefore a non-zero $\Gamma$-submodule of $M^\vee$. By Assumption~\ref{schoen-situation-projective}(9), $M^\vee$ is
irreducible, so $r(v)$ is surjective. Since $m\neq0$, there is
$\lambda\in M^\vee$ with $\lambda(m)\neq0$, and hence there is
$a\in A$ such that
$\langle m,r(v)(a)\rangle_M\neq0$. This proves the claim.

Now let
\[
 \epsilon\in(\operatorname{Inf}(\mathcal K))^\perp
 \subset H^1(k,H^1(B_{\ov{k}},\mc M)).
\]
The functional $v\mapsto\langle\epsilon,v\rangle_{\mathrm{par}}$ on $H^1(B_{\ov{k}},\mc M^\dagger)$ vanishes on $\operatorname{Inf}(\mathcal K)$. By the surjectivity of
\eqref{eq:T-map}, there exists $a\in A$ such that
\begin{equation}\label{eq:epsilon-via-a}
 \langle\epsilon,v\rangle_{\mathrm{par}}
 =\langle m,r(v)(a)\rangle_M
\end{equation}
for every $v\in H^1(B_{\ov{k}},\mc M^\dagger)$. Set $g=as\in G$. By definition, $r(v)=\operatorname{Res}_A(\iota_{M^\vee(1)}(v))$. Since $A$ lies in the kernel of $G\to\Gamma$, it acts trivially on $M^\vee$. Hence $H^1(A,M^\vee)=\operatorname{Hom}(A,M^\vee)$ and the restriction of the class represented by $c_v$ is represented by
the homomorphism $c_v|_A$. In particular, $r(v)(a)=c_v(a)$. The cocycle identity and \eqref{eq:schoen-s-zero} therefore give
\[\langle m,c_v(g)\rangle_M =\langle m,c_v(a)+a c_v(s)\rangle_M =\langle m,r(v)(a)\rangle_M =\langle\epsilon,v\rangle_{\mathrm{par}}.\]
By Assumption~\ref{schoen-situation-projective}(6), there exists
$x\in U(k)$ whose Frobenius conjugacy class in $G$ is the conjugacy
class of $g$. Choosing the point above $x$ used to specify the
Frobenius representative, we may take $g_x=g$. Then
$\gamma_x=\xi$, so $m$ belongs to the local source through
\eqref{eq:local-source-fixed-space}. By \eqref{eq:schoen-evaluation}
and \eqref{eq:epsilon-via-a},
\[
 \langle\delta_x(m),v\rangle_{\mathrm{par}}
 =\langle\epsilon,v\rangle_{\mathrm{par}}
\]
for every $v\in H^1(B_{\ov{k}},\mc M^\dagger)$. By (2), we conclude that $\delta_x(m)=\epsilon$. Therefore
\[
 (\operatorname{Inf}(\mathcal K))^\perp
 \subset
 \bigcup_{\substack{x\in U(k)\\
 \kappa_k(\operatorname{Frob}_x)\sim\xi}}
 \operatorname{Im}(\delta_x),
\]
and this proves \eqref{eq:union-images-delta}.
\end{proof}

\begin{cor}\label{schoen-unified-cor}
Suppose that Assumptions~\ref{schoen-situation-projective}(1)--\ref{schoen-situation-projective}(10) are satisfied, and suppose further that $\mc K=0$. Then, for every $\epsilon\in H^1(k,H^1(B_{\ov{k}},\mathcal M))$, there exists $x\in U(k)$ such that $\kappa_k(\operatorname{Frob}_x)$ is conjugate to $\xi$ and $\epsilon\in\on{Im}(\delta_x)$.
\end{cor}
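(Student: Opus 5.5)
This is a direct specialization of \Cref{schoen-unified}(5), and the plan is simply to read off that statement when $\mathcal K=0$.

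First, since $\mathcal K=0$, we have $\operatorname{Inf}(\mathcal K)=0$ inside $H^1(B_{\ov{k}},\mc M^\dagger)$. The orthogonal of the zero subspace under the pairing $\langle -,-\rangle_{\mathrm{par}}$ of \Cref{schoen-unified}(2) is therefore all of $H^1(k,H^1(B_{\ov{k}},\mc M))$. Equation \eqref{eq:union-images-delta} then reads
\[
\bigcup_{\substack{x\in U(k)\\ \kappa_k(\operatorname{Frob}_x)\sim\xi}}\operatorname{Im}(\delta_x)=H^1(k,H^1(B_{\ov{k}},\mc M)).
\]

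Second, given $\epsilon$ in the target, it lies in this union. So there is some $x\in U(k)$ with $\kappa_k(\operatorname{Frob}_x)$ conjugate to $\xi$ and $\epsilon\in\operatorname{Im}(\delta_x)$, which is exactly the claim.

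One point needs care: the condition ``$\kappa_k(\operatorname{Frob}_x)\sim\xi$'' in the union must be the same as the one in the corollary. In the proof of (5), $\gamma_x$ is the image of $g_x$ in $\Gamma=\operatorname{Im}\kappa_k$, which is $\kappa_k(\operatorname{Frob}_x)$ up to conjugacy, so the two conditions agree. The reverse-inclusion argument in (5) in fact produces $x$ with $\gamma_x=\xi$ exactly.

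There is no real obstacle here. All of the work, namely perfectness of the pairing, the evaluation formula, Schur--Zassenhaus, and Chebotarev via Assumption~\ref{schoen-situation-projective}(6), has already been done in \Cref{schoen-unified}.
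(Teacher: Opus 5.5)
Your proposal is correct and matches the paper, which gives no separate proof because the corollary is an immediate consequence of \Cref{schoen-unified}(5): when $\mathcal K=0$ one has $(\operatorname{Inf}(\mathcal K))^\perp=H^1(k,H^1(B_{\ov{k}},\mathcal M))$, and the union in \eqref{eq:union-images-delta} is indexed by exactly the condition $\kappa_k(\operatorname{Frob}_x)\sim\xi$ that appears in the corollary. Your side remark about $\gamma_x$ is harmless but unnecessary, and the fact that the orthogonal of the zero subspace is everything does not need the perfectness of the pairing.
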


We conclude this section by showing that Assumptions \ref{schoen-situation-projective}(1)--\ref{schoen-situation-projective}(6) are automatically satisfied after replacing $k_0$ by a suitable finite extension.

\begin{lem}\label{lang-estimate}
Assume that $\breve C$ is geometrically connected over $k$ and that $k(\breve C)/k(B)$ is Galois (this holds under \Cref{schoen-situation-projective}(1,4,5)). Let $S\subset B$ be a proper closed subset of $B$, and let $F\subset\operatorname{Gal}(k(\breve C)/k(B))$ be a non-empty conjugacy class. For every positive integer $N$, there exists a positive integer $N'$ such that, for every finite extension $k'/k$ with $[k':k]\geq N'$, the set
\[\{x'\in(B\setminus S)(k'):\operatorname{Frob}_{x'}=F\}\]
has cardinality greater than $N$.
\end{lem}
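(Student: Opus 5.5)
This is an effective Chebotarev density statement for the Galois cover $\breve C\to B_k$, taken over growing constant-field extensions. The plan is to apply the Lang--Weil estimate (equivalently, the Weil bounds for curves) to twisted forms of $\breve C$. Set $G\coloneqq \operatorname{Gal}(k(\breve C)/k(B))$ and $n\coloneqq |G|$. Since $\breve C$ is geometrically connected, $G$ is also the geometric Galois group: $\operatorname{Gal}(\ov{k}(\breve C)/\ov{k}(B))=G$. We may enlarge $S$ so that it contains the branch locus of $\breve C\to B$; this only shrinks the set we are counting, so it suffices to bound the smaller set. Let $\breve V\coloneqq \breve C\times_B(B\setminus S)$, which is a finite étale $G$-torsor over $B\setminus S$.

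Fix $g\in F$ and a finite extension $k'/k$ of degree $r$. A point $x'\in(B\setminus S)(k')$ has $\operatorname{Frob}_{x'}=F$ exactly when some (equivalently, every) geometric point $y$ of $\breve V$ above $x'$ satisfies $\operatorname{Frob}_{k'}(y)=g^{-1}y$ for some $g\in F$. The sign convention is harmless, since $F^{-1}$ is again a conjugacy class. To count these, use the twisted Frobenius $\phi_g\coloneqq g\circ\operatorname{Frob}_{k'}$ acting on $\breve V_{\ov{k}}$. It is the Frobenius of a twisted form $\breve V^{(g)}$ of $\breve V$ over $k'$, which is again a geometrically connected smooth curve of the same genus. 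Its compactification $\breve C^{(g)}$ is a twist of $\breve C_{k'}$; the twist exists because $G$ acts by $k$-automorphisms, which commute with Frobenius.

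Count the fixed points of $\phi_g$. Each $x'$ with Frobenius class $F$ has exactly $|Z_G(g)|$ fixed points of $\phi_g$ above it, and no other $x'$ contributes. Hence
\[
\#\{x'\in(B\setminus S)(k'):\operatorname{Frob}_{x'}=F\}=\frac{\#\breve V^{(g)}(k')}{|Z_G(g)|}\geq \frac{\#\breve V^{(g)}(k')}{n}.
\]
By the Weil bound applied to $\breve C^{(g)}$, with $\breve g$ the genus of $\breve C$ and $q\coloneqq |k|$, we get
\[
\#\breve V^{(g)}(k')\geq q^r+1-2\breve g q^{r/2}-\#(\breve C\setminus\breve V)(\ov{k}).
\]
The last term is bounded independently of $r$, by $n\cdot\#S(\ov{k})$. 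The right-hand side therefore tends to infinity with $r$, and we can choose $N'$ so that it exceeds $n(N+1)$ for every $r\geq N'$.

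The only delicate step is making the twisting argument precise: showing that the fibre count equals $|Z_G(g)|$, and that $\breve V^{(g)}$ is geometrically connected. The latter holds because it is geometrically isomorphic to $\breve V_{\ov{k}}$, which is connected since $\breve C$ is geometrically connected. This is precisely where the hypotheses that $\breve C$ is geometrically connected and that $k(\breve C)/k(B)$ is Galois are used. Alternatively, one can quote an effective Chebotarev theorem for function fields, for example the version in Fried--Jarden with an error term $O(q^{r/2})$ whose constant depends only on $n$, $\breve g$ and $\deg S$. That gives the same conclusion directly.
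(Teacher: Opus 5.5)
Your proof is correct, but it takes a different route from the paper. The paper gives no argument of its own: it cites Lang (and Schoen's Proposition~9.9), where the statement is deduced from Weil's Riemann hypothesis for Artin $L$-functions of the cover $\breve C\to B$. In that approach one expands the indicator function of $F$ in irreducible characters $\chi$ of $G$ and bounds $\sum_{x'}\chi(\operatorname{Frob}_{x'})$ by $O(q^{r/2})$ for $\chi\neq 1$; geometric connectedness of $\breve C$ guarantees that only the trivial character contributes a main term.

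You instead use the classical twisting argument. The points $x'$ with Frobenius class $F$ are counted, each exactly $|Z_G(g)|$ times, by the fixed points of $g\circ\operatorname{Frob}_{k'}$, that is, by the $k'$-points of the twist $\breve V^{(g)}$. The only arithmetic input is then the Hasse--Weil bound for a single smooth projective geometrically connected curve of genus $\breve g$. Geometric connectedness enters exactly where you say: it keeps each twist a geometrically connected curve of that genus.

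What your route buys:
\begin{itemize}
\item It avoids character theory and non-abelian $L$-functions altogether.
\item Your identity $\#\{x'\}=\#\breve V^{(g)}(k')/|Z_G(g)|$, together with $|F|=|G|/|Z_G(g)|$, actually recovers the full effective estimate $\frac{|F|}{|G|}q^r+O(q^{r/2})$, not just the lower bound the lemma needs. The cited route gives the same asymptotic, but as a black box.
\end{itemize}

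The steps you flag as delicate are fine:
\begin{itemize}
\item The twist exists by Galois descent, since $\breve C$ is projective and $G$ acts by $k$-automorphisms commuting with $\operatorname{Frob}_{k'}$.
\item The fibre count holds because $G$ acts simply transitively on the geometric points above $x'$. The $\tau$ with $\tau h\tau^{-1}=g^{-1}$ then form a single coset of $Z_G(h)$, which has cardinality $|Z_G(g)|$.
\end{itemize}
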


\begin{proof}
    This is due to Lang, and it is a consequence of the Riemann hypothesis for Artin L-functions for function fields of transcendence degree $1$ over a finite field proved by Weil; see \cite[p. 393]{lang1956series} or \cite[Proposition 9.9]{schoen1995computation}.
\end{proof}

\begin{prop}\label{3.2.1-3.2.6}
Fix a dense open subscheme $U\subset\dot B_{k_0}\setminus\Sigma$.
    There exists a finite extension $k_1/k_0$ such that Assumptions \ref{schoen-situation-projective}(1)-\ref{schoen-situation-projective}(6) hold for all finite extensions $k/k_1$ (with $U$ replaced by $U_k$).
\end{prop}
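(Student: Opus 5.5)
We verify the six assumptions one at a time, choosing a tower of finite extensions of $k_0$. Each assumption, once it holds over some $k_1$, should persist over every finite extension $k/k_1$; the exception is (6), which we handle with \Cref{lang-estimate}. The plan is to find, for each assumption, an extension after which it is stable, and then take the compositum.

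\textbf{Assumptions (1) and (2).} For (1), replace $k_0$ by the field $k_c$ from \eqref{eq:monodromy-diagram}. This is the algebraic closure of $k_0$ in $\ov{k_0(B)}^{\Ker\kappa}$. Over any $k\supset k_c$, we have $\Gamma_{\mathrm{arith},k}=\Gamma_{\mathrm{geom}}$ and $C$ is geometrically connected, and both properties are stable under further extension. For (2), adjoin $\mu_\ell$; this is also stable.

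\textbf{Assumptions (3), (5) and (4).}
\begin{itemize}
\item For (3), pick any closed point $x$ of $U$ and enlarge $k$ until $x$ and all points of the finite set $\rho^{-1}(x)$ become rational. Rationality persists under extension, so we fix $x_0$ and $c_0$.
\item For (5), $H^1(C_{\ov{k}},\mu_\ell)$ is finite, so the $G_{k_0}$-action factors through a finite quotient. Passing to the corresponding extension makes the action trivial, and it stays trivial afterwards.
\item For (4), the finitely many classes $b_0-c_0\in\Pic^0(C)$ with $b_0\in\rho^{-1}(x_0)$ have preimages under $\times\ell$ in $\Pic^0(C)(\ov{k})$. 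These preimages are defined over some finite extension, after which each $b_0-c_0$ is $\ell$-divisible in $\Pic^0(C)(k)$. Divisibility is preserved under extension, since points stay rational.
\end{itemize}
Let $k_2$ be the compositum of these choices. Then (1)--(5) hold over every $k\supset k_2$. By the lemma of \cite[p.~806]{schoen1999image}, $\breve C$ is geometrically connected and $k(\breve C)/k(B)$ is Galois.

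\textbf{Assumption (6): the main obstacle.} The difficulty is that $\breve C$ itself depends on $k$ through the choice of $c_0$ and the multiplication-by-$\ell$ map. We must therefore check that, for $k\supset k_2$, the cover $\breve C_k$ is just the base change of $\breve C_{k_2}$. This holds because the cartesian square \eqref{c-breve} commutes with base change. The group $G=\on{Gal}(k(\breve C)/k(B))$ is then identified with the geometric Galois group for every such $k$, using geometric connectedness.

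We then apply \Cref{lang-estimate} with $N=0$ and $S=B\setminus U$, once for each of the finitely many conjugacy classes $F\subset G$. This gives an integer $N'$ such that every $k/k_2$ with $[k:k_2]\geq N'$ has a point $x\in U(k)$ with $\on{Frob}_x=F$ for every $F$.

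Finally, we need (6) for all extensions $k/k_1$, not only those of large degree. For this we choose $k_1/k_2$ with $[k_1:k_2]\geq N'$. Every $k\supset k_1$ then satisfies $[k:k_2]\geq N'$, so $k_1$ works. We must also check that the Frobenius conjugacy class over $k$ is computed in the same group $G$; this is where the identification of $G$ with the geometric Galois group is used.
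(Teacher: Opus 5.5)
Your proof is correct, and its overall structure matches the paper's. Both proofs first pass to $k_c$ to get (1), then enlarge the field to get (2), (3) and (5), then handle (4), and finally apply \Cref{lang-estimate} with $S=B\setminus U$ and pass to an extension of large enough degree to get (6).

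The one step where you argue differently is (4). The paper deduces it from (5) via Kummer theory. For every $k$ there is a functorial isomorphism $J(k)/\ell\simeq\on{Hom}(G_k,H^1(C_{\ov{k}},\Z/\ell))$. Restricting such a homomorphism, whose target is killed by $\ell$, to the index-$\ell$ subgroup $G_{k'}$ gives zero. Hence after the degree-$\ell$ extension \emph{every} point of $J(k_0)$ becomes $\ell$-divisible. You instead choose $\ell$-division points in $J(\ov{k})$ of the finitely many classes $b_0-c_0$ and pass to a field of definition.

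Your argument is more elementary and does not need (5). The paper's argument gives an explicit degree-$\ell$ extension that works uniformly for all of $J(k_0)$, independently of the choice of $x_0$ and $c_0$. This fits its stated aim of controlling the degree of the constant-field extension.

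You also check explicitly that, once $x_0$ and $c_0$ are fixed, $\breve C$ and the group $G$ are compatible with base change, so that \Cref{lang-estimate} applies uniformly to all $k/k_2$. The paper leaves this point implicit, and your verification is a useful addition.

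A trivial nit: \Cref{lang-estimate} is stated for positive $N$, so apply it with $N=1$ rather than $N=0$.
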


\begin{proof}
    Replacing $k_0$ by its algebraic closure in $\overline{k_0(B)}^{\on{Ker}\kappa}$, we may assume that (1) holds for every finite extension $k/k_0$. Increasing $k_0$, we may assume that (2), (3) and (5) also hold for every finite extension $k/k_0$. For (4), let $J\coloneqq \on{Pic}^0(C)$ be the Jacobian of $C$. The short exact sequence
\[0\longrightarrow J[\ell]\longrightarrow J\xlongrightarrow{\times\ell} J\longrightarrow 0\]
induces, for every finite field extension $k/k_0$, an isomorphism
\begin{equation}\label{eq:j-mod-ell}
    J(k)/\ell \simeq H^1(k, J[\ell])\simeq H^1(k,H^1(C_{\ov{k}},\Z/\ell))=\on{Hom}(G_k,H^1(C_{\ov{k}},\Z/\ell)),
\end{equation}
where the last identification holds by (5). The isomorphism (\ref{eq:j-mod-ell}) is functorial in $k$. Since $H^1(C_{\ov{k}},\Z/\ell)$ is $\ell$-torsion, it follows that every element of $J(k_0)/\ell$ becomes zero in $J(k_0')/\ell$, where $k_0\subset k_0'\subset \overline{k}_0$ is the extension of degree $\ell$. Replacing $k_0$ by $k_0'$, we obtain that (4) holds for all finite extensions $k/k_0$. Finally, applying \Cref{lang-estimate} with $S=B\setminus U$, increasing if necessary $k_0$, we may assume that (6) holds over every finite extension $k/k_0$.
\end{proof}

\section{Proofs of the conditional results}\label{sec:4}

\subsection{Proof of Theorem~\ref{main-conditional-liftable}}

\begin{lem}\label{surjective-rationally-integrally}
Let $k$ be a finite field, and let $X$ and $Y$ be smooth projective varieties of dimensions $d_X$ and $d_Y$ over $k$,
respectively. Let $i,j,n$ be integers, with $i,j\geq 0$, and let
$z\in Z^j(X\times Y)$ be a cycle. Suppose that there exist an integer
$N>0$ and a cycle
$
w\in Z^{d_X+d_Y-j}(Y\times X)
$
such that
$
z_*\circ w_*=N\cdot \id
$
on
$
H^{i+2(j-d_X)}(Y_{\ov{k}},\Q_\ell(n+j-d_X))
$
for every sufficiently large prime $\ell$. Then the map
\[
z_*\colon H^i(X_{\ov{k}},\Z_\ell(n))
\longrightarrow  H^{i+2(j-d_X)}(Y_{\ov{k}},\Z_\ell(n+j-d_X))
\]
is surjective for all sufficiently large primes $\ell$.
\end{lem}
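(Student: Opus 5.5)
The plan is to work with integral $\ell$-adic cohomology throughout. The key point is that both $z_*$ and $w_*$ are defined integrally, and the identity $z_*\circ w_*=N\cdot\id$ with rational coefficients can be lifted to an integral statement up to torsion.

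First, the correspondences $z_*$ and $w_*$ act on $\ell$-adic cohomology of $X_{\ov{k}}$ and $Y_{\ov{k}}$ with $\Z_\ell$ coefficients. Each is defined by pulling back, cupping with the integral cycle class in $H^{2j}((X\times Y)_{\ov{k}},\Z_\ell(j))$ (resp. $H^{2(d_X+d_Y-j)}$), and pushing forward. They are compatible with the rational actions after tensoring with $\Q_\ell$.

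Set $H\coloneqq H^{i+2(j-d_X)}(Y_{\ov{k}},\Z_\ell(n+j-d_X))$. Then the composite $\phi\coloneqq z_*\circ w_*-N\cdot\id$ is an endomorphism of $H$ which vanishes after $\otimes\Q_\ell$. Hence $\phi(H)\subset H_{\tors}$.

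We now use two standard inputs, both valid for all sufficiently large $\ell$:
\begin{enumerate}
\item By Gabber's theorem, for $\ell$ large the integral cohomology groups $H^*(Y_{\ov{k}},\Z_\ell)$ and $H^*(X_{\ov{k}},\Z_\ell)$ are torsion-free. Alternatively, the comparison of $\ell$-adic Betti numbers with $\dim_{\F_\ell}$ of mod-$\ell$ cohomology holds for almost all $\ell$, and this gives the same conclusion.
\item $N$ is prime to $\ell$ for $\ell>N$.
\end{enumerate}
With $H$ torsion-free we get $\phi=0$, so $z_*\circ w_*=N\cdot\id$ on $H$ integrally. Since $N\in\Z_\ell^\times$, the map $z_*\circ w_*$ is an automorphism of $H$, and therefore $z_*$ is surjective onto $H$. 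The Tate twists play no role beyond bookkeeping, since $w_*$ maps $H$ into $H^i(X_{\ov{k}},\Z_\ell(n))$ with the correct twist by the dimension count.

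The main obstacle is the torsion-freeness input, namely Gabber's theorem that $H^*(Y_{\ov{k}},\Z_\ell)$ is torsion-free for almost all $\ell$, which I would simply cite. If one prefers to avoid it, there is an alternative route. Observe that the image of $z_*\circ w_*$ contains $N\cdot H$. Its cokernel is then killed by $N$ and by $\ell$-power torsion considerations, so for $\ell\nmid N$ the cokernel of $z_*$ is a finite $\ell$-group $Q$. To handle $Q$, I would pass to the mod-$\ell$ reduction with $\F_\ell$ coefficients and use the finiteness of torsion uniformly in $\ell$ to force $Q=0$ for $\ell$ large. This again reduces to the same kind of uniformity statement, so citing Gabber is the cleanest choice.
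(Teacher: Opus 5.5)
Your main argument is correct and is essentially the paper's proof: Gabber's theorem makes $H^{i+2(j-d_X)}(Y_{\ov{k}},\Z_\ell(n+j-d_X))$ torsion-free for large $\ell$, so the rational identity $z_*\circ w_*=N\cdot\id$ holds integrally, and for $\ell\nmid N$ one gets $y=z_*(N^{-1}w_*(y))$. The alternative route you sketch is not needed, and its claim that the image of $z_*\circ w_*$ contains $N\cdot H$ does not follow without torsion-freeness, so you were right to rely on Gabber.
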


\begin{proof}
By Gabber's theorem on torsion in $\ell$-adic cohomology
\cite{gabber1983torsion}, the group $H^{i+2(j-d_X)}(Y_{\ov{k}},\Z_\ell(n+j-d_X))$
is torsion-free for all sufficiently large primes $\ell$.
For such $\ell$, we have $z_*\circ w_*=N\cdot \id$ on $H^{i+2(j-d_X)}(Y_{\ov{k}},\Z_\ell(n+j-d_X))$. Indeed, it holds after tensoring with $\Q_\ell$, and the target is
torsion-free. After discarding the finitely many primes dividing $N$, multiplication by $N$ is an automorphism of this $\Z_\ell$-module. Hence, for every $y\in H^{i+2(j-d_X)}(Y_{\ov{k}},\Z_\ell(n+j-d_X))$, we have $y=z_*(N^{-1}w_*(y))$. Thus $z_*$ is surjective for all sufficiently large $\ell$.
\end{proof}

\begin{thm}\label{tate-aj-liftable-threefold}
Let $k_0$ be a finite field of characteristic $p>2$, and assume that \Cref{tate-conjecture}(2) holds for surfaces. Let $X/k_0$ be a smooth projective geometrically integral threefold which, after possibly replacing $k_0$ by a finite extension, admits a smooth projective
lifting to the ring of Witt vectors of $k_0$.

For all but finitely many primes $\ell\neq p$, for every finite extension $k/k_0$, the Abel--Jacobi map
\[
\alpha^2_{X_k,\ell}\colon
Z^2_{\mathrm{hom},\ell}(X_k)
\longrightarrow
H^1(k,H^3(X_{\ov{k}},\Z_\ell(2)))
\]
is surjective.
\end{thm}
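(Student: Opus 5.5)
We fibre $X$ in surfaces over a curve and run \Cref{conditions} with $m=1$; the three hypotheses of that lemma will be checked separately. After replacing $k_0$ by a finite extension (allowed by \Cref{lem:Abel--Jacobi-corestriction}, since surjectivity over $k'$ implies surjectivity over $k$), fix a Lefschetz pencil. Let $\widetilde X\to X$ be the blow-up of the base locus, a smooth curve $\Lambda$, and let $f\colon\widetilde X\to B=\P^1$ be the resulting flat, generically smooth morphism whose fibres are surfaces. By the blow-up formula, $H^3(\widetilde X_{\ov k},\Z_\ell(2))$ is $H^3(X_{\ov k},\Z_\ell(2))\oplus H^1(\Lambda_{\ov k},\Z_\ell(1))$, compatibly with correspondences. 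The summand coming from $\Lambda$ is hit by Abel--Jacobi images of divisors on $\Lambda$ (\Cref{rmk:AJ_properties}(7)) pushed into the exceptional divisor. By \Cref{rmk:AJ_properties}(2), it then suffices to prove surjectivity of $\alpha^2_{\widetilde X_k,\ell}$ for every $k$.

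Hypothesis (i) of \Cref{conditions} asks for a smooth fibre $V=f^{-1}(x)$ over a $k$-point with $\alpha^1_{V,\ell}$ surjective; this holds by \Cref{rmk:AJ_properties}(7). Hypothesis (iii) concerns $L^0/L^1\subset H^0(B_{\ov k},R^3f_*\Z_\ell(2))$. By hard Lefschetz and the weak Lefschetz theorem, this group is the image of $H^1$ of a hyperplane-type curve, or of $X$ itself via Gysin; for $\ell$ large it is torsion-free. Cycles supported on fibres realise it through the correspondence given by a multisection. Using \Cref{surjective-rationally-integrally} to pass from $\Q_\ell$ to $\Z_\ell$ for almost all $\ell$, we reduce to the divisor case on a curve and again apply \Cref{rmk:AJ_properties}(7).

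The main work, and the expected obstacle, is hypothesis (ii): surjectivity of $\alpha_{f,\ell}$ onto $H^1(k,L^1/L^2)=H^1(k,H^1(B_{\ov k},R^2f_*\Z_\ell(2)))$, uniformly in $k$. The plan is as follows.
\begin{enumerate}
\item Split $\mathcal H(-1)=R^2f_*\Z_\ell(1)$ into the part spanned by classes of divisors on $\widetilde X$, which is handled by divisors, and the vanishing-cycle summand $\mathcal E$. Use \Cref{lem:comp_f_E} to reduce to surjectivity of $\alpha_{\mathcal E}$.
\item By \Cref{schoen-2.4.3-finer}, reduce further to the mod $\ell$ statement for $M=\mathcal E_1$ at the generic point.
\item Apply \Cref{schoen-unified-cor}.
\end{enumerate}
The hypotheses of the corollary must then be verified for all but finitely many $\ell$. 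Irreducibility and $(M^\vee)^\Gamma=0$ come from Deligne's theory of Lefschetz pencils: for large $\ell$ the monodromy is the full orthogonal group, generated by reflections, since the vanishing cycles are nonzero mod $\ell$. Tameness holds because $p>2$ and the local monodromy of a Lefschetz pencil is a Picard--Lefschetz reflection. A prime-to-$\ell$ element $\xi$ with one-dimensional fixed space can be taken to be a suitable product of reflections, or a semisimple element of $O(M)$. The condition $\mathcal K=0$ follows because $\Gamma$ is generated by the inertia subgroups $I_b$ and these are reflection groups; this needs care. Assumptions (1)--(6) are arranged by \Cref{3.2.1-3.2.6}. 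The delicate point is that the corollary only asserts $\epsilon\in\on{Im}(\delta_x)$ for the local class $m\in H^2_{\ov x}$. We need $m$ to be algebraic, i.e.\ the image of $Z^1(f^{-1}(x))_0$ under $q_\#$. This is exactly where the Tate conjecture for the surface $f^{-1}(x)$ enters, combined with the liftability assumption. Liftability gives, through Hodge theory on the lift and $\ell$-adic comparison, control of the Néron--Severi lattice mod $\ell$ for almost all $\ell$: by the Tate conjecture, Galois-fixed classes in $H^2(f^{-1}(x)_{\ov k},\Z_\ell(1))$ are algebraic up to a bounded index that is independent of $\ell$. Excluding the finitely many $\ell$ dividing this index and the torsion (via Gabber), we obtain $z\in Z^1(f^{-1}(x))_0$ with $q_\#z$ reducing to $m$. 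This completes (ii), and \Cref{conditions} gives the theorem.
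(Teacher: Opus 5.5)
Your plan has the same skeleton as the paper's proof: a Lefschetz pencil, \Cref{conditions} with $m=1$, reduction of (ii) to $\alpha_{\mathcal E}$ via \Cref{lem:comp_f_E}, then \Cref{schoen-2.4.3-finer} and \Cref{schoen-unified-cor}. However, the step where you realize the local class by a cycle is missing its key ingredient. \Cref{schoen-unified-cor} gives you a class $m$ in the \emph{mod-$\ell$} group $H^2_{\ov{x}}(B_{\ov{k}},\mathcal E_1(1))_0^{G_k}$, whereas the Tate conjecture only makes \emph{$\ell$-adic} Frobenius-invariant classes algebraic. A mod-$\ell$ invariant lifts to an $\ell$-adic invariant only if the Frobenius at $x$ has eigenvalue $1$ on the stalk $E_\ell\otimes\Q_\ell$ of the vanishing-cycle sheaf, and even then the reduction map must be onto. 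Nothing in your argument controls this. If it fails, no cycle on $f^{-1}(x)$ has $q_\#$-image reducing to $m\neq 0$.

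What you need is $H^2_{\ov{x}}(B_{\ov{k}},\mathcal E(1))_0^{G_k}\simeq\Z_\ell$, with reduction mod $\ell$ an isomorphism onto the one-dimensional mod-$\ell$ group. The paper gets this from Schoen's Lemma~(8.2.2), after arranging that $E_\ell$ has odd rank. It does so by blowing up a rational point of $X$ when $b_2(X)$ is even, and by taking the pencil in an even power $L_0^{\otimes r_0}$ (Schoen's Lemma~(9.2.1)(i)). Your ``bounded index independent of $\ell$'' does not address this. Nor could a fibre-dependent index be discarded by excluding finitely many $\ell$, since the points $x$ vary with $k$. In fact no index is needed: rational Tate for divisors on a surface over a finite field already gives integral surjectivity onto the $\ell$-adic Galois invariants for every $\ell$, because the cokernel is torsion and embeds in the torsion-free $T_\ell\operatorname{Br}$. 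This is how the paper uses Schoen's Lemma~(8.2.3) and Hypothesis~(9.6.1).

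The monodromy hypotheses of \Cref{schoen-unified-cor} are also asserted rather than proved. Big mod-$\ell$ monodromy is not automatic for an arbitrary Lefschetz pencil: if the fibres have $p_g=0$, the vanishing lattice is definite and the monodromy is a finite Weyl group. Your argument for $\mathcal K=0$ also does not work. Each $I_b$ is generated by a reflection of order $2$, so $H^1(I_b,M^\vee(1))=0$ for odd $\ell$. Hence $\mathcal K$ is all of $H^1(\Gamma,M^\vee(1))$, and the fact that $\Gamma$ is generated by inertia gives no information about it.

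This is where the paper genuinely uses liftability. Schoen's Lemma~(9.5.2) provides, after a finite extension, a pencil in $|L_0^{\otimes r_0}|$ with three properties: a complex member has an isolated singularity $z_1^3+z_2^3+z_3^4=0$, the vanishing cohomology has rank at least $5$, and its signature $(a,b)$ satisfies $a,b>1$. Schoen's Lemma~(9.5.5) and Proposition~(9.5.6) then give Assumptions~\ref{schoen-situation-projective}(7)--(10) and $H^1(\Gamma,M^\vee(1))=0$ for almost all $\ell$. Liftability also makes the cokernel in (iii) torsion-free for almost all $\ell$; Schoen's Lemma~(10.2.4) shows it is torsion. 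It does not control the N\'eron--Severi lattices of the special fibres, whose Picard numbers can jump.
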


\begin{proof}
\Cref{lem:Abel--Jacobi-corestriction} allows us to replace $k_0$ by a finite extension throughout the proof. We may assume that $X$ admits a smooth projective lifting $\mathcal X\to\operatorname{Spec}W(k_0)$. If $b_2(X)$ is odd, set $Y=X$ and $\mathcal Y=\mathcal X$. If $b_2(X)$ is even, after a further finite extension choose a point $x\in X(k_0)$. Since $\mathcal X$ is smooth over $W(k_0)$, the point $x$ lifts to a section $\widetilde x$ of $\mathcal X$. Set $Y=\operatorname{Bl}_xX$ and $\mathcal Y=\operatorname{Bl}_{\widetilde x}\mathcal X$. Then $\mathcal Y$ is a smooth projective lifting of $Y$, and $b_2(Y)=b_2(X)+1$ is odd. The blow-up formula gives $H^3(Y_{\ov{k}_0},\Z_\ell(2))\simeq H^3(X_{\ov{k}_0},\Z_\ell(2))$, and compatibility of Abel--Jacobi maps with proper pushforward shows that it suffices to prove the assertion for $Y$.

Choose a relatively ample line bundle $\mathcal L_0$ on $\mathcal Y$ and let $L_0$ be its restriction to $Y$. Thus the pair $(Y,L_0)$ lifts to $(\mathcal Y,\mathcal L_0)$ over $W(k_0)$. We fix an embedding $W(k_0)\hookrightarrow\C$, so that $\mathcal Y$ gives rise to a complex projective manifold $Y^{an}$ and $\mathcal L_0$ gives rise to a line bundle $L_0^{an}$ on $Y^{an}$. By \cite[Lemma~9.5.2]{schoen1999image}, up to replacing $k_0$ by a finite extension if necessary, there exists a positive integer $r_0$ such that the conclusion of \cite[Lemma~9.3.2]{schoen1999image} is satisfied, that is, for every $r\geq r_0$:\begin{itemize}
\item[--] the line bundle $L_0^{\otimes r}$ is very ample,
\item[--] $H^i(Y,L_0^{\otimes r})=0$ for all $i>0$, and
\item[--] there exists a Lefschetz pencil $B\subset\P H^0(Y_{k_{0,r}},L_0^{\otimes r})$, for some finite field extension $k_{0,r}/k_0$,
\end{itemize}
and moreover the conclusion of \cite[Lemma~9.5.2]{schoen1999image} is satisfied, that is,
\begin{itemize}
\item[--] the pencil $B\subset\P H^0(Y,L_0^{\otimes r_0})$ is defined over $k_0$,
\item[--] the integer $r_0$ is even,
\item[--] there is a complex hypersurface on $Y^{an}$ belonging to the linear system $|(L_0^{an})^{\otimes r_0}|$ whose singular locus consists of a unique closed point $P$ given, locally in the analytic topology, by the equation
\[
z_1^3+z_2^3+z_3^4=0,
\]
\item[--] the rank of the vanishing cohomology $E$ is at least $5$, and
\item[--] the signature $(a,b)$ of the intersection product restricted to $E$ satisfies $a>1$ and $b>1$.
\end{itemize}
Set $L\coloneqq L_0^{\otimes r_0}$, and let $W\to B$ be the corresponding Lefschetz pencil over $k_0$; see \cite[(9.1.1)]{schoen1999image}. The morphism $\sigma\colon W\to Y$ is the blow-up of $Y$ along the base locus of the pencil. By the blow-up formula, the pushforward map
\[
\sigma_*\colon H^3(W_{\ov{k}},\Z_\ell(2))\longrightarrow H^3(Y_{\ov{k}},\Z_\ell(2))
\]
is surjective for every prime $\ell\neq p$ and every finite extension $k/k_0$, and hence so is the induced map
\[
H^1(k,H^3(W_{\ov{k}},\Z_\ell(2)))\longrightarrow H^1(k,H^3(Y_{\ov{k}},\Z_\ell(2))).
\]
It therefore suffices to prove the surjectivity of $\alpha^2_{W,\ell}$.

Let $k/k_0$ be a finite extension. As in \Cref{paragraph-relative-aj}, the Leray spectral sequence for $f_{\ov{k}}\colon W_{\ov{k}}\to B_{\ov{k}}$ gives rise to a filtration $L^2\subset L^1\subset L^0$ on $H^3(W_{\ov{k}},\Z_\ell(2))$:
\begin{align*}
L^0&=H^3(W_{\ov{k}},\Z_\ell(2)),\\
L^1&=\operatorname{Ker}[H^3(W_{\ov{k}},\Z_\ell(2))\to H^0(B_{\ov{k}},R^3(f_{\ov{k}})_*\Z_\ell(2))],\\
L^2&=\operatorname{Ker}[L^1\to H^1(B_{\ov{k}},R^2(f_{\ov{k}})_*\Z_\ell(2))]\\
&=\operatorname{Im}[H^2(B_{\ov{k}},R^1(f_{\ov{k}})_*\Z_\ell(2))\to H^3(W_{\ov{k}},\Z_\ell(2))].
\end{align*}
For $i=0,1,2$, set
\[
L^iH^1(k,H^3(W_{\ov{k}},\Z_\ell(2)))\coloneqq
\operatorname{Im}[H^1(k,L^i)\to H^1(k,H^3(W_{\ov{k}},\Z_\ell(2)))].
\]
It suffices to check conditions \textup{(i)--(iii)} of \Cref{conditions} for $f$ with $m=1$.

First, after replacing $k_0$ by a finite extension, choose a rational point $x\in\dot B(k_0)$ and set $V\coloneqq f^{-1}(x)$. Then $V$ is a smooth projective surface over $k_0$. By \Cref{rmk:AJ_properties}(7), the map
\[
\alpha^1_{V_k,\ell}\colon Z^1_{\mathrm{hom},\ell}(V_k)\longrightarrow H^1(k,H^1(V_{\ov{k}},\Z_\ell(1)))
\]
is surjective for every finite extension $k/k_0$. This proves condition \textup{(i)}.

Next, we verify condition \textup{(iii)}. Consider the composite
\[
H^1(V_{\ov{k}},\Z_\ell(1))\longrightarrow H^3(Y_{\ov{k}},\Z_\ell(2))
\longrightarrow H^3(W_{\ov{k}},\Z_\ell(2))\longrightarrow L^0/L^1.
\]
Since $V$, $Y$ and $W$ lift to characteristic zero, for all but finitely many $\ell$ the cokernel of this map is torsion-free. In the proof of \cite[Lemma~(10.2.4)]{schoen1999image}, it is shown that the cokernel is torsion for every $\ell\neq p$. Thus the map is surjective for all but finitely many primes $\ell$. Passing to Galois cohomology and using the surjectivity of $\alpha^1_{V,\ell}$ together with the compatibility of Abel--Jacobi maps with proper pushforward \Cref{rmk:AJ_properties}(2), we conclude that the composite
\[
Z^2_f(W)\xlongrightarrow{\alpha^2_{W,\ell}}H^1(k,L^0)\longrightarrow H^1(k,L^0/L^1)
\]
is surjective. Hence condition \textup{(iii)} holds.

It remains to verify condition \textup{(ii)}. Write $j\colon\dot B\hookrightarrow B$ for the smooth locus of the pencil and $\dot f\colon\dot W\to\dot B$ for the restriction of $f$. For every prime $\ell$ outside the finite set excluded in \cite[(9.5.3)]{schoen1999image}, let $E_\ell\subset H^2(W_{\bar\eta},\Z_\ell(1))$ be the vanishing-cohomology lattice at a geometric generic point, let $E_r=E_\ell/\ell^rE_\ell$, let $\dot{\mathcal E}_r$ be the corresponding locally constant sheaf on $\dot B$, and set
\[
\mathcal E_r=j_*\dot{\mathcal E}_r,\qquad
\mathcal E=\{\mathcal E_r\}_{r\geq1}.
\]
The compatible orthogonal decompositions of \cite[(9.5.3)--(9.5.4)]{schoen1999image}, cf. \cite[(4.3)]{deligne1980weil}, give
\[
R^2f_*\Z_\ell(1)\simeq\mathcal E\oplus\mathcal E^\perp.
\]
Set $\mathcal H=R^2f_*\Z_\ell(2)$, and let
\[
q\colon\mathcal H(-1)=R^2f_*\Z_\ell(1)\longrightarrow\mathcal E
\]
be the resulting projection. After twisting,
\[
\mathcal H\simeq\mathcal E(1)\oplus\mathcal E^\perp(1).
\]
Geometric monodromy is trivial on $\mathcal E^\perp$, and hence $H^1(B_{\bar k},\mathcal E^\perp(1))=0$ because $B_{\bar k}\simeq\P^1_{\bar k}$. Thus the Leray identification and $q$ give an isomorphism
\begin{equation}\label{eq:q-star-isomorphism}
q_*\colon H^1(k,L^1/L^2)\xlongrightarrow{\sim}
H^1(k,H^1(B_{\bar k},\mathcal E(1))).
\end{equation}

Set $M=(\dot{\mathcal E}_1(1))_{\bar\eta}$; then, 
in the notation of \Cref{sec:connecting-map}
and in addition letting $\dot g \colon \eta\to \dot B$ be the inclusion of the generic point, we have 
\[\mathcal E_1(1)=j_*\dot{\mathcal E}_1(1)=j_*(\dot g_* \dot g^*\mathcal E_1(1))=j_*\dot g_*M=g_*M,\]
where the second equality holds because $\dot{\mathcal E}_1(1)$ is locally constant and the third equality holds by the definition of $M$.
The sheaves $\mathcal E_r$ are flat over $\Z/\ell^r$, their restrictions to $\dot B$ are locally constant, and $H^0(\dot B_{\bar k},\mathcal E_1^\vee)=0$. In the notation of Assumption~\ref{schoen-situation-projective}, we take $U\coloneqq \dot B\setminus\Sigma$.
After the extension provided by \Cref{3.2.1-3.2.6}, Assumptions \ref{schoen-situation-projective}(1)--\ref{schoen-situation-projective}(6) hold over every further finite extension. Lemma~(9.5.5) of \cite{schoen1999image} gives Assumptions~\ref{schoen-situation-projective}(7)--\ref{schoen-situation-projective}(9), while \cite[Proposition~(9.5.6)]{schoen1999image} gives Assumption~\ref{schoen-situation-projective}(10) and the vanishing of $H^1(\Gamma,M^\vee(1))$, hence the vanishing of $\mathcal K$, for all but finitely many $\ell$. Let 
\[
\epsilon_1,\ldots,\epsilon_t \in H^1(k,H^1(B_{\bar k},\mathcal E_1(1)))
\]
be an $\F_\ell$-basis. By \Cref{schoen-unified-cor}, for each $i$ there are $k$-points $x_i\in\dot B(k)$ and classes $\bar e_i\in H^2_{\bar x_i}(B_{\bar k},\mathcal E_1(1))_0^{G_k}$ such that $H^2_{\bar x_i}(B_{\bar k},\mathcal E_1(1))_0^{G_k}$ is one-dimensional generated by $\bar e_i$, and $\delta_{\mathcal E_1,x_i}(\bar e_i)=\epsilon_i$. 

The lattice $E_\ell$ has odd rank because $b_2(Y)$ is odd and $r_0$ is even; see \cite[Lemma~(9.2.1)(i)]{schoen1999image}. Therefore \cite[Lemma~(8.2.2)]{schoen1999image} gives
\[
H^2_{\bar x_i}(B_{\bar k},\mathcal E(1))_0^{G_k}\simeq\Z_\ell,\qquad
H^2_{\bar x_i}(B_{\bar k},\mathcal E(1))_0^{G_k}/\ell
\xlongrightarrow{\sim}
H^2_{\bar x_i}(B_{\bar k},\mathcal E_1(1))_0^{G_k}.
\]
In particular, we may choose lifts $e_i\in H^2_{\bar x_i}(B_{\bar k},\mathcal E(1))_0^{G_k}$ of $\bar e_i$. 

Since $m=1$, the Tate conjecture for divisors on the smooth projective fibers of $f$, together with \cite[Lemma~(8.2.3)]{schoen1999image}, implies Hypothesis~(9.6.1) of \cite{schoen1999image}; see \cite[Remark~(9.6.2)(i)]{schoen1999image}. Thus the $\Z_\ell$-linear extension of the fiberwise cycle-class map appearing in the composite defining $q_\#$ in \eqref{q-sharp} is surjective.

Moreover, since
\[
q(1)\colon\mathcal H\longrightarrow\mathcal E(1)
\]
is the projection onto a direct summand, the last map in that composite is split surjective. It follows that the $\Z_\ell$-linear extension of
\[
q_\#\colon Z_f^2(W)\longrightarrow Z(\mathcal E)
\]
is surjective. Applying this to the element of $Z(\mathcal E)$ whose $x_i$-component is $e_i$ and whose other components are zero, we obtain, for every $i=1,\dots,t$, an element $\xi_i\in Z^1(f^{-1}(x_i))_0\otimes\Z_\ell$ such that $q_\#(\xi_i)=e_i$. In particular, $q_\#(\xi_i)$ reduces to $\bar e_i$ modulo $\ell$.

For every $i=1,\dots,t$, write
\[\xi_i=\sum_j a_{ij}z_{ij},\qquad a_{ij}\in\Z_\ell,\quad z_{ij}\in Z^1(f^{-1}(x_i))_0.\]
Choosing integers $n_{ij}\equiv a_{ij}\pmod\ell$ and setting $d_i=\sum_jn_{ij}z_{ij}\in Z^1(f^{-1}(x_i))_0$, we deduce that $q_\#(d_i)$ reduces to $\bar e_i$ modulo $\ell$. The classes $q_\#(d_i)$ now satisfy the hypotheses of \Cref{schoen-2.4.3-finer}, so their $\alpha_{\mathcal E}$-images generate $H^1(k,H^1(B_{\bar k},\mathcal E(1)))$. By \Cref{lem:comp_f_E} and \eqref{eq:q-star-isomorphism}, the classes $\alpha_{f,\ell}(d_i)$ generate $H^1(k,L^1/L^2)$. Thus condition \textup{(ii)} holds. Conditions \textup{(i)} and \textup{(iii)} were proved above, and \Cref{conditions} shows that $\alpha^2_{W,\ell}$ is surjective. The surjectivity of the pushforward from $W$ to $Y$, and then from $Y$ to $X$ in the blow-up case, proves the theorem.
\end{proof}

\begin{lem}\label{lem:geometric-cycle-map-large-ell}
Let $k_0$ be a finite field of characteristic $p$,
and assume that \Cref{tate-conjecture}(2) holds for surfaces.
Let $X/k_0$ be a smooth projective geometrically integral variety of dimension $d$ which admits a smooth projective lifting to the ring of Witt vectors of $k_0$. For all but finitely many primes $\ell\neq p$, for every finite extension $k/k_0$, the cycle map
\[
CH^{d-1}(X_k)_{\Z_\ell}\longrightarrow H^{2d-2}(X_{\ov{k}},\Z_\ell(d-1))^{G_k}
\]
is surjective.
\end{lem}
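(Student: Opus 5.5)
Our plan is to reduce to the case of surfaces by a Lefschetz hyperplane argument, use Tate's conjecture for divisors on surfaces there, and control integrality by discarding finitely many $\ell$. Lemma~\ref{surjective-rationally-integrally} is designed for exactly this last point.

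\emph{Step 1: reduction to a surface.} If $d\leq 2$ the statement is either trivial or it is the integral Tate conjecture for divisors, which over a finite field follows from \Cref{tate-conjecture}(2) together with the Kummer sequence. Assume $d\geq 3$. After a finite extension of $k_0$, which is harmless by a restriction--corestriction argument (see below), we choose a smooth complete intersection surface $S\subset X$ of ample hypersurfaces defined over $k_0$; Bertini over a large finite field provides one. Let $i\colon S\hookrightarrow X$ be the inclusion. The Gysin map $i_*\colon H^2(S_{\ov{k}},\Q_\ell(1))\to H^{2d-2}(X_{\ov{k}},\Q_\ell(d-1))$ is surjective by hard Lefschetz, since it is dual to the injective restriction $H^2(X)\to H^2(S)$. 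Hard Lefschetz also provides a correspondence $w$ with $i_*\circ w_*=N\cdot\id$. Concretely, we take $w=i^*\circ\Lambda$, where $\Lambda$ is an algebraic inverse of the Lefschetz operator $L^{d-2}\colon H^2\to H^{2d-2}$. In general the algebraicity of $\Lambda$ is the Lefschetz standard conjecture.

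\emph{Step 2: liftability and the main obstacle.} Liftability should make this step work. Lift $X$ and an ample bundle to $W(k_0)$, and pass to $\mathbb C$. There, the inverse of $L^{d-2}$ in degree $2d-2$ is algebraic up to an integer $N$: it is the standard conjecture $B$ in degree $\leq 2$, known by Lieberman via divisor/curve classes, since $H^2$ and $H^{2d-2}$ are handled by the Lefschetz $(1,1)$ theorem. Specializing cycles from the generic fiber to $X$ gives $w\in Z^{*}(X\times S)$ with $i_*\circ w_*=N\cdot\id$ on $\Q_\ell$-cohomology for all $\ell\neq p$, by smooth and proper base change and compatibility of specialization with cycle classes. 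Lemma~\ref{surjective-rationally-integrally} then gives that $i_*\colon H^2(S_{\ov{k}},\Z_\ell(1))\to H^{2d-2}(X_{\ov{k}},\Z_\ell(d-1))$ is surjective for $\ell\gg0$. I expect this specialization step to be the main technical point, namely arranging that the correspondence is defined over $k_0$ rather than only over $\ov{k}_0$. This is handled by a finite extension of $k_0$, since only finitely many cycles are needed.

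\emph{Step 3: Galois invariants and descent.} For $\ell\gg0$ we also arrange, by Gabber's theorem, that all relevant cohomology groups are torsion-free. Given $\beta\in H^{2d-2}(X_{\ov{k}},\Z_\ell(d-1))^{G_k}$, put $\alpha=N^{-1}w_*\beta\in H^2(S_{\ov{k}},\Z_\ell(1))^{G_k}$; this class is Galois-invariant because $w$ is defined over $k$ and $\ell\nmid N$. Then $i_*\alpha=\beta$. The cycle map $\on{Pic}(S_k)_{\Z_\ell}\to H^2(S_{\ov{k}},\Z_\ell(1))^{G_k}$ is surjective under the Tate conjecture for $S_k$. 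Its cokernel is controlled by $\on{Br}(S_k)\{\ell\}$, which is finite, and the integral statement follows from the Tate conjecture and the Hochschild--Serre sequence \eqref{eq:hochschild-serre-short-exact} with $H^1(k,H^1)$ handled by \Cref{rmk:AJ_properties}(7). Pushing forward along $i$ yields the surjectivity for $X_k$. Finally, to remove the initial finite extension $k_1/k_0$, use Step 3 over the compositum $kk_1$ and corestrict. The norm $N_{kk_1/k}$ applied to a preimage of $\beta$ maps to $[kk_1:k]\beta$. Hence surjectivity holds after excluding the finitely many $\ell$ dividing $[k_1:k_0]$, a bound that is uniform in $k$.
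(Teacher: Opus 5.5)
Step~2 has a genuine gap. The inverse $\Lambda\colon H^{2d-2}(X)\to H^2(X)$ of $L^{d-2}$ is a codimension-$2$ class on $X\times X$ of K\"unneth type $H^2\otimes H^2$. The Lefschetz $(1,1)$ theorem only makes Hodge classes in $H^2(X)$ and $H^{2d-2}(X)$ algebraic. That handles the N\'eron--Severi part of $\Lambda$, which is a sum of products of divisors. The component of $\Lambda$ on the transcendental part of $H^2(X)$ is a codimension-$2$ Hodge class on $X\times X$, and the $(1,1)$ theorem says nothing about it. Its algebraicity is exactly the Lefschetz standard conjecture in degree $2$, which is open over $\C$ already for threefolds. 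Lieberman's theorem concerns abelian varieties, and the general known cases are in degree $\leq 1$. So the cycle $w$ that you feed into \Cref{surjective-rationally-integrally} has not been constructed. Step~3 also relies on $w$ being defined over $k$ to get Galois invariance of $N^{-1}w_*\beta$, so it has no input either.

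The missing observation is that you never need $\Lambda$ to be algebraic. You only need an inverse of $L^{d-2}$ that is $\Z_\ell$-integral and $G_k$-equivariant, and this comes for free. Since $(X,L_0)$ lifts, hard Lefschetz on $X^{an}$ shows that $\cup c_1(L_0^{an})^{d-2}\colon H^2(X^{an},\Z)\to H^{2d-2}(X^{an},\Z)$ has finite kernel and cokernel. By comparison and smooth proper base change, $\cup c_1(L_0)^{d-2}\colon H^2(X_{\ov{k}_0},\Z_\ell(1))\to H^{2d-2}(X_{\ov{k}_0},\Z_\ell(d-1))$ is then an isomorphism for all but finitely many $\ell$. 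It is $G_{k_0}$-equivariant because $L_0$ is defined over $k_0$. Hence its inverse is equivariant too, and it identifies $G_k$-invariants for every $k$ at once.

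This is the core of the paper's proof. The paper then applies the integral Tate conjecture for divisors on $X_k$ itself, deduced from the surface case via \cite{morrow2019variational}, and intersects with $L_0^{d-2}$.

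With this fix your surface route becomes a legitimate variant. For invariant $\beta$, put $\gamma=(L^{d-2})^{-1}\beta$. Realize $i^*\gamma$ by a divisor on $S_k$ using the Tate conjecture for the surface $S$, and push it forward; its class is $[S]\cup\gamma=r\beta$, where $[S]=r\,c_1(L_0)^{d-2}$. This avoids Morrow's reduction from arbitrary varieties to surfaces. The price is choosing $S$, passing to a finite extension with a norm argument, and discarding the primes dividing $r[k_1:k_0]$. The paper's version needs none of this, since $L_0$ is already defined over $k_0$.
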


\begin{proof}
The conclusion is immediate if $d=1$; from now on, we assume $d\geq 2$. By assumption, $X$ admits a smooth projective lifting $\mathcal X\to \on{Spec}W(k_0)$. Choose a relative ample line bundle $\mathcal L_0$ on $\mathcal X$ and let $L_0$ be its restriction to $X$. The pair $(X,L_0)$ lifts to $(\mathcal X, \mathcal L_0)$ over $W(k_0)$. We fix an embedding $W(k_0)\hookrightarrow \C$ so that $\mathcal X$ gives rise to a complex projective manifold $X^{an}$ and $\mathcal L_0$ gives rise to a line bundle $L_0^{an}$ on $X^{an}$. Hard Lefschetz shows that the cup product with $c_1(L_0^{an})^{d-2}$ induces a map 
\[H^2(X^{an},\Z)\longrightarrow H^{2d-2}(X^{an},\Z)\]
with finite kernel and cokernel,  hence by smooth proper base change, for all but finitely many $\ell\neq p$, the cup product with $c_1(L_0)^{d-2}$ induces an isomorphism of continuous $G_{k_0}$-modules
\[
H^2(X_{\ov{k}_0},\Z_\ell(1))\xlongrightarrow{\sim} H^{2d-2}(X_{\ov{k}_0},\Z_\ell(d-1)).
\]
Let $k/k_0$ be a finite extension.
Since the Tate conjecture for divisors on surfaces implies the conjecture for divisors on smooth projective varieties of any dimension \cite{morrow2019variational}, the cycle map
\[
CH^1(X_k)_{\Z_\ell}\longrightarrow H^2(X_{\ov{k}},\Z_\ell(1))^{G_k}
\]
is surjective, hence so is the cycle map
\[
CH^{d-1}(X_k)_{\Z_\ell}\longrightarrow H^{2d-2}(X_{\ov{k}},\Z_\ell(d-1))^{G_k},
\]
as desired.
\end{proof}

\begin{proof}[Proof of \Cref{main-conditional-liftable}]
The theorem is immediate when $d=1$, for all primes $\ell\neq p$; we now assume $d\geq 2$. Fix a prime $\ell\neq p$ outside the finite exceptional sets in
\Cref{tate-aj-liftable-threefold} and \Cref{lem:geometric-cycle-map-large-ell}. 
By \Cref{lem:geometric-cycle-map-large-ell}, for every finite extension $k/k_0$,
the map
\[
CH^{d-1}(X_k)_{\Z_\ell}
\longrightarrow
H^{2d-2}(X_{\bar k},\Z_\ell(d-1))^{G_k}
\]
is surjective.

We next prove that, for every finite extension $k/k_0$, the Abel--Jacobi map
\[
\alpha^{d-1}_{X_k,\ell}\colon
Z^{d-1}_{\mathrm{hom},\ell}(X_k)
\longrightarrow
H^1(k,H^{2d-3}(X_{\bar k},\Z_\ell(d-1)))
\]
is surjective.
By \Cref{lem:Abel--Jacobi-corestriction}, it is enough to establish the surjections after replacing $k_0$ by a finite extension.
If $d=2$, this
follows from \Cref{rmk:AJ_properties}(7), while if $d=3$ it follows
from \Cref{tate-aj-liftable-threefold}. Suppose that $d\geq4$. 
Since $X$ is liftable, after replacing $k_0$ by a finite extension, we may choose a smooth threefold complete intersection $Y\subset X$ which is liftable.
By \Cref{tate-aj-liftable-threefold}, the map
\[
\alpha^2_{Y_k,\ell}\colon
Z^2_{\mathrm{hom},\ell}(Y_k)
\longrightarrow
H^1(k,H^3(Y_{\bar k},\Z_\ell(2)))
\]
is surjective. By weak Lefschetz
and Poincar\'e duality, the Gysin map
\[
H^3(Y_{\bar k},\Z_\ell(2))
\longrightarrow
H^{2d-3}(X_{\bar k},\Z_\ell(d-1))
\]
is surjective. Since a finite field has $\ell$-cohomological dimension
one, the induced map on $H^1(k,-)$ is also surjective. Compatibility of
Abel--Jacobi maps with proper pushforward
\Cref{rmk:AJ_properties}(2) now gives the surjectivity of
$\alpha^{d-1}_{X_{k},\ell}$.

Therefore, for every finite extension $k/k_0$, both
the maps
\[
CH^{d-1}(X_k)_{\Z_\ell}
\longrightarrow
H^{2d-2}(X_{\bar k},\Z_\ell(d-1))^{G_k}
\]
and
\[
\alpha^{d-1}_{X_k,\ell}\colon
Z^{d-1}_{\mathrm{hom},\ell}(X_k)
\longrightarrow
H^1(k,H^{2d-3}(X_{\bar k},\Z_\ell(d-1)))
\]
are surjective. By the Hochschild--Serre exact sequence and
\Cref{rmk:AJ_properties}(1), the cycle map
\[
CH^{d-1}(X_k)_{\Z_\ell}
\longrightarrow
H^{2d-2}(X_k,\Z_\ell(d-1))
\]
is therefore surjective. The final assertion for $d=3$ follows from the rest and \cite[Th\'eor\`eme~2.2]{colliot2013cycles}.
\end{proof}

\subsection{A variant of Theorem~\ref{main-conditional-liftable}}

Assuming the following stronger form of the Tate Conjecture, we may drop the liftability assumption in \Cref{main-conditional-liftable}.

\begin{conj}\label{tate-conjecture}
    Let $k_0$ be a finite field of characteristic $p>0$, and let $\ell\neq p$ be a prime. For every finite extension $k/k_0$ and every smooth projective $k$-variety $X$:
    \begin{enumerate}
        \item[(1)] the Frobenius element $\phi\in G_k$ acts semi-simply on $H^*(X_{\ov{k}},\Q_\ell)$;
        \item[(2)] for all $i\geq 0$, the cycle class map $CH^i(X)_{\Q_\ell}\to H^{2i}(X_{\ov{k}},\Q_\ell(i))^{G_k}$ is surjective.
    \end{enumerate}
\end{conj}

\begin{lem}\label{lem:tate-independence}
Assume that \Cref{tate-conjecture} holds over $k_0$ for one prime $\ell_0\ne p$. Then, for every finite extension $k/k_0$, every smooth projective $V/k$, every $r$, and every prime $\ell\ne p$, numerical and $\ell$-adic homological equivalence agree on $CH^r(V)_\Q$. Moreover, the $\ell$-adic cycle-class map $CH^r(V)_{\Q_\ell}\to H^{2r}(V_{\bar k},\Q_\ell(r))^{G_k}$ is surjective.
\end{lem}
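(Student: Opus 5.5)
The plan follows the classical argument that the Tate conjecture together with semisimplicity implies ``numerical equals homological'' equivalence (Tate, Milne, Jannsen), and then bootstraps from $\ell_0$ to an arbitrary $\ell$. Throughout, fix a finite extension $k/k_0$, a smooth projective $V/k$ of dimension $d$, and $r\geq 0$.

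\emph{Step 1: the statements for $\ell_0$.} Consider the cycle class map
\[
\operatorname{cl}_{\ell_0}\colon CH^r(V)_{\Q_{\ell_0}}\to H^{2r}(V_{\ov k},\Q_{\ell_0}(r))^{G_k}=:T^r .
\]
By \Cref{tate-conjecture}(2), applied to $V$ over $k$, its image is all of $T^r$; the same holds for $T^{d-r}$.

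By \Cref{tate-conjecture}(1), Frobenius acts semisimply. Hence the generalized eigenspace for eigenvalue $1$ in degree $2r$ coincides with $T^r$. Poincar\'e duality pairs the eigenvalue-$1$ part in degree $2r$ (twist $r$) with the eigenvalue-$1$ part in degree $2d-2r$ (twist $d-r$). Consequently the cup product pairing $T^r\times T^{d-r}\to\Q_{\ell_0}$ is perfect.

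Now let $z$ be a cycle that is numerically trivial. Then $\operatorname{cl}(z)$ pairs to zero with every algebraic class in $T^{d-r}$, and these span $T^{d-r}$. By perfectness, $\operatorname{cl}(z)=0$. So numerical and $\ell_0$-homological equivalence agree on $CH^r(V)_\Q$.

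It follows that $T^r\cong A^r_{\mathrm{num}}(V)\otimes\Q_{\ell_0}$, because:
\begin{itemize}
\item the map from $A^r_{\mathrm{num}}(V)\otimes\Q_{\ell_0}$ is surjective by Step 1's use of the Tate conjecture;
\item it is injective because numerical equivalence is $\Q$-linear and the intersection pairing is defined over $\Q$, by a standard argument.
\end{itemize}
In particular $\dim_{\Q_{\ell_0}} T^r=\operatorname{rank} A^r_{\mathrm{num}}(V)$.

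\emph{Step 2: transfer to arbitrary $\ell$ by counting dimensions.} Let $\ell\neq p$ be any prime and $T^r_\ell$ the corresponding space of Tate classes.

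By Deligne's theorem (Weil I), the characteristic polynomial of Frobenius on $H^{2r}(V_{\ov k},\Q_\ell(r))$ has rational coefficients independent of $\ell$. Hence the algebraic multiplicity of the eigenvalue $1$ is independent of $\ell$, and equals $\dim T^r$ computed at $\ell_0$, where Frobenius is semisimple. This gives
\[
\dim_{\Q_\ell}T^r_\ell\le \text{multiplicity of }1=\operatorname{rank}A^r_{\mathrm{num}}(V).
\]
On the other side, homological equivalence is always finer than numerical equivalence. So the image of $\operatorname{cl}_\ell$ surjects onto $A^r_{\mathrm{num}}(V)\otimes\Q_\ell$, and
\[
\dim\operatorname{Im}(\operatorname{cl}_\ell)\ge\operatorname{rank}A^r_{\mathrm{num}}(V).
\]
The image lies in $T^r_\ell$, so all of these dimensions are equal. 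Therefore $\operatorname{cl}_\ell$ is surjective onto $T^r_\ell$, and the induced map from $A^r_{\mathrm{num}}\otimes\Q_\ell$ is an isomorphism. In particular, the kernel of $\operatorname{cl}_\ell$ on $CH^r(V)_\Q$ is exactly the numerically trivial part.

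One detail needs care here. ``Homological equivalence surjects onto numerical'' gives that $\operatorname{Im}(\operatorname{cl}_\ell)\to A^r_{\mathrm{num}}\otimes\Q_\ell$ is surjective. For this one needs the standard fact that $A^r_{\mathrm{num}}\otimes\Q_\ell$ injects into the dual of $A^{d-r}\otimes\Q_\ell$, which holds since numerical equivalence is defined by a $\Q$-valued pairing.

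\emph{Expected main obstacle.} The delicate point is the $\ell$-independence step. Deligne only gives $\ell$-independence of characteristic polynomials, not of semisimplicity. The dimension squeeze above avoids needing semisimplicity at $\ell$ by bounding $\dim T^r_\ell$ above by the multiplicity of the eigenvalue $1$. That multiplicity must be identified with $\operatorname{rank} A^r_{\mathrm{num}}$ using the semisimplicity at $\ell_0$ from Step 1, so the argument genuinely relies on Step 1 being carried out at $\ell_0$ first.
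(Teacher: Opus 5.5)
Your proposal is correct and follows the same dimension squeeze as the paper. At an arbitrary $\ell$, the image of the cycle class map has dimension at least $\operatorname{rank} A^r_{\mathrm{num}}(V)$, the Frobenius-fixed space has dimension at most the $\ell$-independent algebraic multiplicity of the eigenvalue $1$, and these two numbers coincide by the result at $\ell_0$. The only differences are that your Step 1 reproves by hand (semisimplicity plus the perfect pairing on Tate classes) the part of Tate's Theorem 2.9 that the paper cites, and that you obtain $\ell$-independence from Deligne's theorem on characteristic polynomials rather than from the pole order of $\zeta(V,s)$ at $s=r$.
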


\begin{proof}
By \cite[Theorem~2.9]{tate1994conjectures}, the assumptions at
$\ell_0$ imply that the rank of $CH^r(V)_\Q$ modulo numerical equivalence
equals the order of the pole of $\zeta(V,s)$ at $s=r$. Let this common
integer be $a$. It is also the algebraic multiplicity of $|k|^r$ as an
eigenvalue of Frobenius on $H^{2r}(V_{\bar k},\Q_\ell)$, and is therefore
independent of $\ell$. For an arbitrary $\ell\ne p$, $\ell$-adic
homological equivalence is finer than numerical equivalence. Consequently
the image of the cycle-class map has dimension at least $a$. It is
contained in the Frobenius-fixed subspace, whose dimension is at most the
algebraic multiplicity $a$. All these dimensions are therefore equal.
This proves both assertions.
\end{proof}

\begin{prop}\label{tate-implies-aj-surjective}
    Let $k_0$ be a finite field of characteristic $p>2$, and assume that \Cref{tate-conjecture} holds over $k_0$ for some prime $\ell_0$.
    Let $X$ be a smooth projective $k_0$-variety of dimension $d$.
    For all but finitely many primes $\ell\neq p$, for every finite extension $k/k_0$, the Abel--Jacobi maps
    \[\alpha^2_{X_k,\ell}\colon Z^2_{\mathrm{hom},\ell}(X_k)\longrightarrow H^1(k,H^3(X_{\ov{k}},\Z_\ell(2)))\] and \[\alpha^{d-1}_{X_k,\ell}\colon Z^{d-1}_{\mathrm{hom},\ell}(X_k)\longrightarrow H^1(k,H^{2d-3}(X_{\ov{k}},\Z_\ell(d-1)))
    \]
    are surjective.
\end{prop}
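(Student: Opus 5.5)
Our plan is to reduce to threefolds and then rerun the proof of \Cref{tate-aj-liftable-threefold}, with the single use of liftability replaced by consequences of \Cref{tate-conjecture}. By \Cref{lem:Abel--Jacobi-corestriction} we may enlarge $k_0$ freely. For $\alpha^{d-1}$ with $d\geq 4$, we take a smooth threefold complete intersection $Y\subset X$ of ample hypersurfaces over a finite extension of $k_0$ (Bertini over finite fields, after enlarging $k_0$). Weak Lefschetz and Poincar\'e duality make the Gysin map $H^3(Y_{\ov{k}},\Z_\ell(2))\to H^{2d-3}(X_{\ov{k}},\Z_\ell(d-1))$ surjective, and $\operatorname{cd}_\ell(k)=1$ lets this pass to $H^1(k,-)$. \Cref{rmk:AJ_properties}(2) then reduces us to $\alpha^2_{Y,\ell}$. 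For $\alpha^2$ on $X$ with $d\geq 4$, we dually take a threefold $Y$ and use the pullback (restriction) $H^3(X_{\ov{k}})\to H^3(Y_{\ov{k}})$, which is an isomorphism by weak Lefschetz. This cannot be fed directly into pushforward, so we instead use $i_*$ composed with cup product with $L^{d-3}$. By hard Lefschetz, this composite $H^3(X)\to H^{2d-3}(X)$ is an isomorphism rationally; since cohomology is torsion-free for $\ell\gg0$ (Gabber), \Cref{surjective-rationally-integrally} makes it integrally surjective for almost all $\ell$. We then invert the isomorphism: the Lefschetz operator $\Lambda$ is algebraic rationally under the Tate conjecture, since Lefschetz standard conjecture B follows from Tate plus semisimplicity when homological and numerical equivalence agree, as in \Cref{lem:tate-independence}. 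Applying \Cref{surjective-rationally-integrally} to this correspondence gives a cycle $z$ with $z_*\colon H^{2d-3}(X)\to H^3(X)$ surjective for $\ell\gg 0$. The cases $d\leq 2$ are trivial or covered by \Cref{rmk:AJ_properties}(7), and $d=3$ is the core.

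For a threefold $Y$ (after blowing up a point if needed, to make $b_2$ odd), we choose a Lefschetz pencil $f\colon W\to B$ of sections of $L^{\otimes r}$ with $r$ even, and verify (i)--(iii) of \Cref{conditions}. Condition (i) is \Cref{rmk:AJ_properties}(7). For (iii), we need the Gysin map $H^1(V_{\ov{k}},\Z_\ell(1))\to L^0/L^1$ to be surjective for $\ell\gg0$. Schoen's argument shows the cokernel is torsion. Rather than relying on liftability to control the integral cokernel, we exhibit a rational splitting by an algebraic correspondence: restriction to a fiber followed by the Lefschetz inverse, algebraic by the Tate conjecture. \Cref{surjective-rationally-integrally} then gives surjectivity for almost all $\ell$. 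For (ii), we repeat verbatim the argument in the proof of \Cref{tate-aj-liftable-threefold}: the decomposition into vanishing cohomology, \Cref{3.2.1-3.2.6}, \Cref{schoen-unified-cor}, Schoen's lemmas (8.2.2), (8.2.3), and \Cref{schoen-2.4.3-finer}. Tate for divisors on the fibers supplies Hypothesis (9.6.1).

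The main obstacle is (ii) without liftability. Schoen's Proposition (9.5.6) and Lemma (9.5.2) produce a pencil with monodromy containing a specific element $\xi$ and with $H^1(\Gamma,M^\vee(1))=0$ for almost all $\ell$. They use a characteristic-zero lift to get a degeneration with an $z_1^3+z_2^3+z_3^4$ singularity and a large orthogonal monodromy group. Our first attempt is to obtain the large monodromy directly in characteristic $p>2$. Deligne's theorem on Lefschetz pencils (Weil II, 4.4) says the geometric monodromy on vanishing cycles is the full orthogonal group up to finite index, or finite. Finite monodromy is excluded for even $r\gg 0$ by Deligne's argument. Larsen–Pink/Nori-type results then give, for $\ell\gg0$, that the mod $\ell$ monodromy $\Gamma$ contains $\Omega(E_1)$. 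For $\Omega$ of an odd-rank quadratic space of rank $\geq5$, irreducibility of $M$ is standard. We would also need to exhibit a semisimple element $\xi$ with one-dimensional fixed space, which is immediate for an odd-dimensional orthogonal group, and $H^1(\Gamma,M^\vee)=0$, which holds for $\ell\gg0$ by Cline–Parshall–Scott, giving $\mathcal K=0$. If this step fails, the fallback is to retain \Cref{schoen-unified} with nonzero $\mathcal K$ and attempt to show $\operatorname{Inf}(\mathcal K)=0$ via the tame inertia conditions at the ordinary double points.
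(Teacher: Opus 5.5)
Your reductions for $d\le 2$ and $d\ge 4$ (curve or threefold complete intersections, Gysin surjectivity, and an algebraic inverse Lefschetz correspondence made $\ell$-independent and then integral via \Cref{surjective-rationally-integrally}) match the paper. The gap is the core case $d=3$.

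\textbf{The gap in the case $d=3$.} You rerun the Lefschetz-pencil argument of \Cref{tate-aj-liftable-threefold} on $X$ itself. Condition (ii) then needs, for all but finitely many $\ell$, that the mod-$\ell$ geometric monodromy $\Gamma$ on vanishing cycles satisfies Assumptions (8)--(10) and $\mathcal K=0$. You obtain this from the assertion $\Gamma\supseteq\Omega(E_1)$ for $\ell\gg0$, and nothing you cite gives it.
\begin{itemize}
\item Deligne's theorem only says that, for each $\ell$, the Zariski closure of the $\ell$-adic monodromy is finite or orthogonal. Excluding the finite case in characteristic $p$ needs an argument you do not supply.
\item Even an open $\ell$-adic image for every $\ell$ does not bound the mod-$\ell$ image from below uniformly in $\ell$. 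An open subgroup of $O(E_\ell)$ can reduce to a small irreducible reflection group, such as a Weyl group of type $A$.
\item Nori's theorem only controls the subgroup generated by elements of order $\ell$, but here $\Gamma$ is generated by reflections of order $2$. Larsen--Pink-type maximality gives only density-one sets of primes.
\item What you would actually need is a maximality theorem for mod-$\ell$ geometric monodromy valid for all $\ell\gg0$ (of Cadoret--Hui--Tamagawa type), together with an identification of the resulting group with $O(E_1)$. Neither is provided.
\end{itemize}
This uniformity in $\ell$ is exactly what Schoen extracts from the characteristic-zero lift in Lemma (9.5.2) and Proposition (9.5.6). You hedge with a fallback, but that fallback still presupposes (9) and (10). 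So the $d=3$ case is not proved.

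\textbf{The missing idea.} The paper never removes liftability from the pencil argument. It only applies that argument to an auxiliary liftable threefold.
\begin{itemize}
\item By Schoen's \S7.1.2 on varieties dominated by products, the Tate conjecture plus semisimplicity of Frobenius on $H^3(X_{\bar k},\Q_{\ell_0})$ give curves $T_1,T_2,T_3$ and a cycle $z\in Z^3(T\times X)$, with $T=T_1\times T_2\times T_3$, such that $z_*$ is surjective on $H^3$.
\item A Frobenius-equivariant section of $z_*$ is algebraic by Tate on $X\times T$.
\item Katz--Messing projectors and the equality of numerical and homological equivalence turn this section into a rational correspondence $w$ with $z_*w_*=\mathrm{id}$ on $H^3(X_{\bar k},\Q_\ell)$ for every $\ell\neq p$.
\item \Cref{surjective-rationally-integrally} then makes $z_*\colon H^3(T_{\bar k},\Z_\ell)\to H^3(X_{\bar k},\Z_\ell)$ surjective for $\ell\gg0$. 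This surjectivity survives $H^1(k,-)$.
\item Since $T$ is a product of curves, it lifts to $W(k_0)$. Hence \Cref{tate-aj-liftable-threefold} applies to $T$.
\item Compatibility of Abel--Jacobi maps with correspondences transfers surjectivity from $T$ to $X$.
\end{itemize}
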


\begin{proof}
By \Cref{lem:Abel--Jacobi-corestriction}, it is enough to establish the surjections after replacing $k_0$ by a finite extension.
If $d\leq 1$, both targets vanish and there is nothing to prove. Suppose that $d=2$.
The map
\[
\alpha^1_{X_k,\ell}\colon Z^1_{\mathrm{hom},\ell}(X_k)
\longrightarrow H^1(k,H^1(X_{\bar k},\Z_\ell(1)))
\]
is surjective by \Cref{rmk:AJ_properties}(7). After replacing $k_0$ by a
finite extension, choose a smooth projective geometrically connected
hyperplane section $i\colon C\hookrightarrow X$ defined over $k_0$.
Integral weak Lefschetz and Poincar\'e duality give a surjective Gysin map
\[
i_*\colon H^1(C_{\bar k},\Z_\ell(1))\relbar\joinrel\twoheadrightarrow
H^3(X_{\bar k},\Z_\ell(2)).
\]
Since a finite field has $\ell$-cohomological dimension one, the induced map on $H^1(k,-)$ is also surjective. The codimension-one Abel--Jacobi map on $C_k$ is surjective by \Cref{rmk:AJ_properties}(7), and compatibility with
proper pushforward therefore proves the surjectivity of $\alpha^2_{X_k,\ell}$. This proves the proposition for $d=2$.

Suppose now that $d=3$. As explained in
\cite[\S7.1.2]{schoen1996varieties}, the Tate conjecture together with
semisimplicity of Frobenius on
$H^3(X_{\bar k},\Q_{\ell_0})$ gives smooth projective geometrically connected
curves $T_1,T_2,T_3$ over $k_0$ and a cycle
\[
z\in Z^3(T\times X),\qquad T=T_1\times T_2\times T_3,
\]
such that $z_*\colon H^3(T_{\bar k},\Q_{\ell_0})\to
H^3(X_{\bar k},\Q_{\ell_0})$ is surjective. By
\cite[Lemma~(10.2.1)]{schoen1999image}, the same map is surjective for every
prime $\ell\ne p$.

Using the semisimplicity of Frobenius, we choose a $G_{k_0}$-equivariant section
\[s\colon H^3(X_{\bar k},\Q_{\ell_0}) \longrightarrow H^3(T_{\bar k},\Q_{\ell_0})\]
of $z_*$. By Poincar\'e duality and the K\"unneth decomposition, $s$
corresponds to a Tate class in the degree-$(3,3)$ summand of $H^6((X\times T)_{\bar k},\Q_{\ell_0}(3))$. Hence the Tate conjecture for $X\times T$ gives $w_{\ell_0}\in CH^3(X\times T)_{\Q_{\ell_0}}$ such that the induced map $(w_{\ell_0})_*\colon H^3(X_{\bar k},\Q_{\ell_0}) \to H^3(T_{\bar k},\Q_{\ell_0})$ is equal to $s$.

We now show that $w_{\ell_0}$ may be replaced, for our purposes, by a
correspondence with rational coefficients. For a smooth projective variety
$Y$, write $N^3(Y)_\Q$ for the quotient of $CH^3(Y)_\Q$ by numerical equivalence. 
Let $\pi_X^3$ be the Katz--Messing degree-three projector, which is a
rational linear combination of graphs of powers of Frobenius; see
\cite{katz1974consequences}. Consider the $\Q$-linear map
\[
F\colon N^3(X\times T)_\Q
\longrightarrow N^3(X\times X)_\Q,
\qquad
w\longmapsto
\pi_X^3\circ z\circ w\circ\pi_X^3.
\]
Since $w_{\ell_0}$ induces the section $s$, the correspondence $\pi_X^3\circ z\circ w_{\ell_0}\circ\pi_X^3-\pi_X^3$ acts trivially on $\ell_0$-adic cohomology. Under the Tate conjecture, $\ell_0$-adic homological equivalence and numerical equivalence agree
\cite[Theorem~2.9]{tate1994conjectures}. Therefore $\pi_X^3$ belongs to $\operatorname{Im}(F\otimes_\Q\Q_{\ell_0})$.
Since $\Q_{\ell_0}$ is faithfully flat over $\Q$ and $\pi_X^3$ is defined over $\Q$, it follows that $\pi_X^3\in\operatorname{Im}(F)$, that is, there exists a class $\ov{w}\in N^3(X\times T)_\Q$ such that $\pi_X^3\circ z\circ \ov{w}\circ\pi_X^3=\pi_X^3$ modulo numerical equivalence. Choose a representative $w\in CH^3(X\times T)_\Q$ of $\ov{w}$. Thus the cycle $\pi_X^3\circ(z\circ w-\Delta_X)\circ\pi_X^3$ is numerically trivial, and hence by \Cref{lem:tate-independence}, its $\ell$-adic realization is zero for every prime $\ell\ne p$. It follows that
\[
z_*\circ w_*=\operatorname{id}
\quad\text{on }H^3(X_{\bar k},\Q_\ell)
\]
for every $\ell\ne p$. (A similar argument is used in the proof of \cite[Lemma~(10.2.1)]{schoen1999image}.)

Choose $N>0$ and
$w_N\in Z^3(X\times T)$ such that
$z_*\circ(w_N)_*=N\operatorname{id}$ on $H^3(X_{\bar k},\Q_\ell)$ for every
$\ell\ne p$. By \Cref{surjective-rationally-integrally},
\[
z_*\colon H^3(T_{\bar k},\Z_\ell)\relbar\joinrel\twoheadrightarrow
H^3(X_{\bar k},\Z_\ell)
\]
is surjective for all but finitely many $\ell$. It remains surjective after
applying $H^1(k,-)$ because $H^2(k,-)=0$. Since $T$ lifts to characteristic zero, compatibility of Abel--Jacobi maps with correspondences and \Cref{tate-aj-liftable-threefold} now prove the
assertion for $X$. This completes the proof for $d=3$.

Finally, suppose that $d\geq 4$. Let $T \subset X$ be a smooth threefold complete intersection of ample divisors (it exists after replacing $k_0$ by a suitable finite extension). By the Lefschetz hyperplane theorem \cite[Chapter VI, Theorem 7.1]{milne1980etale}, the Gysin map
\[
H^3(T_{\ov{k}}, \Z_\ell(2)) \longrightarrow H^{2d-3}(X_{\ov{k}}, \Z_\ell(d-1))
\]
is surjective. Thus, for every finite extension $k/k_0$, the induced map
\[
H^1(k,H^3(T_{\ov{k}}, \Z_\ell(2))) \longrightarrow H^1(k,H^{2d-3}(X_{\ov{k}}, \Z_\ell(d-1)))
\]
is surjective. Since $\dim(T)=3$, by the case $d=3$ proved above, for all but finitely many primes $\ell$ and every finite extension $k/k_0$, the map $\alpha_{T_k,\ell}^2$ is surjective. By \Cref{rmk:AJ_properties}(2), it now follows that for all finite extensions $k/k_0$ the map $\alpha_{X_k,\ell}^{d-1}$ is surjective.

By \cite[Theorem 4.1(1)]{kleiman1994standard}, \Cref{tate-conjecture}(2) for $X\times X$
implies the Lefschetz standard conjecture for $X$, and hence the existence of an inverse Lefschetz correspondence $z\in CH^3(X\times X)_{\Q}$ such that the induced map
\[
z_* \colon  H^{2d-3}(X_{\ov{k}}, \Q_{\ell_0}(d-1)) \longrightarrow
H^3(X_{\ov{k}}, \Q_{\ell_0}(2))
\]
is inverse to the hard-Lefschetz isomorphism. Let
\[
L^{d-3}_*\colon H^3(X_{\ov{k}},\Q_{\ell}(2))\longrightarrow
H^{2d-3}(X_{\ov{k}},\Q_{\ell}(d-1))
\]
be the map induced by intersecting with $d-3$ hyperplane sections. If
$\pi_X^3$ again denotes the Katz--Messing degree-three projector, then
\[
\pi_X^3\circ(z\circ L^{d-3}-\Delta_X)\circ\pi_X^3
\]
acts trivially on $\ell_0$-adic cohomology. Homological and numerical
equivalence agree under the Tate conjecture
\cite[Theorem~2.9]{tate1994conjectures}, so this numerical correspondence
is zero; by \Cref{lem:tate-independence}, it has zero realization for every
$\ell\ne p$. There exist an integer $N>0$ and an integral cycle, still denoted
$z\in Z^3(X\times X)$, such that
\[
z_*\circ L^{d-3}_*=N\cdot \id
\]
on $H^3(X_{\ov{k}},\Q_\ell(2))$ for every prime $\ell\neq p$. By
\Cref{surjective-rationally-integrally}, with $i=2d-3$, $j=3$, and
$n=d-1$, this implies that
\[
z_* \colon  H^{2d-3}(X_{\ov{k}}, \Z_{\ell}(d-1)) \longrightarrow
H^3(X_{\ov{k}}, \Z_{\ell}(2))
\]
is surjective for all but finitely many primes $\ell$. Thus the induced map
\[
z_*\colon
H^1(k,H^{2d-3}(X_{\ov{k}},\Z_\ell(d-1)))
\longrightarrow
H^1(k,H^3(X_{\ov{k}},\Z_\ell(2)))
\]
is also surjective. Hence, by \Cref{rmk:AJ_properties}(2), for all but finitely many primes $\ell$ and every finite extension $k/k_0$, the map
\[
\alpha^2_{X_k,\ell}\colon
Z^2_{\mathrm{hom},\ell}(X_k)
\longrightarrow
H^1(k,H^3(X_{\ov{k}},\Z_\ell(2)))
\]
is surjective, as desired.
\end{proof}

\begin{thm}\label{main-conditional-theorem}
Let $k_0$ be a finite field of characteristic $p>2$, and assume that \Cref{tate-conjecture} holds over $k_0$ for some prime $\ell_0$. Let $X/k_0$ be a smooth projective geometrically integral variety of dimension $d$. For all but finitely many primes $\ell\neq p$, for every finite extension $k/k_0$, the group $H^3_{\mathrm{nr}}(k(X)/k,\Q_\ell/\Z_\ell(2))$ is divisible and the cycle maps
\[
CH^{2}(X_k)_{\Z_\ell}\longrightarrow H^{4}(X_k,\Z_\ell(2)),
\qquad
CH^{d-1}(X_k)_{\Z_\ell}
\longrightarrow H^{2d-2}(X_k,\Z_\ell(d-1))
\]
are surjective.
\end{thm}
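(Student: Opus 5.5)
The proof follows the template of the proof of \Cref{main-conditional-liftable}, with \Cref{tate-implies-aj-surjective} replacing \Cref{tate-aj-liftable-threefold}. Fix a prime $\ell\neq p$ outside the finite exceptional sets of \Cref{tate-implies-aj-surjective} and of the two geometric steps below, and let $k/k_0$ be a finite extension. By the Hochschild--Serre exact sequence \eqref{eq:hochschild-serre-short-exact} and the cartesian square of \Cref{rmk:AJ_properties}(1), the cycle map $CH^r(X_k)_{\Z_\ell}\to H^{2r}(X_k,\Z_\ell(r))$ is surjective as soon as two maps are surjective: the Abel--Jacobi map $\alpha^r_{X_k,\ell}$, and the geometric cycle map $CH^r(X_k)_{\Z_\ell}\to H^{2r}(X_{\ov k},\Z_\ell(r))^{G_k}$. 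Indeed, given a class in $H^{2r}(X_k,\Z_\ell(r))$, we first correct it by a cycle so that its image in $H^{2r}(X_{\ov k},\Z_\ell(r))^{G_k}$ vanishes. The corrected class then lies in $H^1(k,H^{2r-1})$, where we hit it with a homologically trivial cycle. \Cref{tate-implies-aj-surjective} provides the Abel--Jacobi surjectivity for $r=2$ and $r=d-1$, so only the geometric surjectivity remains, integrally and for almost all $\ell$.

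For the geometric part, let $r\in\{2,d-1\}$. By \Cref{lem:tate-independence}, $CH^r(X_k)_{\Q_\ell}\to H^{2r}(X_{\ov k},\Q_\ell(r))^{G_k}$ is surjective for every $\ell$. The task is to pass to $\Z_\ell$-coefficients for almost all $\ell$, uniformly in $k$. The approach is to choose cycles $z_1,\dots,z_n\in CH^r(X_{k_1})$, over some finite extension $k_1$, whose classes span the Tate classes over $\ov k$ rationally; this is a finite-dimensional space $H^{2r}(X_{\ov k},\Q_\ell(r))^{(1)}$, whose dimension is independent of $\ell$ by the pole-order argument of \Cref{lem:tate-independence}. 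Numerical equivalence then gives a perfect intersection pairing on their span: take dual cycles $z_j'\in CH^{d-r}$ with intersection matrix of determinant $D\neq0$. For $\ell\nmid D$, and $\ell$ large enough that $H^*(X_{\ov k},\Z_\ell)$ is torsion-free (Gabber), the $\Z_\ell$-span of the $\mathrm{cl}(z_i)$ is saturated in $H^{2r}(X_{\ov k},\Z_\ell(r))$. Saturation holds because $\mathrm{cl}(z_i)$ pairs with the $\mathrm{cl}(z'_j)$ via the matrix of determinant $D$, which is a unit in $\Z_\ell$. Since this span equals the full $\Q_\ell$-space of Tate classes, it equals the integral $(1)$-part.

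To descend from $k_1$ to an arbitrary $k$, we use norms. For the part over $k$, we may replace the chosen basis by one adapted to the finite Galois action of $\on{Gal}(k_1k/k)$ on cycles. Averaging via norms introduces a factor $[k_1k:k]$, which divides a fixed integer (the order of the image of $G_{k_0}$ acting on the finite set of conjugates). After discarding the primes dividing it, the integral $G_k$-invariants are generated by norms of the $z_i$. This is the step I expect to be the main obstacle: making the bound uniform in $k$. I would handle it by noting that the permutation action of Galois on the finitely many cycles $z_i$ factors through a fixed finite quotient of $G_{k_0}$, so only finitely many primes are ever excluded.

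For the final assertion, in the case $d=3$, surjectivity of $CH^2(X_k)_{\Z_\ell}\to H^4(X_k,\Z_\ell(2))$ implies, by \cite[Th\'eor\`eme~2.2]{colliot2013cycles}, that $H^3_{\nr}(k(X)/k,\Q_\ell/\Z_\ell(2))$ is divisible; that theorem identifies the quotient by the maximal divisible subgroup with the torsion in the cokernel of the integral cycle map. For $d\neq3$, the same theorem applies to $CH^2$, whose cycle map we have shown to be surjective, so $H^3_{\nr}$ is again divisible for all $\ell$ outside the finite set.
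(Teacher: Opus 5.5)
Your proof is correct and shares the paper's skeleton. The Hochschild--Serre sequence and \Cref{rmk:AJ_properties}(1) reduce everything to two inputs: Abel--Jacobi surjectivity, which \Cref{tate-implies-aj-surjective} supplies, and surjectivity onto $H^{2r}(X_{\bar k},\Z_\ell(r))^{G_k}$. The latter comes from an invertible intersection matrix together with Gabber's torsion-freeness. Divisibility of $H^3_{\mathrm{nr}}$ then follows from \cite[Th\'eor\`eme~2.2]{colliot2013cycles} in every dimension.

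You differ from the paper in how you get uniformity in $k$. The paper runs the saturation argument over each $k$ separately. It uses bases of $N^j(X_k)_\Q$ consisting of cycles over $k$, together with the rational surjectivity over $k$ given by \Cref{lem:tate-independence}. It then bounds the excluded primes by noting that only finitely many subspaces $N^j(X_k)\subset N^j(X_{\bar k_0})$ occur, hence only finitely many determinants $D_k$.

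You run the saturation argument once, over a field $k_1$ on which a basis of $N^r(X_{\bar k_0})_\Q$ is defined. This identifies the integral $(1)$-part with the $\Z_\ell$-span of the $\mathrm{cl}(z_i)$. You then descend: any $t\in H^{2r}(X_{\bar k},\Z_\ell(r))^{G_k}$ lies in that span, so $[kk_1:k]\,t=\sum_{\sigma\in\mathrm{Gal}(kk_1/k)}\sigma t$ is a $\Z_\ell$-combination of the classes of the norms $N_{kk_1/k}(z_i)$.

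What your route buys is an explicit exceptional set: Gabber's primes together with those dividing $D\cdot[k_1:k_0]$. It also shows that the arithmetic statement over every $k$ follows from the geometric one over $\bar k_0$, at the sole cost of the primes dividing $[k_1:k_0]$. In the paper's version, by contrast, one must make sure the bases used for different fields with the same subspace consist of cycles defined over each such field; this can be arranged by taking them over $k\cap k_1$.

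One small point to fix: for the intersection matrix to be square with $D\neq0$, choose the $z_i$ as a $\Q$-basis of $N^r(X_{\bar k_0})_\Q$ and the $z'_j$ as a basis of $N^{d-r}(X_{\bar k_0})_\Q$, not merely as spanning sets.
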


\begin{proof}
When $d=1$, we have $H^{4}(X_k,\Z_\ell(2))=0$, the cycle map is an isomorphism in codimension $0$, and $H^3(k(X),\Q_\ell/\Z_\ell(2))=0$ because $\mathrm{cd}_\ell(k(X))=2$. From now on, we assume $d\geq 2$.

We first claim that for all but finitely many primes $\ell\neq p$, for every finite extension $k/k_0$, the maps
\begin{equation}\label{eq:geometric-map}
CH^i(X_k)_{\Z_\ell}\longrightarrow H^{2i}(X_{\ov{k}},\Z_\ell(i))^{G_k}\qquad (i=2,d-1).
\end{equation}
are surjective.

By \cite[Theorem~2.9]{tate1994conjectures}, \Cref{tate-conjecture} implies that homological and numerical equivalence coincide for cycles with rational coefficients; see \cite[\S 2.6]{tate1994conjectures} for the definition of numerical equivalence. Let $k/k_0$ be a finite extension. For $j=1,2, d-2,d-1$, let $N^j(X_k)$ denote the $\Q$-vector space of codimension-$j$-cycles on $X_k$ modulo numerical equivalence.
Choose finitely many cycles defined over $k$ whose numerical classes form $\Q$-bases of these four spaces.
By \Cref{lem:tate-independence}, for every $\ell\neq p$, the cycle map induces an isomorphism
\[
N^j(X_k)_\Q\otimes_{\Q}\Q_\ell\xlongrightarrow{\sim} H^{2j}(X_{\ov{k}},\Q_\ell(j))^{G_k}.
\]
Moreover, the intersection pairings
\[
N^2(X_k)_\Q\times N^{d-2}(X_k)_\Q\longrightarrow\Q,\qquad N^{d-1}(X_k)_\Q\times N^1(X_k)_\Q\longrightarrow \Q
\]
are non-degenerate by the definition of numerical equivalence.

It follows that the determinants of their matrices in the chosen bases are non-zero rational numbers. The entries of these matrices are intersection numbers of the chosen cycles, and hence the non-zero determinants are independent of $\ell$. Let $D_k$ be a positive integer such that every numerator and denominator of these finitely many determinants divides $D_k$. By a theorem of Gabber \cite{gabber1983torsion}, integral $\ell$-adic cohomology of $X_{\ov{k}}$ is torsion-free for all but finitely many primes $\ell$. For every prime $\ell\nmid D_k$ for which moreover the $\ell$-adic integral cohomology groups are torsion-free, the cokernel of each map in  \eqref{eq:geometric-map} is torsion-free. Indeed, if $\ell x$ belongs to the image of the cycle map, pairing with the chosen basis in complementary codimension and using that the corresponding intersection matrix is invertible over $\Z_\ell$ shows that $x$ itself belongs to the image. Therefore, under \Cref{tate-conjecture}(2), the maps \eqref{eq:geometric-map} are surjective. 
As $k/k_0$ runs over all finite extensions, only finitely many subspaces $N^j(X_k)\subset N^j(X_{\ov{k}_0})$ occur. We fix bases for each of these finitely many possibilities, and define $D_k$ using these bases. Then only finitely many values of $D_k$ occur.
Hence for all but finitely many primes $\ell\neq p$, for every finite extension $k/k_0$, \eqref{eq:geometric-map} is surjective.

By the Hochschild--Serre spectral sequence, we deduce from the above claim and \Cref{tate-implies-aj-surjective} that for all but finitely many primes $\ell\neq p$, for every finite extension $k/k_0$, the cycle maps
\[
CH^i(X_k)_{\Z_\ell}\longrightarrow H^{2i}(X_k,\Z_\ell(i)),\qquad (i=2,d-1)
\]
are surjective.
Finally, by \cite[Th\'eor\`eme 2.2]{colliot2013cycles}, we get
\[
H^3_{\nr}(k(X)/k,\Q_\ell/\Z_\ell(2))/H^3_{\nr}(k(X)/k,\Q_\ell/\Z_\ell(2))_{\on{div}}=0,
\]
as desired.
\end{proof}

\section{Self-fiber-products of semistable elliptic surfaces}\label{sec:5}

\subsection{The projector and complex multiplication cycles}

Let $k_0$ be a finite field of characteristic $p$, let
$\pi\colon Y\to B$ be a non-isotrivial relatively minimal semistable elliptic
surface with a section, and let $j\colon\dot B\hookrightarrow B$ be its smooth
locus. Let
\[
 \sigma\colon W\longrightarrow Y\times_B Y
\]
be the blow-up of the reduced singular locus, and let
$f\colon W\to B$ be the composite $W\xrightarrow{\sigma}Y\times_{B}Y\to B$. Let $\dot \pi\colon \dot Y\to \dot B$ and $\dot f\colon \dot W\to \dot B$ be the base changes of $\pi$ and $f$ along $j$ respectively. Write $m_\pi$ for the least common multiple of the integers $m$ such that there exists a singular fiber of $\pi$ of Kodaira type $I_m$. For a prime $\ell\nmid 2pm_\pi$, set
\begin{equation}\label{eq:e-m}
 \mathcal E=j_*(\operatorname{Sym}^2R^1\dot\pi_*\Z_\ell)(1),
 \qquad
 \mathcal M=\mathcal E_1(1)
 =j_*(\operatorname{Sym}^2R^1\dot\pi_*(\Z/\ell))(2).
\end{equation}

\begin{lem}\label{11.2.5}
There is a cycle
$P\in Z^3(W\times W)\otimes\Z[1/2]$ with the following properties.
\begin{enumerate}
\item[(i)] Its class in $CH^3(W\times W)\otimes\Z[1/2]$ is idempotent.
\item[(ii)] On $\dot W\times_{\dot B}\dot W$ it induces an idempotent relative
correspondence.
\item[(iii)] If $\ell\nmid2p$, then
\[
P_*R^2\dot f_*\Z_\ell(2)\simeq
(\operatorname{Sym}^2R^1\dot\pi_*\Z_\ell)(2).
\]
\item[(iv)] If $\ell\nmid2pm_\pi$, then
$H^0(B_{\bar k},j_*\operatorname{Sym}^2R^1\dot\pi_*(\Z/\ell^n))=0$
for every $n\geq1$.
\item[(v)] Under the same assumption,
\[
P_*H^3(W_{\bar k},\Z_\ell(2))\simeq
H^1(B_{\bar k},\mathcal E(1)),
\]
and this module is torsion-free.
\item[(vi)] The correspondence $P$ fixes every CM cycle.
\item[(vii)] The $\ell$-adic Abel--Jacobi map induces a surjection
\[
(1-P)_*CH^2_{\mathrm{alg}}(W_{\bar k})\longrightarrow
(1-P)_*J^2_\ell(W),
\]
where
$J^2_\ell(W)=\varinjlim_{k'/k_0}H^1(k',H^3(W_{\bar k},\Z_\ell(2))/\mathrm{tors})$.
\end{enumerate}
\end{lem}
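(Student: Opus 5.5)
This lemma is essentially Schoen's construction of the projector on the self-fiber-product of a semistable elliptic surface in \cite[\S11]{schoen1999image} (the numbering \texttt{11.2.5} follows Schoen's Lemma (11.2.5)). The plan is to build $P$ explicitly and then verify (i)--(vii) one at a time, citing Schoen for the standard parts.

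\textbf{Construction of $P$ and properties (i)--(iii).} On the smooth part $\dot Y\times_{\dot B}\dot Y$, define $P$ as the symmetrization $\tfrac12(\Delta+\tau)$, where $\tau$ is the graph of the swap of factors. Compose it with the Künneth projector onto the $R^1\otimes R^1$ summand. That projector is built from the zero section $O$: $\pi_0=O\times Y$, $\pi_2=Y\times O$, $\pi_1=\Delta-\pi_0-\pi_2$ relative to $B$. Then take closures in $W\times W$, correcting by multiples of components of exceptional and singular fibres so that idempotency holds in $CH^3(W\times W)\otimes\Z[1/2]$. Idempotency on the generic fibre is formal, giving (ii). To get (i) globally, check that the error terms are supported over the finitely many singular fibres; since $\sigma$ is an explicit small-type blow-up of the $I_m\times I_m$ singularities, these terms can be killed by adding fibre components. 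Property (iii) then follows from the relative Künneth formula $R^2\dot f_*=\bigoplus R^a\dot\pi_*\otimes R^b\dot\pi_*$ and $\Lambda^2$ versus $\operatorname{Sym}^2$: the alternating part of $R^1\otimes R^1$ is $\Z_\ell(-1)$, and it is killed because the symmetrization carries a sign on $H^1\otimes H^1$. Inverting $2$ is exactly what makes this splitting integral.

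\textbf{Properties (iv) and (v).} For (iv), identify $H^0(B_{\bar k},j_*\operatorname{Sym}^2R^1\dot\pi_*(\Z/\ell^n))$ with monodromy invariants. Non-isotriviality and a local unipotent monodromy $\begin{pmatrix}1&m\\0&1\end{pmatrix}$ with $\ell\nmid m$ show that the mod-$\ell$ geometric monodromy contains $\operatorname{SL}_2(\F_\ell)$, by Igusa/Schoen-type results for $\ell\nmid 2pm_\pi$. The invariants of $\operatorname{Sym}^2$ then vanish for $\ell>2$; then induct on $n$. For (v), apply the Leray spectral sequence for $f$ and the projector $P$. By (iii), $P$ picks out $H^1(B_{\bar k},\mathcal E(1))$ together with possible $H^0$ and $H^2$ terms. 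The $H^0$ term vanishes by (iv), and the $H^2$ term vanishes by (iv) and duality, since $\operatorname{Sym}^2$ is self-dual up to twist. Away from $\dot B$, the contributions of singular fibres are handled as in Schoen, using $j_*$. Torsion-freeness comes from the exact sequence for $\mathcal E_n$ together with the vanishing of $H^0$ in (iv) mod $\ell$.

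\textbf{Properties (vi) and (vii).} For (vi), a CM cycle is the graph of an endomorphism (complex multiplication) of a CM fibre $Y_x$ minus its Künneth components. Its class lies in the $H^1\otimes H^1$ part, and it is symmetric up to the correction terms, so $P$ fixes it. Property (vii) is the main obstacle. The plan is to decompose $(1-P)_*H^3(W_{\bar k})$ into its pieces: the antisymmetric part $\Lambda^2 = \Z_\ell(-1)$, which contributes $H^1(B,\Z_\ell)$-type classes; the $R^0\otimes R^1$ pieces, which contribute $H^1(B,R^1\pi_*)$, that is, $H^3$ of the elliptic surface $Y$; and classes from exceptional divisors. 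Each piece is the image of a correspondence from a curve or surface (namely $B$, $Y$, or the exceptional divisors), whose codimension-one Abel--Jacobi maps are surjective by \Cref{rmk:AJ_properties}(7). Then use compatibility with correspondences, \Cref{rmk:AJ_properties}(2), and pass to the direct limit over $k'$, where the $H^1(Y_{\bar k},\cdot)$ pieces come from $\operatorname{Pic}^0$ and divisors on $Y$. The delicate point is showing that these correspondences surject integrally modulo torsion onto $(1-P)_*H^3$, and I would check this piece by piece through the Leray filtration.
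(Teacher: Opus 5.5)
The paper does not construct $P$ at all. Its proof just quotes the seven assertions of Schoen's Lemma~(11.2.5) in \cite{schoen1999image}, with (vii) being the complementary-projector statement \cite[Corollary~10.8]{schoen1995computation}. Your reconstruction, however, breaks at the first step because of a sign. For the swap $\tau$ and classes $a,b$ of degree one, $\tau^*(p_1^*a\cup p_2^*b)=p_2^*a\cup p_1^*b=-\,p_1^*b\cup p_2^*a$. Hence $\tfrac12(\Delta+\Gamma_\tau)_*=\tfrac12(1+\tau^*)$ sends $a\otimes b$ to $\tfrac12(a\otimes b-b\otimes a)$. It therefore \emph{keeps} the alternating part $\Lambda^2R^1\dot\pi_*\simeq\Z_\ell(-1)$, which is spanned by the $(1,1)$-K\"unneth component of the relative diagonal, and \emph{kills} $\operatorname{Sym}^2R^1\dot\pi_*$ — the opposite of what you assert. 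For your $P$ this has three consequences:
\begin{itemize}
\item (iii) fails, since the image is the rank-one, monodromy-trivial summand, and (v) fails with it.
\item (vi) fails. A CM cycle is orthogonal to $\Delta_x$, $\pi^{-1}(x)\times s(x)$ and $s(x)\times\pi^{-1}(x)$, so it corresponds to a trace-zero element of $\operatorname{End}(\pi^{-1}(x)_{\bar k})\otimes\Q$. There $\tau^*$ acts as the Rosati involution, i.e.\ by $-1$, so your $P$ kills the cycle.
\item Your decomposition in (vii), which puts $\Lambda^2$ into $1-P$, is correct only for the corrected projector.
\end{itemize}
The symmetrizer has to be $\tfrac12(\Delta-\Gamma_\tau)$.

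Even with the sign fixed, (i) and (ii) are not established. Statement (ii) is about $\dot W\times_{\dot B}\dot W$, not the generic fibre. The zero-section projectors $O\times_BY$ and $Y\times_BO$ are not orthogonal over $B$: their composite involves the self-intersection $O^2=-\chi(\mathcal O_Y)\neq0$ of the zero section. Globally, closures in $W\times W$ compose through the singular fibres. There $\sigma$ blows up the ordinary double points $x_1y_1=x_2y_2$ of $Y\times_BY$, with exceptional quadrics, so it is not a small resolution. ``Correcting by fibre components so that idempotency holds'' is precisely the content of (i), and your sketch does not show it can be done, let alone compatibly with (iii)--(vii).

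The missing idea is to build $P$ from automorphisms:
\begin{itemize}
\item The fibrewise inversions $\iota_1=[-1]\times1$ and $\iota_2=1\times[-1]$ extend to the relatively minimal model $Y$. Together with $\tau$ they preserve the singular locus of $Y\times_BY$, so they generate a group $G\subset\operatorname{Aut}_B(W)$ of order $8$.
\item Let $\varepsilon\colon G\to\{\pm1\}$ be the character equal to $-1$ on all three generators, and set $P=\tfrac18\sum_{g\in G}\varepsilon(g)\Gamma_g$. It is idempotent in $CH^3(W\times W)\otimes\Z[1/2]$, and relatively over $\dot B$, simply because $\Gamma_g\circ\Gamma_h=\Gamma_{gh}$.
\item On $R^2\dot f_*=\bigoplus_{a+b=2}R^a\dot\pi_*\otimes R^b\dot\pi_*$ it kills $R^0\otimes R^2$ and $R^2\otimes R^0$ (where the $\iota_i$ act trivially) and $\Lambda^2R^1\dot\pi_*$, leaving exactly $\operatorname{Sym}^2R^1\dot\pi_*$.
\item This is also where the denominators in $\Z[1/2]$ come from.
\end{itemize}

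Finally, the pieces in your (vii) are misidentified. $H^1(B,R^1\pi_*)$ is part of $H^2(Y)$, not of any $H^3$, and the exceptional quadrics have no odd cohomology. Up to torsion, $(1-P)_*H^3(W_{\bar k},\Z_\ell(2))$ consists of copies of $H^1(B_{\bar k},\Z_\ell(1))$ coming from the three monodromy-trivial summands just listed. They are realized by $\beta\mapsto f^*\beta\cup[D]$, with $D$ the divisor on $W$ coming from $s(B)\times_BY$, $Y\times_Bs(B)$ or the relative diagonal. That is what reduces (vii) to the Abel--Jacobi map of $\operatorname{Pic}^0(B)$.
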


\begin{proof}
These are exactly the seven assertions of
\cite[Lemma~(11.2.5)]{schoen1999image}. In particular, (v) has no residual
$\ell^n$ on its right-hand side, and (vii), rather than a statement about the
diagonal alone, is the complementary-projector assertion proved in
\cite[Corollary~10.8]{schoen1995computation}.
\end{proof}

A $k'$-point $x\in\dot B(k')$ is a \emph{CM point} if the elliptic
curve $\pi^{-1}(x)_{\bar k}$ over $\bar k$ is ordinary. If $x$ is a CM point, then
$\operatorname{NS}(f^{-1}(x)_{\bar k})$ has rank four. A \emph{CM cycle} is
a divisor on $f^{-1}(x)$ whose N\'eron--Severi class generates the orthogonal
complement of
\[
 \pi^{-1}(x)\times s(x),\qquad s(x)\times\pi^{-1}(x),\qquad\Delta_x.
\]
These are \cite[Definitions~(11.2.1) and (11.2.3)]{schoen1999image}.
For $\ell\nmid2pm_\pi$, a CM cycle is homologically trivial on $W$, and its
class in
$\operatorname{Sym}^2H^1(\pi^{-1}(x)_{\bar k},\Z/\ell)(1)\simeq H^2_{\ov{x}}(B_{\ov{k}},\mathcal M)$ is non-zero;
see \cite[Lemma~(11.2.6)(i),(ii)]{schoen1999image}.

\subsection{The auxiliary threefold}\label{sec:W}

Assume $p\neq 2,3$. Let
$Y\subset\P^1_{\F_p}\times\P^2_{\F_p}$ be given by
\[
u_0(x_0^3+x_1^3+x_2^3)-u_1x_0x_1x_2=0,
\]
and let $\pi\colon Y\to B=\P^1_{\F_p}$ be the first projection. It has the
section $(0:-1:1)$ and four geometric singular fibers, all of type $I_3$,
over
\[
 Z_{\bar{\F}_p}=\{(0:1)\}\cup
 \{(1:3\zeta):\zeta\in\mu_3(\bar{\F}_p)\}.
\]
Thus $m_\pi=3$. We will sometimes refer to the closed points of $Z_{\bar{\F}_p}$ as the cusps of the fibration. 

Let $W$ be the blow-up of the reduced singular locus of
$Y\times_B Y$. Then $W$ is a smooth projective threefold over $\F_p$.

\begin{thm}\label{w-aj-surjective}
Let $\ell\nmid 6p$ be a prime. Then, for every finite extension $k/\F_p$, the Abel--Jacobi map
\[
\alpha^2_{W_k,\ell}\colon CH^2_{\mathrm{hom},\ell}(W_k)
\longrightarrow H^1(k,H^3(W_{\bar k},\Z_\ell(2)))
\]
is surjective.
\end{thm}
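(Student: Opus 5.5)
We will reduce the statement to \Cref{conditions} (with $m=1$), splitting the target along the projector $P$ of \Cref{11.2.5}. By \Cref{lem:Abel--Jacobi-corestriction}, it suffices to prove surjectivity after replacing $\F_p$ by a finite extension. We may therefore assume that the extension $k_1$ of \Cref{3.2.1-3.2.6} has been made, so that Assumptions~\ref{schoen-situation-projective}(1)--(6) hold for every further $k$. We apply \Cref{sec:connecting-map} with $M=\mathcal M_{\bar\eta}$ as in \eqref{eq:e-m}. Since $P$ is idempotent in $CH^3(W\times W)\otimes\Z[1/2]$ and $\ell\neq 2$, we get a decomposition
\[
H^1(k,H^3(W_{\bar k},\Z_\ell(2)))=P_*H^1(k,H^3)\oplus(1-P)_*H^1(k,H^3),
\]
and \Cref{rmk:AJ_properties}(2) makes $\alpha^2_{W_k,\ell}$ compatible with this splitting. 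It thus suffices to hit each summand separately.

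\emph{The $(1-P)$ part.} We use \Cref{11.2.5}(vii). This only gives surjectivity over $\bar k$ onto the torsion-free quotient, so an extra argument is needed over each finite $k$. Here $(1-P)_*H^3(W_{\bar k})$ is built from $H^1$ of the smooth fibers, of $B$ and of the exceptional divisors of $\sigma$, together with the pieces $\mathcal E^\perp$ of $R^2f_*$. On all of these the Abel--Jacobi map comes from codimension-one Abel--Jacobi maps on curves and surfaces, which are surjective by \Cref{rmk:AJ_properties}(7). Concretely, we will check conditions (i) and (iii) of \Cref{conditions} exactly as in the proof of \Cref{tate-aj-liftable-threefold}: take a smooth fiber $V=\pi^{-1}(x)^2$ and use that the relevant Gysin maps from $H^1(V_{\bar k})$ are surjective integrally. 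The latter is clean for $W$ since $B=\P^1$, so $L^0/L^1$ is small. We will also check the $(1-P)$ part of (ii), namely that $H^1(B_{\bar k},\mathcal E^\perp(1))$ is handled by divisors.

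\emph{The $P$ part.} By \Cref{11.2.5}(v), $P_*H^3\simeq H^1(B_{\bar k},\mathcal E(1))$ is torsion-free. By \Cref{lem:comp_f_E} and \Cref{11.2.5}(vi), it suffices to show that the $\alpha_{\mathcal E}$-images of the fundamental classes of CM cycles generate $H^1(k,H^1(B_{\bar k},\mathcal E(1)))$. By \Cref{schoen-2.4.3-finer}, together with \Cref{11.2.5}(iv) and the fact that a CM cycle has non-zero class in the one-dimensional $H^2_{\bar x}(B_{\bar k},\mathcal M)^{G_k}$, this reduces modulo $\ell$ to \Cref{schoen-unified-cor}. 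We must therefore verify Assumptions~\ref{schoen-situation-projective}(7)--(10) and $\mathcal K=0$ for $\mathcal M$:
\begin{itemize}
\item[(7)] Tameness holds because $\ell\nmid 6p$ and semistable monodromy is unipotent.
\item[(8), (9)] The geometric monodromy modulo $\ell$ of the Hesse pencil is $\operatorname{SL}_2(\F_\ell)$, since it is the congruence group $\Gamma(3)$ reduced mod $\ell$, which is surjective for $\ell\nmid 6$. Hence $\Gamma=\operatorname{PSL}_2(\F_\ell)$ or $\operatorname{SL}_2(\F_\ell)$ acting on $\Sym^2$, which is irreducible with no invariants for $\ell\geq 5$.
\item[(10)] Take $\xi$ a split semisimple element of order $>2$ prime to $\ell$, i.e.\ $\operatorname{diag}(a,a^{-1})$ with $a^2\neq 1$. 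Its fixed space on $\Sym^2$ is spanned by $e_1e_2$, so it is one-dimensional. Such an $a$ exists for $\ell\geq 5$. We must also ensure that Frobenius at such $x$ is ordinary, hence that the point is a CM point. Since $\xi$ has eigenvalues $a^{\pm1}$ with $a\not\equiv\pm1$, the Frobenius trace $a+a^{-1}q\ldots$ should be arranged to be a unit; we expect this can be done, possibly by choosing the Frobenius class through Assumption~(6).
\end{itemize}

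\emph{Main obstacle: $\mathcal K=0$.} Here $\mathcal K\subset H^1(\Gamma,M^\vee(1))$ consists of classes trivial on the inertia groups at the four cusps, each generated by a unipotent element of order $\ell$ (type $I_3$, $\ell\nmid 3$). We have $H^1(\operatorname{SL}_2(\F_\ell),\Sym^2)=0$ for $\ell\geq 5$ (with Schoen's exception at $\ell=5$ noted in \Cref{sec:5}). For general $\ell$ we would use the inflation–restriction vanishing from the $\ell$-Sylow, the upper unipotent subgroup $U_\ell$: a class killed on the inertia group $\cong U_\ell$ is zero, because restriction to the Sylow subgroup is injective on $\ell$-torsion cohomology. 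This would give $\mathcal K=0$ uniformly for all $\ell\nmid 6p$, including $\ell=5$. This step, and pinning down the monodromy for $\ell=5$, is where we expect the real work.
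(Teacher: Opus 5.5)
Your argument for the $P$-part is essentially the paper's. You use \Cref{3.2.1-3.2.6} for Assumptions (1)--(6) and the $\operatorname{PSL}_2(\F_\ell)$-monodromy on $\Sym^2$ for (7)--(10); your explicit $\xi$, the image of $\operatorname{diag}(a,a^{-1})$ with $a^2\neq1$, is a fine substitute for Schoen's Lemma (11.2.7). You get $\mathcal K=0$ because the inertia image at a cusp has order $\ell$, hence is an $\ell$-Sylow subgroup, so restriction to it is injective. Then come \Cref{schoen-unified-cor}, CM cycles and \Cref{schoen-2.4.3-finer}.

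The genuine gap is in producing the CM cycles. You need $\pi^{-1}(x_i)$ to be ordinary, and you propose to force this through the Frobenius class $\kappa_k(\operatorname{Frob}_{x_i})\sim\xi$. That cannot work. Ordinarity means the Frobenius trace on $H^1$ of the fiber is prime to $p$, which is a $p$-adic condition. The class $\kappa_k(\operatorname{Frob}_x)$ only records the action of the mod-$\ell$ representation on $\Sym^2$, which says nothing about the trace modulo $p$. The fix, as in the paper, is to build ordinarity into the open set: take $U=\ddot B$, the complement in $\dot B$ of the finitely many supersingular points. This is a dense open in $\dot B\setminus\Sigma$, so the following all hold:
\begin{itemize}
\item \Cref{3.2.1-3.2.6} (via \Cref{lang-estimate} with $S=B\setminus\ddot B$) still gives Assumption~\ref{schoen-situation-projective}(6).
\item Your Sylow argument at a cusp still gives $\mathcal K=0$.
\item \Cref{schoen-unified-cor} then directly produces $x_i\in\ddot B(k)$, which are automatically CM points.
\end{itemize}

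Your $(1-P)$ half is superfluous, and as written it is not a proof. The argument for \Cref{conditions}(iii) in \Cref{tate-aj-liftable-threefold} rests on a torsion-freeness statement valid only for all but finitely many $\ell$, whereas you need every $\ell\nmid 6p$. The remaining pieces are only asserted to be ``handled by divisors''. The missing observation is that $P$ acts as the identity on $H^3(W_{\bar k},\Z_\ell(2))$. This is Schoen's Lemma (10.3)(iii) over $\C$, transported to characteristic $p$ by spreading out and compatibility of smooth proper base change with correspondences. Hence $(1-P)_*H^1(k,H^3(W_{\bar k},\Z_\ell(2)))=0$. Surjectivity onto the $P$-part, which \Cref{11.2.5}(v) identifies with $H^1(k,H^1(B_{\bar k},\mathcal E(1)))$, is therefore already the full statement.

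Two smaller points:
\begin{itemize}
\item Identifying the geometric monodromy with the reduction of $\Gamma(3)$ is a characteristic-zero fact. In characteristic $p$ it needs a specialization argument for tame fundamental groups, or a citation of Schoen's Lemma (11.2.6)(iii).
\item To pass from $\alpha_{\mathcal E}\circ q_\#$ to $P_*\circ\alpha^2_{W_k,\ell}$ you need \Cref{relative-aj-compatibility} in addition to \Cref{lem:comp_f_E}.
\end{itemize}
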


\begin{proof}
Let $\mathcal M$ be as in \eqref{eq:e-m}, let $\rho\colon C\to B=\P^1_{\F_p}$ be its arithmetic monodromy cover, and write $\Gamma_{\mathrm{arith},\F_p}$ and $\Gamma_{\mathrm{geom}}$ for its arithmetic and geometric monodromy groups, respectively, as in \Cref{sec:connecting-map}. It is proved in \cite[Lemma~(11.2.6)(iii),(iv)]{schoen1999image} that
\[
\Gamma_{\mathrm{geom}}\simeq\operatorname{SL}_2(\F_\ell)/\{\pm1\},
\]
and the representation is tamely ramified.

Let $\ddot B\subset \dot B=B\setminus Z$ be the non-empty open subset parametrizing ordinary fibers of $\pi\colon Y\to B$, i.e. the complement in $\dot B$ of the supersingular points; see \cite[Lemma~(11.2.2)(i)]{schoen1999image}. In the notation of Assumption~\ref{schoen-situation-projective}, we take $U=\ddot B$. By \Cref{3.2.1-3.2.6}, after replacing $\F_p$ by a finite extension $k_1$, we may assume that Assumptions \ref{schoen-situation-projective}(1)--\ref{schoen-situation-projective}(6) hold over every finite extension of $k_1$. Assumption \ref{schoen-situation-projective}(7) was verified above, and Assumptions \ref{schoen-situation-projective}(8)--\ref{schoen-situation-projective}(10) follow from \cite[Lemma~(11.2.7)]{schoen1999image}.

As usual, let $M$ be the Galois module corresponding to the stalk $\mathcal{M}_{\bar{\eta}}=\dot{\mathcal E}_1(1)_{\bar\eta}$, so that $M^\vee(1)$ corresponds to the stalk $(\mathcal E_1)^\vee_{\bar\eta}$. By \cite[Lemma~(11.2.6)(iii)]{schoen1999image}, the geometric monodromy group is
\[\Gamma\simeq\operatorname{PSL}_2(\F_\ell).\]
Let $b$ be one of the four cusps. By the local monodromy calculation in the proof of \cite[Lemma~(11.2.6)]{schoen1999image}, the image $I_b\subset\Gamma$ of the inertia subgroup at $b$ is generated by a nontrivial unipotent element. Hence $|I_b|=\ell$. Since $|\operatorname{PSL}_2(\F_\ell)|=\frac{\ell(\ell^2-1)}{2}$, we deduce that $[\Gamma:I_b]$ is prime to $\ell$. A restriction-corestriction argument now shows that $H^1(\Gamma,M^\vee(1))\to H^1(I_b,M^\vee(1))$ is injective. Therefore 
\[H^1(\Gamma,M^\vee(1))\longrightarrow\prod_{b\in B_{\ov{k}}\setminus U_{\ov{k}}} H^1(I_b,M^\vee(1))\]
is injective, and hence $\mathcal K_{M^\vee(1)}=0$.

Since the natural homomorphism $\pi_1(\ddot B_{\bar k})\to\pi_1(\dot B_{\bar k})$ is surjective, the image of the homomorphism $\pi_1(\ddot B_{\bar k})\to\operatorname{GL}(M)$ is also equal to $\Gamma_{\mathrm{geom}}$. Therefore $H^0(\ddot B_{\bar k},\mathcal E_1^\vee)\simeq(M^\vee(1))^{\Gamma_{\mathrm{geom}}}$. Since the cyclotomic character is trivial on geometric monodromy, Assumption \ref{schoen-situation-projective}(8) gives $(M^\vee(1))^{\Gamma_{\mathrm{geom}}}=(M^\vee)^{\Gamma_{\mathrm{geom}}}=0$. Thus $H^0(\ddot B_{\bar k},\mathcal E_1^\vee)=0$.

Let $K/k_1$ be a finite extension, and let $\epsilon_1,\ldots,\epsilon_t\in H^1(K,H^1(B_{\bar k},\mathcal M))$ be an $\F_\ell$-basis. By \Cref{schoen-unified-cor}, for every $i=1,\dots,t$ there exist $x_i\in U(K)=\ddot B(K)$ and $\bar e_i\in H^2_{\bar x_i}(B_{\bar k},\mathcal M)_0^{G_K}$ such that $\delta_{\mathcal M,x_i}(\bar e_i)=\epsilon_i$ and $\kappa_K(\operatorname{Frob}_{x_i})$ is conjugate to $\xi$. By \Cref{schoen-unified}(3), the space $H^2_{\bar x_i}(B_{\bar k},\mathcal M)_0^{G_K}$ is one-dimensional.

Let $\mathcal E$ be as in \eqref{eq:e-m}, so that $\mathcal E_1(1)=\mathcal M$, and let $q\colon R^2f_*\Z_\ell(1)\to\mathcal E$ be the projection induced by $P$. Since $x_i\in\ddot B(K)$, it is a CM point, and hence we may choose a CM cycle $z_i$ supported on $f^{-1}(x_i)$. By \Cref{11.2.5}(vi) and \cite[Lemma~(11.2.6)(i),(ii)]{schoen1999image}, the cycle $z_i$ is homologically trivial on $W_K$ and the reduction of $q_\#(z_i)$ is non-zero in $H^2_{\bar x_i}(B_{\bar k},\mathcal M)_0^{G_K}$. Since this space is one-dimensional, after multiplying $z_i$ by an integer prime to $\ell$, we may assume that the reduction of $q_\#(z_i)$ is $\bar e_i$.

The hypotheses of \Cref{schoen-2.4.3-finer} are satisfied for $\mathcal E$ and the open $U=\ddot B$: the restrictions $\mathcal E_n|_U$ are locally constant, the sheaves $\mathcal E_n$ are flat over $\Z/\ell^n$, and we have shown above that $H^0(U_{\bar k},\mathcal E_1^\vee)=0$. Hence \Cref{schoen-2.4.3-finer} shows that the classes $\alpha_{\mathcal E}(q_\#(z_i))$ generate $H^1(K,H^1(B_{\bar k},\mathcal E(1)))$.

Since $q$ is the projection induced by $P$, it follows from \Cref{lem:comp_f_E,relative-aj-compatibility} and \Cref{11.2.5}(v) that, under the identification
\[P_*H^1(K,H^3(W_{\bar k},\Z_\ell(2)))\simeq H^1(K,H^1(B_{\bar k},\mathcal E(1))),\]
the class $P_*\alpha^2_{W_K,\ell}(z_i)$ corresponds to $\alpha_{\mathcal E}(q_\#(z_i))$. It follows that the composite
\[CH^2_{\mathrm{hom},\ell}(W_K)\xlongrightarrow{\alpha^2_{W_K,\ell}}H^1(K,H^3(W_{\bar k},\Z_\ell(2)))\xlongrightarrow{P_*}P_*H^1(K,H^3(W_{\bar k},\Z_\ell(2)))\]
is surjective.

The correspondence $P$ acts as the identity on $H^3(W_{\bar k},\Z_\ell(2))$. This follows from \cite[Lemma~(10.3)(iii)]{schoen2002complex} in characteristic zero, but then extends to characteristic prime to $6\ell$ by a spreading-out argument and the compatibility of smooth proper base change with correspondences. Thus, for every finite extension $K/k_1$, we have
\[P_*H^1(K,H^3(W_{\bar k},\Z_\ell(2)))=H^1(K,H^3(W_{\bar k},\Z_\ell(2))),\]
and hence $\alpha^2_{W_K,\ell}$ is surjective.

Finally, let $K$ be the compositum of $k$ and $k_1$. Then $K/k_1$ is finite, so the Abel--Jacobi map is surjective over $K$ by the preceding paragraph. By \Cref{lem:Abel--Jacobi-corestriction}, surjectivity over $K$ descends to $k$. This proves the result.
\end{proof}

\section{Proof of Theorem \ref{main-fermat-cubic-thm}}\label{sec:6}

\subsection{The supersingular case}

We begin with two consequences of Shioda's theorem for the cohomology of supersingular abelian varieties.

\begin{lem}\label{lem:supersingular-divisors}
Let $k$ be the algebraic closure of a finite field of characteristic $p$, let $X/k$ be a supersingular abelian variety, and let $\ell\ne p$ be a prime. Then the cycle map
\[CH^1(X)_{\Z_\ell} \longrightarrow H^2(X,\Z_\ell(1))\]
is surjective.
\end{lem}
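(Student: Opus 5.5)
The plan is to reduce the statement to the case of a product of supersingular elliptic curves. There the N\'eron--Severi group is as large as possible and is integrally saturated in $\ell$-adic cohomology. Two facts about $X$ of dimension $g$ over $k=\ov{\F}_p$ will be used.

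\emph{Shioda's theorem}: for $g\geq 2$, $X$ is isogenous to $E_0^g$, and, being supersingular, it satisfies $\rho(X)=b_2(X)=g(2g-1)$.

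\emph{Integrality}: $H^2(X,\Z_\ell(1))=\wedge^2 H^1(X,\Z_\ell(1))(-1)$ is torsion-free. Moreover, $\operatorname{NS}(X)\otimes\Z_\ell\to H^2(X,\Z_\ell(1))$ is injective with torsion-free cokernel. This comes from the Kummer sequence: the cokernel injects into $T_\ell\operatorname{Br}(X)$, which is torsion-free.

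By Shioda, the cycle map $\operatorname{NS}(X)\otimes\Z_\ell\to H^2(X,\Z_\ell(1))$ is an injection between free $\Z_\ell$-modules of the same rank $b_2$. Its cokernel is therefore finite, hence torsion. Since the cokernel is also torsion-free, it vanishes, and the map is surjective. As $CH^1(X)\to\operatorname{NS}(X)$ is surjective, the claim follows.

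The key step is to make the cokernel argument precise. The Kummer sequence on $X$ gives
\[
0\longrightarrow \operatorname{Pic}(X)/\ell^n\longrightarrow H^2(X,\mu_{\ell^n})\longrightarrow \operatorname{Br}(X)[\ell^n]\longrightarrow 0.
\]
Passing to the limit gives
\[
0\to \operatorname{NS}(X)\otimes\Z_\ell\to H^2(X,\Z_\ell(1))\to T_\ell\operatorname{Br}(X)\to 0.
\]
Here I use that $\operatorname{Pic}^0(X)$ is divisible, so it dies in the limit, and that $\operatorname{NS}(X)$ is finitely generated, so Mittag-Leffler holds. Since $T_\ell$ of any group is torsion-free, the cokernel is torsion-free.

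The remaining input is the equality $\rho=b_2$. I would quote it from Shioda directly for supersingular abelian varieties. Alternatively, it follows from the Tate conjecture for divisors on abelian varieties over finite fields (Tate), together with the fact that in the supersingular case all Frobenius eigenvalues on $H^2$ become $q$ after a finite extension. Then every class in $H^2(X,\Q_\ell(1))$ is Tate over some finite field, hence algebraic.

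The step I expect to be the main obstacle is only bookkeeping. One must check that supersingularity yields $\rho=b_2$ in this generality rather than only for $E_0^g$. Tate's theorem combined with the eigenvalue argument handles this uniformly, so I do not anticipate a genuine difficulty.
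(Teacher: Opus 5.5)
Your proof is correct and follows the paper's argument essentially verbatim. Both combine Shioda's equality $\rho(X)=b_2(X)$ with the $\ell$-adic Kummer sequence $0\to\operatorname{NS}(X)\otimes\Z_\ell\to H^2(X,\Z_\ell(1))\to T_\ell\operatorname{Br}(X)\to 0$, whose torsion-free cokernel then has rank zero and so vanishes. The announced reduction to a product of elliptic curves is never actually used. Your alternative route to $\rho=b_2$ via Tate's theorem also works, but it needs that Frobenius acts semisimply on the cohomology of an abelian variety: equal eigenvalues alone only give unipotence.
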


\begin{proof}
Shioda's theorem \cite[Appendix]{shioda1975algebraic} gives $\operatorname{rank}\operatorname{NS}(X)=b_2(X)$. The Kummer exact sequence gives
\[
0\longrightarrow
\operatorname{NS}(X)_{\Z_\ell}
\longrightarrow
H^2(X,\Z_\ell(1))
\longrightarrow
T_\ell\operatorname{Br}(X)
\longrightarrow0.
\]
The last term is torsion-free, while the rank equality above forces its
rank to be zero. Hence it vanishes, and the result follows.
\end{proof}

\begin{lem}\label{lem:supersingular-gysin}
Let $k$ be the algebraic closure of a finite field of characteristic $p$,
let $X/k$ be a supersingular abelian variety of dimension $g$, and let
$\ell\ne p$ be a prime. For every $1\leq r\leq g$, the $\Z_\ell$-module $H^{2r-1}(X,\Z_\ell(r))$ is generated by the images of Gysin maps
\[
i_*\colon
H^1(Y,\Z_\ell(1))
\longrightarrow
H^{2r-1}(X,\Z_\ell(r)),
\]
where $i\colon Y\hookrightarrow X$ ranges over smooth complete
intersections of codimension $r-1$.
\end{lem}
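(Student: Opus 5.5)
We reduce to products of supersingular elliptic curves and induct on $r$ using hyperplane sections. Since $X$ is supersingular, it is isogenous to $E^g$ for a supersingular elliptic curve $E$. By a theorem of Oort, for $g\geq 2$ it is even isomorphic to $E^g$ when superspecial, but a general supersingular abelian variety need not be. So we avoid relying on isomorphism and argue directly with Lefschetz-type statements.

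For $r=1$ there is nothing to prove (take $Y=X$). For $r\geq 2$, the plan is to pass through a chain of smooth ample hypersurface sections. Write $Y_{r-1}\subset Y_{r-2}\subset\cdots\subset Y_0=X$, where each $Y_{j}$ is a smooth ample divisor in $Y_{j-1}$. By the integral weak Lefschetz theorem and Poincar\'e duality (as used in the proof of \Cref{tate-implies-aj-surjective}), the Gysin map
\[
H^{2r-3}(Y_1,\Z_\ell(r-1))\longrightarrow H^{2r-1}(X,\Z_\ell(r))
\]
is surjective as soon as $2r-3\geq \dim Y_1=g-1$, i.e.\ in the range $2r-1\geq g+1$ past the middle degree. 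For small $r$, namely $2r-1\leq g$, weak Lefschetz alone does not suffice. This is where supersingularity enters. By the $\ell$-adic analogue of \Cref{lem:supersingular-divisors}, the integral cohomology $H^*(X,\Z_\ell)=\bigwedge^*H^1$ and $H^2$ is spanned by divisor classes. In particular, $H^{2r-2}(X,\Z_\ell(r-1))=\bigwedge^{2r-2}H^1$ is generated by products of $r-1$ divisor classes $D_1\cdots D_{r-1}$. Moreover, $H^{2r-1}=\bigwedge^{2r-1}H^1=H^1\cdot\bigwedge^{2r-2}H^1$ integrally, because the cup product map $H^1\otimes\bigwedge^{2r-2}H^1\to\bigwedge^{2r-1}H^1$ is surjective over $\Z_\ell$.

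Hence $H^{2r-1}(X,\Z_\ell(r))$ is generated by classes $\alpha\cup D_1\cup\cdots\cup D_{r-1}$ with $\alpha\in H^1(X,\Z_\ell(1))$ and $D_i\in \operatorname{NS}(X)$. We may replace each $D_i$ by a $\Z$-linear combination of very ample classes, since ample classes span $\operatorname{NS}$ and differences of very ample classes are allowed. Expanding multilinearly, it then suffices to treat products with $D_i$ very ample. For very ample $D_1,\dots,D_{r-1}$, Bertini gives a smooth complete intersection $i\colon Y=D_1\cap\cdots\cap D_{r-1}$ of codimension $r-1$. The projection formula gives $i_*(i^*\alpha)=\alpha\cup[Y]=\alpha\cup D_1\cdots D_{r-1}$, which lies in the image of $i_*$ on $H^1(Y,\Z_\ell(1))$. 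This argument handles all $r$ uniformly, so the weak Lefschetz reduction above is unnecessary.

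The main obstacle is the integrality of the claim that $\bigwedge^{2r-2}H^1$ is generated by products of divisor classes over $\Z_\ell$. \Cref{lem:supersingular-divisors} gives $H^2=\bigwedge^2H^1$ spanned integrally by divisors. Since cohomology of an abelian variety is the exterior algebra on $H^1$ over $\Z_\ell$, $\bigwedge^{2r-2}H^1$ is generated by products of elements of $\bigwedge^2H^1$. This holds because wedges $e_{i_1}\wedge\cdots\wedge e_{i_{2r-2}}$ of basis vectors factor as products of pairs. These facts combine to give the needed integral generation.
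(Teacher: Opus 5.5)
Your argument is correct and is essentially the paper's proof. In both, the exterior-algebra structure and \Cref{lem:supersingular-divisors} show that $H^{2r-1}(X,\Z_\ell(r))$ is generated by classes $\alpha\cup D_1\cup\cdots\cup D_{r-1}$; each $D_i$ is then written as a difference of very ample classes, and Bertini together with the projection formula $i_*i^*\alpha=\alpha\cup[Y]$ finishes. The opening remarks about Oort's theorem and the weak Lefschetz range are unnecessary, as you yourself note, and can simply be deleted.
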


\begin{proof}
The case $r=1$ is immediate. Assume $2\leq r\leq g$.
By \Cref{lem:supersingular-divisors}, divisor classes generate $H^2(X,\Z_\ell(1))$. 
Since $H^*(X,\Z_\ell)$ is an exterior algebra on $\bigwedge H^1(X,\Z_\ell)$, the group $H^{2r-1}(X,\Z_\ell(r))$ is generated by classes of the form
\[
a\cup D_1\cup\cdots\cup D_{r-1},
\]
where $a\in H^1(X,\Z_\ell(1))$ and $D_i\in H^2(X,\Z_\ell(1))$ are divisor classes.

Write each $D_i$ as a difference of sufficiently ample divisor classes
and expand the products. By Bertini's theorem, the resulting
intersections may be represented by smooth complete intersections $i\colon Y\hookrightarrow X$ of codimension $r-1$. By the projection formula, $i_*i^*a=a\cup[Y]$.
Thus every generator above belongs to the image of a Gysin map $i_*$, as desired.
\end{proof}

\begin{thm}\label{thm:supersingular-Abel--Jacobi}
Let $k$ be a finite field of characteristic $p$, let $\ell\ne p$ be a prime, and let $X/k$ be a supersingular abelian variety. Then, for every integer $r\geq 0$, the Abel--Jacobi map
\[
\alpha^r_{X,\ell}\colon
Z^r_{\mathrm{hom},\ell}(X)
\longrightarrow
H^1(k,H^{2r-1}(X_{\bar k},\Z_\ell(r)))
\]
is surjective.
\end{thm}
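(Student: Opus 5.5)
The plan is to reduce to the codimension-one case via \Cref{lem:supersingular-gysin}, using Gysin pushforward from curves (or divisors) and compatibility of Abel--Jacobi maps with proper pushforward.

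\textbf{Reductions.} The case $r=0$ is trivial, since the target vanishes. For $r>g$ the target $H^{2r-1}(X_{\bar k},\Z_\ell(r))$ vanishes as well. So assume $1\le r\le g$. By \Cref{lem:Abel--Jacobi-corestriction}, surjectivity over any finite extension $k'/k$ implies surjectivity over $k$. We are therefore free to enlarge $k$.

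\textbf{Choosing the subvarieties.} By \Cref{lem:supersingular-gysin}, applied over $\bar k$, there are finitely many smooth complete intersections $i_j\colon Y_j\hookrightarrow X_{\bar k}$ of codimension $r-1$ such that the images of the Gysin maps
\[
i_{j*}\colon H^1(Y_{j},\Z_\ell(1))\to H^{2r-1}(X_{\bar k},\Z_\ell(r))
\]
generate the target. Finitely many suffice because the target is a finitely generated $\Z_\ell$-module. Each $Y_j$ is defined over some finite extension of $k$, so after enlarging $k$ we may assume every $Y_j$ is defined over $k$. Each $Y_j$ is geometrically integral: for $r<g$ it is a connected complete intersection of ample divisors, and for $r=g$ it is a curve, which we may take connected by choosing ample divisors in the Bertini step. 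The sum map
\[
\bigoplus_j H^1(Y_{j,\bar k},\Z_\ell(1))\to H^{2r-1}(X_{\bar k},\Z_\ell(r))
\]
is then a surjective $G_k$-equivariant map. Since $\mathrm{cd}_\ell(k)=1$, the functor $H^1(k,-)$ is right exact on such modules, so the induced map on $H^1(k,-)$ is also surjective.

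\textbf{Conclusion.} By \Cref{rmk:AJ_properties}(7), each $\alpha^1_{Y_j,\ell}$ is surjective. By \Cref{rmk:AJ_properties}(2), Abel--Jacobi maps are compatible with the pushforwards $i_{j*}$, which send $Z^1_{\mathrm{hom},\ell}(Y_j)$ into $Z^r_{\mathrm{hom},\ell}(X)$. Hence $\alpha^r_{X,\ell}$ is surjective.

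\textbf{Main obstacle.} The delicate step is the descent of the $Y_j$ to a finite field with the properties actually needed: smooth and geometrically integral over $k$, with Gysin images still generating integrally. The integral generation is supplied by \Cref{lem:supersingular-gysin}, which rests on Shioda's theorem via \Cref{lem:supersingular-divisors}, so the remaining work is a Bertini argument over a finite extension of $k$. Here I would use Poonen's Bertini over finite fields, or alternatively choose the complete intersections over $\bar k$ and note they descend to a finite extension; the reduction to finite extensions makes either route acceptable.
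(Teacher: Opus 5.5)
Your proof is correct, but it is organized differently from the paper's.

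The paper stays over $k$ throughout. It combines \Cref{lem:supersingular-gysin} with \cite[Lemma~2.12]{scavia2023coniveau} to obtain smooth projective geometrically connected curves $C_j$ defined over $k$ itself, together with correspondences $\Gamma_j\in CH^r(C_j\times X)$ such that $\sum_j(\Gamma_j)_*$ maps $\bigoplus_j H^1(C_{j,\bar k},\Z_\ell(1))$ onto $H^{2r-1}(X_{\bar k},\Z_\ell(r))$. It then applies \Cref{rmk:AJ_properties}(7) only to curves.

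Your argument differs in two ways:
\begin{enumerate}
\item You use \Cref{lem:Abel--Jacobi-corestriction} to pass to a finite extension over which the complete intersections $Y_j$ of \Cref{lem:supersingular-gysin} are themselves defined. This replaces the paper's appeal to the external lemma for producing cycles over $k$.
\item You push forward directly from the $Y_j$, which may have dimension greater than one. This works because \Cref{rmk:AJ_properties}(7) gives surjectivity of the codimension-one Abel--Jacobi map for every smooth proper variety, not just for curves. The degrees and codimensions match: $i_{j*}$ sends $H^1(Y_j,\Z_\ell(1))$ to $H^{2r-1}(X,\Z_\ell(r))$ and $Z^1$ to $Z^r$.
\end{enumerate}

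What each route buys: yours is more self-contained, since it uses only results proved earlier in the paper, and it treats $r=1$ uniformly by taking $Y_j=X$. The paper's route needs no base change at all. The price is importing a lemma that turns Gysin images over $\bar k$ into correspondences from curves defined over the original finite field.
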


\begin{proof}
The cases $r=0$ and $r>\dim X$ are immediate, while the case $r=1$ follows from \Cref{rmk:AJ_properties}(7). 

Assume $2\leq r\leq\dim X$. By \Cref{lem:supersingular-gysin} and \cite[Lemma~2.12]{scavia2023coniveau}, there exist smooth projective
geometrically connected curves $C_j$ over $k$ and correspondences $\Gamma_j\in CH^r(C_j\times X)$ such that the map
\[
\bigoplus_j H^1(C_{j,\bar k},\Z_\ell(1))
\xlongrightarrow{\ \sum_j(\Gamma_j)_*\ }
H^{2r-1}(X_{\bar k},\Z_\ell(r))
\]
is surjective. Since $k$ is a finite field, this implies that the induced map
\[
\bigoplus_j
H^1(k,H^1(C_{j,\bar k},\Z_\ell(1)))
\longrightarrow
H^1(k,H^{2r-1}(X_{\bar k},\Z_\ell(r)))
\]
is surjective. The codimension-one Abel--Jacobi maps of the curves
$C_j$ are surjective by \Cref{rmk:AJ_properties}(7), and compatibility
of Abel--Jacobi maps with correspondences
\Cref{rmk:AJ_properties}(2) proves the result.
\end{proof}

\begin{thm}\label{thm:supersingular-all-codimension}
Let $X/k_0$ be a supersingular abelian variety of dimension $g$ over a
finite field of characteristic $p$. There exists a finite extension
$k_1/k_0$, depending only on $X$, such that, for every finite extension
$k/k_1$, every prime $\ell\ne p$, and every $0\leq r\leq g$, the cycle
map
\[
CH^r(X_k)_{\Z_\ell}
\longrightarrow
H^{2r}(X_k,\Z_\ell(r))
\]
is surjective.
\end{thm}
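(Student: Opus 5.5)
The plan is to use the Hochschild--Serre short exact sequence \eqref{eq:hochschild-serre-short-exact} together with \Cref{rmk:AJ_properties}(1). Surjectivity of $CH^r(X_k)_{\Z_\ell}\to H^{2r}(X_k,\Z_\ell(r))$ follows once two things hold. First, the Abel--Jacobi map $\alpha^r_{X_k,\ell}$ must be surjective onto $H^1(k,H^{2r-1}(X_{\bar k},\Z_\ell(r)))$. Second, the geometric cycle map $CH^r(X_k)_{\Z_\ell}\to H^{2r}(X_{\bar k},\Z_\ell(r))^{G_k}$ must be surjective. Indeed, given a class $c$, I lift its image in $H^{2r}(X_{\bar k},\Z_\ell(r))^{G_k}$ by a cycle $z$. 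Then $c-\mathrm{cl}(z)$ lies in $H^1(k,H^{2r-1})$, and by the cartesian square it is $\alpha^r(z')$ for some $z'\in Z^r_{\mathrm{hom},\ell}$. Since the cartesian square is for cycles and the target is a finitely generated $\Z_\ell$-module, tensoring with $\Z_\ell$ is harmless.

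The first ingredient is \Cref{thm:supersingular-Abel--Jacobi}. It applies to $X_k$ over every finite field $k$ and every $\ell\neq p$, with no extension needed.

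For the second ingredient I choose $k_1$ independent of $\ell$. By \Cref{lem:supersingular-divisors}, $\mathrm{NS}(X_{\bar k})_{\Z_\ell}\to H^2(X_{\bar k},\Z_\ell(1))$ is surjective, in fact an isomorphism, for every $\ell\neq p$. Since $\mathrm{NS}(X_{\bar k})$ is finitely generated, there is a finite extension $k_1/k_0$ over which a finite set of generators $D_1,\dots,D_n$ of $\mathrm{NS}(X_{\bar k})$ is represented by divisors defined over $k_1$. Over any $k\supset k_1$ the Galois action on $H^2(X_{\bar k},\Z_\ell(1))$ is then trivial, because it is generated by classes defined over $k$. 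Using that $H^*(X_{\bar k},\Z_\ell)$ is the exterior algebra on $H^1$ and is torsion-free, $H^2(X_{\bar k},\Z_\ell(1))$ generates $H^{2r}(X_{\bar k},\Z_\ell(r))$ under cup product for supersingular $X$. Here $\bigwedge^2 H^1$ spans, so $\bigwedge^{2r}H^1$ is spanned by products of elements of $\bigwedge^2$. Hence $H^{2r}(X_{\bar k},\Z_\ell(r))$ is spanned over $\Z_\ell$ by the classes of the intersection products $D_{i_1}\cdots D_{i_r}$. These are cycles in $CH^r(X_k)$ for every $k\supset k_1$. Thus $H^{2r}(X_{\bar k},\Z_\ell(r))^{G_k}=H^{2r}(X_{\bar k},\Z_\ell(r))$ is entirely in the image, uniformly in $\ell$ and $k$.

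The main point to check is the claim that $\bigwedge^{2r}H^1$ is generated over $\Z_\ell$, not just over $\Q_\ell$, by $r$-fold products from $\bigwedge^2H^1$. This holds integrally, since a basis wedge $e_{1}\wedge\cdots\wedge e_{2r}$ equals $(e_1\wedge e_2)\cdots(e_{2r-1}\wedge e_{2r})$. We also need the identification $H^2(X_{\bar k},\Z_\ell(1))=\bigwedge^2H^1(X_{\bar k},\Z_\ell)(1)$, which holds integrally for abelian varieties. The remaining step is to combine the two ingredients via the exact sequence as explained above.
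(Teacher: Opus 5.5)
Your proposal is correct and follows essentially the same route as the paper's proof. For the geometric part, you use generators of $\operatorname{NS}(X_{\bar k_0})$ defined over a fixed $k_1$, \Cref{lem:supersingular-divisors}, and the integral identity $e_{1}\wedge\cdots\wedge e_{2r}=(e_1\wedge e_2)\cup\cdots\cup(e_{2r-1}\wedge e_{2r})$, which together give surjectivity onto $H^{2r}(X_{\bar k},\Z_\ell(r))$ with trivial Galois action, uniformly in $\ell$ and $k$; the $H^1(k,H^{2r-1}(X_{\bar k},\Z_\ell(r)))$ part of the Hochschild--Serre sequence then follows from \Cref{thm:supersingular-Abel--Jacobi} and \Cref{rmk:AJ_properties}(1).
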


\begin{proof}
Choose divisors $D_1,\ldots,D_s$ whose classes form a $\Z$-basis of $\operatorname{NS}(X_{\bar k_0})$, and choose a finite extension $k_1/k_0$ over which all of them are defined. Observe that the extension $k_1/k_0$ is chosen independently of
$r$ and $\ell$. By \Cref{lem:supersingular-divisors}, for every prime $\ell\ne p$ their
classes generate $H^2(X_{\bar k_0},\Z_\ell(1))$. Set $V\coloneqq H^1(X_{\bar k_0},\Z_\ell)$. Since $H^*(X_{\bar k_0},\Z_\ell)$ is an exterior algebra on $V$, the cup-product map
\[
H^2(X_{\bar k_0},\Z_\ell(1))^{\otimes r}
\longrightarrow
H^{2r}(X_{\bar k_0},\Z_\ell(r))
\]
is surjective for every $r$. Indeed, if
$e_1,\ldots,e_{2g}$ is a basis of $V$, every basis monomial of degree
$2r$ can be written as
\[
e_{i_1}\wedge\cdots\wedge e_{i_{2r}}
=
(e_{i_1}\wedge e_{i_2})\cup\cdots\cup
(e_{i_{2r-1}}\wedge e_{i_{2r}}).
\]

It follows that, for every finite extension $k/k_1$, products of the
divisors $D_i$ give a surjection
\[
CH^r(X_k)_{\Z_\ell}
\longrightarrow
H^{2r}(X_{\bar k},\Z_\ell(r)).
\]
In particular, since $G_k$ acts trivially on $CH^r(X_k)$, it acts trivially on $H^{2r}(X_{\bar k},\Z_\ell(r))$.

The Hochschild--Serre spectral sequence gives an exact sequence
\[
0\longrightarrow
H^1(k,H^{2r-1}(X_{\bar k},\Z_\ell(r)))
\longrightarrow
H^{2r}(X_k,\Z_\ell(r))
\longrightarrow
H^{2r}(X_{\bar k},\Z_\ell(r))^{G_k}
\longrightarrow0.
\]
The right-hand term is contained in the image of the cycle map by the
preceding paragraph, while the left-hand term is contained in its image
by \Cref{thm:supersingular-Abel--Jacobi} and
\Cref{rmk:AJ_properties}(1). Hence the cycle map
\[
CH^r(X_k)_{\Z_\ell}
\longrightarrow
H^{2r}(X_k,\Z_\ell(r))
\]
is surjective. 
\end{proof}

\begin{cor}\label{cor:supersingular-threefold}
Let $k$ be a finite field of characteristic $p$, let $\ell\ne p$ be a
prime, and let $X/k$ be a supersingular abelian threefold which is a
product of Jacobians of smooth projective geometrically connected
curves over $k$. Then
\[
CH^2(X)_{\Z_\ell}
\longrightarrow
H^4(X,\Z_\ell(2))
\]
is surjective and
\[
H^3_{\nr}(k(X)/k,\Q_\ell/\Z_\ell(2))=0.
\]
\end{cor}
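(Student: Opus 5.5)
The plan is to split the cycle map into its arithmetic and geometric parts using the Hochschild--Serre sequence \eqref{eq:hochschild-serre-short-exact}:
\[
0\to H^1(k,H^3(X_{\bar k},\Z_\ell(2)))\to H^4(X,\Z_\ell(2))\to H^4(X_{\bar k},\Z_\ell(2))^{G_k}\to 0.
\]
By \Cref{rmk:AJ_properties}(1), the image of $CH^2(X)_{\Z_\ell}$ contains the left-hand term as soon as $\alpha^2_{X,\ell}$ is surjective. \Cref{thm:supersingular-Abel--Jacobi} gives this surjectivity over $k$ itself, with no extension of $k$ needed.

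It then suffices to show that $CH^2(X)_{\Z_\ell}\to H^4(X_{\bar k},\Z_\ell(2))^{G_k}$ is surjective over $k$. This is the main obstacle. \Cref{thm:supersingular-all-codimension} only gives this after passing to a finite extension $k_1$, and one cannot descend invariants directly, since corestriction only produces multiples by the degree. This is where the hypothesis that $X=J_1\times J_2\times J_3$ is a product of Jacobians is used. I would proceed as follows.

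\begin{enumerate}
\item Use Künneth to decompose $H^4(X_{\bar k},\Z_\ell(2))$ $G_k$-equivariantly into summands of the form $H^{a}(J_{1,\bar k})\otimes H^{b}(J_{2,\bar k})\otimes H^{c}(J_{3,\bar k})$ with $a+b+c=4$. This is integral, because the cohomology of abelian varieties is torsion-free.
\item Handle the summands involving $H^0$, $H^4$, or $H^2$ of a single factor (for instance $H^2\otimes H^2\otimes H^0$, or pieces of $H^4$ of one Jacobian) via products of divisor classes and pullbacks. This needs the surjectivity of $CH^1_{\Z_\ell}\to H^2(\cdot)^{G_k}$ for abelian varieties over finite fields, which is Tate's theorem. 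Its integral version is also known, because $H^2$ of an abelian variety is $\bigwedge^2 H^1$ and $\mathrm{NS}\otimes\Z_\ell\cong \mathrm{End}$-type classes by Tate, with integral surjectivity holding since $T_\ell\mathrm{Br}$ is torsion-free.
\item Handle the remaining summands, such as $H^1\otimes H^1\otimes H^2$ and $H^1\otimes H^2\otimes H^1$, by identifying $H^1(J_{i})\otimes H^1(J_{j})$ with $\mathrm{Hom}(T_\ell J_i,T_\ell J_j)$ up to twist. Tate's isomorphism $\mathrm{Hom}(J_i,J_j)\otimes\Z_\ell\cong\mathrm{Hom}_{G_k}(T_\ell J_i,T_\ell J_j)$ then provides integral Galois-invariant classes as graphs of homomorphisms, that is, divisor classes on $J_i\times J_j$ defined over $k$.
\end{enumerate}

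Combining the pieces, I expect $H^4(X_{\bar k})^{G_k}$ to be spanned by intersections of $k$-rational divisors on products of factors with pullbacks of classes on single factors. The delicate point is that $G_k$-invariants of a tensor product need not be generated by tensors of invariants. Supersingularity helps here, since Frobenius acts on $H^1$ through a finite-order twist after scaling. If the direct argument fails, I would fall back on curves: use the Jacobian structure to write $H^4(X_{\bar k})$ as the Gysin image of $H^2$ of products of the curves, $C_i\times C_j\times C_m$ modulo pieces, and apply integral Tate for divisors on products of curves, which holds by Tate's theorem on homomorphisms of Jacobians, to descend invariance.

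Once surjectivity of $CH^2(X)_{\Z_\ell}\to H^4(X,\Z_\ell(2))$ is established, the vanishing of $H^3_{\nr}$ follows from \cite[Théorème~2.2]{colliot2013cycles}. That result identifies the cokernel of the cycle map with $H^3_{\nr}/H^3_{\nr,\mathrm{div}}$. Divisibility is then upgraded to vanishing as in that paper: for $X$ with $CH_0$-type control over $\bar k$, or more simply because $H^3_{\nr}(k(X)/k,\Q_\ell/\Z_\ell(2))$ is finite for abelian threefolds. Finiteness follows from the finiteness of $H^1(k,H^3(X_{\bar k},\Z_\ell(2)))$ together with Tate's theorem, and a finite divisible group is zero.
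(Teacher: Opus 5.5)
Your outline matches the paper's: split by \eqref{eq:hochschild-serre-short-exact}, take the $H^1(k,H^3)$ part from \Cref{thm:supersingular-Abel--Jacobi} and \Cref{rmk:AJ_properties}(1), then apply \cite[Th\'eor\`eme~2.2]{colliot2013cycles} plus finiteness. However, the step you flag as the main obstacle is left open, and that is a genuine gap. The paper does not prove surjectivity of $CH^2(X)_{\Z_\ell}\to H^4(X_{\ov k},\Z_\ell(2))^{G_k}$ over $k$ by hand. It quotes \cite[Theorem~1.2]{scavia2025direct}, and that is where the Jacobian hypothesis enters.

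Your K\"unneth argument does work when $X$ has at least two factors, i.e.\ $E_1\times E_2\times E_3$ or $E\times J(C)$ with $g(C)=2$. In those cases every K\"unneth summand of $H^4(X_{\ov k},\Z_\ell(2))$ is one of the following:
\begin{itemize}
\item a trivial rank-one module, spanned by classes such as $\{0\}\times\{0\}\times E_3$ or $E\times\{0\}$;
\item $H^2(1)$ of one factor tensored with a trivial rank-one module, handled by integral Tate for divisors on an abelian variety;
\item up to a trivial rank-one factor, an $H^1\otimes H^{2g-1}$ summand from two factors, i.e.\ a $\operatorname{Hom}(T_\ell,T_\ell)$-module, handled integrally by Tate's isogeny theorem through graphs.
\end{itemize}
So you never need invariants of a genuine tensor product, and this covers $E^3$, the only case used in \Cref{main-fermat-cubic-thm}.

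The statement, however, also allows $X=J(C)$ with $g(C)=3$, where K\"unneth gives nothing. There your claim that ``$H^4$ of one Jacobian'' comes from products of divisors is unsupported. Products of $k$-divisors, and norms of products over finite extensions, only have classes in the image of $(H^2(X_{\ov k},\Z_\ell(1))^{\otimes 2})^{G_k}$. The cokernel of that image in $H^4(X_{\ov k},\Z_\ell(2))^{G_k}$ is governed by $H^1(k,-)$ of the kernel of the cup product, and you give no reason for it to vanish. That Frobenius acts with finite order on $\mathrm{NS}(X_{\ov k})$ does not help when $\ell$ divides that order.

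Your fallback through curves is the right kind of idea but is not carried out. For $C\times C\to J$, the Gysin map on the $H^1\otimes H^1$ part is $V\otimes V\to\bigwedge^2 V$ with $V=H_1(J_{\ov k},\Z_\ell)$, up to twist. Tate gives all $G_k$-invariants of the source as $k$-divisors on $C\times C$. For $\ell$ odd, the equivariant splitting $v\wedge w\mapsto\frac12(v\otimes w-w\otimes v)$ makes invariants surject. At $\ell=2$ this splitting is unavailable and a further argument is needed.

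Finally, the finiteness of $H^3_{\nr}(k(X)/k,\Q_\ell/\Z_\ell(2))$ that you use at the end is true, but not for the reason you give. $H^1(k,H^3(X_{\ov k},\Z_\ell(2)))$ is finite for every smooth projective threefold over a finite field, whereas finiteness of $H^3_{\nr}$ is not known in general, so it cannot follow from that together with Tate's theorem for divisors. The paper gets it as follows: $X$ is an abelian threefold, hence $X\in B_{\mathrm{Tate}}(k)$ by \cite[Proposition~3.15(b)(ii)]{colliot2013cycles}, and then \cite[Th\'eor\`eme~3.18(c)]{colliot2013cycles} applies.
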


\begin{proof}
By \cite[Theorem~1.2]{scavia2025direct}, the cycle map
\[
CH^2(X)_{\Z_\ell}
\longrightarrow
H^4(X_{\bar k},\Z_\ell(2))^{G_k}
\]
is surjective. On the other hand,
\Cref{thm:supersingular-Abel--Jacobi} gives the surjectivity of
\[
\alpha^2_{X,\ell}\colon
Z^2_{\mathrm{hom},\ell}(X)
\longrightarrow
H^1(k,H^3(X_{\bar k},\Z_\ell(2))).
\]
Using \Cref{rmk:AJ_properties}(1) and the Hochschild--Serre exact
sequence, we conclude that the cycle map
\[
CH^2(X)_{\Z_\ell}
\longrightarrow
H^4(X,\Z_\ell(2))
\]
is surjective. 
By \cite[Th\'eor\`eme~2.2]{colliot2013cycles}, this implies that $H^3_{\nr}(k(X)/k,\Q_\ell/\Z_\ell(2))$ is divisible. Since $X$ is an abelian variety,
it belongs to the class $B(k)$ of
\cite[\S3.4]{colliot2013cycles}. As $\dim X=3$,
\cite[Proposition~3.15(b)(ii)]{colliot2013cycles} implies that
$X\in B_{\mathrm{Tate}}(k)$; see also
\cite[Property~3.14(vi)]{colliot2013cycles}. Therefore
\cite[Th\'eor\`eme~3.18(c)]{colliot2013cycles} shows that $H^3_{\nr}(k(X)/k,\Q_\ell/\Z_\ell(2))$ is finite. Since a finite divisible group is zero, the result follows.
\end{proof}

\subsection{The Fermat cubic curve}

Let $k$ be a field of characteristic different from $3$, and consider the projective plane $\P^2_k$ with homogeneous coordinates $x,y,z$. The \emph{Fermat cubic curve} over $k$ is the smooth projective cubic curve $E\subset \P^2_k$ given by
\begin{equation}\label{eq:fermat}E\coloneqq \{x^3+y^3+z^3=0\}\subset \P^2_k.\end{equation} We will always consider $E$ as an elliptic curve with origin $O\coloneqq (1:-1:0)\in E(k)$.

The group $\mu_3$ acts on $E$ by
\begin{equation}\label{eq:mu_3-action}\mu_3\times_k E\to E,\qquad \omega\cdot (x:y:z)=(x:y:\omega z).\end{equation}

\begin{lem}\label{lem:fermat-schoen-isogenous}
Let $k$ be a field of characteristic different from $2$ and $3$. Let
$E/k$ be the Fermat cubic of \eqref{eq:fermat}, and let
\[
E'\colon (x')^3-(y')^2z'+(z')^3/4=0
\]
be the elliptic curve considered by Schoen \cite[(13.1)]{schoen1995computation}, with origin $O'=(0:1:0)$. There is a $3$-isogeny $E'\to E$ which is obtained by base change from a $3$-isogeny between the elliptic curves over $\Z[1/6]$ given by the same equations as $E'$ and $E$. Its dual $E\to E'$ has the same property. 

Moreover, if $\zeta_3\in k$, these $3$-isogenies are equivariant for the order-three automorphisms given, on the corresponding Weierstrass models, by multiplication of the $x$-coordinate by $\zeta_3$.
\end{lem}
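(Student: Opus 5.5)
The plan is to reduce both curves to Weierstrass models with $j=0$ over $\Z[1/6]$ and write down the $3$-isogeny explicitly. First I will put $E$ in Weierstrass form. Using the standard change of variables $X=12z/(x+y)$, $Y=36(x-y)/(x+y)$ (or a variant adapted to the origin $O=(1:-1:0)$, i.e.\ the flex at $x+y=0$), the Fermat cubic $x^3+y^3+z^3=0$ becomes $Y^2=X^3-432$ over $\Z[1/6]$. The inverse transformation also has coefficients in $\Z[1/6]$, so this is an isomorphism of elliptic curves over $\Z[1/6]$ sending $O$ to the point at infinity. Under this change of variables, the $\mu_3$-action $z\mapsto\omega z$ of \eqref{eq:mu_3-action} becomes $X\mapsto\omega X$, $Y\mapsto Y$. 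Similarly, $E'$ dehomogenizes at $z'=1$ to $y^2=x^3+1/4$; substituting $y=Y/2$ and scaling gives $Y^2=4x^3+1$, i.e.\ a $j=0$ curve $Y^2=X^3+16$ over $\Z[1/6]$ after $X=4x$ and $Y\mapsto 4y\cdot$(unit). This is again an isomorphism over $\Z[1/6]$.

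Next I will write down the classical $3$-isogeny between $j=0$ curves. The curve $y^2=x^3+D$ has the rational $3$-torsion subgroup $\{O,(0,\pm\sqrt D)\}$, which is defined over the base as a subgroup scheme even when $\sqrt D$ is not. V\'elu's formulas give the quotient map
\[
(x,y)\longmapsto\Bigl(\frac{x^3+4D}{x^2},\ \frac{y(x^3-8D)}{x^3}\Bigr)
\]
onto $y^2=x^3-27D$. With $D=16$ the target is $y^2=x^3-432$, which is exactly the model of $E$ found above. The coefficients lie in $\Z[1/6]$, and $\ker$ is finite flat of degree $3$ over $\Z[1/6]$, so this is an isogeny of elliptic schemes over $\Z[1/6]$. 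Its base change to any $k$ with $\operatorname{char}k\neq2,3$ gives the $3$-isogeny $E'\to E$. The dual isogeny exists over $\Z[1/6]$ by the general theory of duals of isogenies of abelian schemes. Concretely, applying the same formula with $D=-432$ gives a map onto $y^2=x^3+27\cdot432=x^3+3^6\cdot 16$, which is isomorphic to $y^2=x^3+16$ via $(x,y)\mapsto(x/9,y/27)$, an isomorphism over $\Z[1/6]$. Composing yields $E\to E'$ of degree $3$, and after adjusting by an automorphism if needed it is the dual.

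For the equivariance statement, the formula is visibly compatible with $x\mapsto\zeta_3x$: if $x\mapsto\zeta_3x$ and $y\mapsto y$, then $(x^3+4D)/x^2\mapsto\zeta_3^{-2}(\cdot)=\zeta_3(\cdot)$ and $y(x^3-8D)/x^3$ is unchanged. The same check applies to the rescalings, since they multiply $x$ by constants. The one point needing care, which I expect to be the main obstacle, is bookkeeping: the Weierstrass coordinate changes must match the stated origins, and the automorphism of $E$ "multiply $x$ by $\zeta_3$" must correspond under the coordinate change to the given $\mu_3$-action, or at least to the order-three automorphism on the Weierstrass model used. I would verify this directly from the explicit substitution.
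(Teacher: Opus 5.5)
Your proposal is correct and is essentially the paper's proof. You use the same Weierstrass model $v^2w=u^3-432w^3$ for $E$, the same isogeny $\xi=(x^3+4D)/x^2$, $\eta=y(x^3-8D)/x^3$, and the same equivariance check; you rescale $E'$ to $y^2=x^3+16$ before applying the isogeny, whereas the paper applies it to $y^2=x^3+1/4$ and then rescales by $u=4\xi$, $v=8\eta$, which gives the identical map. One sign slip to fix in your bookkeeping: the coordinate must be $X=-12z/(x+y)$, as in the paper's $[u:v:w]=[-12z:36(x-y):x+y]$, because $X=12z/(x+y)$ gives $Y^2=-X^3-432$.
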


\begin{proof}
We first recall a Weierstrass model of the Fermat cubic. Let
\[
E_W\colon v^2w=u^3-432w^3.
\]
Since $\operatorname{char}(k)\neq 2,3$, the change of coordinates
\[
[u:v:w]=[-12z:36(x-y):x+y]
\]
is defined over $k$ and induces a $k$-isomorphism $E\xrightarrow{\sim} E_W$.
On the affine chart $z'\neq 0$, the curve $E'$ is
\[
(y')^2=(x')^3+1/4.
\]
We have a $3$-isogeny
\[
E'\longrightarrow E'',\qquad
E''\colon \eta^2=\xi^3-27/4,
\]
given by
\[
\xi=\frac{(x')^3+1}{(x')^2},\qquad
\eta=\frac{y'((x')^3-2)}{(x')^3}.
\]
Its kernel is the order-$3$ subgroup
\[
\{O',\,(0:1/2:1),\,(0:-1/2:1)\}\subset E'(k).
\]
Finally, the change of variables $u=4\xi$, $v=8\eta$ identifies $E''$ with $E_W$.
Composing
\[
E'\longrightarrow E''\xlongrightarrow{\sim}E_W\xlongleftarrow{\sim}E
\]
gives a $k$-rational $3$-isogeny $E'\to E$. The dual isogeny gives a
$k$-rational $3$-isogeny $E\to E'$. The displayed formula is equivariant for
$x'\mapsto\zeta_3x'$ and $\xi\mapsto\zeta_3\xi$; under the Weierstrass
change of coordinates, the action \eqref{eq:mu_3-action} is
$u\mapsto\zeta_3u$. Hence the isogeny, and therefore also its dual, is
$C_3$-equivariant.
\end{proof}

\begin{prop}\label{prop:fermat-facts}
    Let $k$ be a field of characteristic different from $2$ and $3$, and let $E$ be the Fermat cubic curve over $k$ defined in (\ref{eq:fermat}), with origin $O=(1:-1:0)$.

    \begin{enumerate}
     \item The Weierstrass form of $E$ is given by $v^2=u^3-2^4\cdot 3^3$.
    \item The $j$-invariant of $E$ is equal to $0$.
    \item Suppose that $\mathrm{char}(k)=p>0$. Then $E$ is ordinary if $p\equiv 1\pmod 3$, and is supersingular if $p\equiv 2\pmod 3$.
     \item If $k$ contains a primitive third root of unity $\zeta_3$ and $E$ is ordinary, then the $\mu_3$-action of (\ref{eq:mu_3-action}) induces an isomorphism \[\Z[\zeta_3]\xlongrightarrow{\sim}\mathrm{End}_{\ov{k}}(E),\] and every endomorphism is defined over $k$.

     \item Let $\ell$ be a prime not equal to $3$ or $\on{char}(k)$, let 
     $R=\Z_\ell[\zeta_3]$, let $C_3$ be a cyclic group of order $3$, and let $\omega\in C_3$ be a generator.  Let $L$
     (respectively $\overline L$) be the rank-one $R[C_3]$-module on which
     $\omega$ acts by $\zeta_3$ (respectively $\zeta_3^2$). Then
     \[
     H^1(E_{\ov{k}},\Z_\ell)\otimes_{\Z_\ell}R
     \simeq L\oplus\overline L
     \]
     as $R[C_3]$-modules.
    \item If $\mathrm{char}(k)\neq 2$, the curve $E$ is $3$-isogenous to the elliptic curve of equation $x^3-y^2z+z^3/4=0$ of \cite[(13.1)]{schoen1995computation}.
    \end{enumerate}
\end{prop}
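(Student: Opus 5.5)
The proposition collects six mostly standard facts, and I would prove them in order by explicit computation; (6) is essentially \Cref{lem:fermat-schoen-isogenous}.

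For (1) and (2), I would use the change of coordinates from the proof of \Cref{lem:fermat-schoen-isogenous}, namely $[u:v:w]=[-12z:36(x-y):x+y]$. It identifies $E$ with $v^2=u^3-432$. Since $432=2^4\cdot 3^3$, this gives (1). A curve $v^2=u^3+b$ with $b\neq 0$ has $a_4=0$, so $j=1728\cdot 4a_4^3/(4a_4^3+27b^2)=0$, which is (2).

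For (3), I would use Deuring's criterion for $j=0$. It is cleanest to compute the Hasse invariant of the plane cubic $x^3+y^3+z^3$: it is the coefficient of $(xyz)^{p-1}$ in $(x^3+y^3+z^3)^{p-1}$. This coefficient is nonzero if and only if $3\mid p-1$, because then the term $(x^3)^{a}(y^3)^{a}(z^3)^{a}$ with $3a=p-1$ occurs with multinomial coefficient $(p-1)!/(a!)^3\not\equiv 0\pmod p$; otherwise no monomial of that shape occurs. So $E$ is ordinary exactly when $p\equiv 1\pmod 3$. Since $p\neq 3$, the remaining case $p\equiv 2\pmod 3$ is supersingular.

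For (4), the automorphism $[\omega]\colon z\mapsto\zeta_3 z$ fixes $O$, so it is a group automorphism of order $3$. It satisfies $1+[\omega]+[\omega]^2=0$, because $[\omega]$ has no nonzero fixed tangent vector (it acts by $\zeta_3$ on the invariant differential). This gives a ring map $\Z[\zeta_3]\to\mathrm{End}_{\ov k}(E)$, which is injective. In the ordinary case, $\mathrm{End}_{\ov k}(E)$ is an order in an imaginary quadratic field containing $\Z[\zeta_3]$. Since $\Z[\zeta_3]$ is the maximal order, the map is an isomorphism. All endomorphisms are then generated by $[\omega]$, which is defined over $k$ once $\zeta_3\in k$.

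For (5), $\omega$ acts on the rank-$2$ module $T=H^1(E_{\ov k},\Z_\ell)$ with $\omega^2+\omega+1=0$, because this identity already holds in $\mathrm{End}(E)$. So $T$ is a module over $\Z_\ell[C_3]/(\omega^2+\omega+1)\simeq\Z_\ell[\zeta_3]$. After tensoring with $R=\Z_\ell[\zeta_3]$, we get $R\otimes_{\Z_\ell}R\simeq R\times R$: this holds because $3$ is invertible in $\Z_\ell$ (as $\ell\neq 3$), so $\Z_\ell[\zeta_3]/\Z_\ell$ is étale. The two idempotents are $e_\pm=(\omega-\zeta_3^{\mp1})/(\zeta_3^{\pm1}-\zeta_3^{\mp1})$, and $\zeta_3-\zeta_3^2$ is a unit since its square is $-3$. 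Thus $T\otimes R$ splits as the $\zeta_3$- and $\zeta_3^2$-eigenspaces, each a direct summand of a free module, hence projective over the local ring $R$ (or product of local rings), hence free.

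The step I expect to need the most care is checking that both eigenspaces have rank one. The trace of $\omega$ on $T\otimes\Q_\ell$ is $-1$, because its characteristic polynomial is $x^2+x+1$: this is given by the degree/Lefschetz formula $\deg(1-\omega)=3$. That forces eigenvalues $\zeta_3$ and $\zeta_3^2$ each with multiplicity one, giving $L\oplus\ov L$. Finally, (6) is exactly \Cref{lem:fermat-schoen-isogenous}.
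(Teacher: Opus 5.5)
Your proposal is correct. For (1), (2), (4) and (6) it is the paper's argument. It departs from the paper in (3) and (5).

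\textbf{Part (3).} The paper only cites Silverman's Example~V.4.4, which concerns $y^2=x^3+1$, or Deuring's reduction criterion for curves with CM by $\Z[\zeta_3]$. Since the Weierstrass model here is $v^2=u^3-432$, a twist of $y^2=x^3+1$, that citation tacitly uses that supersingularity is geometric and hence shared by all $j=0$ curves over $\ov{k}$. You instead compute the Hasse invariant of the plane cubic directly: it is the coefficient of $(xyz)^{p-1}$ in $(x^3+y^3+z^3)^{p-1}$ (the classical criterion, e.g.\ Hartshorne IV.4.21). This works on the Fermat equation itself and is self-contained.

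\textbf{Part (5).} The paper notes that $\zeta_3-\zeta_3^2$ is a unit, so the idempotents split $H^1(E_{\ov{k}},\Z_\ell)\otimes R$. It then simply asserts that both pieces have rank one. Your trace argument supplies that rank count, which the paper leaves implicit: $\deg\omega=1$ and $\deg(1-\omega)=3$ force characteristic polynomial $x^2+x+1$, hence one eigenvalue of each kind. Your version is more complete on this point.

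\textbf{Minor slips (none affects correctness).}
\begin{itemize}
\item The identification $\Z_\ell[C_3]/(\omega^2+\omega+1)\simeq\Z_\ell[\zeta_3]$ is right only if $\Z_\ell[\zeta_3]$ means $\Z_\ell[x]/(x^2+x+1)$. If $\ell\equiv1\pmod3$ and $R$ is read as a subring of $\ov{\Q}_\ell$, then $R=\Z_\ell$ while the left side is $\Z_\ell\times\Z_\ell$.
\item ``Projective over a product of local rings'' does not by itself give freeness.
\item Neither matters, because your explicit idempotents $e_\pm$ use only $\zeta_3\in R$ and $(\zeta_3-\zeta_3^2)^2=-3\in R^\times$. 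In the intended reading $R$ is local, and the trace fixes the ranks.
\item In (4), the cleanest justification of $1+\omega+\omega^2=0$ is $(1-\omega)(1+\omega+\omega^2)=1-\omega^3=0$ in the domain $\mathrm{End}_{\ov{k}}(E)$, with $\omega\neq1$. This is better than the tangent-space remark, which needs an extra step to conclude.
\end{itemize}
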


\begin{proof}
(1) The change of homogeneous coordinates
\[
[u:v:w]=[-12z:36(x-y):x+y]
\]
identifies $E$ with the Weierstrass cubic $v^2w=u^3-432w^3$, and hence on the affine chart $w=1$ with $v^2=u^3-2^4\cdot3^3$.

(2) This follows from (1) and the formula for the $j$-invariant \cite[\S III.1, p.~42]{silverman2009arithmetic}.

(3) This is \cite[Example~V.4.4]{silverman2009arithmetic}; equivalently it follows from the complex multiplication criterion in \cite[Chapter~13, Theorem~12]{lang1987elliptic}.

(4) Since $E$ is ordinary, by \cite[Theorem~V.3.1]{silverman2009arithmetic} the ring $\on{End}_{\ov{k}}(E)$ is an order in an imaginary quadratic field. The automorphism in \eqref{eq:mu_3-action} has order $3$ and gives an injective ring homomorphism $\Z[\zeta_3]\hookrightarrow \on{End}_{\ov{k}}(E)$. We deduce the equality $\on{End}_{\ov{k}}(E)\otimes_{\Z}\Q=\Q(\zeta_3)$. Since $\Z[\zeta_3]$ is the maximal order of $\Q(\zeta_3)$, we obtain $\on{End}_{\ov{k}}(E)=\Z[\zeta_3]$. Since by assumption, $\zeta_3\in k$, the automorphism in \eqref{eq:mu_3-action} is defined over $k$. As this automorphism generates $\Z[\zeta_3]$ as a ring, every geometric endomorphism of $E$ is defined over $k$.

(5) Since $\ell\ne3$, the difference
$\zeta_3-\zeta_3^2$ is a unit of $R=\Z_\ell[\zeta_3]$. The two
idempotents associated with the roots of $T^2+T+1$ therefore split
$H^1(E_{\ov{k}},\Z_\ell)\otimes_{\Z_\ell}R$ into the rank-one character
modules $L$ and $\overline L$.

(6) This is \Cref{lem:fermat-schoen-isogenous}.
\end{proof}

\begin{lem}\label{medium-tate}
Let $E$ be an elliptic curve over a finite field $k$ and $\ell$ be a prime number invertible in $k$.
Then $CH^2(E^3)_{\Z_\ell}\rightarrow H^{4}(E^3_{\ov{k}},\Z_\ell(2))^{G_k}$ is surjective.
\end{lem}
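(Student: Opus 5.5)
Since $E^3$ has dimension $3$, codimension-$2$ cycles are $1$-cycles. The plan is to reduce to the Tate conjecture for divisors on abelian varieties over finite fields (Tate's theorem) and then pass from divisors to $1$-cycles integrally. The point is that $E^3$ is an abelian threefold, so the Tate conjecture is known, but only rationally. The integral statement needs an argument that $H^4(E^3_{\ov{k}},\Z_\ell(2))^{G_k}$ is generated by algebraic classes with $\Z_\ell$ coefficients.

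First, I would use the Künneth decomposition and the exterior-algebra structure. Write $H=H^1(E_{\ov{k}},\Z_\ell)$, free of rank $2$. Then
\[
H^4(E^3_{\ov{k}},\Z_\ell(2))\simeq \textstyle\bigwedge^4(H^{\oplus 3})(2),
\]
which Poincaré duality identifies with $\bigwedge^2(H^{\oplus3})^\vee(1)\simeq H^2(E^3_{\ov{k}},\Z_\ell(1))^\vee$. The identification is Galois-equivariant and perfect over $\Z_\ell$, since cohomology of abelian varieties is torsion-free. Dually, a $1$-cycle class is determined by its intersection numbers with divisors.

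Next, I would decompose $H^2(E^3)(1)$ as $\bigoplus_i H^2(E_i)(1)\oplus\bigoplus_{i<j}H^1(E_i)\otimes H^1(E_j)(1)$, and do the same for $H^4(2)$ via the summands $H^2(E_i)\otimes H^2(E_j)(2)$ and $H^1(E_i)\otimes H^1(E_j)\otimes H^2(E_m)(2)$ with $\{i,j,m\}=\{1,2,3\}$. The summands of the first kind have Galois-invariants generated by the classes of the curves $E\times\{pt\}\times\{pt\}$ (suitably permuted). For a summand of the second kind, the Gysin map for the inclusion $E\times E\hookrightarrow E^3$ as a fiber $\{pt\}$ in the $m$-th factor is an isomorphism onto it, twisting appropriately. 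Its $G_k$-invariants therefore equal $(H^1(E)\otimes H^1(E)(1))^{G_k}$, which is a summand of $H^2(E^2_{\ov{k}},\Z_\ell(1))^{G_k}$.

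The key input is then the integral Tate theorem for divisors on $E\times E$ over $k$: the map
\[
\on{Hom}_k(E,E)\otimes\Z_\ell\to\on{Hom}_{G_k}(T_\ell E,T_\ell E)
\]
is an isomorphism (Tate), so $\on{NS}(E^2_k)_{\Z_\ell}\to H^2(E^2_{\ov{k}},\Z_\ell(1))^{G_k}$ is surjective. Equivalently, the Brauer group has no $\ell$-divisible part fixed by Galois, giving integral surjectivity via the Kummer sequence. Pushing these divisors forward along $E^2\times\{O\}\hookrightarrow E^3$ produces $1$-cycles over $k$ hitting each summand of the second kind. Summing over the three choices of $m$ and adding the curves from the first kind gives surjectivity onto all $G_k$-invariants, because the Künneth decomposition is $G_k$-equivariant and integral, so invariants split as the direct sum of the invariants of the summands.

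The step I expect to need care is the integral Tate statement for $E\times E$, in particular that the $\Z_\ell$-structure (not just $\Q_\ell$) is correct and that the rational point $O$ is available so that the fiber inclusions are defined over $k$. Tate's theorem is integral for homomorphisms of abelian varieties, so that part should be fine; the rational point is the origin $O$.
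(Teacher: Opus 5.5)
Your proof is correct, and it takes a genuinely different route from the paper. The paper's entire proof is a citation of \cite[Theorem~1.2]{scavia2025direct}, a general integral Tate statement for $1$-cycles that it also invokes for supersingular abelian threefolds which are products of Jacobians (\Cref{cor:supersingular-threefold}).

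You instead prove the case of $E^3$ directly, and the argument holds up.
\begin{itemize}
\item Because $H^*(E_{\ov{k}},\Z_\ell)$ is torsion-free, the K\"unneth decomposition of $H^4(E^3_{\ov{k}},\Z_\ell(2))$ is integral and $G_k$-stable, so the invariants split. It has three rank-one summands of type $(2,2,0)$ and three rank-four summands of type $(2,1,1)$.
\item The rank-one summands are spanned by the classes of $\{O\}\times\{O\}\times E$ and its permutations.
\item The Gysin map of $E^2\times\{O\}\hookrightarrow E^3$, which is defined over $k$, identifies each rank-four summand with $H^1\otimes H^1(1)\simeq\on{End}(T_\ell E)$. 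Its invariants are $\on{End}_k(E)\otimes\Z_\ell$ by Tate's theorem, which is indeed integral.
\end{itemize}

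A few points to tighten. To make the last step explicit, take graphs of $k$-endomorphisms and subtract integral multiples of $E\times\{O\}$ and $\{O\}\times E$ to isolate the mixed K\"unneth component. Your parenthetical Brauer-group reformulation would need an extra step, descending classes from $\on{NS}(E^2_{\ov{k}})^{G_k}$ to divisors over $k$; the graph construction avoids this. The Poincar\'e-duality remark is not used.

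Your route buys transparency. The only deep input is $\on{End}_k(E)\otimes\Z_\ell\simeq\on{End}_{G_k}(T_\ell E)$, and the argument works for every $\ell$ invertible in $k$. It also extends verbatim to any product $E_1\times E_2\times E_3$ of elliptic curves over $k$, using $\on{Hom}_k(E_i,E_j)$.

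The citation buys generality beyond products of curves. For example, for an abelian threefold that is the Jacobian of a single genus-$3$ curve, no K\"unneth splitting reduces $H^4$ to mixed classes on surfaces. That is the generality in which the paper states \Cref{cor:supersingular-threefold}.
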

\begin{proof}
This is \cite[Theorem~1.2]{scavia2025direct}.
\end{proof}

\subsection{Proof of Theorem \ref{main-fermat-cubic-thm}}

Let $\Delta\subset (\mu_3)^3\subset \mathrm{Aut}(E_{\ov{\mathbb{F}}_p})$ be the diagonal subgroup of $(\mu_3)^3$, where each factor $\mu_3$ acts on the corresponding copy of $E_{\ov{\mathbb{F}}_p}$ as in (\ref{eq:mu_3-action}). For every $\delta\in \Delta$, let $\Gamma_\delta$ be the graph of $\delta$, and consider the correspondences $Q\coloneqq\sum_{\delta\in \Delta}\Gamma_\delta$ and $Q'\coloneqq 3I-Q$. We have
\[Q\circ Q=3Q,\qquad Q'\circ Q'=3Q',\qquad Q\circ Q'=0=Q'\circ Q.\]
Thus $Q$ and $Q'$ are not idempotents. Whenever $3$ is invertible in the coefficient ring, the correspondences $e=Q/3$ and $e'=Q'/3$ are complementary idempotents. We use $Q_*A$ and $Q'_*A$ only to denote the images of the integral correspondences on a group $A$ (this is not necessarily an integral direct-sum decomposition of the Chow group).

Following \cite[\S13]{schoen1995computation}, let $X(3)$ denote the compactified modular curve over $\Q$ associated with the level-$3$ moduli problem considered there, let $V(3)\to X(3)$ be the corresponding projective relatively minimal elliptic surface, and let $W(3)$ be the blow-up of the reduced singular locus of $V(3)\times_{X(3)}V(3)$. 
We write $X(3)^\circ\subset X(3)$ for the open modular curve over which $V(3)$ is smooth, and $V(3)^\circ\longrightarrow X(3)^\circ$ for the corresponding universal elliptic curve.

\begin{lem}\label{lem:hesse-V3}
Let $B_0=\P^1_{\Q}$, and let $Y_0\subset\P^1_{\Q}\times\P^2_{\Q}$ be the surface defined by
\[
u_0(x_0^3+x_1^3+x_2^3)-u_1x_0x_1x_2=0.
\]
Let $\pi_0\colon Y_0\to B_0$ be the first projection, and let $W_0$ be the blow-up of the reduced singular locus of $Y_0\times_{B_0}Y_0$. Then there is a commutative diagram
\[
\begin{tikzcd}
Y_0 \arrow[r,"\sim"] \arrow[d,"\pi_0"'] & V(3) \arrow[d] \\
B_0 \arrow[r,"\sim"] & X(3)
\end{tikzcd}
\]
over $\Q$. This induces an isomorphism
\[
Y_0\times_{B_0}Y_0
\xlongrightarrow{\sim}
V(3)\times_{X(3)}V(3),
\]
and hence, after blow-up of the reduced singular locus of $Y_0\times_{B_0}Y_0$, an isomorphism
\[
W_0\xlongrightarrow{\sim}W(3).
\]
\end{lem}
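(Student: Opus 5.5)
The plan is to identify the Hesse pencil $\pi_0\colon Y_0\to B_0$ with the universal elliptic curve with full level-$3$ structure, and then let the fiber-product and blow-up statements follow formally.

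First, recall Schoen's description in \cite[\S13]{schoen1995computation}. There $X(3)$ is the compactified modular curve for the level-$3$ moduli problem, which has genus $0$ and four cusps. $V(3)\to X(3)$ is the relatively minimal elliptic surface whose restriction to $X(3)^\circ$ is the universal curve with level-$3$ structure, and whose singular fibers are of type $I_3$ over the cusps. On the other side, the Hesse pencil $\pi_0$ is smooth over $B_0$ minus the four points $(0:1)$ and $(1:3\zeta)$ with $\zeta\in\mu_3$, and has $I_3$ fibers there, as recalled in \S\ref{sec:W}. Its nine base points $\{x_0x_1x_2=0\}\cap\{x_0^3+x_1^3+x_2^3=0\}$ are sections. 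Once $(0:-1:1)$ is taken as origin, they give a trivialization of the $3$-torsion, namely a symplectic level-$3$ structure, over the smooth locus. This uses the standard fact that the nine flexes of a Hesse cubic form its $3$-torsion subgroup and that the Weil pairing is constant, given by a fixed root of unity $\zeta_3$.

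The main step, which I expect to be the real obstacle, is to match this with the precise moduli problem Schoen uses. One point is whether it is the full level-$3$ structure with a fixed Weil pairing, which is defined over $\Q(\zeta_3)$, or a $\Q$-rational variant. My approach is to follow Schoen's explicit model in \cite[(13.1)]{schoen1995computation}. If that model is written as a Hesse-type pencil, I would compare the equations directly over $\Q$, using an explicit change of coordinates on the base and on $\P^2$. Otherwise, the universal property of $X(3)^\circ$ gives a classifying map $B_0\setminus Z\to X(3)^\circ$. This map is an isomorphism because both curves are $\P^1$ minus four points, both carry a family with the same monodromy image $\Gamma(3)$, and the map has degree one: it is generically injective, since $j$ has degree $12=|\mathrm{PSL}_2(\F_3)|$ on both sides. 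The isomorphism extends to the smooth compactifications $B_0\simeq X(3)$. The pulled-back universal curve agrees with $Y_0$ over $B_0\setminus Z$, compatibly with the sections. Relatively minimal smooth models with a section are unique, so $Y_0\simeq V(3)$ over $B_0\simeq X(3)$. If the rationality of the moduli problem causes trouble, I would descend using that both sides are defined over $\Q$ and the isomorphism is Galois-equivariant by uniqueness.

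Finally, a commutative square of isomorphisms induces an isomorphism of fiber products $Y_0\times_{B_0}Y_0\simeq V(3)\times_{X(3)}V(3)$. An isomorphism carries the reduced singular locus onto the reduced singular locus, so by functoriality of blow-ups it lifts to $W_0\simeq W(3)$.
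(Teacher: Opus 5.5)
Your skeleton is the same as the paper's: a classifying map on the smooth locus, shown to be an isomorphism, extended over the four cusps, then uniqueness of the relatively minimal regular model, then the formal passage to the fiber product and the blow-up. Your degree count is a valid and more self-contained substitute for the paper's appeal to Beauville's classification and change of level structure over $\C$. Indeed, $j$ has degree $12$ on the Hesse pencil and $X(3)\to X(1)$ has degree $|\mathrm{PSL}_2(\F_3)|=12$, so the classifying map is birational, hence an isomorphism between two copies of $\P^1$ minus four points.

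The gap is the step you flag and leave open: producing the classifying map over $\Q$. The nine base points are $(1:-\zeta:0)$, $(1:0:-\zeta)$, $(0:1:-\zeta)$ with $\zeta\in\mu_3$. Only three of them are $\Q$-rational sections, so the symplectic full level-$3$ structure you describe exists only over $\Q(\zeta_3)$. Your fallback of descending ``by uniqueness'' does not work as stated. The isomorphism of fibrations is unique only relative to a fixed level structure. The Galois conjugate of a non-rational level structure is a different level structure, and its classifying map differs from the original by an automorphism of $X(3)_{\ov{\Q}}$ induced by a change of level structure. You would be left with a Galois $1$-cocycle in the automorphisms of the fibration, and its triviality is exactly what must be proved. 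Your other option does not help either: as cited in this paper, Schoen's display (13.1) is the equation of the curve $E'$, not a model of $V(3)$.

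The missing idea is to use the $\Q$-rational moduli problem Schoen actually works with: an elliptic curve with a point of order $3$ and a disjoint subgroup scheme isomorphic to $\mu_3$. The Hesse pencil carries exactly this structure over $\Q$.
\begin{itemize}
\item Take the origin $O=(1:-1:0)$. The rational base point $P=(0:1:-1)$ is a flex, hence of order $3$. Its multiples are the three rational base points, which lie on the line $x_0+x_1+x_2=0$.
\item Take $H=\{(1:-\zeta:0):\zeta\in\mu_3\}$, cut out on the smooth fibers by $x_2=0$. It is a finite flat subgroup scheme of the $3$-torsion, isomorphic to $\mu_3$, and it meets $\langle P\rangle$ only in $O$.
\end{itemize}
The pair $(P,H)$ gives the classifying morphism $U\to X(3)^\circ$ directly over $\Q$, so no descent is needed. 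Your degree argument and the remaining steps of your proposal then complete the proof.
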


\begin{rem}\label{rmk:reduce-mod-p}
    By construction, for every prime $p\neq2,3$, the varieties $Y$ and $W$ of \S\ref{sec:W} are obtained by reducing the equation of \Cref{lem:hesse-V3} modulo $p$ and performing the same self-fiber-product and blow-up construction.
\end{rem}

\begin{proof}[Proof of \Cref{lem:hesse-V3}]
Let $U\subset B_0$ be the open subset over which $\pi_0$ is smooth.
Recall that \cite[\S13]{schoen1995computation} considers the moduli
problem over $\Q$ which associates to a $\Q$-scheme $S$ the
isomorphism classes of elliptic curves over $S$ equipped with a point
of order $3$ and a disjoint subgroup scheme isomorphic to $\mu_3$. The restriction $\pi_{0,U}\colon (Y_0)_U\to U$ is a family of elliptic curves with zero section $O=(1:-1:0)$. The nine base points of the pencil are flex points of every smooth member and hence are $3$-torsion with respect to the group law with origin $O$. In particular, $P=(0:1:-1)$ has order $3$. Moreover, the intersection with the line $x_2=0$ gives the finite flat subgroup scheme
\[
H\coloneqq \{(1:-\zeta:0):\zeta\in\mu_3\}\subset (Y_0)_U[3]
\]
which is isomorphic to $\mu_3$. The modular description gives a classifying morphism
\[
\phi\colon U\longrightarrow X(3)^\circ
\]
over $\Q$ and an isomorphism
\[
(Y_0)_U
\xlongrightarrow{\sim}
U\times_{X(3)^\circ}V(3)^\circ
\]
of elliptic schemes over $U$. Over $\C$, Beauville's classification \cite[p.~658, first row of the table]{beauville1982elliptic} identifies the Hesse pencil with the elliptic modular family corresponding to $\Gamma(3)$. Any two full level-$3$ structures differ by a change of level structure, and such a change induces an automorphism of $X(3)^\circ_{\C}$. It follows from the fine moduli property that the classifying morphism $\phi_{\C}\colon U_{\C}\to X(3)^\circ_{\C}$ associated with $(P,H)$ is an isomorphism. It follows that $\phi$ is already an isomorphism over $\Q$, and hence extends uniquely to an isomorphism of smooth projective compactifications
\[
B_0\xlongrightarrow{\sim}X(3).
\]
Under this identification, the universal-property isomorphism above identifies the generic fiber of $\pi_0\colon Y_0\to B_0$ with the generic fiber of $V(3)\to X(3)$. Both $Y_0\to B_0$ and $V(3)\to X(3)$ are relatively minimal regular elliptic models of this generic elliptic curve. By uniqueness of the relatively minimal regular model \cite[Theorem~IV.1.1]{silverman1994advanced}, the isomorphism of generic fibers extends to an isomorphism
\[
Y_0\xlongrightarrow{\sim}V(3)
\]
over the identified bases. It follows that
\[
Y_0\times_{B_0}Y_0
\xlongrightarrow{\sim}
V(3)\times_{X(3)}V(3),
\]
and this isomorphism identifies their reduced singular loci. Blowing
up these loci gives
\[
W_0\xlongrightarrow{\sim}W(3).
\]

Finally, the equation defining $Y_0$ has coefficients in $\Z[1/6]$. For every $p\neq2,3$, its reduction modulo $p$ is precisely the equation defining the surface $Y$ in \Cref{sec:W}. The threefold $W$ of that section is, by definition, obtained from this reduced surface by taking the self-fiber-product over $\P^1_{\F_p}$ and blowing up its reduced singular locus. This completes the proof.
\end{proof}

\begin{lem}\label{lem:dominant-rational-E3-W}
Let $k$ be a field of characteristic different from $2$ and $3$, let $E$ be the Fermat cubic curve, and let $W$ be the threefold over $k$ defined in \Cref{sec:W}. There exists a dominant rational map $f\colon E^3\dashrightarrow W$ of degree $d$ a power of $3$ with the following properties. If $\Gamma\subset E^3\times W$ denotes the closure of the graph of $f$, then:
\begin{enumerate}
\item[(i)] for every prime $\ell$ invertible in $k$ with $\ell\nmid6$, we have $\Gamma_*\Gamma^*=d\cdot \mathrm{id}$ on $H^3(W_{\ov{k}},\Z_\ell(2))$;
\item[(ii)] if $k$ contains a primitive cube root of unity and $Q$ is
the correspondence introduced above, then $Q_*\Gamma^*=3\Gamma^*$ on $H^3(W_{\ov{k}},\Z_\ell(2))$ for every prime $\ell$ invertible in $k$ with $\ell\nmid6$.
\end{enumerate}
\end{lem}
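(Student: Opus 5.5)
The plan is to construct $f$ as a composite of two maps: the product of $3$-isogenies from \Cref{lem:fermat-schoen-isogenous}, followed by Schoen's birational description of $W(3)$ as a quotient of $E'^3$.

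\emph{Construction of $f$.} Schoen \cite[\S13]{schoen1995computation} shows that $W(3)$ is birational to the quotient $E'^3/\Delta'$, where $E'$ is his curve \cite[(13.1)]{schoen1995computation} and $\Delta'$ is the diagonal copy of the order-three automorphism group. I take this as the main geometric input. Via \Cref{lem:hesse-V3}, it gives a dominant rational map $E'^3\dashrightarrow W_0$ of degree $3$. Both sides and the map are defined over $\Z[1/6]$, so by \Cref{rmk:reduce-mod-p} the map reduces to $E'^3\dashrightarrow W$ over any $k$ with $\operatorname{char}(k)\neq 2,3$. I then precompose with the product $E^3\to E'^3$ of the $3$-isogenies of \Cref{lem:fermat-schoen-isogenous}, which has degree $27$. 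This gives $f$ of degree $d=81$.

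By the equivariance statement in \Cref{lem:fermat-schoen-isogenous}, the isogeny intertwines the diagonal $\mu_3$-action $\Delta$ on $E^3$ with $\Delta'$ when $\zeta_3\in k$. Hence $f\circ\delta=f$ for every $\delta\in\Delta$. The main obstacle I expect is checking that Schoen's birational identification is really compatible with the Hesse model and with these particular actions, and that it spreads out over $\Z[1/6]$. I would verify this directly on the affine Weierstrass models, where $\Delta$ acts by $u\mapsto\zeta_3u$ on each factor.

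\emph{Property (i).} Let $\widetilde W\to\Gamma$ be a resolution, with $p\colon\widetilde W\to E^3$ birational and $q\colon\widetilde W\to W$ generically finite of degree $d$. Then $\Gamma^t\circ\Gamma$, viewed as a correspondence from $W$ to $W$, equals $d\cdot\Delta_W$ plus a cycle supported on $D\times W\cup W\times D$ for some proper closed $D\subset W$. Such error terms act on $H^3$ through Gysin maps from, or restrictions to, varieties of dimension at most $2$. Their images therefore lie in $N^1H^3(W_{\ov{k}},\Z_\ell(2))$ (or the dual statement), which I want to show is zero.

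For this, compare with characteristic zero by smooth proper base change. Over $\C$, $H^3$ of Schoen's threefold is the rank-two lattice $H^1(B,\mathcal E(1))$ with $h^{3,0}=1$; it is irreducible and has coniveau $0$. By \Cref{11.2.5}(v) and the fact that $P$ acts as the identity, this module is torsion-free for $\ell\nmid 6$. So any class supported in codimension $\geq1$ is torsion, hence zero, and the error terms vanish. It remains to carry this vanishing from $\C$ to $\ov{k}$. I would argue that the support loci and correspondences spread out, so the relevant maps on $\ell$-adic cohomology are identified by specialization. This gives $\Gamma_*\Gamma^*=d\cdot\mathrm{id}$.

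\emph{Property (ii).} Since $f\circ\delta=f$ for all $\delta\in\Delta$, we have $\Gamma\circ\Gamma_\delta=\Gamma$ as correspondences, possibly after discarding components supported on the indeterminacy locus, which act trivially on $H^3$ by the argument in (i). Hence $\delta_*\Gamma^*=\Gamma^*$ for each $\delta\in\Delta$. Summing over the three elements of $\Delta$ gives $Q_*\Gamma^*=3\Gamma^*$.
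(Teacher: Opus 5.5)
Your construction of $f$ and your proof of (ii) are essentially the paper's. Your characteristic-zero argument for (i) is a legitimate variant of the paper's. The paper resolves $f_0$ as $E_0^3\xleftarrow{\pi}\widetilde X\xrightarrow{g}W_0$ and uses Hodge types to get $g^*H^3\subset\pi^*H^3$, hence $(\Gamma_0)_*\Gamma_0^*=g_*\pi^*\pi_*g^*=g_*g^*=d$. You instead kill the difference $\Gamma_0\circ{}^t\Gamma_0-d\Delta_{W_0}$, which is supported over the indeterminacy locus, using $N^1H^3(W_{0,\C},\Z_\ell)=0$. That vanishing does hold, since this module is torsion-free and of Hodge type $(3,0)+(0,3)$. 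The gap is the passage to characteristic $p$.

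You reduce (i) to $N^1H^3(W_{\ov k},\Z_\ell(2))=0$ and propose to carry this vanishing from $\C$ to $\ov k$. That cannot work. Specialization sends $N^1$ into $N^1$, so coniveau can only grow under specialization, and here it does.
\begin{itemize}
\item For $p\equiv2\pmod 3$ (allowed, e.g.\ $p=5$) the curve $E$ is supersingular.
\item By \Cref{lem:supersingular-gysin}, all of $H^3(E^3_{\ov k},\Z_\ell)$ then has coniveau $\geq1$.
\item Since $\Gamma_*$ preserves coniveau and $\Gamma_*\Gamma^*=d$ is invertible for $\ell\neq3$, the whole rank-two module $H^3(W_{\ov k},\Z_\ell(2))$ lies in $N^1$.
\end{itemize}
So the excess terms of the characteristic-$p$ composite $\Gamma\circ{}^t\Gamma$ cannot be controlled by any coniveau statement on $W_{\ov k}$.

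What works, and what the paper does, is to prove the identity $(\Gamma_0)_*\Gamma_0^*=d$ integrally over $\C$ and then specialize the correspondence itself. This uses that specialization of cycles is compatible with composition of correspondences and with smooth proper base change. It also requires an input you do not address: the closure of the graph of $f_0$ must spread out over $\Z[1/6]$ with special fiber equal to the closure $\Gamma$ of the graph of $f$. The paper gets this from \cite[Lemma~13.6]{schoen1995computation}. Without it, the specialization of $\Gamma_0$ could contain extra components over the indeterminacy locus. Those could act non-trivially on $H^3(W_{\ov k})$, precisely because $N^1$ need not vanish there. Your last sentence gestures at this route, but it is a different argument from the one you set up.

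Two smaller points. In (ii) nothing has to be discarded. Since $\delta$ is an automorphism, $\Gamma\circ\Gamma_\delta=(\delta\times1_W)^*\Gamma$, which is the closure of the graph of $f\circ\delta=f$, i.e.\ $\Gamma$ itself. This matters, because ``the argument in (i)'' would not justify discarding components in characteristic $p$.

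Second, your claim that Schoen identifies $W(3)$ birationally with $E'^3/\Delta'$ (so that $d=81$) is not needed, and the paper does not use it. The paper only uses that $h_0$ factors through a quotient by a group $N$ containing the diagonal subgroup, and that its degree is a power of $3$ by \cite[Proposition~10.2]{schoen2002complex}.
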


\begin{proof}
Let $E'/k$ be the elliptic curve of equation $(x')^3-(y')^2z'+(z')^3/4=0$, as in  \cite[(13.1)]{schoen1995computation} and \Cref{prop:fermat-facts}(6). By
\Cref{lem:fermat-schoen-isogenous}, there is a $3$-isogeny $\varphi\colon E\to E'$ over $k$. Let $E_0$ and $E'_0$ be the elliptic curves over $\Q$
defined by the same equations as $E$ and $E'$, respectively. \Cref{lem:fermat-schoen-isogenous} gives a $3$-isogeny $\varphi_0\colon E_0\to E'_0$ over $\Q$, whose base change to any field of characteristic different from $2$ and $3$ is the isogeny $\varphi$ above. Let
\[
h_0\colon (E'_0)^3\dashrightarrow W(3)
\]
be the dominant rational map of
\cite[Theorem~13.2]{schoen1995computation}. By \Cref{lem:hesse-V3} and \Cref{rmk:reduce-mod-p}, we identify $W(3)$ with $W_0$, where $W_0$ is the
threefold over $\Q$ obtained from the same equation and blow-up
construction as the threefold $W$ of \Cref{sec:W}. Set
\[
f_0\coloneqq h_0\circ\varphi_0^3
\colon E_0^3\dashrightarrow W_0.
\]
By \cite[Proposition~10.2]{schoen2002complex}, the degree of $h_0$ is
a power of $3$. Since $\varphi_0^3$ also has degree a power of $3$,
the degree of $f_0$ is a power of $3$.

The rational map in the proof of
\cite[Theorem~13.2]{schoen1995computation} is given by explicit
formulas, and \cite[Lemma~13.6]{schoen1995computation} observes that
its graph spreads out over $\Z[1/6]$. The equation defining
$Y_0$, the blow-up construction defining $W_0$, and the isogeny
$\varphi_0$ of \Cref{lem:fermat-schoen-isogenous} are likewise defined
over $\Z[1/6]$. Reducing these constructions in positive
characteristic, or base changing them in characteristic zero, gives
for every field $k$ of characteristic different from $2$ and $3$ a
rational map $f\colon E^3\dashrightarrow W$. The explicit factorization in the proof of
\cite[Theorem~13.2]{schoen1995computation} shows that this map remains
dominant and has degree a power of $3$. We let $d$ be the degree of $f$.

We prove (i) first over $\C$. We resolve the indeterminacies of $f_0$ to
obtain a diagram
\[
(E_0)^3_{\C}\xlongleftarrow{\pi}\widetilde X\xlongrightarrow{g}(W_0)_{\C},
\]
where $\pi$ is a composite of blow-ups along smooth centers and $g$ is
generically finite of degree $d$. The proof of
\cite[Lemma~13.5]{schoen1995computation} gives
\[
H^3((W_0)_{\C},\C)
=
H^{3,0}((W_0)_{\C})\oplus H^{0,3}((W_0)_{\C}).
\]
The only new summands in the third cohomology introduced by blowing up
a smooth threefold arise from curve centers and are of the form $H^1(C_i,\Q)(-1)$, hence have Hodge types $(2,1)$ and $(1,2)$. It follows from the compatibility of the blow-up decomposition with Hodge structures that
\[
g^*H^3((W_0)_{\C},\Q)
\subset
\pi^*H^3((E_0)^3_{\C},\Q).
\]
Therefore $\pi^*\pi_*g^*=g^*$ on $H^3((W_0)_{\C},\Q)$. Since the graph correspondence $\Gamma_0$ of $f_0$ satisfies $\Gamma_0^*=\pi_*g^*$ and $(\Gamma_0)_*=g_*\pi^*$, we obtain $(\Gamma_0)_*\Gamma_0^*=
g_*\pi^*\pi_*g^*=g_*g^*=d\cdot \mathrm{id}$ on $H^3((W_0)_{\C},\Q)$.

By the comparison theorem, the same equality holds on
$H^3((W_0)_{\C},\Q_\ell(2))$. The group
$H^3((W_0)_{\C},\Z_\ell(2))$ is torsion-free by
\cite[Lemma~10.3(i)]{schoen2002complex}, so the equality holds
integrally on $H^3((W_0)_{\C},\Z_\ell(2))$. Finally, \cite[Lemma~13.6]{schoen1995computation} and the spreading
described above allow us to specialize the graph correspondence.
Compatibility of smooth proper base change with correspondences gives $\Gamma_*\Gamma^*
=d\cdot\mathrm{id}$ on $H^3(W_{\ov{k}},\Z_\ell(2))$, proving (i).

For (ii), assume that $k$ contains a primitive cube root of unity.
The construction in the proof of
\cite[Theorem~13.2]{schoen1995computation} factors the rational map
$h_0$ through a quotient
\[
(E'_0)^3\dashrightarrow N\backslash(E'_0)^3,
\]
where the diagonal subgroup of order $3$ is contained in $N$; see also
\cite[Proposition~10.2]{schoen2002complex}. Hence $h_0$ is invariant
under the diagonal order-three action on $(E'_0)^3$. By
\Cref{lem:fermat-schoen-isogenous}, the isogeny
$\varphi_0\colon E_0\to E'_0$ is equivariant for the corresponding
order-three automorphisms. Thus $f_0$ is invariant under the diagonal subgroup $\Delta\subset(\mu_3)^3$. This invariance specializes to the rational map $f$ over $k$. Thus, $(\Gamma_\delta)_*\Gamma^*=\Gamma^*$ for every $\delta\in\Delta$. Summing over the three elements $\delta\in \Delta$ gives
\[
Q_*\Gamma^*
=
\sum_{\delta\in\Delta}(\Gamma_\delta)_*\Gamma^*
=
3\Gamma^*,
\]
which proves (ii).
\end{proof}

\begin{lem}\label{lemma:cup-correspondence}
Let $k$ be a field of characteristic different from $2$ and $3$
containing a primitive cube root of unity, let $E/k$ be the Fermat cubic,
and let $\ell>3$ be a prime invertible in $k$. Let $f\colon E^3\dashrightarrow W$ be a rational map as in \Cref{lem:dominant-rational-E3-W}, and let
$\Gamma\subset E^3\times W$ be the closure of its graph.
\begin{enumerate}
\item[(i)] The cup product induces a surjective map
\[
H^1(E^3_{\ov{k}},\Z_\ell)\otimes_{\Z_\ell}
Q_*H^2(E^3_{\ov{k}},\Z_\ell)
\xlongrightarrow{\cup}
Q'_*H^3(E^3_{\ov{k}},\Z_\ell).
\]

\item[(ii)] If $k$ is a finite field of characteristic
$p\equiv1\pmod3$, then
\[
Q_*H^2(E^3_{\ov{k}},\Z_\ell(1))
=
H^2(E^3_{\ov{k}},\Z_\ell(1))^{G_k}.
\]

\item[(iii)] The map
\[
\Gamma^*\colon
H^3(W_{\ov{k}},\Z_\ell(2))
\xlongrightarrow{\sim}
Q_*H^3(E^3_{\ov{k}},\Z_\ell(2))
\]
is an isomorphism.
\end{enumerate}
\end{lem}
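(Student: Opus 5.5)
We work throughout after extending scalars to $R=\Z_\ell[\zeta_3]$. This is harmless: $\ell>3$ makes $R$ faithfully flat over $\Z_\ell$, and $3$ is invertible, so $e=Q/3$ and $e'=Q'/3$ are genuine complementary idempotents on $\Z_\ell$-cohomology. Thus $Q_*H=eH$ and $Q'_*H=e'H$ are integral direct summands. By \Cref{prop:fermat-facts}(5), $H^1(E_{\ov{k}},R)=L\oplus\overline L$ with $\omega$ acting by $\zeta_3$ and $\zeta_3^2$. The cohomology $H^*(E^3_{\ov{k}},R)$ is the exterior algebra on $H^1(E^3)=\bigoplus_{i=1}^3(L_i\oplus\overline L_i)$, and the diagonal generator $\omega$ acts on a monomial by $\zeta_3^{a-b}$, where $a$ (respectively $b$) is the number of $L$-factors (respectively $\overline L$-factors). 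The image of $e$ is then spanned by the monomials with $a\equiv b\pmod 3$, and the image of $e'$ by the rest.

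\textbf{Part (i).} In degree $2$, the condition $a\equiv b\pmod 3$ with $a+b=2$ forces $a=b=1$. So $Q_*H^2$ is free with basis the monomials $x_i\bar y_j$, where $x_i\in L_i$ and $\bar y_j\in\overline L_j$ are basis vectors. In degree $3$, $Q'_*H^3$ is spanned by the monomials of types $(2,1)$ and $(1,2)$; types $(3,0)$ and $(0,3)$ lie in $Q_*$. Any monomial $x_ix_j\bar y_m$ equals $x_j\cup(x_i\bar y_m)$ up to sign, and $x_i\bar y_m\bar y_n$ equals $\bar y_n\cup(x_i\bar y_m)$ up to sign. Hence the cup product is surjective onto $Q'_*H^3$ over $R$. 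Surjectivity over $\Z_\ell$ then follows by faithful flatness, since the cup-product map is defined over $\Z_\ell$.

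\textbf{Part (ii).} Since $E$ is ordinary, \Cref{prop:fermat-facts}(4) shows that Frobenius lies in $\Z[\zeta_3]$ and commutes with the $\mu_3$-action, so it acts on $L$ by some $\pi$ and on $\overline L$ by $\bar\pi$, with $\pi\bar\pi=q$. On $H^2(1)$, Frobenius acts on $x_i\bar y_j$ by $\pi\bar\pi/q=1$, on $x_ix_j$ by $\pi^2/q=\pi/\bar\pi$, and on $\bar y_i\bar y_j$ by $\bar\pi/\pi$. For ordinary $E$, $\pi/\bar\pi$ is not a root of unity. Therefore the invariants are exactly the span of the $x_i\bar y_j$, which is $Q_*H^2$, both rationally and integrally, because the decomposition is a direct sum over $R$ and invariants are saturated.

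\textbf{Part (iii), and the main obstacle.} This is the step I expect to be hardest. First, \Cref{lem:dominant-rational-E3-W}(ii) gives $\Gamma^*H^3(W)\subset Q_*H^3(E^3)=eH^3$. Injectivity follows from \Cref{lem:dominant-rational-E3-W}(i): $\Gamma_*\Gamma^*=d$ with $d$ a power of $3$, hence a unit in $\Z_\ell$. For surjectivity, note that over $R$ the target $eH^3$ has rank $2$, spanned by $x_1x_2x_3$ and $\bar y_1\bar y_2\bar y_3$. Also $H^3(W)$ has rank $2$: this holds over $\C$ by the Hodge computation $H^3=H^{3,0}\oplus H^{0,3}$ quoted from \cite[Lemma~13.5]{schoen1995computation}, and transfers by specialization. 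So $\Gamma^*$ is an injection between free modules of equal rank, and it remains to show that its cokernel has no $\ell$-torsion. For this I would show that $\Gamma^*\Gamma_*=d\cdot\mathrm{id}$ on $eH^3(E^3)$. Since $\Gamma_*\Gamma^*=d$, the composite $\Gamma^*\Gamma_*/d$ is an idempotent on $eH^3$ whose image $\Gamma^*H^3(W)$ has full rank $2$. An idempotent on a torsion-free module with image of full rank is the identity. Hence $\Gamma^*\Gamma_*/d=\mathrm{id}$, and so $\Gamma^*$ is surjective.
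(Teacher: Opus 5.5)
Your proposal is correct and follows essentially the paper's argument. For (i) you use the same character decomposition of the exterior algebra over $\Z_\ell[\zeta_3]$ together with faithful flatness, and for (ii) the same ``$\pi/\bar\pi$ is not a root of unity, plus saturation'' argument; for (iii) your idempotent $\Gamma^*\Gamma_*/d$ is just a rephrasing of the paper's observation that $d^{-1}\Gamma_*$ makes $\Gamma^*$ a split injection between free $\Z_\ell$-modules of rank $2$. The one point to tighten is the rank of $H^3(W_{\ov{k}},\Z_\ell(2))$: the splitting $H^3=H^{3,0}\oplus H^{0,3}$ by itself only gives rank $2h^{3,0}$, so you also need $h^{3,0}=1$, which the paper takes from \cite[Lemma~10.3(i) and Proposition~4.15]{schoen2002complex} together with smooth proper base change.
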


\begin{proof}
(i) Our argument is inspired by the proof of
\cite[Theorem~14.4 and Remark~14.9]{schoen1995computation}. Since cup
product gives an isomorphism
\[
\bigwedge H^1(E^3_{\ov{k}},\Z_\ell)
\xlongrightarrow{\sim}
H^*(E^3_{\ov{k}},\Z_\ell),
\]
the cup-product map
\[
H^1(E^3_{\ov{k}},\Z_\ell)\otimes_{\Z_\ell}
H^2(E^3_{\ov{k}},\Z_\ell)
\longrightarrow
H^3(E^3_{\ov{k}},\Z_\ell)
\]
is surjective.

Let $C_3$ be a cyclic group of order $3$, acting on $E_{\ov{k}}$
through the automorphism \eqref{eq:mu_3-action}. Let
$R=\Z_\ell[\zeta_3]$. Since $\ell\neq3$, we have
\begin{equation}\label{eq:decompose-h1}
H^1(E_{\ov{k}},\Z_\ell)\otimes_{\Z_\ell}R
\simeq L\oplus\overline L,
\end{equation}
where $L$ and $\overline L$ are the rank-one $R[C_3]$-modules on which
a generator of $C_3$ acts by multiplication by $\zeta_3$ and $\zeta_3^2$, respectively.
Thus
\[
H^1(E^3_{\ov{k}},\Z_\ell)\otimes_{\Z_\ell}R=V\oplus\overline V,
\]
where $V=L^{\oplus3}$ and  $\overline V=\overline L^{\oplus3}$. For all $i\geq j\geq0$, set
\[
V_{i,j}
\coloneqq
\bigwedge^{i-j}V\otimes_R\bigwedge^j\overline V.
\]
Then
\[
H^i(E^3_{\ov{k}},\Z_\ell)\otimes_{\Z_\ell}R
=
\bigoplus_{j=0}^iV_{i,j}.
\]
The action of $C_3$ on $V_{i,j}$ is given by multiplication by $\zeta_3^{i+j}$. Therefore $Q$ acts on $V_{i,j}$ as multiplication by $3$ if $i+j\equiv 0\pmod 3$ and as zero otherwise.

In degree $2$, the $Q$-part is therefore $V_{2,1}$, while in degree
$3$ the $Q'$-part is $V_{3,1}\oplus V_{3,2}$. The cup-product map in
the statement becomes after extension of scalars to $R$ the sum of the
natural wedge maps
\[
(V\oplus\overline V)\otimes_RV_{2,1}
\longrightarrow
V_{3,1}\oplus V_{3,2},
\]
which is surjective. Since $R$ is faithfully flat over $\Z_\ell$, this
proves (i).

Notice also that in degree $3$ the $Q$-part is $V_{3,0}\oplus V_{3,3}$. Hence
$Q_*H^3(E^3_{\ov{k}},\Z_\ell)$ is a free $\Z_\ell$-module of rank $2$.

(ii) Let $q\coloneqq |k|$. The geometric Frobenius commutes with the order-$3$
automorphism \eqref{eq:mu_3-action} and hence preserves the decomposition \eqref{eq:decompose-h1}. Its eigenvalues on $L$ and $\ov{L}$ are algebraic numbers
$\pi$ and $\overline\pi$ such that $\pi\overline\pi=q$. 

Since $E$ is ordinary, one of $\pi$ and $\bar\pi$ is a $p$-adic unit, while the other has $p$-adic valuation $v_p(q)$. Hence $\pi/\bar\pi$ has non-zero $p$-adic valuation and is therefore not a root of unity. Therefore, the Frobenius-fixed part of
$H^2(E^3_{\ov{k}},\Z_\ell(1))$ is exactly the mixed-character
summand. Thus
\[
Q_*H^2(E^3_{\ov{k}},\Z_\ell(1))
\otimes_{\Z_\ell}\Q_\ell(\zeta_3)
=
V\otimes_R\overline V(1)\otimes_{\Z_\ell}\Q_\ell=
H^2(E^3_{\ov{k}},\Z_\ell(1))^{G_k}
\otimes_{\Z_\ell}\Q_\ell(\zeta_3).
\]
Since both $Q_*H^2(E^3_{\ov{k}},\Z_\ell(1))$ and $H^2(E^3_{\ov{k}},\Z_\ell(1))^{G_k}$ 
are saturated submodules of $H^2(E^3_{\ov{k}},\Z_\ell(1))$, they are equal to each other.

(iii) Let $d\coloneqq \deg(f)$. By \Cref{lem:dominant-rational-E3-W}(ii), we have $Q_*\Gamma^*=3\Gamma^*$, so
\[\operatorname{Im}(\Gamma^*)\subset Q_*H^3(E^3_{\ov{k}},\Z_\ell(2)).\]
By \cite[Lemma~10.3(i) and Proposition~4.15]{schoen2002complex} and smooth proper base change, $H^3(W_{\ov{k}},\Z_\ell(2))$ is free of rank $2$. By the character calculation in the proof of (i), $Q_*H^3(E^3_{\ov{k}},\Z_\ell(2))$ is also free of rank $2$. Finally, by \Cref{lem:dominant-rational-E3-W}(i), we have $\Gamma_*\Gamma^*=d\cdot \mathrm{id}$ on $H^3(W_{\ov{k}},\Z_\ell(2))$.
Since $d$ is a power of $3$ and $\ell>3$, it is a unit in $\Z_\ell$.
Hence $d^{-1}\Gamma_*$ is a left inverse to $\Gamma^*$, so
$\Gamma^*$ is a split injection. Since its source and target are free
$\Z_\ell$-modules of rank $2$, it is an isomorphism.
\end{proof}


\begin{proof}[Proof of \Cref{main-fermat-cubic-thm}]

Let $X\coloneqq E^3$. Suppose first that $p\equiv2\pmod3$. We claim that $E$ is
supersingular. If $p>2$, this follows from \Cref{prop:fermat-facts}. If $p=2$, then 
\[
E(\F_2)=\{(1:1:0),(1:0:1),(0:1:1)\},
\]
so $|E(\F_2)|=3$. Thus the trace of Frobenius is $a_2=2+1-|E(\F_2)|=0$, and therefore $E$ is supersingular; see \cite[Theorem~V.3.1]{silverman2009arithmetic}. It follows that $X_k=E^3_k$ is a supersingular abelian threefold for every finite extension $k/\F_p$. Applying \Cref{cor:supersingular-threefold}, we obtain, for every prime
$\ell\neq p$,
\[H^3_{\nr}(k(X)/k,\Q_\ell/\Z_\ell(2))=0\]
and the surjectivity of
\[CH^2(X_k)_{\Z_\ell}\longrightarrow H^4(X_k,\Z_\ell(2)).\]

Suppose now that $p\equiv 1\pmod 3$. In particular $p\neq 2$, and $\F_p$ contains a primitive cube root of unity, so the correspondences $Q$ and $Q'=3I-Q$ introduced above are defined over $\F_p$. Let $k/\F_p$ be an arbitrary finite extension.

We claim first that the Abel--Jacobi map
\[
\alpha^2_{X_k,\ell}\colon
CH^2_{\mathrm{hom},\ell}(X_k)\longrightarrow
H^1(k,H^3(X_{\bar{k}},\Z_\ell(2)))
\]
is surjective. Let $\Gamma\subset X\times W$
be the closure of the graph of the rational map $X\dashrightarrow W$ from
\Cref{lem:dominant-rational-E3-W}. By functoriality of the Abel--Jacobi map
with respect to correspondences, we have a commutative diagram
\[
\begin{tikzcd}
CH^2_{\mathrm{hom},\ell}(W_k)_{\Z_\ell}
  \arrow[r,"(\alpha^2_{W_k,\ell})_{\Z_\ell}"]
  \arrow[d,"\Gamma^*"]
&
H^1(k,H^3(W_{\bar k},\Z_\ell(2)))
  \arrow[d,"\Gamma^*"]
\\
CH^2_{\mathrm{hom},\ell}(X_k)_{\Z_\ell}
  \arrow[r,"(\alpha^2_{X_k,\ell})_{\Z_\ell}"]
&
H^1(k,H^3(X_{\bar k},\Z_\ell(2))).
\end{tikzcd}
\]
By \Cref{lemma:cup-correspondence}(iii), the right vertical map identifies $H^1(k,H^3(W_{\bar k},\Z_\ell(2)))$ with $Q_*H^1(k,H^3(X_{\bar k},\Z_\ell(2)))$. By \Cref{w-aj-surjective}, the top horizontal map is surjective. Therefore the image of the Abel--Jacobi map for $X_k$ contains $Q_*H^1(k,H^3(X_{\bar k},\Z_\ell(2)))$.

We now prove the surjectivity of the $Q'$-part. Set
\[
A_\ell\coloneqq Q_*H^2(X_{\bar{k}},\Z_\ell(1)).
\]
By \Cref{lemma:cup-correspondence}(ii), we have
$
A_\ell=H^2(X_{\bar{k}},\Z_\ell(1))^{G_k}.
$
In particular $G_k$ acts trivially on $A_\ell$. By Tate's theorem for
divisors on abelian varieties over finite fields \cite{tate1966endomorphisms}, the cycle class map
\[
CH^1(X_k)_{\Z_\ell}\longrightarrow A_\ell
\]
is surjective. Moreover, by \Cref{rmk:AJ_properties}(7), the Abel--Jacobi map
in codimension $1$,
\[
\alpha^1_{X_k,\ell}\colon
CH^1_{\mathrm{hom},\ell}(X_k)\longrightarrow
H^1(k,H^1(X_{\bar{k}},\Z_\ell(1))),
\]
is surjective.

By \Cref{lemma:cup-correspondence}(i), cup product induces a surjection
\[
H^1(X_{\bar{k}},\Z_\ell(1))\otimes_{\Z_\ell} A_\ell
\relbar\joinrel\twoheadrightarrow
Q'_*H^3(X_{\bar{k}},\Z_\ell(2)).
\]
Passing to $H^1(k,-)$ therefore gives a surjection
\[
H^1(k,H^1(X_{\bar{k}},\Z_\ell(1)))\otimes_{\Z_\ell} A_\ell
\relbar\joinrel\twoheadrightarrow
H^1(k,Q'_*H^3(X_{\bar{k}},\Z_\ell(2))).
\]

Now let $\eta\in
H^1(k,Q'_*H^3(X_{\bar k},\Z_\ell(2)))$. By the above, we may write $\eta$ as a finite sum of the images
of tensors
\[
\beta_i\otimes a_i,\qquad
\beta_i\in H^1(k,H^1(X_{\bar k},\Z_\ell(1))),
\quad
a_i\in A_\ell.
\]
Since $\alpha^1_{X_k,\ell}$ is surjective, there exist $z_i\in CH^1_{\mathrm{hom},\ell}(X_k)$ such that $\alpha^1_{X_k,\ell}(z_i)=\beta_i$. By Tate's theorem for
divisors, there exist $D_i\in CH^1(X_k)_{\Z_\ell}$ whose cycle class is $a_i$. Then
\[
\sum_i z_i\cdot D_i
\in CH^2_{\mathrm{hom},\ell}(X_k)_{\Z_\ell},
\]
and the compatibility of Abel--Jacobi maps with products
(see \Cref{rmk:AJ_properties}(2), applied, for each $i$, to the
correspondence $\Delta_{X*}D_i$) shows that the Abel--Jacobi map sends $\sum_i z_i\cdot D_i$ to $\eta$. Hence the image $(\alpha^2_{X_k,\ell})_{\Z_\ell}$ contains the $Q'$-part of the codomain. Since it also contains the $Q$-part, the
decomposition of the target induced by the complementary idempotents
$Q/3$ and $Q'/3$ (recall that $\ell\neq 3$) shows that
\[
(\alpha^2_{X_k,\ell})_{\Z_\ell}\colon CH^2_{\mathrm{hom},\ell}(X_k)_{\Z_\ell}
\longrightarrow
H^1(k,H^3(X_{\bar k},\Z_\ell(2)))
\]
is surjective. Since $H^1(k,H^3(X_{\bar k},\Z_\ell(2)))$ is finite, this implies the surjectivity of
\[
\alpha^2_{X_k,\ell}\colon
CH^2_{\mathrm{hom},\ell}(X_k)\longrightarrow
H^1(k,H^3(X_{\bar{k}},\Z_\ell(2))).
\]

We now prove the surjectivity of the cycle class map
\[
CH^2(X_k)_{\Z_\ell}\longrightarrow H^4(X_k,\Z_\ell(2)).
\]
The Hochschild--Serre exact sequence gives
\[
0\to H^1(k,H^3(X_{\bar{k}},\Z_\ell(2)))
\to H^4(X_k,\Z_\ell(2))
\to H^4(X_{\bar{k}},\Z_\ell(2))^{G_k}\to 0.
\]
By \Cref{medium-tate}, the cycle map
\[
CH^2(X_k)_{\Z_\ell}\longrightarrow
H^4(X_{\bar{k}},\Z_\ell(2))^{G_k}
\]
is surjective. By the surjectivity of the Abel--Jacobi proved above and the Hochschild--Serre spectral sequence, we deduce that the cycle map
\[
CH^2(X_k)_{\Z_\ell}\longrightarrow H^4(X_k,\Z_\ell(2))
\]
is surjective.

As noted in the proof of \cite[Theorem~5.6]{kahn2012classes}, $E^3$ belongs to
$B_{\mathrm{Tate}}$, and hence so does $(E^3)_k$ for every finite extension $k/\F_p$. Finally, by \cite[Th\'eor\`eme 3.18]{colliot2013cycles}, the group
$H^3_{\nr}(k(X)/k,\Q_\ell/\Z_\ell(2))$ is finite for $X=E^3$, and by
\cite[Th\'eor\`eme 1.1]{kahn2012classes} it is isomorphic to the torsion
subgroup of the cokernel of
\[
CH^2(X_k)_{\Z_\ell}\longrightarrow H^4(X_k,\Z_\ell(2)).
\]
Since this cokernel is zero, we obtain
\[
H^3_{\nr}(k(X)/k,\Q_\ell/\Z_\ell(2))=0.
\]
The assertion over $\overline{\F}_p$ follows from the rest and the standard \Cref{lem:unramified-direct-limit} below.
\end{proof}

\begin{lem}\label{lem:unramified-direct-limit}
    Let $k$ be a field, let $X$ be a smooth $k$-variety, let $i$ be an integer, and let $A$ be a torsion discrete $G_k$-module. If $\on{char}(k)=p>0$, assume that $A[p]=0$. Then pullback induces a group isomorphism
   \begin{equation}\label{direct-limit}\varinjlim_{k\subset K\subset\ov{k}}H^i_{\nr}(K(X)/K,A)\xlongrightarrow{\sim}H^i_{\nr}(\ov{k}(X)/\ov{k},A),\end{equation}
    where $K/k$ ranges over all finite subextensions of $k$ contained in $\ov{k}$.
\end{lem}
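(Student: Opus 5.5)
The plan is to reduce the statement to the corresponding fact for Galois cohomology of function fields, and then to check that the unramified condition is compatible with the limit on both sides.

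\emph{Step 1 (the field level).} Write $\ov{k}(X)=\varinjlim_K K(X)$, where $K$ ranges over the finite subextensions $k\subset K\subset\ov{k}$. This holds because $X$ is geometrically integral, so $K(X)=K\otimes_k k(X)$ is a field for each $K$, and $\ov{k}\otimes_k k(X)$ is the filtered union of these. Correspondingly, $G_{\ov{k}(X)}=\varprojlim_K G_{K(X)}$ as profinite groups. Continuous Galois cohomology with torsion discrete coefficients commutes with such limits (Serre, \emph{Galois cohomology}, I.2.2, Prop.~8). Hence
\[
\varinjlim_K H^i(K(X),A)\xlongrightarrow{\sim} H^i(\ov{k}(X),A).
\]

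\emph{Step 2 (the unramified condition).} For each codimension-one point $y$ of $X_{\ov{k}}$ there is a finite $K$ over which the image point $y_K\in X_K$ has $\ov{k}$-geometric components defined. The local ring $\mathcal O_{X_{\ov{k}},y}$ is the filtered colimit of the local rings $\mathcal O_{X_{K'},y_{K'}}$ for $K'\supset K$. Since $X_{K'}\to X_K$ is étale, these local rings are DVRs with the same uniformizer, so ramification indices equal $1$. Residue maps are therefore compatible with restriction: for $\alpha\in H^i(K(X),A)$ we have
\[
\partial_y(\alpha_{\ov{k}})=\operatorname{Res}\bigl(\partial_{y_K}(\alpha)\bigr),
\]
using $A[p]=0$ so that residues are defined. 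In particular, pullback sends unramified classes to unramified classes, and the map \eqref{direct-limit} is well defined.

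Injectivity of \eqref{direct-limit} then follows directly from injectivity on the level of $H^i$ in Step 1.

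\emph{Step 3 (surjectivity, the main point).} Let $\beta\in H^i_{\nr}(\ov{k}(X)/\ov{k},A)$. By Step 1, $\beta=\alpha_{\ov{k}}$ for some $\alpha\in H^i(K(X),A)$. The difficulty is that $\alpha$ may be ramified along divisors of $X_K$; there could be infinitely many such divisors only a priori.

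To control this, first observe that $\alpha$ is unramified on some dense open $U\subset X_K$, since a class is defined over some open subset. So $\alpha$ ramifies only along the finitely many divisors $D_1,\dots,D_n$ contained in $X_K\setminus U$. For each $j$, the residue $\partial_{D_j}(\alpha)\in H^{i-1}(K(D_j),A(-1))$ becomes zero in $H^{i-1}(\ov{k}(D_j'),A(-1))$ for every geometric component $D_j'$ of $D_j$, by the compatibility in Step 2 and the hypothesis that $\beta$ is unramified. Applying Step 1 to the residue fields, the limit of $H^{i-1}(K'(D_j'),A(-1))$ over $K'$ equals $H^{i-1}(\ov{k}(D_j'),A(-1))$. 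Hence each residue dies over some finite extension $K'$. Taking $K'$ large enough to kill all $n$ residues, $\alpha_{K'}$ is unramified at every codimension-one point of $X_{K'}$: the points lying over $U$ carry no residue, and the residues over the $D_j$ vanish.

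The only delicate point is to make sure residue maps commute with the étale base change $X_{K'}\to X_K$ and pass through the limit. This is standard, given unramified base change and $A[p]=0$.
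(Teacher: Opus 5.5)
Your proof is correct. Its global structure matches the paper's: injectivity comes from the field-level colimit $\varinjlim_K H^i(K(X),A)\xrightarrow{\sim} H^i(\ov k(X),A)$ (the paper cites Stacks Tag 09YQ where you cite Serre). You then choose a dense open $U$ over which the lifted class extends, and there remain finitely many bad prime divisors to treat after a finite extension.

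The two proofs differ in how they kill the ramification along a bad divisor $P$.

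\emph{The paper's route.} It never uses residue maps. It takes ``unramified at $P$'' to mean ``coming from $H^i(\mathcal O_{X,P},A)$'' and writes $\mathcal O_{X_{\ov k},P_{\ov k}}=\varinjlim_K\mathcal O_{X_K,P_K}$. It then lifts the local class of $\beta$ to a finite level, using the colimit property for the cohomology of local rings. Finally it uses injectivity at the generic point to make this local lift agree with $\alpha$ after one more finite extension.

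\emph{Your route.} You identify unramifiedness with the vanishing of residues. This needs two inputs: purity for the DVRs $\mathcal O_{X_K,P}$, which is where $A[p]=0$ genuinely enters, and the formula $\partial'\circ\operatorname{Res}=e\cdot\operatorname{Res}\circ\partial$ with $e=1$ for the étale base change, which relies on $\ov k$ being a separable closure, as in the paper's conventions. In exchange, the killing step is a single application of the field-level colimit to the residue fields $\kappa(D_j')$. You never have to reconcile a separately chosen local lift with $\alpha$.

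\emph{What each buys.} The paper's argument is more elementary. It only uses that étale cohomology commutes with filtered colimits of rings, so the hypothesis $A[p]=0$ plays no role inside the proof itself. Your argument leans on the residue formalism but makes the obstruction at each bad divisor completely explicit.
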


\begin{proof}
    We know that
    \[\varinjlim_{k\subset K\subset \ov{k}} H^i(K(X),A)\xlongrightarrow{\sim} H^i(\ov{k}(X),A),\]
    see \cite[Tag 09YQ]{stacks-project}. In particular, (\ref{direct-limit}) is injective. We now prove surjectivity. Given $\alpha\in H^i_{\on{nr}}(\ov{k}(X)/\ov{k},A)$, up to replacing $k$ by a finite extension, we may assume that $\alpha$ is the image of some $\alpha_0\in H^i(k(X),A)$. Since \'etale cohomology commutes with direct limits, there exists a dense open subscheme $U\subset X$ such that $\alpha_0$ extends to an element of $H^i(U,A)$. In particular, $\alpha_0$ is unramified at every codimension $1$ point of $X$ contained in $U$. The complement $X_{\ov{k}}\setminus U_{\ov{k}}$ contains at most finitely many codimension $1$ points of $X_{\ov{k}}$ and hence, replacing $k$ by a finite extension, we may assume that all of these finitely many points are defined over $k$. Let $P$ be a codimension $1$ point of $X$ not contained in $U$. Since $\alpha$ is unramified at $P_{\ov{k}}$, it comes from $H^i(\mc{O}_{X_{\ov{k}},P_{\ov{k}}},A)$. We have $\mc{O}_{X_{\ov{k}},P_{\ov{k}}}=\varinjlim_K \mc{O}_{X_K,P_K}$, and hence by \cite[Tag 09YQ]{stacks-project} pullback gives an isomorphism
    \[\varinjlim_{k\subset K\subset \ov{k}} H^i(\mc{O}_{X_K,P_K},A)\xlongrightarrow{\sim} H^i(\mc{O}_{X_{\ov{k}},P_{\ov{k}}},A),\]
    where $K/k$ ranges over all finite subextensions of $k$ contained in
    $\ov{k}$. Choose such a local lift over some $K$. Its restriction
    to $K(X)$ and the restriction of $\alpha_0$ have the same image in
    $H^i(\ov{k}(X),A)$. By the injectivity of
    \[
    \varinjlim_K H^i(K(X),A)\longrightarrow H^i(\ov{k}(X),A),
    \]
    after enlarging $K$ once more these two generic restrictions agree.
    Therefore, up to replacing $k$ by a finite extension, we may assume
    that $\alpha_0$ comes from $H^i(\mc{O}_{X,P},A)$, that is, $\alpha_0$ is
    unramified at $P$. We may repeat this process for all codimension $1$
    points of $X$ not in $U$, and hence, up to replacing $k$ by a finite
    extension, conclude that $\alpha_0$ is unramified. Thus
    (\ref{direct-limit}) is also surjective.
\end{proof}

\section{Consequences of Theorem \ref{main-fermat-cubic-thm}}\label{sec:7}

\subsection{Coniveau and strong coniveau filtrations}\label{subsec:coniveau}

We begin by determining the coniveau-one part of $H^3(E^3)$ and comparing the coniveau and strong coniveau filtrations.

\begin{lem}\label{lem:transcendental-E3}
Let $k$ be the algebraic closure of a finite field of characteristic
$p\neq3$, let $E/k$ be the Fermat cubic, and let $\ell\neq p$ be a prime.
If $p\equiv1\pmod3$, assume moreover that $\ell>3$.

If $p\equiv1\pmod3$, then
\[
N^1H^3(E^3,\Z_\ell(2))
=
Q'_*H^3(E^3,\Z_\ell(2)),
\]
and hence
\[
H^3_{\mathrm{tr}}(E^3,\Z_\ell(2))
\coloneqq
H^3(E^3,\Z_\ell(2))/N^1H^3(E^3,\Z_\ell(2))
\simeq
Q_*H^3(E^3,\Z_\ell(2)),
\]
which is free of rank $2$.

If $p\equiv2\pmod3$, then
\[
N^1H^3(E^3,\Z_\ell(2))
=
H^3(E^3,\Z_\ell(2)),
\]
and hence
\[
H^3_{\mathrm{tr}}(E^3,\Z_\ell(2))=0.
\]
\end{lem}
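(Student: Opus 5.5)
The plan is to treat the two cases separately, establishing two inclusions in the ordinary case.

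\emph{Supersingular case} ($p\equiv 2\pmod 3$). Here $E^3$ is a supersingular abelian threefold (for $p=2$ by the point count in the proof of \Cref{main-fermat-cubic-thm}). By \Cref{lem:supersingular-gysin} with $r=2$, $H^3(E^3,\Z_\ell(2))$ is generated by Gysin images $i_*H^1(Y,\Z_\ell(1))$ of smooth divisors $Y\subset E^3$. Such classes are supported on a closed subset of codimension $1$, so $N^1H^3=H^3$ and $H^3_{\mathrm{tr}}=0$.

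\emph{Ordinary case} ($p\equiv 1\pmod 3$, $\ell>3$). First show $Q'_*H^3\subset N^1H^3$. Descend to a finite field $k_0$ over which all the relevant divisor classes are defined, so that \Cref{lemma:cup-correspondence}(ii) and Tate's theorem identify $A_\ell=Q_*H^2(E^3,\Z_\ell(1))$ with the span of divisor classes. By \Cref{lemma:cup-correspondence}(i), $Q'_*H^3$ is spanned by classes $a\cup[D]$ with $a\in H^1$ and $D$ a divisor. As in the proof of \Cref{lem:supersingular-gysin}, write $D$ as a difference of very ample smooth divisors $Y$ and use $a\cup[Y]=i_*i^*a$. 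Each such class is supported on $Y$, hence lies in $N^1$.

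Next show $N^1H^3\subset Q'_*H^3$; this is the main obstacle. The approach is to prove $Q_*(N^1H^3)=0$ after multiplying by a unit. Any class in $N^1H^3$ is supported on a divisor $Z$. Taking a resolution $\widetilde Z\to Z$ (alterations of degree prime to $\ell$ would suffice, or resolution in dimension $2$), and using the standard fact that $N^1H^3$ is the image of Gysin maps from $H^1$ of smooth surfaces, we get $N^1H^3\otimes\Q_\ell\subset$ the image of $H^1$ of surfaces. Such images have Frobenius weights coming from $H^1(\widetilde Z)(-1)$. Over a finite field, Frobenius eigenvalues on $Q_*H^3=V_{3,0}\oplus V_{3,3}$ are $\pi^3$ and $\bar\pi^3$. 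For an ordinary curve these have $p$-adic slopes $0$ and $3v_p(q)$. Eigenvalues on $H^1(\widetilde Z)(-1)$ have slopes in $[1,2]\cdot v_p(q)$ by Mazur--Ogus / Katz slope estimates: $H^1$ slopes lie in $[0,1]$, and the Tate twist shifts them by $1$. So the image of $N^1$ in $Q_*H^3\otimes\Q_\ell$ is zero. This gives $e_*N^1H^3=0$ rationally. Because $Q_*H^3$ is a free direct summand (as $\ell\ne3$), $N^1H^3\subset Q'_*H^3$ follows once we know $N^1$ is saturated inside it, or directly since $e\cdot x=0$ in the torsion-free $H^3$.

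Combining the two inclusions gives $N^1=Q'_*H^3$. The quotient is then $Q_*H^3$, which is free of rank $2$ by the computation in \Cref{lemma:cup-correspondence}(i).

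An alternative for the hard step is to use $Q_*H^3\simeq H^3(W)$ via \Cref{lemma:cup-correspondence}(iii), together with the Hodge-level statement $h^{3,0}$-type (\cite[Lemma~13.5]{schoen1995computation}) carried to characteristic $p$. The slope argument seems cleaner, so I would try it first.
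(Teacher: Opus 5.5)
Your proof is correct. Its skeleton matches the paper's:
\begin{enumerate}
\item the supersingular case comes from \Cref{lem:supersingular-gysin};
\item the inclusion $Q'_*H^3\subset N^1H^3$ comes from \Cref{lemma:cup-correspondence}(i),(ii) and Tate's theorem;
\item the reverse inclusion comes from $p$-adic divisibility of Frobenius eigenvalues on coniveau-one classes.
\end{enumerate}

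The real difference is how you control the eigenvalues on $Q_*H^3$. The paper transports $Q_*H^3$ to $H^3(W)$ via \Cref{lemma:cup-correspondence}(iii). It then quotes Schoen's computation that the Frobenius polynomial there is $T^2-b_pT+p^3$ with $p\nmid b_p$. Irreducibility shows neither root is divisible by $p$, so only the one-sided statement ``eigenvalues on $N^1$ are divisible by $p$'' is needed.

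You instead read the eigenvalues $\pi^3,\bar\pi^3$ directly off the CM decomposition $V_{3,0}\oplus V_{3,3}$. This makes the lemma independent of $W$, of the rational map $E^3\dashrightarrow W$, and of Schoen's point count. It also in effect recovers $p\nmid b_p$, since $\pi^3+\bar\pi^3=a^3-3qa$ with $p\nmid a$.

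The price is the upper slope bound you use to exclude $\bar\pi^3$ at a fixed place over $p$. That bound is elementary: for an eigenvalue $\alpha$ on $H^1$ of a smooth projective variety, both $\alpha$ and $q/\alpha$ are algebraic integers, so Mazur--Ogus is not needed. You could even drop the bound entirely. Each of $\pi^3$, $\bar\pi^3$ is a unit at one of the two places of $\Q(\zeta_3)$ above $p$, while every eigenvalue on $N^1$ is divisible by $q$ at every place.

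Your last step is a clean substitute for the paper's ``$N^1\cap Q_*H^3=0$ and correspondences preserve coniveau''. There, $e x$ vanishes rationally, hence vanishes in the torsion-free $H^3$, so $x=e'x\in Q'_*H^3$. This step tacitly uses that $e$ commutes with Frobenius, which holds since $\zeta_3\in\F_p$.

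One caution: the ``standard fact'' that $N^1H^3$ is spanned by Gysin images from $H^1$ of smooth surfaces holds only after $\otimes\Q_\ell$, via weights and alterations. Integrally it is exactly the strong-coniveau question (cf.\ \Cref{rem:schreieder}). Fortunately the rational version is all your argument uses.
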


\begin{proof}
Suppose first that $p\equiv2\pmod3$. Then $E$ is supersingular.
By \Cref{lem:supersingular-gysin},
\[
H^3(E^3,\Z_\ell(2))
=
\widetilde N^1H^3(E^3,\Z_\ell(2))
\subset
N^1H^3(E^3,\Z_\ell(2)),
\]
and the result follows.

Suppose now that $p\equiv1\pmod3$. By
\Cref{lemma:cup-correspondence}(i),(ii) and Tate's theorem for divisors,
we have
\[
Q'_*H^3(E^3,\Z_\ell(2))
\subset
N^1H^3(E^3,\Z_\ell(2)).
\]

We claim that
\[
N^1H^3(E^3,\Z_\ell)
\cap
Q_*H^3(E^3,\Z_\ell)
=0.
\]
By \Cref{lemma:cup-correspondence}(iii), we have $Q_*H^3(E^3,\Z_\ell)
\simeq H^3(W,\Z_\ell)$. The geometric-Frobenius polynomial on $H^3(W,\Z_\ell)$ is $T^2-b_pT+p^3$ by \cite[Proposition~12.6]{schoen1995computation}, and
$p\nmid b_p$ by \cite[Example~12.8(4)]{schoen1995computation}.

Every geometric-Frobenius eigenvalue on
$N^1H^3(E^3,\Q_\ell)$ is divisible by $p$ as an algebraic integer.
Indeed, after a finite constant extension defining the supports and
choosing alterations of their divisor components,
$N^1H^3(E^3,\Q_\ell)$ is generated by Gysin images of groups $H^1(\widetilde D,\Q_\ell)(-1)$. Thus, if $\lambda$ is such an eigenvalue, then
$\lambda^m/p^m$ is an algebraic integer for some $m$, and hence
$\lambda/p$ is an algebraic integer.

On the other hand, neither root of $T^2-b_pT+p^3$ is divisible by $p$
as an algebraic integer. Indeed, the two roots have complex absolute
value $p^{3/2}$, hence are not rational, and so the polynomial is irreducible over $\Q$. If one
root were divisible by $p$, then so would its conjugate be, and hence
their sum $b_p$ would be divisible by $p$, a contradiction.
This proves the claim.

Since $\ell\neq 3$, the idempotents $Q/3$ and
$Q'/3$ give
\[
H^3(E^3,\Z_\ell)
=
Q_*H^3(E^3,\Z_\ell)
\oplus
Q'_*H^3(E^3,\Z_\ell).
\]
Moreover, these correspondences preserve the coniveau filtration. Thus
\[
N^1H^3(E^3,\Z_\ell)
\subset
Q'_*H^3(E^3,\Z_\ell).
\]
Together with the reverse inclusion proved above, this gives
\[
N^1H^3(E^3,\Z_\ell)
=
Q'_*H^3(E^3,\Z_\ell).
\]
Finally,
\[
Q_*H^3(E^3,\Z_\ell(2))
\simeq
H^3(W,\Z_\ell(2))
\]
is free of rank $2$ by \Cref{lemma:cup-correspondence}(iii) and
\cite[Lemma~10.3(i)]{schoen2002complex}.
\end{proof}

\begin{prop}\label{c-sc-E3}
Let $k$ be the algebraic closure of a finite field of characteristic
$p\neq3$, let $E/k$ be the Fermat cubic, and let $\ell\neq p$ be a prime.
If $p\equiv1\pmod3$, assume moreover that $\ell>3$. Then
\[
N^1H^3(E^3,\Z_\ell)=\widetilde N^1H^3(E^3,\Z_\ell).
\]
If $p\equiv1\pmod3$ (resp. $p\equiv2\pmod3$), these are free
$\Z_\ell$-modules of rank $18$ (resp. $20$).
\end{prop}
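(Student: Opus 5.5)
The inclusion $\widetilde N^1\subset N^1$ always holds, so the task is the reverse inclusion together with a rank count. The Tate twist is irrelevant over $k=\ov{k}$ once a primitive $\ell$-power root of unity is fixed, so we may use \Cref{lem:transcendental-E3} freely. We split according to $p$ modulo $3$.

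\textbf{Case $p\equiv2\pmod3$.} By \Cref{lem:supersingular-gysin} with $r=2$, the whole group $H^3(E^3,\Z_\ell(2))$ is generated by Gysin images $i_*H^1(Y,\Z_\ell(1))$. Here $i\colon Y\hookrightarrow E^3$ ranges over smooth divisors (complete intersections of codimension $1$). These Gysin images lie in $\widetilde N^1$ by definition, with $Y$ smooth projective of dimension $2$. Hence
\[
\widetilde N^1=N^1=H^3(E^3,\Z_\ell),
\]
which is free of rank $\binom{6}{3}=20$, since $H^*$ is an exterior algebra on the free rank-$6$ module $H^1$.

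\textbf{Case $p\equiv1\pmod3$, $\ell>3$.} By \Cref{lem:transcendental-E3}, $N^1=Q'_*H^3$. It therefore suffices to show $Q'_*H^3\subset\widetilde N^1$. By \Cref{lemma:cup-correspondence}(i), $Q'_*H^3(E^3,\Z_\ell)$ is generated by cup products $a\cup D$ with $a\in H^1$ and $D\in Q_*H^2$. By \Cref{lemma:cup-correspondence}(ii), after descending to a finite field of definition, $Q_*H^2(1)=H^2(1)^{G_{k'}}$. By Tate's theorem for divisors on abelian varieties, this module is spanned over $\Z_\ell$ by classes of divisors. Exactly as in the proof of \Cref{lem:supersingular-gysin}, we would then write each divisor class as a difference of very ample classes and apply Bertini to get smooth divisors $i\colon Y\hookrightarrow E^3$. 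The projection formula gives $a\cup[Y]=i_*i^*a$, so each generator $a\cup D$ lies in $\widetilde N^1$. This settles $N^1=\widetilde N^1$.

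The main point needing care is that the integral generation statement in \Cref{lemma:cup-correspondence}(i) is used with $Q_*H^2$ spanned integrally, not just rationally, by algebraic classes. This is exactly what part (ii) plus Tate's integral theorem provides, since $\operatorname{NS}\otimes\Z_\ell\to H^2(1)^{G}$ is surjective for abelian varieties.

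For the rank, the character decomposition in the proof of \Cref{lemma:cup-correspondence}(i) gives
\[
Q'_*H^3\otimes R=V_{3,1}\oplus V_{3,2},
\]
which has rank $\binom32\binom31+\binom31\binom32=9+9=18$. Equivalently, the rank is $20-2$, using that $Q_*H^3$ is free of rank $2$. Freeness holds because $Q'_*H^3$ is a direct summand of the free module $H^3$, as $\ell\neq3$.
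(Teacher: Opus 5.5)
Your proposal is correct and follows the paper's proof essentially step for step. In the supersingular case you use \Cref{lem:supersingular-gysin}; in the ordinary case you prove $Q'_*H^3\subset\widetilde N^1H^3$ via \Cref{lemma:cup-correspondence}(i),(ii), Tate's theorem for divisors, Bertini and the projection formula, and then combine this with $N^1H^3=Q'_*H^3$ from \Cref{lem:transcendental-E3}. The only difference is harmless: you also read off the rank $18$ directly from the character decomposition $V_{3,1}\oplus V_{3,2}$, whereas the paper subtracts the rank $2$ of $Q_*H^3$ from $20$.
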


\begin{proof}
Suppose first that $p\equiv2\pmod3$. Then $E$ is supersingular, so
\Cref{lem:supersingular-gysin} gives $H^3(E^3,\Z_\ell) = \widetilde N^1H^3(E^3,\Z_\ell)$.
Hence
\[
N^1H^3(E^3,\Z_\ell)
=
\widetilde N^1H^3(E^3,\Z_\ell)
=
H^3(E^3,\Z_\ell)
\]
is $\Z_\ell$-free of rank $20$.

Suppose now that $p\equiv1\pmod3$. By
\Cref{lemma:cup-correspondence}(i),(ii) and Tate's theorem for divisors,
the module $Q'_*H^3(E^3,\Z_\ell)$ is generated by cup products of classes in $H^1(E^3,\Z_\ell)$ with divisor classes. Writing a divisor
class as a difference of sufficiently ample classes and applying
Bertini, the projection formula shows that cup product of a class in $H^1(E^3,\Z_\ell)$ with a divisor
class factors through a Gysin map from a smooth divisor. Hence
\[
Q'_*H^3(E^3,\Z_\ell)
\subset
\widetilde N^1H^3(E^3,\Z_\ell).
\]
By \Cref{lem:transcendental-E3}, we have $N^1H^3(E^3,\Z_\ell)= Q'_*H^3(E^3,\Z_\ell)$.
Therefore
\[
Q'_*H^3(E^3,\Z_\ell)
\subset
\widetilde N^1H^3(E^3,\Z_\ell)
\subset
N^1H^3(E^3,\Z_\ell)
=
Q'_*H^3(E^3,\Z_\ell),
\]
so all three modules are equal.

By \Cref{lem:transcendental-E3}, the complement $Q_*H^3(E^3,\Z_\ell)$ is free
of rank $2$. Since $H^3(E^3,\Z_\ell)$ is free of rank $20$, the common
module above is free of rank $18$.
\end{proof}

\subsection{Galois descent of codimension-\texorpdfstring{$2$}{2} cycles}

The vanishing of degree-$3$ unramified cohomology obtained in \Cref{main-fermat-cubic-thm} and the results on the coniveau and strong coniveau filtrations of \S\ref{subsec:coniveau} have concrete Galois descent consequences
for codimension-$2$ cycles, as we now show.

\begin{cor}\label{cor:CTK}
Let $p\neq3$ be a prime, let $E\subset\P^2_{\F_p}$ be the Fermat cubic,
and let $\ell\neq p$ be a prime. If $p\equiv 1 \pmod 3$, assume moreover that $\ell>3$.
Then, for every finite extension $k/\F_p$, base change induces an isomorphism
\[
CH^2(E^3_k)_{\Z_\ell}
\xlongrightarrow{\sim}
CH^2(E^3_{\ov{\F}_p})^{G_k}_{\Z_\ell}.
\]
\end{cor}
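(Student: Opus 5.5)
The plan is to deduce the corollary from the general Galois descent criterion of Colliot-Th\'el\`ene and Kahn \cite{colliot2013cycles}. For a smooth projective variety $X$ over a finite field $k$, they relate the kernel and cokernel of
\[
CH^2(X)_{\Z_\ell}\longrightarrow CH^2(X_{\ov{k}})^{G_k}_{\Z_\ell}
\]
to degree-$3$ unramified cohomology and to the failure of the integral Tate conjecture. We would therefore check the two halves, injectivity and surjectivity, separately.

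\emph{Injectivity.} The kernel of base change $CH^2(X_k)\to CH^2(X_{\ov{k}})$ is torsion, killed by the degree of a trivializing extension, by a norm argument. Hence after $\otimes\Z_\ell$ it lies in $CH^2(X_k)\{\ell\}$. We would use the Colliot-Th\'el\`ene--Sansuc--Soul\'e / Kahn description: the kernel of $CH^2(X_k)\{\ell\}\to CH^2(X_{\ov{k}})$ is controlled by $H^1(k,K_2)$-type terms. Concretely, \cite{colliot2013cycles} gives an exact sequence whose terms involve $H^3_{\nr}(k(X)/k,\Q_\ell/\Z_\ell(2))$ and the kernel $\on{Ker}[H^3_{\nr}(k(X))\to H^3_{\nr}(\ov{k}(X))]$. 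Both vanish by \Cref{main-fermat-cubic-thm}.

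\emph{Surjectivity.} Take a $G_k$-invariant class $\beta\in CH^2(X_{\ov{k}})_{\Z_\ell}$. It is defined over some finite extension $k'/k$. The obstruction to descending it lies in Galois cohomology of $K_2$-cohomology groups. The cited theorem of Colliot-Th\'el\`ene--Kahn identifies the cokernel with a subquotient of the cokernel of the integral cycle map
\[
CH^2(X_k)_{\Z_\ell}\to H^4(X_k,\Z_\ell(2))
\]
together with $H^3_{\nr}$. Both vanish by \Cref{main-fermat-cubic-thm}, which is valid over every finite extension $k$.

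The main obstacle is locating the precise statement in \cite{colliot2013cycles}: we need the one that converts vanishing of $H^3_{\nr}$ over $k$, together with surjectivity of the cycle map over $k$, into descent with $\Z_\ell$ coefficients, rather than merely up to finite groups. If only a finite-cokernel statement is available, we would argue directly instead.

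\emph{Direct argument, surjectivity.} Let $\beta$ be $G_k$-invariant. Its cycle class gives a $G_k$-invariant element of $H^4(X_{\ov{k}},\Z_\ell(2))$. By \Cref{medium-tate}, subtract a $k$-rational cycle to reduce to $\beta$ homologically trivial. Write $\beta$ as coming from some $z\in CH^2(X_{k'})$. Then use the surjectivity of the Abel--Jacobi map over $k$ (proved in the proof of \Cref{main-fermat-cubic-thm}), together with the injectivity of the Abel--Jacobi map on $\ell$-primary torsion, to adjust $z$ by a $k$-rational cycle until $N_{k'/k}$ arguments finish the descent.

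This fallback is the delicate step, and we expect to rely on the exact sequence of Kahn \cite[Th\'eor\`eme 1.1]{kahn2012classes} comparing $CH^2$ over $k$ and $\ov{k}$ via $H^3_{\nr}$.
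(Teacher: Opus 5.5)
You have the right tool: the paper's proof is a direct application of the exact sequence of \cite[Th\'eor\`eme~6.8]{colliot2013cycles}. However, you have misread which term controls which half, and your injectivity argument fails as written.

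Put $X=E^3_k$ and let $K$ and $C$ be the kernel and cokernel of $CH^2(X)\to CH^2(X_{\ov{k}})^{G_k}$. Let $H$ be the kernel of $H^3_{\nr}(k(X)/k,\Q/\Z(2))\to H^3_{\nr}(\ov{k}(X)/\ov{k},\Q/\Z(2))$, and let $M\coloneqq\oplus_q H^3(X_{\ov{k}},\Z_q(2))\{q\}$. The sequence is
\[
0\longrightarrow K\longrightarrow H^1(k,M)\longrightarrow H\longrightarrow C\longrightarrow 0.
\]
So the kernel of base change is not bounded by unramified cohomology: it embeds in $H^1(k,M)$, and the unramified term sits to its right. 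In fact, if $H\{\ell\}=0$, the sequence forces $K\{\ell\}\simeq H^1(k,M\{\ell\})$. This is non-zero whenever $M\{\ell\}$ has non-zero $G_k$-invariants.

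The missing input is $M\{\ell\}=H^3(X_{\ov{k}},\Z_\ell(2))\{\ell\}=0$. This holds because $X$ is an abelian variety, whose $\ell$-adic cohomology is the exterior algebra on the free $\Z_\ell$-module $H^1$.

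Your surjectivity half is correct once stated precisely: $C$ is a quotient of $H$, and $H\{\ell\}=0$ by \Cref{main-fermat-cubic-thm}. The cokernel of the integral cycle map does not enter here; it was only used to prove $H^3_{\nr}=0$.

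With $H^1(k,M)\{\ell\}=0$ and $H\{\ell\}=0$ one gets $K\{\ell\}=C\{\ell\}=0$. Since $K$ and $C$ are torsion (your norm argument works for both), tensoring with $\Z_\ell$ gives the isomorphism; this is exactly the paper's proof.

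Your fallback ``direct argument'' is unnecessary and, as sketched, would not close. Norms only descend classes up to multiplication by $[k':k]$, which may be divisible by $\ell$. Moreover, a homologically trivial class over $\ov{\F}_p$ is not known to be torsion, so injectivity of the cycle or Abel--Jacobi map on $\ell$-primary torsion cannot be applied to it.
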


\begin{proof}
Set $X=E^3_k$. Recall that
$H^3(X_{\ov{k}},\Z_\ell(2))$ is torsion-free. In the notation of
\cite[Th\'eor\`eme~6.8]{colliot2013cycles}, the finite Galois module $M\coloneqq \oplus_q H^3(X_{\ov{k}},\Z_q(2))\{q\}$ therefore satisfies $M\{\ell\}=H^3(X_{\ov{k}},\Z_\ell(2))\{\ell\}=0$. 
Hence $H^1(k,M)\{\ell\}=0$. By \Cref{main-fermat-cubic-thm}, we have
\[
H^3_{\nr}(k(X)/k,\Q_\ell/\Z_\ell(2))=0.
\]
Now from the exact sequence of \cite[Th\'eor\`eme~6.8]{colliot2013cycles} we deduce that both the kernel and the cokernel of
\[
CH^2(X)\longrightarrow CH^2(X_{\ov{k}})^{G_k}
\]
have trivial $\ell$-primary components, that the kernel is finite, and that the cokernel is torsion. Tensoring with $\Z_\ell$ therefore gives the desired isomorphism.
\end{proof}

For a field $k$ and a $k$-variety $X$, write $CH^i_{k-\alg}(X)$ for the subgroup of classes algebraically equivalent to zero over $k$ in the sense of Fulton (see \cite[\S 4.1]{scavia2023coniveau}), and write $CH^i_{\alg}(X)$ for the subgroup of classes which become algebraically trivial over a finite separable extension of $k$, equivalently over $\bar k$.

\begin{cor}\label{cor:waj}
Let $p\neq3$ be a prime, let $E\subset\P^2_{\F_p}$ be the Fermat cubic,
let $k/\F_p$ be a finite extension, and let $\ell\neq p$ be a prime.
If $p\equiv1\pmod3$, assume moreover that $\ell>3$. Then the following
hold.
\begin{enumerate}
\item[(i)] The algebraic Abel--Jacobi map induces an isomorphism
\[
\cl_a\colon CH^2_{\alg}(E^3_k)_{\Z_\ell}
\xlongrightarrow{\sim}
H^1(k,N^1H^3(E^3_{\ov{\F}_p},\Z_\ell(2))).
\]

\item[(ii)] Base change induces an isomorphism
\[
CH^2_{\alg}(E^3_k)_{\Z_\ell}
\xlongrightarrow{\sim}
CH^2_{\alg}(E^3_{\ov{\F}_p})^{G_k}_{\Z_\ell}.
\]

\item[(iii)] A class in $CH^2(E^3_k)\{\ell\}$ belongs to
$CH^2_{k-\alg}(E^3_k)$ if and only if it belongs to
$CH^2_{\alg}(E^3_k)$.

\item[(iv)] If $p\equiv1\pmod3$, then the abelian groups $CH^2_{\alg}(E^3_k)_{\Z_\ell}$ and $(E(k)\{\ell\})^9$ have the same order.
\end{enumerate}
\end{cor}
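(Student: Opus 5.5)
Set $X\coloneqq E^3_k$ and $\ov X\coloneqq E^3_{\ov{\F}_p}$. The plan is to compare the algebraically trivial part of $CH^2$ with its image in $\ell$-adic cohomology, using the Galois descent of \Cref{cor:CTK} and the coniveau computation of \Cref{lem:transcendental-E3} and \Cref{c-sc-E3}. The central input is the algebraic Abel--Jacobi map of \cite{scavia2023coniveau}. For a variety over a finite field whose $H^3$ is torsion-free, this map
\[
\cl_a\colon CH^2_{\alg}(X)_{\Z_\ell}\longrightarrow H^1(k,\widetilde N^1H^3(\ov X,\Z_\ell(2)))
\]
is defined by restricting the $\ell$-adic Abel--Jacobi map $\alpha^2_{X,\ell}$. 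Algebraically trivial cycles land in the strong coniveau part because they are parametrized by curves. By \Cref{c-sc-E3}, strong coniveau equals coniveau here, so the target is $H^1(k,N^1H^3)$. This coincidence is exactly what makes (i) expressible with $N^1$.

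For (i), surjectivity comes from the factorization through curves. By \Cref{c-sc-E3}, $N^1H^3$ is generated by Gysin images from smooth divisors and surfaces, and hence, using \cite[Lemma~2.12]{scavia2023coniveau}, by correspondences $\Gamma_j\in CH^2(C_j\times X)$ from curves. By \Cref{rmk:AJ_properties}(7) the codimension-one Abel--Jacobi maps of the $C_j$ are surjective. Since $\on{cd}(k)=1$, $H^1(k,-)$ preserves surjections. Compatibility with correspondences, \Cref{rmk:AJ_properties}(2), then gives surjectivity onto $H^1(k,N^1H^3)$, and the cycles produced, $\Gamma_{j*}$ of degree-zero divisors, are algebraically trivial.

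Injectivity is the main obstacle. Let $z\in CH^2_{\alg}(X)_{\Z_\ell}$ with $\cl_a(z)=0$. By \Cref{rmk:AJ_properties}(1), together with $\ov z$ being homologically trivial, the cycle class of $z$ in $H^4(X,\Z_\ell(2))$ vanishes. The required control on the kernel of the integral cycle map comes from \Cref{main-fermat-cubic-thm}: the vanishing of $H^3_{\nr}$ feeds into the Colliot-Th\'el\`ene--Kahn exact sequence \cite[Th\'eor\`eme~6.8]{colliot2013cycles}, used as in \Cref{cor:CTK}. Combined with Galois descent, this reduces injectivity to the geometric statement that $CH^2_{\alg}(\ov X)\{\ell\}$ injects under the $\ell$-adic Abel--Jacobi map into $H^1$-type data. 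I expect to invoke the known injectivity of the algebraic Abel--Jacobi map on torsion over $\ov{\F}_p$ (Bloch, Murre) to conclude; this is the step I am least sure assembles cleanly. Part (ii) then follows by comparing (i) over $k$ with the same map over finite extensions, using \Cref{cor:CTK} together with the restriction of the $G_k$-equivariant isomorphism to $\alg$-parts.

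For (iii), one inclusion is trivial. Conversely, a class $z\in CH^2_{\alg}(X)\{\ell\}$ is by (i) determined by $\cl_a(z)$, and the surjectivity argument realizes $\cl_a(z)$ by a cycle that is algebraically trivial over $k$, since it is pushed forward from degree-zero divisors on curves over $k$. Injectivity in (i) then gives $z\in CH^2_{k-\alg}$. For (iv), compute the order of $H^1(k,N^1H^3)=N^1H^3/(\on{Frob}-1)$ using $N^1=Q'_*H^3$ from \Cref{lem:transcendental-E3}. In terms of the eigenvalues $\pi,\ov\pi$ on $L,\ov L$, the summands of $V_{3,1}\oplus V_{3,2}$ after the twist by $(2)$ have Frobenius eigenvalues $\pi/q$ or $\ov\pi/q$ times factors $\pi\ov\pi/q=1$. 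There are 9 copies of each, so the order is $|\det(1-\on{Frob})|_\ell^{-1}$ of nine copies of $H^1(E)$, which equals $|E(k)\{\ell\}|^9$.
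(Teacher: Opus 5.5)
There is a genuine gap: injectivity of $\cl_a$ in (i) is not established, and your proof of (iii) and the order count in (iv) both rely on it.

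The mechanism you sketch is misdirected. Vanishing of $H^3_{\nr}$ controls the torsion in the \emph{cokernel} of $CH^2(X)_{\Z_\ell}\to H^4(X,\Z_\ell(2))$ \cite[Th\'eor\`eme~2.2]{colliot2013cycles}. Through \cite[Th\'eor\`eme~6.8]{colliot2013cycles} it also gives the Galois descent of \Cref{cor:CTK}. It does not control the \emph{kernel} of the cycle map.

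After you use \Cref{cor:CTK} to reduce to $z_{\ov{\F}_p}=0$, the appeal to Bloch--Murre over $\ov{\F}_p$ is not connected to what you know. The $\ell$-adic class of the torsion cycle $z_{\ov{\F}_p}$ in the torsion-free group $H^4(\ov X,\Z_\ell(2))$ vanishes for trivial reasons. So you would have to relate $\alpha^2_{X,\ell}(z)$ to Bloch's map $\lambda\colon CH^2(\ov X)\{\ell\}\to H^3(\ov X,\Q_\ell/\Z_\ell(2))$, a compatibility you neither state nor prove. As written, only surjectivity in (i) is proved.

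The missing input is short and does not need \Cref{main-fermat-cubic-thm}.
\begin{enumerate}
\item $CH^2_{\alg}(X)$ is torsion. Over some finite $k'/k$, $z_{k'}$ is a sum of classes $\Gamma_*(a)$ with $a$ in a finite group $J(C)(k')$, up to an element of $\Ker[CH^2(X_{k'})\to CH^2(\ov X)]$. That element is torsion by a norm argument. Applying $N_{k'/k}$ shows $z$ is torsion, so $CH^2_{\alg}(X)_{\Z_\ell}=CH^2_{\alg}(X)\{\ell\}$.
\item For $X$ smooth projective over a finite field, $CH^2(X)\{\ell\}\to H^4(X,\Z_\ell(2))$ is injective by Colliot-Th\'el\`ene--Sansuc--Soul\'e \cite{colliot1983torsion}.
\item By \Cref{lem:transcendental-E3}, $N^1H^3$ is either $Q'_*H^3$ or all of $H^3$, so it is a $G_k$-stable direct summand. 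Hence $H^1(k,N^1H^3)\to H^1(k,H^3)$ is injective.
\end{enumerate}
Your observation via \Cref{rmk:AJ_properties}(1) that $\cl_a(z)=0$ forces $\mathrm{cl}(z)=0$ now gives $z=0$.

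With this, your surjectivity argument goes through, with the curves and correspondences taken over $k$ (or after a norm argument). Your deduction of (iii) from (i) then also goes through. This is essentially the content of \cite[Theorem~1.2]{scavia2023coniveau}, which the paper simply cites once \Cref{c-sc-E3} gives $N^1=\widetilde N^1$.

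For (ii) you do not need (i) at all. Since $CH^2_{\alg}(X)$ is the preimage of $CH^2_{\alg}(\ov X)$ and $\Z_\ell$ is flat, (ii) is just the restriction of the isomorphism in \Cref{cor:CTK}. This is shorter than the paper's route, which uses \cite[Theorem~1.2(2)]{scavia2023coniveau} and the vanishing of $H^1(k,H^3_{\tors})$. Your eigenvalue count in (iv) agrees with the paper's.
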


\begin{proof}
By \Cref{c-sc-E3}, coniveau and strong coniveau agree in degree $3$.
Parts (1) and (3) of \cite[Theorem~1.2]{scavia2023coniveau} give (i) and
(iii), and part (2) gives the surjectivity in (ii). The second isomorphism
in part (1) of that theorem identifies the $\ell$-primary kernel of
$CH^2(E^3_k)\to CH^2(E^3_{\ov{\F}_p})$ with $H^1(k,H^3(E^3_{\ov{\F}_p},\Z_\ell(2))_{\tors})$, 
which is zero because the integral cohomology of $E^3$ is torsion-free.
Thus the map in (ii) is also injective. For (iv), set
\[
M=N^1H^3(E^3_{\ov{\F}_p},\Z_\ell(2)),
\qquad V=H^1(E_{\ov{\F}_p},\Z_\ell)(1).
\]
\Cref{c-sc-E3} gives $M=Q'_*H^3(E^3_{\ov{\F}_p},\Z_\ell(2))$. After adjoining a third root of unity to $\Q_\ell$, write
\[
H^1(E,\Q_\ell)\otimes_{\Q_\ell}\Q_\ell(\zeta_3)
=
L\oplus\bar L
\]
for the two non-trivial $C_3$-characters $\chi$ and $\bar\chi$
(cf. the proof of \Cref{lemma:cup-correspondence} for the notation).
The six K\"unneth summands of type $(2,1,0)$ give six copies
of $V_{\Q_\ell}\otimes_{\Q_\ell}\Q_\ell(\zeta_3)$.
Within the $(1,1,1)$-K\"unneth summand, the two ``pure'' character terms
$L^{\otimes3}$ and $\bar L^{\otimes3}$ form the $Q$-summand, while the
``mixed'' terms form the $Q'$-summand. Since $L\otimes\bar L\simeq\Q_\ell(\zeta_3)(-1)$ 
the ``mixed'' terms give three further copies of $V_{\Q_\ell}\otimes_{\Q_\ell}\Q_\ell(\zeta_3)$. Therefore, after extending scalars to $\Q_\ell(\zeta_3)$, the
Frobenius characteristic polynomial of $M_{\Q_\ell}$ is the ninth
power of that of $V_{\Q_\ell}$. Since characteristic polynomials are
unchanged by scalar extension, we have
\[
\det(\operatorname{Frob}_k-1\mid M_{\Q_\ell})
=
\det(\operatorname{Frob}_k-1\mid V_{\Q_\ell})^9.
\]

For a finite free $\Z_\ell$-module $T$ on which $1$ is not a Frobenius
eigenvalue,
\[
H^1(k,T)=\operatorname{Coker}(\operatorname{Frob}_k-1),
\qquad
|H^1(k,T)|
=
|\det(\operatorname{Frob}_k-1\mid T_{\Q_\ell})|_\ell^{-1};
\]
see \cite[Lemma~5.5]{scavia2023coniveau}. Hence $|H^1(k,M)|=|H^1(k,V)|^9$. The principal polarization and the Weil
pairing identify $V$ with $T_\ell E$, while Kummer theory and Lang's
theorem give $H^1(k,T_\ell E)\simeq E(k)\{\ell\}$. Now (iv) follows from (i).
\end{proof}

For a $k$-variety $X$, set $\mathrm{A}^i(X)\coloneqq CH^i(X)/CH^i_{k-\alg}(X)$.

\begin{cor}
Let $p\neq3$ be a prime, let $E\subset\P^2_{\F_p}$ be the Fermat cubic, and let $\ell\neq p$ be a prime. 
If $p\equiv 1\pmod 3$, assume moreover that $\ell>3$.
Then, for every finite extension $k/\F_p$, base change induces an isomorphism
\[\mathrm{A}^2(E^3_k)_{\Z_\ell}\xlongrightarrow{\sim}\mathrm{A}^2(E^3_{\ov{\F}_p})^{G_k}_{\Z_\ell}.\]
\end{cor}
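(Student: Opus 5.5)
The plan is to deduce this from the two preceding corollaries with a five-lemma argument. Set $X=E^3_k$ and $\bar X=E^3_{\ov{\F}_p}$. By definition of $\mathrm{A}^2$ there is an exact sequence
\[
0\longrightarrow CH^2_{k-\alg}(X)\longrightarrow CH^2(X)\longrightarrow \mathrm{A}^2(X)\longrightarrow 0,
\]
and similarly over $\ov{\F}_p$, where $CH^2_{\ov{\F}_p-\alg}(\bar X)=CH^2_{\alg}(\bar X)$. Tensoring with the flat $\Z$-module $\Z_\ell$ keeps both sequences exact. Taking $G_k$-invariants of the $\ov{\F}_p$ sequence gives a left exact sequence
\[
0\to CH^2_{\alg}(\bar X)^{G_k}_{\Z_\ell}\to CH^2(\bar X)^{G_k}_{\Z_\ell}\to \mathrm{A}^2(\bar X)^{G_k}_{\Z_\ell}\to H^1(k,CH^2_{\alg}(\bar X))_{\Z_\ell}.
\]
Here I use that invariants commute with $\otimes\Z_\ell$, or I work throughout with the convention of the preceding corollaries. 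Base change then maps the first sequence to this one.

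Next I compare the terms. By \Cref{cor:CTK} the middle map $CH^2(X)_{\Z_\ell}\to CH^2(\bar X)^{G_k}_{\Z_\ell}$ is an isomorphism. For the left terms, $CH^2_{k-\alg}(X)\subset CH^2_{\alg}(X)$, and \Cref{cor:waj}(ii) gives the isomorphism $CH^2_{\alg}(X)_{\Z_\ell}\simeq CH^2_{\alg}(\bar X)^{G_k}_{\Z_\ell}$. So I still need $CH^2_{k-\alg}(X)_{\Z_\ell}=CH^2_{\alg}(X)_{\Z_\ell}$. The quotient $CH^2_{\alg}(X)/CH^2_{k-\alg}(X)$ is torsion, being killed by degrees of finite extensions via norms. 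Its $\ell$-part vanishes by \Cref{cor:waj}(iii), applied to $\ell$-primary torsion classes. Concretely, given $z\in CH^2_{\alg}(X)$ with $nz\in CH^2_{k-\alg}$, I write $n=\ell^a m$ with $\ell\nmid m$ and reduce to the torsion case. Alternatively, $CH^2_{\alg}(X)_{\Z_\ell}$ is the finite group computed in \Cref{cor:waj}(i), hence $\ell$-primary torsion, so (iii) applies directly after tensoring. Thus the left map is an isomorphism too.

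A diagram chase then finishes the argument. Injectivity of $\mathrm{A}^2(X)_{\Z_\ell}\to\mathrm{A}^2(\bar X)^{G_k}_{\Z_\ell}$ is immediate, since the middle map is injective and the left map is surjective. The main obstacle is surjectivity, because an invariant class in $\mathrm{A}^2(\bar X)$ need not lift to an invariant cycle. For this I must show that the connecting map to $H^1(k,CH^2_{\alg}(\bar X))\otimes\Z_\ell$ vanishes on the image, or bypass it. My first attempt would be to argue that $CH^2_{\alg}(\bar X)\{\ell\}$ is divisible ($\ell$-divisible group of an abelian variety via the algebraic representative) and that $H^1$ of a finite field with coefficients in a divisible torsion module of cofinite type is killed after $\otimes\Z_\ell$. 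Failing that, I would invoke directly the relevant part of \cite[Theorem~1.2]{scavia2023coniveau}, whose surjectivity statement for $\mathrm{A}^2$ holds once coniveau equals strong coniveau (\Cref{c-sc-E3}) and the $\ell$-adic integral Tate-type surjectivity of \Cref{main-fermat-cubic-thm} holds.
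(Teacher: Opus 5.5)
Your skeleton is the paper's: compare the defining sequence of $\mathrm{A}^2(E^3_k)$ with the $G_k$-invariants of the one over $\ov{\F}_p$, then use \Cref{cor:CTK} for the middle term and \Cref{cor:waj}(ii),(iii) for the left term. Your handling of the left term is fine: both $CH^2_{\alg}(E^3_k)$ and $CH^2_{k-\alg}(E^3_k)$ are torsion, so after $\otimes\Z_\ell$ they become their $\ell$-primary parts, and these agree by (iii). Your injectivity chase is also fine.

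The gap is exactly at the step you call the main obstacle. The principle you invoke is that $H^1$ of a finite field with coefficients in a divisible torsion module of cofinite type dies after $\otimes\Z_\ell$. This is false. For $\Q_\ell/\Z_\ell$ with trivial action, $H^1(k,\Q_\ell/\Z_\ell)=\operatorname{Hom}(\widehat{\Z},\Q_\ell/\Z_\ell)\simeq\Q_\ell/\Z_\ell$, and tensoring an $\ell$-primary torsion group with $\Z_\ell$ changes nothing. For $D\simeq(\Q_\ell/\Z_\ell)^s$ one has $H^1(k,D)=D/(F-1)D$, which vanishes exactly when $F-1$ has finite kernel on $D$. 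That is an arithmetic statement about Frobenius, not a consequence of divisibility. Your fallback is also unsupported: in the paper, \cite[Theorem~1.2]{scavia2023coniveau} is used only for statements about $CH^2_{\alg}$ and its Abel--Jacobi map. It is not used for surjectivity onto $\mathrm{A}^2(E^3_{\ov{\F}_p})^{G_k}$.

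What is missing is the equality $C=(F-1)C$, where $C=CH^2_{\alg}(E^3_{\ov{\F}_p})$ and $F=\operatorname{Frob}_k$. The paper gets it from Lang's theorem. Each $c\in C$ is the image under a correspondence of a degree-zero class on the Jacobian $J$ of a curve, all defined over some $k_m/k$ of degree $m$. Since $F^m-1$ is surjective on $J(\ov{k})$, we get $c\in(F^m-1)C\subset(F-1)C$. As $C$ is torsion, $H^1(k,C)=C/(F-1)C=0$, so the invariant sequence is short exact and comparing quotients gives the isomorphism.

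You could instead complete your own divisibility route with results already available. By \Cref{cor:waj}(ii), $(C\{\ell\})^{G_k}\simeq CH^2_{\alg}(E^3_k)_{\Z_\ell}$, and this is finite by \Cref{cor:waj}(i). Hence $F-1$ has finite kernel on $C\{\ell\}\simeq(\Q_\ell/\Z_\ell)^s$, so it is surjective, and $H^1(k,C)_{\Z_\ell}=H^1(k,C\{\ell\})=0$.
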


\begin{proof}
Set $C\coloneqq CH^2_{\alg}(E^3_{\ov{\F}_p})$ and 
$F\coloneqq \operatorname{Frob}_k$. We first show that $H^1(k,C)=0$. Indeed,
every $c\in C$ is obtained from a degree-zero class on the Jacobian $J$ of a suitable smooth projective curve by a correspondence, all defined over some finite extension
$k_m/k$ of degree $m$. By Lang's theorem, $F^m-1$ is surjective on $J(\ov{k})$.
Thus $c\in(F^m-1)C\subset(F-1)C$. Thus $C=(F-1)C$ and hence 
$H^1(k,C)=C/(F-1)C=0$. Taking invariants therefore gives the short exact
sequence
\[
0\longrightarrow C^{G_k}\longrightarrow
CH^2(E^3_{\ov{\F}_p})^{G_k}\longrightarrow
\mathrm{A}^2(E^3_{\ov{\F}_p})^{G_k}\longrightarrow0.
\]
Now compare this sequence with the defining sequence for
$\mathrm{A}^2(E^3_k)$. The middle vertical map becomes an isomorphism after
tensoring with $\Z_\ell$ by \Cref{cor:CTK}. The left vertical map becomes
surjective by \Cref{cor:waj}(ii), while part (iii) permits us to replace
geometric algebraic equivalence on the source by algebraic equivalence over
$k$. Since $\Z_\ell$ is flat over $\Z$, the two rows remain exact after
tensoring. The induced map on the quotients is therefore an isomorphism.
\end{proof}

\begin{rem}
    For related results on the behavior of algebraic equivalence for one-cycles under field extension, see \cite[Theorem~7]{kollar2025stable}.
\end{rem}

\subsection{Griffiths group}

Using \Cref{main-fermat-cubic-thm}, we complete the computation of the Griffiths group $\on{Griff}^2(E^3)$ initiated by Schoen \cite[Theorem 0.1]{schoen1995computation}.

\begin{cor}\label{griffiths}
Let $k$ be the algebraic closure of a finite field of characteristic
$p\neq3$, let $E/k$ be the Fermat cubic, and let $\ell\neq p$ be a prime.
If $p\equiv1\pmod3$, assume moreover that $\ell>3$. Then
\begin{itemize}
    \item[(i)] If $p\equiv1\pmod3$, then $\Griff^2(E^3)\{\ell\}\simeq(\Q_\ell/\Z_\ell)^2$.
    \item[(ii)] If $p\equiv2\pmod3$, then $\Griff^2(E^3)\{\ell\}=0$.
\end{itemize}
\end{cor}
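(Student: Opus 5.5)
Over $k=\ov{\F}_p$ we start from the exact sequence
\[
0\longrightarrow CH^2_{\alg}(E^3)\longrightarrow CH^2_{\mathrm{hom}}(E^3)\longrightarrow \Griff^2(E^3)\longrightarrow0
\]
and take $\ell$-primary torsion. The $\ell$-primary torsion of $\Griff^2(E^3)$ is then described via the standard description over algebraically closed fields: the $\ell$-adic Abel--Jacobi map on torsion, together with the Bloch--Merkurjev--Suslin / Colliot-Th\'el\`ene--Kahn formalism. Concretely, we use the exact sequence (cf. \cite{colliot2013cycles}, and \cite{scavia2023coniveau} for the algebraically closed version)
\[
0\to H^3_{\nr}(k(X)/k,\Q_\ell/\Z_\ell(2))_{\mathrm{div}}\to \ldots
\]
More concretely, the $\ell$-torsion of $CH^2(X)$ over $\ov{\F}_p$ injects via the Bloch map into $H^3(X,\Q_\ell/\Z_\ell(2))$. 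Its image is controlled by the coniveau: the image of $CH^2_{\alg}(X)\{\ell\}$ is $N^1H^3\otimes\Q_\ell/\Z_\ell$. The whole $CH^2(X)\{\ell\}$ surjects onto the torsion of $H^3(X,\Q_\ell/\Z_\ell(2))$ modulo $H^3_{\nr}$-contributions, and this torsion equals $H^3\otimes\Q_\ell/\Z_\ell$ since the cohomology is torsion-free. Thus $\Griff^2(X)\{\ell\}$ should be identified with $H^3_{\mathrm{tr}}(X,\Z_\ell(2))\otimes\Q_\ell/\Z_\ell$, provided two things hold. First, the $\ell$-divisible part of $\Griff^2$ accounts for everything, using the vanishing $H^3_{\nr}(\ov{k}(X)/\ov{k},\Q_\ell/\Z_\ell(2))=0$ from \Cref{main-fermat-cubic-thm}, which kills the obstruction coming from classes not supported in codimension one. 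Second, $\Griff^2(X)\{\ell\}$ is generated by torsion cycles, i.e. $CH^2_{\mathrm{hom}}\{\ell\}\to \Griff^2\{\ell\}$ is surjective.

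For the second point we argue as follows. Over $\ov{\F}_p$, homologically trivial cycles are torsion modulo algebraic equivalence once their $\ell$-adic Abel--Jacobi images over finite fields are finite. More precisely, a cycle $z$ defined over $k_m$ has $\alpha(z)\in H^1(k_m,H^3)$, which is finite. By \Cref{cor:waj}(i) applied over $k_m$, we may subtract an algebraically trivial cycle with the same image in the $N^1$-part. The remaining $Q$-part then gives torsion in $\Griff$. Combining this with the Bloch map and Roitman-type injectivity of $CH^2_{\alg}\{\ell\}\to N^1H^3\otimes\Q_\ell/\Z_\ell$ (from \Cref{cor:waj} in the limit), we obtain
\[
\Griff^2(E^3)\{\ell\}\simeq H^3_{\mathrm{tr}}(E^3,\Z_\ell(2))\otimes\Q_\ell/\Z_\ell .
\]
Here the surjectivity onto $H^3\otimes\Q_\ell/\Z_\ell$ comes from the limit over $k$ of the Abel--Jacobi surjectivity in the proof of \Cref{main-fermat-cubic-thm}, since $\varinjlim_k H^1(k,T)=T\otimes\Q_\ell/\Z_\ell$ for $T$ with no Frobenius eigenvalue $1$ in any power.

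Finally we insert \Cref{lem:transcendental-E3}. For $p\equiv1\pmod 3$, $H^3_{\mathrm{tr}}$ is free of rank $2$, giving $(\Q_\ell/\Z_\ell)^2$. For $p\equiv2\pmod3$, $H^3_{\mathrm{tr}}=0$, giving $0$. The main obstacle is the rigorous identification of $\Griff^2\{\ell\}$ with the transcendental part tensored with $\Q_\ell/\Z_\ell$. In particular one must check injectivity: a torsion cycle whose Bloch class lies in $N^1H^3\otimes\Q_\ell/\Z_\ell$ must be algebraically trivial. For this I would pass to a finite field of definition and use \Cref{cor:waj}(i),(iii) together with the torsion-freeness of $H^3$.
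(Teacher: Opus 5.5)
Your route differs from the paper's, and it works once a few justifications are replaced. The paper does no computation at this point. It notes that $E^3$ lies in $B_{\mathrm{Tate}}$ and feeds the vanishing of $H^3_{\nr}(\ov{k}(E^3)/\ov{k},\Q_\ell/\Z_\ell(2))$ from \Cref{main-fermat-cubic-thm} into Kahn's comparison results \cite[Theorem~5.2(b), Propositions~5.4 and~5.5]{kahn2012classes}. These yield $\Griff^2(E^3)_{\Z_\ell}\simeq H^3_{\mathrm{tr}}(E^3,\Z_\ell(2))\otimes\Q_\ell/\Z_\ell$, and the paper then reads off the answer from \Cref{lem:transcendental-E3}.

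You instead reprove that identification by hand with the Bloch map $\lambda\colon CH^2(E^3)\{\ell\}\to H^3(E^3,\Q_\ell/\Z_\ell(2))=H^3(E^3,\Z_\ell(2))\otimes\Q_\ell/\Z_\ell$. Stated cleanly, your argument runs as follows.
\begin{enumerate}
\item[(a)] $\lambda$ is injective with image $N^1H^3(E^3,\Q_\ell/\Z_\ell(2))$, by Merkurjev--Suslin, Colliot-Th\'el\`ene--Sansuc--Soul\'e and Bloch--Ogus.
\item[(b)] With $\Q_\ell/\Z_\ell(2)$-coefficients, $H^3/N^1$ embeds into $H^3_{\nr}$, so the vanishing of $H^3_{\nr}$ makes $\lambda$ bijective.
\item[(c)] $\lambda(CH^2_{\alg}\{\ell\})$ is the image of $N^1H^3(E^3,\Z_\ell(2))\otimes\Q_\ell/\Z_\ell$.
\item[(d)] Hence $\Griff^2\{\ell\}\simeq(H^3/N^1)\otimes\Q_\ell/\Z_\ell$.
\end{enumerate}
This avoids Kahn's comparison between $\Griff^2_{\Z_\ell}$ and the $\ell$-adic Griffiths group, and it computes the $\ell$-primary torsion directly.

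The price is an input the paper's proof does not need. In general, $\lambda(CH^2_{\alg}\{\ell\})$ is the image of the \emph{strong} coniveau $\widetilde N^1H^3\otimes\Q_\ell/\Z_\ell$: algebraically trivial cycles come from Jacobians of curves via correspondences, and $\lambda$ commutes with correspondences. So your sentence ``the image of $CH^2_{\alg}(X)\{\ell\}$ is $N^1H^3\otimes\Q_\ell/\Z_\ell$'' is not a general fact. For $E^3$ it holds by \Cref{c-sc-E3}. Equality of the ranks of $N^1$ and $\widetilde N^1$ would already suffice, since only the rank of $H^3/\widetilde N^1$ survives tensoring with $\Q_\ell/\Z_\ell$. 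Going through \Cref{cor:waj}(i) in the limit, as you propose, also works. It additionally requires the compatibility of the finite-field Abel--Jacobi maps with $\lambda$.

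Three of your intermediate justifications should be replaced.
\begin{enumerate}
\item[(1)] Your argument for the ``second point'' does not prove it. Finiteness of $H^1(k_m,H^3)$, and subtracting an algebraically trivial cycle with the same $N^1$-component, say nothing about lifting an $\ell$-primary class of $\Griff^2$ to an $\ell$-primary torsion cycle. The correct reason is one line. $CH^2_{\alg}(E^3)$ is divisible over $\ov{\F}_p$, so $CH^2_{\mathrm{hom}}\{\ell\}\to\Griff^2\{\ell\}$ is onto with kernel $CH^2_{\alg}\{\ell\}$. Moreover $CH^2_{\mathrm{hom}}\{\ell\}=CH^2\{\ell\}$ because $H^4(E^3,\Z_\ell)$ is torsion-free.
\item[(2)] What you call the main obstacle is immediate. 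If $\lambda(z)\in\lambda(CH^2_{\alg}\{\ell\})$, then $z\in CH^2_{\alg}\{\ell\}$ by injectivity of $\lambda$, so \Cref{cor:waj}(iii) is not needed.
\item[(3)] Get surjectivity of $\lambda$ from $H^3_{\nr}=0$ as in (b), not from the limit of the Abel--Jacobi surjectivity over finite fields. The latter would also need the cycles it produces to become torsion over $\ov{\F}_p$, which you have not justified.
\end{enumerate}
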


When $p\equiv1\pmod3$ and $\ell\equiv2\pmod3$, part (i) was proved by
Schoen in \cite[Theorem~14.4(3)]{schoen1995computation}. For
$p\equiv2\pmod3$ and $\ell>3$, part (ii) is
\cite[Theorem~14.4(1)]{schoen1995computation}. By
\cite[Remark~14.9]{schoen1995computation}, the elliptic curve considered by Schoen may
be replaced by any isogenous elliptic curve, in particular by the Fermat
cubic. Thus the new cases are $p\equiv1\pmod3$ and
$\ell\equiv1\pmod3$, and, in the supersingular case, the primes
$\ell\leq3$ with $\ell\neq p$.

\begin{proof}[Proof of \Cref{griffiths}]
By \Cref{main-fermat-cubic-thm}, we have
\[
H^3_{\nr}(k(E^3)/k,\Q_\ell/\Z_\ell(2))=0.
\]
Let $X_0/\F_p$ be the triple self-product of the Fermat cubic over $\F_p$. Since $X_0$ is an abelian threefold, it belongs to $B(\F_p)$ and hence to $B_{\mathrm{Tate}}(\F_p)$ by
\cite[Property~3.14(vi)]{colliot2013cycles}. Combining \cite[Theorem~5.2(b), Proposition~5.4, Proposition~5.5]{kahn2012classes}, we obtain an isomorphism
\[
\Griff^2(E^3)_{\Z_\ell}\simeq \Griff^2(E^3,\Z_\ell)
\simeq
H^3_{\mathrm{tr}}(E^3,\Z_\ell(2))
\otimes_{\Z_\ell}\Q_\ell/\Z_\ell.
\]
The conclusion now follows from \Cref{lem:transcendental-E3}.
\end{proof}

\subsection{Unramified cohomology of the auxiliary threefold}

Finally, the dominant rational map $E^3\dashrightarrow W$ allows us to prove an analogue of \Cref{main-fermat-cubic-thm} for the threefold $W/\F_p$ of \S\ref{sec:W}.

\begin{thm}
Let $p>3$ be a prime, let $W$ be the smooth projective threefold over
$\F_p$ introduced in \S\ref{sec:W}, and let $\ell\neq p$ be a prime.
If $p\equiv 1\pmod 3$ (resp. $p\equiv 2\pmod 3$), assume moreover that $\ell>3$ (resp. $\ell\neq 3$).
Then, for every finite extension $k/\F_p$,
\[
H^3_{\nr}(k(W)/k,\Q_\ell/\Z_\ell(2))=0
\]
and the cycle map
\[
CH^2(W_k)_{\Z_\ell}\longrightarrow H^4(W_k,\Z_\ell(2))
\]
is surjective. In particular,
\[
H^3_{\nr}(\ov{\F}_p(W)/\ov{\F}_p,\Q_\ell/\Z_\ell(2))=0.
\]
\end{thm}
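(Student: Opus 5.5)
The plan mirrors the proof of \Cref{main-fermat-cubic-thm}: prove surjectivity of the Abel--Jacobi map and of the geometric cycle map onto $H^4(W_{\ov{k}},\Z_\ell(2))^{G_k}$, combine them via the Hochschild--Serre sequence and \Cref{rmk:AJ_properties}(1), and then use finiteness of $H^3_{\nr}$ to conclude it vanishes.

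\textbf{Abel--Jacobi part.} For $\ell\nmid 6p$ this is exactly \Cref{w-aj-surjective}. That covers everything except $\ell=2$ when $p\equiv2\pmod 3$. For that case, and as a uniform alternative, I would push forward along the closure of the graph $\Gamma\subset E^3\times W$ from \Cref{lem:dominant-rational-E3-W}. By (i) of that lemma, $\Gamma_*\Gamma^*=d\cdot\id$ with $d$ a power of $3$. Hence for $\ell\neq3$, $\Gamma_*\colon H^3(E^3_{\ov{k}},\Z_\ell(2))\to H^3(W_{\ov{k}},\Z_\ell(2))$ is surjective, provided the identity holds integrally for $\ell=2$ too. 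I expect to get this by rerunning the proof of (i) with torsion-freeness of $H^3(W)$ for all $\ell\ne p$ from \cite[Lemma~10.3(i)]{schoen2002complex}. The map stays surjective on $H^1(k,-)$, since $\operatorname{cd}(k)=1$. Surjectivity of $\alpha^2_{E^3_k,\ell}$, established in the proof of \Cref{main-fermat-cubic-thm} (supersingular case via \Cref{thm:supersingular-Abel--Jacobi}), then transfers to $W$ by \Cref{rmk:AJ_properties}(2).

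\textbf{Geometric part.} Again $\Gamma_*$ carries $CH^2(E^3_k)_{\Z_\ell}\to H^4(E^3_{\ov{k}},\Z_\ell(2))^{G_k}$ (surjective by \Cref{medium-tate}) to $W$. The problem is that $\Gamma_*$ on $H^4$ need not be surjective onto $G_k$-invariants, because of extra classes from the blow-up and singular fibers. I would handle $H^4(W)$ directly instead: $W$ is a blow-up of $Y\times_BY$, and by Poincaré duality $H^4(W,\Z_\ell(2))\simeq H^2(W,\Z_\ell(1))^\vee$. I would show $H^2(W_{\ov{k}},\Z_\ell(1))$ is spanned by divisor classes over a suitable extension, using \cite[Proposition~4.15]{schoen2002complex} together with the fact that $H^{2,0}$ vanishes on the complex model. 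Then integral Tate for $1$-cycles on $W$ reduces, via the unimodular intersection pairing and torsion-freeness, to exhibiting curves dual to a divisor basis. These can be taken among fiber components and sections over $k$-points, after checking Galois invariance.

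\textbf{Main obstacle and conclusion.} The step I expect to be hardest is the geometric part: integral Tate for $H^4$ of $W$ over every $k$, including non-split Galois action on fiber components over the cusps. I would first try to dominate everything by images of $\Gamma_*$ and of fiber components of $f$, and use Galois descent as in \Cref{cor:CTK}. Once surjectivity of the cycle map $CH^2(W_k)_{\Z_\ell}\to H^4(W_k,\Z_\ell(2))$ is known, \cite[Th\'eor\`eme 1.1]{kahn2012classes} requires $W$ to lie in $B_{\mathrm{Tate}}$. I would obtain this because $W$ is dominated by the abelian threefold $E^3$ with degree prime to $\ell$, and $B_{\mathrm{Tate}}$ is stable under such dominations. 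Kahn's theorem then gives vanishing of $H^3_{\nr}(k(W)/k,\Q_\ell/\Z_\ell(2))$, and \Cref{lem:unramified-direct-limit} gives the statement over $\ov{\F}_p$.
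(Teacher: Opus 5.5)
You have a genuine gap in the ``geometric part''. You never prove that
$CH^2(W_k)_{\Z_\ell}\to H^4(W_{\ov{k}},\Z_\ell(2))^{G_k}$ is surjective for \emph{every} finite $k$, and the strategies you list do not do it.

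\begin{itemize}
\item Even if $H^4(W_{\ov{k}},\Z_\ell(2))$ were integrally spanned by cycle classes over $\ov{k}$ (itself unproved: producing curves dual to a divisor basis is an integral Tate statement for $1$-cycles that needs an argument), a $G_k$-invariant class $y$ built from cycles defined over $k'$ only yields that $[k':k]\,y$ is algebraic over $k$, via norms.
\item This is useless when $\ell\mid[k':k]$. That happens, e.g., for $\ell=2$, $k=\F_p$, $p\equiv2\pmod 3$: the cusps $(1:3\zeta)$ with $\zeta\neq1$ are conjugate, and fiber components need not be defined over $k$.
\item Lifting invariants from a lattice that is only a quotient of a permutation module is exactly where an $H^3_{\nr}$-type obstruction can live, so ``checking Galois invariance'' is not a formality.
\item Your fallback, Galois descent as in \Cref{cor:CTK}, is circular. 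That corollary feeds the vanishing of $H^3_{\nr}$ into \cite[Th\'eor\`eme~6.8]{colliot2013cycles}, and that vanishing is what you want to deduce at the end.
\end{itemize}
(Secondary: for $\ell=2$ your Abel--Jacobi step also needs $\Gamma_*\Gamma^*=d$ integrally on $H^3(W_{\ov{k}},\Z_2(2))$, which \Cref{lem:dominant-rational-E3-W} only states for $\ell\nmid6$.)

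The missing idea is to reverse the order and prove the vanishing of $H^3_{\nr}$ first, directly from $E^3$.
\begin{itemize}
\item The dominant map of \Cref{lem:dominant-rational-E3-W} makes $k(E^3)/k(W)$ a finite separable extension of $3$-power degree.
\item Restriction sends unramified classes to unramified classes, and $\operatorname{Cor}\circ\operatorname{Res}$ is multiplication by the degree. This is bijective on $\Q_\ell/\Z_\ell(2)$ for $\ell\neq3$.
\item Hence $H^3_{\nr}(k(W)/k,\Q_\ell/\Z_\ell(2))$ injects into $H^3_{\nr}(k(E^3)/k,\Q_\ell/\Z_\ell(2))$, which is $0$ by \Cref{main-fermat-cubic-thm} under exactly the stated hypotheses on $\ell$.
\end{itemize}
After that, only \emph{rational} surjectivity of $CH^2(W_k)_{\Q_\ell}\to H^4(W_k,\Q_\ell(2))$ is needed. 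This follows from four ingredients:
\begin{itemize}
\item $h^{2,0}(W(3))=0$, so $H^2$ is spanned by divisor classes;
\item spreading out and specialization;
\item hard Lefschetz, plus norms down to $k$;
\item finiteness of $H^1(k,H^3(W_{\ov{k}},\Z_\ell(2)))$.
\end{itemize}
Then the cokernel $C$ of the integral cycle map is torsion, and \cite[Th\'eor\`eme~2.2]{colliot2013cycles} identifies $C_{\tors}$ with $H^3_{\nr}/(H^3_{\nr})_{\on{div}}=0$. This route makes both your Abel--Jacobi step and the integral descent problem unnecessary, and it handles $\ell=2$ at no extra cost.
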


\begin{proof}
The dominant rational map of
\Cref{lem:dominant-rational-E3-W} gives a finite separable extension
$k(W)\subset k(E^3)$ of degree a power of $3$. Restriction on unramified
cohomology is injective: its composite with corestriction is multiplication
by this degree, which is an automorphism of $\Q_\ell/\Z_\ell$. By
\Cref{main-fermat-cubic-thm}, the unramified cohomology of $E^3_k$
vanishes, and hence so does that of $W_k$.

We next verify rational surjectivity of the cycle map for $W$.
For Schoen's threefold $W(3)$,
\cite[Example~1.5]{schoen1986cycles} gives $h^2(W(3),\mathcal O)=0$, and
\cite[Lemma~1.6]{schoen1986cycles} then gives
$b_2(W(3))=\operatorname{rank}\operatorname{NS}(W(3))$. Thus
$H^2(W(3)_{\C},\Q_\ell(1))$ is generated by divisor classes. 
By \Cref{lem:hesse-V3}, this is the same threefold as the resolution used here. Choose finitely many divisor generators and
spread them over a number field. This construction and its blow-up
form a smooth proper model over $\Z[1/6]$. By the proper base change in $\ell$-adic cohomology, we deduce that $H^2(W_{\bar k},\Q_\ell(1))$ is also generated by divisor classes.
Hard Lefschetz, using an ample divisor defined over the ground field, then
shows that $H^4(W_{\bar k},\Q_\ell(2))$ is generated by codimension-two
cycle classes. Taking norms from fields of definition of these cycles to $k$ shows that the rational cycle map surjects onto
$H^4(W_{\bar k},\Q_\ell(2))^{G_k}$. The left-hand term of
\eqref{eq:hochschild-serre-short-exact} is finite, so it disappears after
tensoring with $\Q_\ell$; hence
\[
CH^2(W_k)_{\Q_\ell}\longrightarrow H^4(W_k,\Q_\ell(2))
\]
is surjective.

Let $C$ be the cokernel of the integral cycle map in the statement.
The preceding rational surjectivity says that $C$ is torsion, that is, $C=C_{\tors}$. On the
other hand, \cite[Th\'eor\`eme~2.2]{colliot2013cycles} identifies
$C_{\tors}$ with the quotient of
$H^3_{\nr}(k(W)/k,\Q_\ell/\Z_\ell(2))$ by its maximal divisible subgroup.
The unramified group has just been shown to vanish, so $C=C_{\tors}=0$.
Thus $C=0$, proving surjectivity of the integral cycle map. The final assertion follows
from \Cref{lem:unramified-direct-limit}.
\end{proof}

\section{Potential vanishing versus absolute vanishing of unramified cohomology}\label{sec:8}

Over $\C$, let $X:=X_{n,2n-1}$ in \cite[Definition 2.13]{ottem2024fano}. The variety $X$ is a smooth Fano variety of dimension $2n-6$, and it has Picard rank $1$ and $H^3(X,\Z)=\Z/2$; see \cite[Theorem~4.1]{ottem2024fano}.

\begin{thm}\label{thm:IHC-for-Fano}
For $n\geq 6$, the cycle map $CH^2(X)\rightarrow H^4(X,\Z)$ is surjective.
\end{thm}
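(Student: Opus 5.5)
We reduce the statement to the integral Hodge conjecture for codimension-$2$ classes on the classifying space $BGO(4)^\circ$, as announced in the introduction. Recall from \cite{ottem2024fano} that $X=X_{n,2n-1}$ is built from a representation of $G\coloneqq GO(4)^\circ$: $X$ is the quotient of a complete intersection $Z$ in an open subset $U\subset V$ of a $G$-representation $V$, with $G$ acting freely on $U$. Hence there is a classifying map $\phi\colon X\to BG$, and Totaro's finite-dimensional approximations of $BG$ give a model $U/G$. Our plan has two steps. First, show that $\phi^*\colon H^4(BG,\Z)\to H^4(X,\Z)$ is surjective. Second, show that every class in $H^4(BG,\Z)$ is algebraic, meaning it comes from $CH^2_G(\mathrm{pt})=CH^2(BG)$ (Totaro's equivariant Chow group). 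Since pullback along $\phi$ is compatible with cycle maps, the theorem follows.

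For the first step we use the Lefschetz-type argument of Ottem--Rennemo. The complement $V\setminus U$ has high codimension, so $H^i(U/G,\Z)\simeq H^i(BG,\Z)$ in low degrees. The inclusion $Z/G\subset U/G$ is a complete intersection of ample divisors, so the Lefschetz hyperplane theorem for quasi-projective varieties, or the Grothendieck--Lefschetz statement used in \cite[\S4]{ottem2024fano} to compute $H^3(X,\Z)=\Z/2$, gives isomorphisms $H^i(BG,\Z)\simeq H^i(X,\Z)$ for $i<\dim X=2n-6$. When $n\geq 6$ we have $4<2n-6$, so $H^4(BG,\Z)\to H^4(X,\Z)$ is an isomorphism. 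It is enough to invoke the same range that Ottem--Rennemo use in degree $3$.

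For the second step we compute $H^*(BG,\Z)$ in degrees $\leq4$ together with its generators. We use $GO(4)^\circ\simeq(GL_2\times GL_2)/\mathbb G_m$, which follows from $SO(4)\simeq(SL_2\times SL_2)/\mu_2$. From this central extension, and from the fibration $B\mathbb G_m\to B(GL_2\times GL_2)\to BG$ (or alternatively the Leray--Serre sequence for $BSO(4)\to BG\to B\mathbb G_m$), we compute $H^{\leq4}(BG,\Z)$. It should be generated by the similitude character class $c_1$, by Chern classes $c_1,c_2$ of natural representations of $G$ (for instance the standard $4$-dimensional representation and $\Lambda^2$, or the tensor products of the two $GL_2$-factors), and by products of these. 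Chern classes of representations are algebraic, since they lie in the image of $CH^*(BG)$. What remains is to check that these Chern classes generate $H^4(BG,\Z)$ integrally, including its $2$-torsion, which is where $H^3=\Z/2$ and its Bockstein companions live.

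We expect this integral generation, especially at the prime $2$, to be the main obstacle. Our first approach is to compute $H^4$ by restricting to a maximal torus $T\subset G$ together with the Weyl group. Where a torsion class cannot be detected on the torus, we would use the spectral sequence of the central extension, either $B\mu_2\to B(\mathbb G_m\times SL_2\times SL_2)\to BG$ or $B\mathbb G_m\to B(GL_2\times GL_2)\to BG$. In that spectral sequence we would identify the torsion in degree $4$ as $\beta(w_3)$-type classes or as $c_2$ of an explicit representation minus a decomposable term. As a fallback, we can invoke Totaro's result that $CH^*(BG)\to H^*(BG,\Z)$ is surjective in degree $\leq 4$ for groups of this kind, after checking that the needed classes are Chern classes.
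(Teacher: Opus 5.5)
Your first step is exactly the paper's. By Ottem--Rennemo's comparison, $H^4(BGO(4)^\circ,\Z)\to H^4(X,\Z)$ is an isomorphism for $n\geq 6$, so everything reduces to \Cref{thm:IHC-for-BGO}.

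The gap is that the second step, which is the whole content of the theorem, is never carried out. You only assert that $H^4(BG,\Z)$ ``should be'' generated by Chern classes, and your fallback does not exist. There is no general theorem making $CH^2(BG)\to H^4(BG,\Z)$ surjective for groups ``of this kind''. For the closely related group $SO(4)$ it is false: the image of $CH^2(BSO(4))$ is $\Z(2e)\oplus\Z p$, of index $2$ in $H^4(BSO(4),\Z)=\Z e\oplus\Z p$. So any argument has to explain why the non-algebraic Euler class does not come from $BGO(4)^\circ$.

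Your diagnosis of where the difficulty lies is also off, because $H^4(BGO(4)^\circ,\Z)$ has no $2$-torsion. Consider the Gysin sequence of the $\C^*$-bundle $\pi\colon BSO(4)\to BGO(4)^\circ$, using $H^2(BSO(4),\Z)=0$ and $H^3(BSO(4),\Z)=\Z/2$. It gives $H^2(BGO(4)^\circ,\Z)=\Z c_1(L)$ and an exact sequence $0\to\Z c_1(L)^2\to H^4(BGO(4)^\circ,\Z)\to H^4(BSO(4),\Z)$. Hence $H^4\simeq\Z^3$, and the $2$-primary issue is a lattice index, not torsion. The paper resolves it with this same sequence. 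One has $\pi_*p=0$, while $\pi_*e\neq0$ in $H^3(BGO(4)^\circ,\Z)\simeq\Z/2$ (from $\pi^*\pi_*w_4=w_3$). So $\operatorname{Im}\pi^*=\Z(2e)\oplus\Z p$ is algebraic, and a chase with the Chow localization sequence and $CH^1\twoheadrightarrow H^2$ finishes.

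Your torus strategy can nonetheless be completed once torsion-freeness is known, because then $H^4(BG,\Z)$ injects into $H^4(BT,\Z)^W$ for a maximal torus $T$ with Weyl group $W$. Write $G=(GL_2\times GL_2)/\mathbb{G}_m$, let $\mu=c_1(\chi)$ be the similitude character, and let $\alpha,\beta$ be the roots of the two factors. One computes
$H^4(BT,\Z)^W=\Z\alpha^2\oplus\Z\beta^2\oplus\Z\tfrac{\mu^2-\alpha^2-\beta^2}{2}$.
On the other hand, for the standard representation $V$ and the two summands of $\Lambda^2V$:
$c_2(V)=\mu^2+\tfrac{\mu^2-\alpha^2-\beta^2}{2}$, $c_2(\Lambda^2_+V)=3\mu^2-\alpha^2$, and $c_2(\Lambda^2_-V)=3\mu^2-\beta^2$.
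Together with $\mu^2$ these span the invariant lattice, so Chern classes generate $H^4(BG,\Z)$.

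You need the two summands $\Lambda^2_\pm V$ separately. Representations extending to $GO(4)$, such as $V$ and $\Lambda^2V$, have Chern classes symmetric under $\alpha\leftrightarrow\beta$, and together with $\mu^2$ they span only a rank-$2$ sublattice. This route avoids the Holla--Nitsure computation and the description of $CH^2(BSO(4))$. It still needs the Gysin sequence, or some substitute, to rule out torsion in $H^4$.
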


According to \cite[\S 3.3]{ottem2024fano}, $X$ approximates $BGO(4)^\circ$.
More precisely, by \cite[Proposition 3.5]{ottem2024fano}, for $n\geq 6$, we have $H^4(BGO(4)^\circ,\Z)\xrightarrow{\sim}H^4(X,\Z)$.

Thus \Cref{thm:IHC-for-Fano} follows from:

\begin{thm}\label{thm:IHC-for-BGO}
The cycle map $CH^2(BGO(4)^\circ)\rightarrow H^4(BGO(4)^\circ,\Z)$ is surjective.
\end{thm}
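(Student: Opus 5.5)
The plan is to compute $H^4(BGO(4)^\circ,\Z)$ explicitly and show that every class is a Chern class of a representation, or a polynomial in Chern classes of representations, of $G\coloneqq GO(4)^\circ$. Chern classes of representations are algebraic in the Totaro approximation of the classifying space. Recall that $GO(4)^\circ=(\on{GL}_2\times\on{GL}_2)/\mathbb G_m$, where $\mathbb G_m$ is embedded via $t\mapsto(t,t^{-1})$. Equivalently, $GO(4)^\circ$ is the quotient of $\on{GL}_2\times\on{GL}_2$ acting on $M_2\simeq\mathbb A^4$ by $(A,B)\cdot X=AXB^{-1}$, which preserves the determinant quadratic form up to the similitude factor.

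First I would set up the fibration coming from the central extension
\[1\to\mathbb G_m\to\on{GL}_2\times\on{GL}_2\to G\to1.\]
It gives a fibration $B\mathbb G_m\to B(\on{GL}_2\times\on{GL}_2)\to BG$, or equivalently a $K(\Z,2)\to B\tilde G\to BG$. I would then run the Serre spectral sequence in low degrees. The cohomology of $B(\on{GL}_2\times\on{GL}_2)$ is the polynomial ring $\Z[c_1,c_2,c_1',c_2']$. Alternatively, one can use the fibration $BG\to K(\Z,3)$ obtained from the Bockstein of the obstruction class. Since $H^3(X,\Z)=\Z/2$, there is a $2$-torsion class in $H^3(BG,\Z)$, and this suggests that $H^4(BG,\Z)$ may contain classes that are not Chern-type. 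So the key computation is $H^4(BG,\Z)$ as an abelian group, together with its restriction to a maximal torus $T$ and to suitable elementary abelian $2$-subgroups.

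Second, I would list algebraic classes in $H^4(BG)$. These are the Chern classes $c_2$ of $G$-representations: the standard $4$-dimensional representation $V$, the similitude character $\mu$, and the representations $\on{Sym}^2$ of each $\on{GL}_2$ factor twisted appropriately. The latter descend to $G$, giving the self-dual and anti-self-dual $3$-dimensional pieces of $\bigwedge^2V\otimes\mu^{-1}$. I would also include $c_1(\mu)^2$ and products of first Chern classes. Then I would show that these classes generate $H^4(BG,\Z)$. Rationally this follows from the fact that $H^*(BG,\Q)=H^*(BT,\Q)^W$ is generated by Chern classes. Integrally, I would detect classes on the torus $T$ (a rank-$3$ torus) and on the $2$-torsion, and compare lattices in $H^4(BT,\Z)^W$.

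The main obstacle will be the integral, $2$-primary comparison. Torsion in $H^*(BG,\Z)$ (the Bockstein of the $w_2$-type class) may be invisible on $T$, so I will need to check that any torsion in $H^4$ is of the form $c_1\cdot\beta$ or a Chern class. I would try first to prove that $H^4(BG,\Z)$ is torsion-free. This should follow from the Serre spectral sequence for the fibration $BG\to B(PGL_2\times PGL_2)$, or from the known cohomology of $BSO(4)$ and $BPGL_2$ in degrees $\le5$: $H^4(BPGL_2,\Z)=\Z$ is generated by $p_1=c_2(\mathrm{ad})$, and there is torsion only in odd degrees. If $H^4(BG,\Z)$ is torsion-free, it injects into $H^4(BT,\Z)^W$, and the problem reduces to an explicit lattice computation. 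In that computation, index-$2$ discrepancies such as $p_1$ versus $c_2$ are handled by exhibiting the specific representations above.
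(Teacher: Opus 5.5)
Your route is correct, and it is genuinely different from the paper's.

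\textbf{The paper's route.} The paper never computes $H^4(BGO(4)^\circ,\Z)$ on a torus. It runs the Gysin sequence of the $\C^*$-fibration $\pi\colon BSO(4)\to BGO(4)^\circ$. It imports from Totaro and Pandharipande that the algebraic lattice in $H^4(BSO(4),\Z)=\Z e\oplus\Z p$ is $\Z(2e)\oplus\Z p$. It uses Holla--Nitsure to show that $\pi_*e$ is the non-zero element of $H^3(BGO(4)^\circ,\Z)\simeq\Z/2$, so $\ker\pi_*$ is exactly the algebraic lattice. A chase through the Chow localization sequence, using that $H^2$ is algebraic, finishes.

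\textbf{Why your route closes.} It needs none of these inputs.

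Torsion-freeness of $H^4$ is immediate from the Serre spectral sequence of $BSO(4)\to BGO(4)^\circ\to B\mathbb G_m$. Since $H^1$ and $H^2$ of $BSO(4)$ vanish and $H^4(BSO(4),\Z)$ is free, in total degree $4$ only two terms survive: $E^{4,0}_\infty=\Z$ (the $d_4$ from $E^{0,3}=\Z/2$ is zero) and a subgroup of $E^{0,4}_2=\Z^2$. Then $H^4$ injects into $H^4(BT,\Z)^W$.

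The lattice comparison works with exactly the classes you list, and you do not need to know the image of $H^4(BG,\Z)$ in advance. Write $x_i,y_j$ for the diagonal weights of the two $\mathrm{GL}_2$ factors, $\alpha=x_1-x_2$, $\beta=y_1-y_2$, and $\mu$ for the similitude character. The character lattice of $T$ is spanned by the weights $x_i-y_j=\tfrac12(\mu\pm\alpha\pm\beta)$ of $V$, and one finds
\[
H^4(BT,\Z)^W=\{A\mu^2+B\alpha^2+C\beta^2 : 2A,2B,2C\in\Z,\ A\equiv B\equiv C \pmod{\Z}\}.
\]
This lattice is generated by
\[
c_1(\mu)^2=\mu^2,\quad c_2(\mathrm{ad}_1)=-\alpha^2,\quad c_2(\mathrm{ad}_2)=-\beta^2,\quad c_2(V)=\tfrac12(3\mu^2-\alpha^2-\beta^2).
\]
Hence $H^4(BGO(4)^\circ,\Z)\to H^4(BT,\Z)^W$ is an isomorphism onto a lattice spanned by Chern classes of representations.

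\textbf{Trade-offs.} Your argument is self-contained and gives the explicit structure $H^4(BGO(4)^\circ,\Z)\simeq\Z^3$. It bypasses the Chow ring of $BSO(4)$ and the Gysin image of $w_4$. The paper's argument is shorter given those references, and it makes the mechanism visible. The non-algebraic class $e=\tfrac14(\alpha^2-\beta^2)$ of $BSO(4)$ simply does not come from $BGO(4)^\circ$; its image under $\pi_*$ is the $2$-torsion class. By contrast, $2e=\tfrac12(\alpha^2-\beta^2)$ and $p=\tfrac12(\alpha^2+\beta^2)$ do come from $BGO(4)^\circ$. This is why integral algebraicity holds for $GO(4)^\circ$ although it fails for $SO(4)$.

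\textbf{Two small slips,} neither affecting the argument:
\begin{enumerate}
\item With the action $X\mapsto AXB^{-1}$, the central $\mathbb G_m$ is $t\mapsto(t,t)$ rather than $(t,t^{-1})$.
\item $BPGL_2$ does have even-degree torsion, e.g.\ $W_3^2\in H^6$. There is none in degrees $\le 5$, which is all you use.
\end{enumerate}
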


\Cref{thm:IHC-for-BGO} is perhaps surprising, because for the closely related group $SO(4)$, the corresponding statement fails, which is a source of examples with non-zero Griffiths group in \cite{totaro1997torsion}.

\begin{proof}[Proof of \Cref{thm:IHC-for-BGO}]
The exact sequence
\[
1\longrightarrow SO(4)\longrightarrow GO(4)^\circ
\mathop{\longrightarrow}^{\chi}\C^*\longrightarrow1
\]
gives a $\C^*$-fibration
$\pi\colon BSO(4)\to BGO(4)^\circ$. The relevant part of its (topological) Gysin
sequence is
\[
H^2(BGO(4)^\circ,\Z)\mathop{\longrightarrow}^{c_1(L)}
H^4(BGO(4)^\circ,\Z)\mathop{\longrightarrow}^{\pi^*}
H^4(BSO(4),\Z)\mathop{\longrightarrow}^{\pi_*}
H^3(BGO(4)^\circ,\Z),
\]
where $L$ is associated with $\chi$. The cycle map
$CH^1(BGO(4)^\circ)\to H^2(BGO(4)^\circ,\Z)$ is surjective by
\cite[Corollary~3.5]{totaro1999chow}. Moreover,
\[
H^4(BSO(4),\Z)=\Z e\oplus\Z p,
\]
where $e$ and $p$ are the Euler and Pontryagin classes, and
\cite[pp.~483--484]{totaro1997torsion} and
\cite{pandharipande1998equivariant} identify the image
of $CH^2(BSO(4))$ with $\Z(2e)\oplus\Z p$. The localization sequence gives an
exact sequence
\[
CH^1(BGO(4)^\circ)\mathop{\longrightarrow}^{c_1(L)}
CH^2(BGO(4)^\circ)\longrightarrow CH^2(BSO(4))\longrightarrow0;
\]
see for example \cite[\S2.2]{bhaumik2015chow}.

By \cite[Proposition~3.1]{ottem2024fano}, we have $H^3(BGO(4)^\circ,\Z)\simeq\Z/2$. We claim that $\pi_*e$ is the non-zero element of $H^3(BGO(4)^\circ,\Z)$. Modulo $2$, the Euler class is
$w_4$, so it is enough to prove
\[
\pi^*\pi_*w_4=w_3\ne0
\quad\text{in }H^3(BSO(4),\Z/2).
\]
For the extension
\[
1\longrightarrow O(4)\longrightarrow GO(4)
\mathop{\longrightarrow}^{\chi}\C^*\longrightarrow1,
\]
\cite[Proposition~4.21]{holla2001characteristic} gives the corresponding
identity
\[
\pi^*\pi_*w_4=w_3
\quad\text{in }H^3(BO(4),\Z/2).
\]
The square obtained from the inclusions
$SO(4)\subset O(4)$ and $GO(4)^\circ\subset GO(4)$ is Cartesian.
By naturality of the Gysin maps, we therefore obtain the claim.

Moreover, $p=-c_2(V)$, where $V$ is the tautological $4$-dimensional complex representation of $SO(4)$. Since $V$ is the restriction of a $GO(4)^\circ$-representation, we see that $p$ is the pullback of the corresponding Chern class on
$BGO(4)^\circ$, and exactness gives $\pi_*p=0$. It follows that
\[
\ker(\pi_*\colon H^4(BSO(4),\Z)\to
H^3(BGO(4)^\circ,\Z))=\Z(2e)\oplus\Z p,
\]
which is exactly the algebraic lattice. Given
$y\in H^4(BGO(4)^\circ,\Z)$, the class $\pi^*y$ lies in this kernel.
Lift an algebraic representative of $\pi^*y$ through the Chow localization
sequence. The difference between $y$ and the class of this lift belongs
to $\ker\pi^*$, hence by the Gysin sequence is $c_1(L)$ times a class in
$H^2(BGO(4)^\circ,\Z)$. That $H^2$-class is algebraic, hence so is $y$.
\end{proof}

\begin{proof}[Proof of \Cref{thm:potential-vs-absolute}]
Let $X_{\C}=X_{6,11}$ be the Fano sixfold of Ottem--Rennemo \cite{ottem2024fano}. Thus
$X_{6,11}$ is a general codimension-$11$ linear section of the double
symmetric determinantal variety
\[
W_{4,6}\longrightarrow Z_{4,6},
\]
where $Z_{4,6}\subset \P(\Sym^2(\C^6)^\vee)$ parametrizes quadrics of rank at most $4$. By
\cite[Theorem~4.1 and Corollary~3.6]{ottem2024fano}, the variety
$X_{\C}$ is smooth and Fano, $H^3(X_{\C},\Z)\simeq \Z/2$ and, if $u_{\C}\in H^3(X_{\C},\Z)$ is the non-zero class, then the mod-$2$ reduction $\overline{u}_{\C}$ satisfies $\overline{u}_{\C}^{2}\neq 0$ in $H^6(X_{\C},\Z/2)$.

By \Cref{thm:IHC-for-Fano}, choose finitely many
classes $z_j\in CH^2(X_{\C})$ whose integral cycle classes generate
$H^4(X_{\C},\Z)$. Let
$0\ne u_{\C}\in H^3(X_{\C},\Z_2)\simeq\Z/2$. Since $X_{\C}$ is rationally
connected, \cite[Theorem~1]{bloch1983remarks} gives an integer $N>0$, a
point $x\in X_{\C}$,
a divisor $D\subset X_{\C}$, and a relation
\begin{equation}\label{eq:diagonal-relation-complex}
    N[\Delta_{X_{\C}}]=N[X_{\C}\times x]+\Gamma
\quad\text{in }CH^6(X_{\C}\times X_{\C}),
\end{equation}
where $\Gamma$ is supported on $D\times X_{\C}$.

We spread out $X_\C$, the cycles $z_j$, and the above relation over
an integral scheme of finite type over $\Z[1/2]$, and shrink the base so
that the family $f\colon\mathcal X\to S$ is smooth and projective. After shrinking $S$ and a suitable finite \'etale base change of $S$, the class $u_{\C}$ extends, by smooth proper
base change, as a section of the $2$-torsion subsheaf of
$R^3f_*\Z_2(2)$. Choose a closed point of odd residue
characteristic and write $X/\kappa$ for the resulting fiber. Smooth
proper base change,
comparison, and specialization of the $z_j$ give a surjection
\[
CH^2(X_{\bar\kappa})_{\Z_2}
\relbar\joinrel\twoheadrightarrow H^4(X_{\bar\kappa},\Z_2(2)).
\]
Moreover, since here $n=6$ and $c=11=4n-13$,
\cite[Corollary~3.6]{ottem2024fano} says that the square of the mod-$2$
reduction of $u_{\C}$ is non-zero. Compatibility of specialization
with Steenrod operations, as in
\cite[Lemma~2.1]{scavia2022cohomology}, therefore gives a class
\[
u\in H^3(X_{\bar\kappa},\Z_2(2)),\qquad
\operatorname{Sq}^3(\overline u)=\overline u^{\,2}\ne0.
\]
After replacing $\kappa$ by a finite extension $k_0$, we may assume that the following conditions hold:
\begin{itemize}
\item[--] $\mu_4\subset k_0$;
\item[--] $G_{k_0}$ acts trivially on $H^6(X_{\bar k_0},\Z/2)$;
\item[--] the specializations of the $z_j$ are defined over $k_0$, so that we have a surjection
\[
CH^2(X_{k_0})_{\Z_2}\relbar\joinrel\twoheadrightarrow H^4(X_{\ov{k}_0},\Z_2(2));\]
\item[--] the specialization of \eqref{eq:diagonal-relation-complex} is defined over $k_0$: for the integer $N>0$ in \eqref{eq:diagonal-relation-complex}, there exist a point $x\in X(k_0)$, a divisor $D\subset X_{k_0}$, a $6$-cycle $\Gamma$ supported on $D\times X_{k_0}$, and a relation
\begin{equation}\label{eq:diagonal-relation}
N[\Delta_{X_{k_0}}]=N[X_{k_0}\times x] + \Gamma \in CH^6(X_{k_0}\times X_{k_0}).
\end{equation}
\end{itemize}

Let
\[
C\coloneqq \operatorname{Coker}(
CH^2(X_{\bar k_0})_{\Z_2}\to
H^4(X_{\bar k_0},\Z_2(2))).
\]
The preceding surjectivity gives $C=0$. The exact sequence of
\cite[Theorem~1.1]{kahn2012classes} then shows that
$H^3_{\nr}(\bar k_0(X)/\bar k_0,\Q_2/\Z_2(2))$ is divisible. On the other
hand, by base change of \eqref{eq:diagonal-relation} over $\ov{k}_0$,
we see that this group is annihilated by $N$. Since a divisible group of finite exponent is zero, we conclude
\[
H^3_{\nr}(\bar k_0(X)/\bar k_0,\Q_2/\Z_2(2))=0.
\]

Now let $k/k_0$ be any finite extension. We apply
\cite[Proposition~3.8]{scavia2022cohomology} over $k$, with
$\ell=i=2$, $P=\operatorname{Sq}^3$, and the geometric class $u$ above.
The field $k$ still contains $\mu_4$, the $G_k$-action on
$H^6(X_{\bar k},\Z/2)$ is trivial, and
$\operatorname{Sq}^3(\overline u)\ne0$. The proposition produces a
geometrically trivial, non-algebraic class $\alpha_k\in H^4(X_k,\Z_2(2))$. 
Because $u$ has order $2$, so does $\alpha_k$. Its image is therefore a
non-zero element of 
the torsion subgroup of
\[
C'\coloneqq \operatorname{Coker}(
CH^2(X_k)_{\Z_2}\to H^4(X_k,\Z_2(2))).
\]
This cokernel class has order $2$. In fact, this is the only non-zero class in $C'$ because $CH^2(X_k)_{\Z_2}\to H^4(X_{\ov{k}},\Z_\ell(2))$ is surjective, we have $H^1(k,H^3(X_{\ov{k}},\Z_2(2)))\simeq H^3(X_{\ov{k}},\Z_2(2))\simeq \langle u\rangle$, and by the Hochschild--Serre spectral sequence.
By \cite[Th\'eor\`eme~2.2]{colliot2013cycles}, we have
\[\frac{H^3_{\nr}(k(X)/k,\Q_2/\Z_2(2))}{H^3_{\nr}(k(X)/k,\Q_2/\Z_2(2))_{\on{div}}}
\xlongrightarrow{\ \sim\ }C'\simeq \Z/2\]
for every finite extension $k/k_0$. Moreover, by base change of \eqref{eq:diagonal-relation} over $k$, we have $H^3_{\nr}(k(X)/k,\Q_2/\Z_2(2))_{\on{div}}=0$ and hence $H^3_{\nr}(k(X)/k,\Q_2/\Z_2(2))\simeq\Z/2$ for every such $k/k_0$. This completes the proof for $d=6$. For $d>6$, it suffices to consider the product $X\times\P^{d-6}_{k_0}$.
\end{proof}

\begin{rem}\label{rem:schreieder}
Stefan Schreieder has pointed out that for a finite field $k_0$, an
integer $i\geq2$, and a smooth projective variety $X/k_0$, if
$H^i_{\nr}(k(X)/k,\Q_\ell/\Z_\ell(i-1))$ is non-zero for every finite
extension $k/k_0$, then $X_{\bar k_0}$ is not stably rational. Indeed, a
stable-rationality construction over $\bar k_0$ descends to some finite extension
$k/k_0$. Then $H^i_{\nr}(k(X)/k,\Q_\ell/\Z_\ell(i-1))\simeq H^i(k,\Q_\ell/\Z_\ell(i-1))$, which is zero for $i>1$ because
$\on{cd}_\ell(k)=1$.

In particular, for the Fano variety $X/k_0$ of
\Cref{thm:potential-vs-absolute}, $X_{\ov{k}_0}$ is not stably rational,
even though $H^3_{\nr}$ is absolutely vanishing. The same non-zero-square
class also shows on the special fiber that
\[
N^1H^3(X_{\ov{k}_0},\Z_2)\neq
\widetilde{N}^1H^3(X_{\ov{k}_0},\Z_2).
\]
Indeed, apply \cite[Proposition~3.4]{scavia2023coniveau} with
$m=3$, $c=1$, and $j=2$. In this case, $S_2=\operatorname{Sq}^3$, so
$\operatorname{Sq}^3(\bar u)=\bar u^2\neq0$ implies
$u\notin\widetilde N^1H^3$. On the other hand, a torsion $\ell$-adic
class belongs to $N^1H^3$; see the proof of
\cite[Theorem~1.1(2)]{scavia2023coniveau} and
\cite[Lemma~3.12]{kahn2012classes}. This already obstructs stable
rationality of $X_{\ov{k}_0}$. It would be interesting to find examples
with $H^3_{\nr}$ that is not potentially vanishing, for which no other
known obstructions to stable rationality apply.
\end{rem}

\section*{Acknowledgments}

The results of this paper have been obtained in the Spring of 2025 and announced in January 2026 during the Workshop ``Arithmetic of Algebraic Cycles'' at the Lodha Mathematical Sciences Institute in Mumbai. Part of this work was written during our stay at the LMSI; we thank the Institute for excellent working conditions.

We thank Stefan Schreieder for \Cref{rem:schreieder}. We thank Yang Cao for asking us about \Cref{cor:local-global-fermat} and for bringing \cite{gazaki2024weak} to our attention. We thank John Christian Ottem for helpful comments about \Cref{thm:potential-vs-absolute}.

The first author thanks Jean-Louis Colliot-Thélène for introducing him to this research area and for asking him about the unramified cohomology of triple products of elliptic curves over finite fields in the Fall of 2019.

The second author was funded by National Key R\&D Program of China (grant no. 2025YFA1017300 and 2025YFA1017301) and by the ERC grant ``RationAlgic'' (grant no. 948066).

\end{document}